\numberwithin{equation}{section}
\newtheorem{theorem}{Theorem}[section]
\newtheorem{corollary}[theorem]{Corollary}
\newtheorem{proposition}[theorem]{Proposition}
\newtheorem{lemma}[theorem]{Lemma}
\theoremstyle{definition}
\newtheorem{definition}[theorem]{Definition}
\newtheorem{question}[theorem]{Question}
\newtheorem{example}[theorem]{Example}
\newtheorem{remark}[theorem]{Remark}
\newcommand{\Z}{\mathbb{Z}}
\newcommand{\Q}{\mathbb{Q}}
\newcommand{\C}{\mathbb{C}}
\newcommand{\A }{\mathbb{A}}
\newcommand{\PP}{\mathbb{P}}
\def\BB{\mathbb{B}}
\def\CC{\mathbb{C}}
\def\DD{\mathbb{D}}
\def\EE{\mathbb{E}}
\def\FF{\mathbb{F}}
\def\GG{\mathbb{G}}
\def\II{\mathbb{I}}
\def\KK{\mathbb{K}}
\def\LL{\mathbb{L}}
\def\TT{\mathbb{T}}
\def\sO{{\mathscr O}}
\def\sG{{\mathscr G}}
\def\sM{{\mathscr M}}
\def\sL{{\mathscr L}}
\def\sE{\mathscr{E}}
\def\sF{\mathscr{F}}
\def\sP{\mathscr{P}}
\def\sD{\mathscr{D}}
\def\sX{\mathscr{X}}
\newcommand{\cal}{\mathcal}
\def\cF{{\cal F}}
\def\cH{{\cal H}}
\def\cI{{\cal I}}
\def\cJ{{\cal J}}
\def\cL{{\cal L}}
\def\cX{{\cal X}}
\def\cY{{\cal Y}}
\def\cZ{{\cal Z}}
\def\cX{\mathcal{X} }
\def\bX{\mathbf{X}}
\def\bY{\mathbf{Y}}
\def\bP{\mathbf{P}}
\def\bV{\mathbf{V}}
\def\bF{\mathbf{F}}
\def\bL{{\mathbf L}}
\def\bR{{\mathbf R}}
\def\fC{\mathfrak{C}}
\def\fN{\mathfrak{N}}
\def\fq{\mathfrak{q}}
\def\fQ{\mathfrak{Q}}
\def\fe{\mathfrak e}
\def\tphi{\widetilde{\phi} }
\def\tf{\widetilde{f}}
\def\tg{\widetilde{g}}
\def\th{\widetilde{h}}
\def\tk{\widetilde{k}}
\def\tp{\widetilde{p}}
\def\tq{\widetilde{q}}
\def\tcX{\widetilde{\cX}}
\def\tcY{\widetilde{\cY}}
\def\hO{\hat{\O}}
\def\hbar{\overline{h}}
\DeclareMathOperator{\Ext}{Ext}
\DeclareMathOperator{\id}{id} 
\DeclareMathOperator{\rank}{rank}
\DeclareMathOperator{\tr}{tr}
\DeclareMathOperator{\Hom}{Hom}
\DeclareMathOperator{\spec}{Spec}
\DeclareMathOperator{\Sym}{Sym}
\def\coker{\mathrm{coker} }
\def\ch{\mathrm{ch} }
\def\td{\mathrm{td} }
\def\hom{\cH om }
\def\and{\quad{\rm and}\quad}
\def\lra{\longrightarrow }
\def\mapright#1{\,\smash{\mathop{\lra}\limits^{#1}}\,}
\def\beq{\begin{equation}}
\def\eeq{\end{equation}}
\def\ben{\begin{enumerate}}
\def\een{\end{enumerate}}
\def\sp{\mathrm{sp}}
\def\cone{\mathrm{cone}}
\def\Perf{\mathrm{Perf}}
\def\spl{{\mathrm{spl}}}
\def\virt{^{\mathrm{vir}}}
\def\dual{^{\vee}}
\def\sqe{\sqrt{e}}
\def\uperp{^{\perp}}
\def\un{^{[n]}}
\def\trunc{\tau^{\ge-1}}
\def\and{\quad\text{and}\quad}
\def\O{\mathscr{O}}
\def\I{\mathcal{I}}
\title[Virtual pullbacks in DT4 theory]{Virtual pullbacks in Donaldson-Thomas theory of Calabi-Yau 4-folds}
\date{2021.10.07}
\author{Hyeonjun Park}
\address{Department of Mathematical Sciences, Seoul National University, Seoul 08826, Korea}
\email{hyeonjun93@snu.ac.kr}
\begin{document}

\begin{abstract}
Recently, Oh and Thomas constructed algebraic virtual cycles for moduli spaces of sheaves on Calabi-Yau 4-folds. The purpose of this paper is to provide a virtual pullback formula between these Oh-Thomas virtual cycles. We find a natural compatibility condition between 3-term symmetric obstruction theories that induces a virtual pullback formula. There are two types of applications.

Firstly, we introduce a Lefschetz principle in Donaldson-Thomas theory, which relates the tautological DT4 invariants of a Calabi-Yau 4-fold with the DT3 invariants of its divisor. As corollaries, we prove the Cao-Kool conjecture on the tautological Hilbert scheme invariants for very ample line bundles and the Cao-Kool-Monavari conjecture on the tautological DT/PT correspondence for line bundles with Calabi-Yau divisors when the tautological complexes are vector bundles.
 
Secondly, we present a correspondence between the Oh-Thomas virtual cycles on the moduli spaces of pairs and the moduli spaces of sheaves by combining the virtual pullback formula and a pushforward formula for virtual projective bundles. As corollaries, we prove the Cao-Maulik-Toda conjecture on the primary PT/GV correspondence for irreducible curve classes and the Cao-Toda conjecture on the primary JS/GV correspondence under the coprime condition, assuming the Cao-Maulik-Toda conjecture on the primary Katz/GV correspondence. Moreover, we also prove tautological versions of these two correspondences.
%% Furthermore, we remove the quasi-projectivity hypothesis on the construction of Oh-Thomas virtual cycles and the torus localization.
\end{abstract}

\maketitle
\tableofcontents

\section*{Introduction}%\label{Introduction}

\subsection*{Background}

Donaldson-Thomas invariants are virtual numbers of counting sheaves, defined as the integrals of cohomology classes over the virtual cycles of moduli spaces of sheaves. For Calabi-Yau 3-folds and Fano 3-folds, such virtual cycles were constructed by Thomas \cite{Tho} through 2-term perfect obstruction theories \cite{BeFa,LiTi}. The associated Donaldson-Thomas invariants have been studied intensively during the last two decades and it turned out that they have rich structures and properties (e.g., correspondence to Gromov-Witten invariants \cite{MNOP1,MNOP2} and rationality \cite{PT,PT3}, motivic property \cite{Behrend,JS}, modularity \cite{GS,TT},  etc.).

Generalizing Donaldson-Thomas theory to higher-dimensional algebraic varieties is not obvious. The standard method of constructing virtual cycles in \cite{BeFa,LiTi} does not work for higher-dimensional varieties since the natural obstruction theories on the moduli spaces of sheaves are no longer 2-term. In particular, for Calabi-Yau 4-folds, the obstruction theories are 3-term symmetric, which are never 2-term.

In the groundbreaking work \cite{BJ}, Borisov and Joyce constructed real virtual cycles for schemes with 3-term symmetric obstruction theories based on the Darboux theorem \cite{BBJ,BBBJ} and derived differential geometry. Thus Donaldson-Thomas invariants for Calabi-Yau 4-folds can be defined via the Borisov-Joyce virtual cycles. However, computation of these DT4 invariants through the Borisov-Joyce virtual cycles is believed to be very difficult.

Recently, Oh and Thomas \cite{OT} lifted Borisov-Joyce virtual cycles to Chow groups by generalizing Cao-Leung's algebraic approach \cite{CL}. The key idea is to localize Edidin-Graham's square root Euler class \cite{EG} by an isotropic section via Kiem-Li's cosection-localized Gysin map \cite{KL}. This algebraic method enables us to compute DT4 invariants in some cases.

Currently, there are three known computational tools in DT4 theory:
\begin{enumerate}
\item {\em Reduction to Edidin-Graham/Behrend-Fantechi classes} \cite{CL};
\item {\em Torus localization} \cite{OT};
\item {\em Cosection localization} \cite{KP}.
\end{enumerate}
These tools are shown to be effective when they can be applied, i.e., the moduli space is smooth/virtually smooth or has a torus action/cosection (cf. \cite{CL,CKp,CMT,CMT2,CKc,CTd,CTt,CKMk}). However, since there are many examples that are not in the above cases, it is desired to develop additional tools.

%% The situation is very different with DT3 theory, where there were general computational methods such as degeneration formula \cite{LiWu} and motivic methods \cite{Behrend,JS}. %% In principle, any DT3 invariants can be computed by the degeneration formula \cite{LiWu} and the torus localization formula \cite{GrPa} since the algebraic cobordism ring is generated by toric varieties \cite{LePa}. Moreover, the motivic methods \cite{Behrend,JS} enable us to compute DT3 invariants of Calabi-Yau 3-folds without exploring obstruction theories and virtual cycles. 

\subsection*{Main Result: Virtual pullback formula}

The purpose of this paper is to provide a new computational tool for DT4 invariants: a {\em virtual pullback formula} between Oh-Thomas virtual cycles.

Recall \cite{Man} that Manolache introduced the notion of virtual pullbacks as a relative version of Behrend-Fantechi virtual cycles \cite{BeFa}. More specifically, given a morphism $f:\cX \to \cY$ of schemes equipped with a relative 2-term perfect obstruction theory $\phi_f:\KK_f \to \LL_f$, Manolache constructed a map
\[f^! : A_*(\cY) \to A_*(\cX),\]
called virtual pullback, satisfying the functorial property. In particular, if the schemes $\cX$ and $\cY$ are also equipped with 2-term perfect obstruction theories $\phi_{\cX}$ and $\phi_{\cY}$ that fit into a compatibility diagram
\beq\label{Compatibility.Man}
\xymatrix{
f^*\KK_{\cY} \ar[r] \ar[d]^{f^*{\phi_{\cY}}} & \KK_{\cX} \ar[r] \ar[d]^{\phi_{\cX}} & \KK_f  \ar[d]^{\phi_f} \ar[r] &\\
f^*\LL_{\cY} \ar[r]  &\LL_{\cX} \ar[r] & \LL_f \ar[r] & }
\eeq
then there is a virtual pullback formula
\beq\label{VirtualPullbackFormula.BeFa}
f^![\cY]\virt_{BF} = [\cX]\virt_{BF} \in A_*(\cX)
\eeq
between the associated Behrend-Fantechi virtual cycles.

The main result of this paper is a generalization of the virtual pullback formula \eqref{VirtualPullbackFormula.BeFa} to the Oh-Thomas virtual cycles:

\begin{theorem}[Virtual pullback formula]\label{Thm.VirtualPullbackFormula}
Let $f:\cX \to \cY$ be a morphism of quasi-projective schemes equipped with the following obstruction theories:
\begin{enumerate}
\item [D1)] symmetric obstruction theories $\phi_{\cX}:\EE_{\cX} \to \LL_{\cX}$ and $\phi_{\cY}:\EE_{\cY} \to \LL_{\cY}$ of tor-amplitude $[-2,0]$, oriented, and isotropic (see Definition \ref{Def.Isotropic});
\item [D2)] a perfect obstruction theory $\phi_f:\EE_f\to\LL_f$ of tor-amplitude $[-1,0]$.
\end{enumerate}
Assume that there exist morphisms of distinguished triangles
\beq\label{Compatibility.OT}
\xymatrix{
\DD\dual[2] \ar[r]^{\alpha\dual} \ar[d]^{\beta\dual} & \EE_{\cX} \ar[r] \ar[d]^{\alpha} & \EE_f  \ar@{=}[d] \ar[r] &\\
f^*\EE_{\cY} \ar[r]^{\beta} \ar[d]^{f^*{\phi_{\cY}}} & \DD \ar[r] \ar[d]^{\phi_{\cX}'} & \EE_f  \ar[d]^{\phi_f} \ar[r] &\\
f^*\LL_{\cY} \ar[r]  &\LL_{\cX} \ar[r] & \LL_f \ar[r] & }
\eeq
for some perfect complex $\DD$ and maps $\alpha$, $\beta$, $\phi'_{\cX}$ such that $\phi_{\cX} =\phi'_{\cX} \circ \alpha$. We further assume that the orientations of $\EE_{\cX}$ and $\EE_{\cY}$ are compatible with the isomorphism $\det(\EE_{\cX}) \cong f^* \det(\EE_{\cY})$ induced by \eqref{Compatibility.OT}. Then we have a {\em virtual pullback formula}
\begin{equation}\label{VPF.OT}
f^![\cY]\virt_{OT} = [\cX]\virt_{OT} \in A_*(\cX)
\end{equation}
between the associated Oh-Thomas virtual cycles.
\end{theorem}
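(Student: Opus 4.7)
The plan is to adapt Manolache's deformation-theoretic proof of the Behrend--Fantechi virtual pullback formula \eqref{VirtualPullbackFormula.BeFa} to the DT4 setting, replacing ordinary Euler classes by Edidin--Graham's square root Euler class and carefully tracking the orientation and isotropic data through the compatibility diagram \eqref{Compatibility.OT}.

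First, I would invoke Manolache's factorization $f^! = 0^!_{\EE_f^\vee} \circ \sp_f$, where $\sp_f$ is specialization to the intrinsic normal cone $\mathfrak{C}_f$ inside the vector-bundle stack $h^1/h^0(\EE_f^\vee)$ and $0^!_{\EE_f^\vee}$ is the Gysin pullback along the zero section. Applying this to $[\cY]\virt_{OT}$ reduces the theorem to identifying the specialized cycle $\sp_f([\cY]\virt_{OT})$ as an Oh--Thomas virtual class associated with a canonical symmetric, isotropic, oriented obstruction theory on $\mathfrak{C}_f$. The diagram \eqref{Compatibility.OT} should produce exactly such a theory: pulling back $\phi_{\cY}$ along $\mathfrak{C}_f \to \cY$ and extending via $\phi_{\cX}'$ and the complex $\DD$ ought to yield a three-term symmetric obstruction theory on (a thickening of) $\mathfrak{C}_f$ whose orientation is compatible with that of $\EE_{\cY}$. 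A direct deformation argument modeled on Manolache's then identifies the OT class of this theory with $\sp_f([\cY]\virt_{OT})$.

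Second, I would establish a product formula for the cosection-localized square root Euler class. Concretely, suppose $V$ is an $SO$-bundle on a scheme $W$ with isotropic section $s$, and that $V$ admits an orthogonal direct sum decomposition $V = H \oplus V_0$, where $H = F \oplus F^\vee$ carries the hyperbolic symmetric form and $V_0$ is itself an $SO$-bundle, with orientations compatible. If $s$ splits accordingly as $s = s_F + s_0$ with $s_F \in \Gamma(F)$ and $s_0$ an isotropic section of $V_0$, then I expect the formula
\[
\sqrt{e}(V, s) \;=\; e(F, s_F) \cdot \sqrt{e}(V_0, s_0)
\]
in the appropriate cosection-localized Chow groups of $W$. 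Combined with the first step, the compatibility of orientations in the theorem's hypothesis, and the identification of the hyperbolic factor $H$ with the contribution of $\EE_f$ and of $V_0$ with that of $f^*\EE_{\cY}$ via \eqref{Compatibility.OT}, this product formula delivers the desired equality \eqref{VPF.OT}.

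The main obstacle is precisely this product formula. Unlike the ordinary Euler class, $\sqrt{e}$ is not naturally multiplicative under arbitrary orthogonal direct sums, and the hypothesis that the orientation respects the decomposition is essential. I expect the proof to proceed via Edidin--Graham's construction of $\sqrt{e}$ through isotropic Grassmannians and $SO$-equivariant Chow theory, combined with a careful compatibility check with Kiem--Li's cosection localization on the non-vanishing loci of $s_F$ and $s_0$; the sign/orientation bookkeeping will be the most delicate part of the argument.
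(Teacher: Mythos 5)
Your high-level outline has the right two ingredients --- a deformation-to-the-normal-cone argument plus a reduction formula for the localized square root Euler class --- and your ``product formula'' in the second step is essentially Proposition \ref{Prop.LocFunct} (= \cite[Lemma 4.5]{KP}), which is already available and need not be re-derived from Edidin--Graham's isotropic flag construction. You also correctly identify that the hyperbolic/isotropic factor corresponds to $\EE_f$ and the reduced factor to $f^*\EE_{\cY}$. However, the proposal skips over the actual crux of the argument. In Manolache's setting the deformed obstruction theory on $\cX\times\A^1 \to M^{\circ}_{\cY/\spec(\C)}$ comes for free from a cone of a map of two-term complexes; here one must produce a \emph{three-term symmetric} obstruction theory $\phi_h$ on the deformation space (built by the reduction operation $[\EE_f\dual[2]\to \EE_{g\circ f}\oplus(\EE_f\oplus\EE_f\dual[2])\to\EE_f]$) \emph{and} verify that its intrinsic normal cone is isotropic for the induced quadratic function. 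That isotropy is not inherited formally from the isotropy of $\phi_{\cX}$ and $\phi_{\cY}$; it requires an explicit computation with symmetric resolutions and the criterion of Proposition \ref{Prop.Isotropic} (this occupies Lemmas \ref{Lem.D1}--\ref{Lem.Deform.Isotropic}). Your phrase ``a direct deformation argument modeled on Manolache's'' hides exactly this step, and without it the specialization $\sp$ cannot be commuted past the square-root Euler class operations. Relatedly, the deformation should be applied to $\cY\to\spec(\C)$ (producing $\fC_{\cY}$ as the special fibre), not to $\fC_f=\fC_{\cX/\cY}$ as you propose: identifying $\sp_f([\cY]\virt_{OT})$ as an Oh--Thomas class of a symmetric obstruction theory \emph{on the stack} $\fC_f$ is not how the bookkeeping closes up, and if you push your version through you will be forced back to the double deformation space anyway.

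Two further gaps in the second step. First, the hypothesis ``$s$ splits accordingly as $s=s_F+s_0$'' is not part of the data: what the compatibility diagram \eqref{Compatibility.OT} actually gives, after choosing compatible resolutions, is an isotropic subbundle $K\subseteq E$ (coming from $\EE_f$) together with the statement that the cone $C$ covering $\fC_{\cX}$ lies in $K\uperp$, so that the tautological section satisfies $s\cdot K=0$ --- and \emph{this containment must be proved} (it is the content of the identification \eqref{F1} and the surrounding argument), not assumed. The correct local statement is Proposition \ref{Prop.LocFunct}, where $s_1=\bar{s}\in\Gamma(K\uperp/K)$ and $s_2=s|_{\cX(s_1)}\in\Gamma(K|_{\cX(s_1)})$, with no direct-sum decomposition of $E$ or of $s$. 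Second, even after reducing to the cone case $\cX\to\cY\to\fC_{\cY}$ and presenting $\fC_{\cY}$ by a cone $C$, the zero section $\cY\to C$ is a closed immersion that need not be regular, so $\phi_g$ cannot immediately be written as $G\dual[1]\to N\dual[1]$; one needs the blowup of the target along $\cY$ together with the blowup exact sequence and bivariance of $\sqrt{f^!}$ to reduce to the lci case. These missing steps are where the new content of the theorem lives, so as it stands the proposal does not constitute a proof.
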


In DT4 theory, there is a general philosophy that we should use operations that are {\em symmetric}. The compatibility diagram \eqref{Compatibility.OT} is a typical example. If we use the compatibility diagram \eqref{Compatibility.Man} in the situation of Theorem \ref{Thm.VirtualPullbackFormula}, then we will have
\[f^*\EE_{\cY} \cong \cone(\EE_{\cX} \to \EE_f)[-1],\]
which violates the fact that both $\EE_{\cX}$ and $f^*\EE_{\cY}$ are symmetric complexes. Thus we need to consider a symmetric operation such as
\beq\label{Intro.Reduction}
f^*\EE_{\cY} \cong \cone( \cone(\EE_f\dual[2] \to \EE_{\cX})  \to \EE_f)[-1],
\eeq
which is exactly what the compatibility diagram \eqref{Compatibility.OT} does. The above operation \eqref{Intro.Reduction} will be called the {\em reduction} of the symmetric complex $\EE_{\cX}$ by the isotropic subcomplex $\EE_f$, which generalizes the reduction $K\uperp/K$ of an orthogonal bundle $E$ by an isotropic subbundle $K$. %%(see Proposition \ref{Prop.Reduction}).

%% Recovers Manolache's virtual pullback formula
%% The virtual pullback formula \eqref{VPF.OT} in Theorem \ref{Thm.VirtualPullbackFormula} recovers Manolache's virtual pullback formula \eqref{VirtualPullbackFormula.BeFa}. Indeed, if we have a compatible diagram \eqref{Compatibility.Man} of 2-term perfect obstruction theories, letting $\EE_{\cX}=\KK_{\cX} \oplus \KK_{\cX}\dual[2]$ and $\EE_{\cY}= \KK_{\cY} \oplus \KK_{\cY}\dual[2]$, we can easily recover \eqref{VirtualPullbackFormula.BeFa} from \eqref{VPF.OT}.

We briefly sketch how we prove Theorem \ref{Thm.VirtualPullbackFormula}. Overall, the proof has a similar structure to the standard arguments of functoriality of various pullbacks \cite{Ful,KKP,Man,Qu,CKL,KPVIT}. Specifically, in our setting, we first introduce a relative version of Oh-Thomas virtual cycles: {\em square root virtual pullbacks} for relative 3-term symmetric obstruction theories. Then the crucial step is to construct a relative 3-term symmetric obstruction theory $\phi_h$ for the composition
\[ h : \cX\times \A^1 \to \cY \times \A^1 \to M^{\circ}_{\cY/\spec(\C)},\]
where $M^{\circ}_{\cY/\spec(\C)}$ denotes the deformation space \cite{Ful,Kresch}, such that the relative intrinsic normal cone $\fC_{\cX\times \A^1/M^{\circ}_{\cY/\spec(\C)}}$ is isotropic. After constructing $\phi_h$, a deformation argument reduces Theorem \ref{Thm.VirtualPullbackFormula} to the functoriality for $\cX \to \cY \to \fC_{\cY}$. Further replacing the cone stack $\fC_{\cY}$ by some cone $C$ that covers $\fC_{\cY}$, the blowup method in \cite{KP} will complete the proof.

The virtual pullback formula in Theorem \ref{Thm.VirtualPullbackFormula} was motivated by the Cao-Kool conjecture \cite{CKp} on the tautological Hilbert scheme invariants and the Cao-Maulik-Toda conjectures \cite{CMT,CMT2} on the genus zero Gopakumar-Vafa type invariants. Now Theorem \ref{Thm.VirtualPullbackFormula} turns out that these conjectures (under some assumptions) are corollaries of more general phenomena: {\em Lefschetz principle} and {\em Pairs/Sheaves correspondence}.

\subsection*{Application I: Lefschetz principle} 

Recall \cite{KKP} that the quantum Lefschetz principle relates the Gromov-Witten invariants of an algebraic variety with the Gromov-Witten invariants of its divisor. The virtual pullback formula in Theorem \ref{Thm.VirtualPullbackFormula} provides an analogous formula in Donaldson-Thomas theory.

Let $X$ be a Calabi-Yau 4-fold, i.e., a smooth projective variety $X$ such that $K_X\cong \O_X$. The Hilbert scheme $I_{n,\beta}(X)$ of curves $C$ with $\ch(\O_C)=(0,0,0,\beta,n)$ carries an Oh-Thomas virtual cycle
\[[I_{n,\beta}(X)]\virt_{OT}  \in A_n (I_{n,\beta}(X))\]
by \cite{OT}, which depends on the choice of an orientation \cite{CGJ}. For a line bundle $L$ on $X$, define the tautological complex as
\[\sL_{n,\beta} := \bR\pi_*(\O_Z \otimes L)\]
where $Z$ denotes the universal family and $\pi: I_{n,\beta}(X) \times X \to I_{n,\beta}(X)$ denotes the projection map. Suppose that there is a smooth divisor $D$ with $\O_X(D)=L$ and let $i:D \hookrightarrow X$ be the inclusion map. Then the Hilbert scheme $I_{n,\beta}(D)$ of curves $C$ on $D$ with $\ch(i_*\O_C)=(0,0,0,\beta,n)$ carries a Behrend-Fantechi virtual cycle
\[[I_{n,\beta}(D)]\virt_{BF} \in A_{-\beta \cdot D} (I_{n,\beta}(D))\]
by \cite{MNOP1,MNOP2,PT}.

\begin{theorem}[Lefschetz principle]\label{Intro.Thm.Lefschetz}
Let $X$ be a Calabi-Yau 4-fold and $L$ be a line bundle. Let $D$ be a smooth divisor with $\O_X(D)=L$. Fix a curve class $\beta \in H_2(X,\Q)$ and an integer $n \in \Z$. If $H^1(C,L)=0$ for all $[C] \in I_{n,\beta}(X)$, then for any orientation on $I_{n,\beta}(X)$, there exist canonical signs $\sigma(e)$ for connected components $I_{n,\beta}(D)^e$ of $I_{n,\beta}(D)$ such that
\beq\label{Intro.Eq.Lefschetz}\sum_{e}(-1)^{\sigma(e)}(j_{e})_*[I_{n,\beta}(D)^e]\virt_{BF} =  e(\sL_{n,\beta}) \cap [I_{n,\beta}(X)]\virt_{OT}\eeq
where $j_e : I_{n,\beta}(D)^e \hookrightarrow I_{n,\beta}(X)$ denotes the inclusion map.
\end{theorem}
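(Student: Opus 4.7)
The plan is to apply the virtual pullback formula of Theorem \ref{Thm.VirtualPullbackFormula} to the natural morphism
\[ f : I_{n,\beta}(D) \longrightarrow I_{n,\beta}(X), \qquad [C] \longmapsto [i_*C], \]
and then recognise $f^!$ as capping with the Euler class of $\sL_{n,\beta}$. The defining section $s_D \in H^0(X,L)$ induces a tautological section $\sigma : \O_{I_{n,\beta}(X)} \to \sL_{n,\beta}$ whose fibre at $[C]$ is $s_D|_C \in H^0(C,L|_C)$. Under the hypothesis $H^1(C,L)=0$ the complex $\sL_{n,\beta}$ is a vector bundle, its zero scheme has the same support as the image of $f$, and $f$ acquires a canonical perfect obstruction theory $\EE_f := \sL_{n,\beta}\dual[1]|_{I_{n,\beta}(D)} \to \LL_f$ of tor-amplitude $[-1,0]$.

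The heart of the argument is the construction of the compatibility diagram \eqref{Compatibility.OT} for $f$. I start from the structural sequence
\[ 0 \to L^{-1} \to \O_X \to i_* \O_D \to 0 \]
on $X$, pull it back along the universal family $\cZ \subset I_{n,\beta}(D) \times X$, and apply $\bR \pi_*\, \bR \mathcal{H}om(\cI_\cZ, -)_0[1]$ in the second argument, exploiting both the Calabi--Yau identification $K_X \cong \O_X$ and the adjunction $K_D \cong L|_D$. This should produce the two triangles of \eqref{Compatibility.OT}, with the intermediate complex $\DD$ being the mixed obstruction theory of the pair $(X,D)$ on the universal curve. In the same stroke I must equip $I_{n,\beta}(D)$ with a $3$-term symmetric obstruction theory $\EE_{I_{n,\beta}(D)}$ that fits the diagram and whose associated Oh--Thomas cycle coincides, up to a locally constant sign, with $[I_{n,\beta}(D)]\virt_{BF}$. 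The required isotropy is then automatic, since $f^*\EE_{I_{n,\beta}(X)}$ is by construction the symmetric reduction \eqref{Intro.Reduction} of $\EE_{I_{n,\beta}(D)}$ by the isotropic subcomplex $\EE_f$.

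The diagram \eqref{Compatibility.OT} also supplies an isomorphism of determinant line bundles
\[ \det \EE_{I_{n,\beta}(D)} \;\cong\; f^* \det \EE_{I_{n,\beta}(X)}, \]
so the chosen orientation on $\EE_{I_{n,\beta}(X)}$ pulls back to an orientation on $\EE_{I_{n,\beta}(D)}$. Comparing it component by component with the canonical orientation underlying $[I_{n,\beta}(D)^e]\virt_{BF}$ yields the locally constant signs $(-1)^{\sigma(e)}$. Theorem \ref{Thm.VirtualPullbackFormula} then gives
\[ f^! \, [I_{n,\beta}(X)]\virt_{OT} = \sum_e (-1)^{\sigma(e)} [I_{n,\beta}(D)^e]\virt_{BF}. \]
Because $\EE_f = \sL_{n,\beta}\dual[1]$ arises from a section of a vector bundle, Manolache's virtual pullback $f^!$ coincides with the refined top Chern class of $\sL_{n,\beta}$ localised at $\sigma$, so that
\[ f_*\, f^! \alpha \;=\; e(\sL_{n,\beta}) \cap \alpha \qquad \text{for every } \alpha \in A_*(I_{n,\beta}(X)). \]
Specialising to $\alpha = [I_{n,\beta}(X)]\virt_{OT}$ and using $(j_e)_* = f_* \circ (\iota_e)_*$ for the component inclusions $\iota_e$ delivers \eqref{Intro.Eq.Lefschetz}.

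The main obstacle is the middle step. The Oh--Thomas theory on $I_{n,\beta}(X)$ is built from the self-Ext of $\cI_{\cZ/X}$ paired by the Calabi--Yau duality $K_X \cong \O_X$, whereas the Behrend--Fantechi theory on $I_{n,\beta}(D)$ is genuinely $2$-term and is paired instead through $K_D \cong L|_D$. Manufacturing a $3$-term symmetric obstruction theory on $I_{n,\beta}(D)$ whose Oh--Thomas cycle reproduces the Behrend--Fantechi one, while simultaneously threading the adjunction $K_D \cong L|_D$ through these two different Serre pairings in a manner that produces all three vertical obstruction-theory maps of \eqref{Compatibility.OT} compatibly with the Atiyah classes at the derived level, is where the geometric content of the Lefschetz principle actually lives.
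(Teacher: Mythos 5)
Your outline reproduces the paper's strategy essentially step for step: identify $I_{n,\beta}(D)$ as the zero locus of the tautological section of $\sL_{n,\beta}$ (Proposition \ref{Prop.ComparisonofModuli}), build a $3$-term symmetric obstruction theory on $I_{n,\beta}(D)$ whose Oh--Thomas cycle equals the Behrend--Fantechi cycle up to locally constant signs, fit it into the compatibility diagram \eqref{Compatibility.OT} with the standard theory on $I_{n,\beta}(X)$, and conclude by Theorem \ref{Thm.VirtualPullbackFormula} together with $j_*j^!=e(\sL_{n,\beta})\cap(-)$ (where the independence of $j^!$ from the particular relative obstruction theory is Lemma \ref{Lem.VPindepofPhi}). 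Your starting point for the compatibility diagram --- the sequence $0\to L^{-1}\to\O_X\to i_*\O_D\to 0$ fed into $\bR\hom(\cI_{\cZ},-)$ --- is also the paper's, in the guise of the Bondal--Orlov triangle \eqref{N.Eq.BO} and its self-dual consequence \eqref{N.Eq.JJ}; and the signs $\sigma(e)$ arise exactly as you describe, from the reduction formula (Proposition \ref{Prop.ReductionFormula}) applied to the rank-zero reduction of the new symmetric complex by the Behrend--Fantechi theory.

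That said, the step you explicitly defer is not a routine verification, and as written it is a genuine gap rather than a detail. Two concrete points. First, the symmetric complex on $I_{n,\beta}(D)$ is not simply a traceless part of $\bR\hom_{\pi_X}(d_*\cJ,d_*\cJ)[3]$: no symmetric splitting of the trace exists there, and the paper must instead perform a reduction by the isotropic piece $d_*(\O\boxtimes L|_D)[-1]$ using the generalized reduction of Appendix \ref{A.Reduction} (Lemma \ref{Lem.JJs}); the existence and uniqueness of this $\#$-complex, with the genuinely self-dual triangle \eqref{N.Eq.JJs} relating it to the $2$-term Behrend--Fantechi theory, is precisely the object your plan presupposes. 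Second, the naive comparison diagram between $\bR\hom(d_*\cJ,d_*\cJ)$ and $\bR\hom(\cI,\cI)_0$ (the paper's prototype \eqref{Eq.JJtoII0}) does \emph{not} commute; commutativity is recovered only after replacing all corners by their $\#$- and truncated versions, and checking this (Proposition \ref{Prop.L.Comp.OT}(1)) together with the Atiyah-class compatibility \eqref{Eq.L.Atiyah} --- which requires the derived moduli-stack functorialities for both $j$ and $d_*$ --- occupies most of \S\ref{ss.ComparisonofOT}. The architecture of your argument is correct, but the theorem's actual content lives exactly where you stopped.
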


The defining equation of the divisor $D$ induces the tautological section $\tau$ of the vector bundle $\sL_{n,\beta}$ over $I_{n,\beta}(X)$ whose zero locus is $I_{n,\beta}(D)$,
\[\xymatrix{  & \sL_{n,\beta} \ar[d] \\   I_{n,\beta}(D) \ar@{^{(}->}[r]^{j} & I_{n,\beta}(X)  \ar@/_.4cm/[u]_{\tau} .}\] 
The key idea of proving Theorem \ref{Intro.Thm.Lefschetz} is to construct a natural {\em 3-term} symmetric obstruction theory on $I_{n,\beta}(D)$ as a {\em non-trivial} extension of the standard 2-term perfect obstruction theory on $I_{n,\beta}(D)$. Then by comparing this 3-term symmetric obstruction theory with the standard 3-term symmetric obstruction theory on $I_{n,\beta}(X)$, we can apply the virtual pullback formula in Theorem \ref{Thm.VirtualPullbackFormula},
\[j^![I_{n,\beta}(X)]\virt_{OT} = [I_{n,\beta}(D)]\virt_{OT} = \sum_e (-1)^{\sigma(e)}[I_{n,\beta}(D)^e]\virt_{BF},\]
which implies the Lefschetz formula \eqref{Intro.Eq.Lefschetz}.

Theorem \ref{Intro.Thm.Lefschetz} also holds for moduli spaces of pairs. For instance, let $P_{n,\beta}(X)$ be the moduli space of Pandharipande-Thomas stable pairs $(F,s)$ with $\ch(F)=(0,0,0,\beta,n)$ \cite{PT,Pot}. Then we have two virtual cycles
\[ [P_{n,\beta}(X)]\virt_{OT} \in A_n (P_{n,\beta}(X)) \and [P_{n,\beta}(D)]\virt_{BF} \in A_{-\beta\cdot D} (P_{n,\beta}(X))\]
where $P_{n,\beta}(D)$ is the moduli space of stable pairs on $D$. If $H^1(X,F)=0$ for all $(F,s) \in P_{n,\beta}(X)$, then we have a Lefschetz formula
\[\sum_{e}(-1)^{\sigma(e)}(j_{e})_*[P_{n,\beta}(D)^e]\virt_{BF} =  e(\sL_{n,\beta}) \cap [P_{n,\beta}(X)]\virt_{OT}\]
where $\sL_{n,\beta}:=\bR\pi_*(\FF\otimes L)$ denotes the tautological complex, $\FF$ denotes the universal family, and all the other notations are given analogously.

There are two immediate corollaries of Theorem \ref{Intro.Thm.Lefschetz}. Firstly, when $\beta=0$, the tautological complex $L\un:=\sL_{n,0}$ on the Hilbert scheme of points $X\un=I_{n,0}(X)$ is always a vector bundle. Thus the Lefschetz principle computes the tautological invariants in terms of the MacMahon funtion.

\begin{corollary}[Tautological Hilbert scheme invariants]\label{Intro.Cor.CaoKoolconj}
Let $X$ be a Calabi-Yau 4-fold and $L$ be a line bundle. If there is a smooth connected divisor $D$ such that $\O_X(D)=L$, then there exists a choice of orientations such that
\beq\label{Intro.CaoKoolConj}
\sum_{n\geq 0}\int_{[X\un]\virt}e(L\un)\cdot q^n =M(-q)^{\int_X{c_3(X)c_1(L)}}
\eeq
where $M(q)=\prod_{n \geq 1}(1-q^n)^{-n}$ denotes the MacMahon function.
\end{corollary}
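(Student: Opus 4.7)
The plan is to derive the formula by specializing Theorem~\ref{Intro.Thm.Lefschetz} to $\beta = 0$ and then invoking the classical degree-zero MNOP partition function for Donaldson--Thomas invariants of the smooth projective $3$-fold $D$.

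First, the hypothesis of Theorem~\ref{Intro.Thm.Lefschetz} is automatic in this case: a point of $I_{n,0}(X) = X\un$ is a zero-dimensional subscheme $Z \subset X$ of length $n$, so $\O_Z \otimes L$ is a coherent sheaf on a finite scheme and $H^1(Z, L) = 0$ trivially. Correspondingly, $\sL_{n,0} = \pi_*(\O_Z \otimes L) = L\un$ is a vector bundle of rank $n$ on $X\un$. Since $D$ is connected, so is $D\un$, and Theorem~\ref{Intro.Thm.Lefschetz} collapses to a single-term identity
\[ (-1)^{\sigma(n)} \, j_*[D\un]\virt_{BF} \;=\; e(L\un) \cap [X\un]\virt_{OT} \]
for a canonical sign $\sigma(n) \in \{0, 1\}$ depending on the chosen orientation on $X\un$. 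Taking degrees gives $\int_{[X\un]\virt} e(L\un) = (-1)^{\sigma(n)} \int_{[D\un]\virt_{BF}} 1$.

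The right-hand side is the degree-zero DT partition function of the smooth projective $3$-fold $D$, which by the theorems of Levine--Pandharipande and J.~Li equals $M(-q)^{\int_D c_3(T_D \otimes K_D)}$. To identify this exponent with $\int_X c_3(X) c_1(L)$, note that adjunction together with $K_X = \O_X$ gives $K_D = L|_D$ and $T_X|_D \cong T_D \oplus L|_D$, hence $c_3(T_X)|_D = c_3(T_D) + c_2(T_D) c_1(L|_D)$. The projection formula along $i : D \hookrightarrow X$ then yields $\int_X c_3(T_X) c_1(L) = \int_D \bigl(c_3(T_D) + c_2(T_D) c_1(L|_D)\bigr)$. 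On the other hand, expanding the tensor-product Chern class with $c_1(T_D) = -c_1(L|_D)$ gives $c_3(T_D \otimes K_D) = c_3(T_D) - c_1(T_D) c_2(T_D) = c_3(T_D) + c_1(L|_D) c_2(T_D)$, matching the previous expression.

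It remains to absorb the sign $\sigma(n)$ by the choice of orientation. Since $D\un$ is connected, exactly one sign arises for each $n$, and flipping the orientation on $X\un$ flips $\sigma(n)$; hence, for each $n$ independently, one may choose an orientation so that $\sigma(n) = 0$, producing the series~\eqref{Intro.CaoKoolConj}. The main subtlety I anticipate is that these orientations are chosen separately in $n$: it would be preferable to check that a uniform orientation-theoretic principle (e.g.\ the spin/orientation data of Cao--Gross--Joyce) accomplishes the sign choice coherently across all $n$, but this is not needed for the existence statement of the corollary.
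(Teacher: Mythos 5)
Your proof is correct and follows essentially the same route as the paper's: specialize the Lefschetz principle to $\beta=0$ (where $D\un$ is connected, so a single sign appears and is absorbed into the orientation choice for each $n$ separately, which is legitimate since the $X\un$ are distinct moduli spaces), evaluate $\sum_{n}\int_{[D\un]\virt}1\cdot q^n$ by Levine--Pandharipande and Li, and identify the exponent by the same Chern-class computation. The only cosmetic point is that $T_X|_D$ is an extension of $L|_D$ by $T_D$ rather than a direct sum, but the Whitney formula gives exactly the identity of Chern classes you use, so nothing is affected.
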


The above formula \eqref{Intro.CaoKoolConj} was conjectured by Cao-Kool in \cite[Conjecture 1.2]{CKp} for any line bundle $L$. Corollary \ref{Intro.Cor.CaoKoolconj} proves that the Cao-Kool conjecture holds when $L$ has a smooth connected divisor (e.g., when $L$ is very ample).

Secondly, when $\beta \neq 0$, the Lefschetz principle for a Calabi-Yau divisor gives us a tautological DT/PT correspondence.

\begin{corollary}[Tautological DT/PT correspondence]\label{Intro.Cor.DTPTcurves}
Let $X$ be a Calabi-Yau 4-fold and $L$ be a line bundle which has a smooth connected Calabi-Yau divisor. Let $\beta \in H_2(X,\Q)$ be a curve class satisfying the followings:
\begin{enumerate}
\item [A1)] For all pure 1-dimensional closed subschemes $C$ of $X$ with $[C]=\beta$, we have $H^1(C,L)=0$.
\item [A2)] For all $n$, the inclusion maps $I_{n,\beta}(D) \hookrightarrow I_{n,\beta}(X)$ and $P_{n,\beta}(D) \hookrightarrow P_{n,\beta}(X)$ induce injective maps between the sets of connected components.
\end{enumerate}
Then there exists a choice of orientations such that
\beq\label{Intro.TautDT/PT}
\dfrac{\sum_{n\geq 0} \int_{[I_{n,\beta}(X)]\virt} e(\sL_{n,\beta}) \cdot  q^n}{\sum_{n\geq 0} \int_{[X\un]\virt} e(L\un) \cdot  q^n}  = \sum_{n\geq 0} \int_{[P_{n,\beta}(X)]\virt} e(\sL_{n,\beta}) \cdot  q^n\,.
\eeq
\end{corollary}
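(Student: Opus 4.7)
The plan is to apply the Lefschetz principle of Theorem \ref{Intro.Thm.Lefschetz} to each of the three generating series in \eqref{Intro.TautDT/PT}, thereby reducing the identity to the classical DT/PT correspondence on the Calabi-Yau 3-fold $D$.

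First I would apply Theorem \ref{Intro.Thm.Lefschetz} to $I_{n,\beta}(X)$ and its pair-theoretic analog to $P_{n,\beta}(X)$. Assumption A1) ensures that the tautological complexes $\sL_{n,\beta}$ are vector bundles on both moduli spaces, so the Euler-class integrands are defined and the Lefschetz formula expresses each numerator/RHS generating series as a signed sum of Behrend-Fantechi integrals over the connected components of $I_{n,\beta}(D)$ and $P_{n,\beta}(D)$ respectively. For the denominator I would invoke Corollary \ref{Intro.Cor.CaoKoolconj}, which gives
\[\sum_{n\geq 0}\int_{[X\un]\virt} e(L\un)\, q^n = M(-q)^{\int_X c_3(X) c_1(L)}.\]
Since $D$ is Calabi-Yau and $K_X$ is trivial, adjunction gives $c_1(L)|_D = c_1(N_{D/X}) = c_1(K_D) = 0$ numerically, so $c_i(T_X)|_D = c_i(T_D)$ for all $i$ and $\int_X c_3(X) c_1(L) = \int_D c_3(T_D) = \chi(D)$. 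Hence the denominator equals $M(-q)^{\chi(D)}$.

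The next step uses assumption A2) to promote the signed sums to honest DT and PT invariants on $D$. Concretely, A2) says that distinct connected components of $I_{n,\beta}(D)$ (respectively $P_{n,\beta}(D)$) lie in distinct connected components of $I_{n,\beta}(X)$ (respectively $P_{n,\beta}(X)$). Since the signs $(-1)^{\sigma(e)}$ produced by Theorem \ref{Intro.Thm.Lefschetz} depend only on the orientation of the ambient Calabi-Yau 4-fold moduli space, and orientations can be prescribed independently on each connected component, we may fix orientations so that every sign is $+1$. This identifies the LHS numerator and RHS of \eqref{Intro.TautDT/PT} with
\[\sum_n I_{n,\beta}(D)\, q^n \and \sum_n P_{n,\beta}(D)\, q^n\]
respectively, where the invariants on $D$ are computed against the Behrend-Fantechi virtual cycles of the Calabi-Yau 3-fold $D$.

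With these three reductions in hand, identity \eqref{Intro.TautDT/PT} becomes precisely the classical DT/PT correspondence on the Calabi-Yau 3-fold $D$ due to Bridgeland and Toda,
\[\frac{\sum_n I_{n,\beta}(D)\, q^n}{\sum_n I_{n,0}(D)\, q^n} = \sum_n P_{n,\beta}(D)\, q^n,\]
together with the MNOP evaluation $\sum_n I_{n,0}(D)\, q^n = M(-q)^{\chi(D)}$. I expect the main obstacle to be the orientation-matching step: one must coherently choose orientations on $I_{n,\beta}(X)$, $P_{n,\beta}(X)$, and $X\un$ so that all three Lefschetz reductions deliver the honest DT, PT and $I_{n,0}$ series on $D$ with matching sign conventions, and assumption A2) is precisely what makes this component-by-component orientation choice possible.
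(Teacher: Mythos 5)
Your proposal is correct and follows essentially the same route as the paper: apply the Lefschetz principle to $I_{n,\beta}(X)$, $P_{n,\beta}(X)$ and $X^{[n]}$, use A2) to fix orientations component by component so all signs are $+1$, and conclude by the Bridgeland--Toda DT/PT correspondence on the Calabi--Yau 3-fold $D$. The only cosmetic difference is that you handle the denominator via Corollary \ref{Intro.Cor.CaoKoolconj} and the identity $\int_X c_3(X)c_1(L)=\chi(D)$, whereas the paper applies the Lefschetz principle directly to $X^{[n]}$; these are equivalent.
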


The 4-fold DT/PT correspondence was first conjectured by Cao-Kool in \cite[Conjecture 0.3]{CKc} for primary insertions. Later, the tautological DT/PT correspondence \eqref{Intro.TautDT/PT} was conjectured by Cao-Kool-Monavari in \cite[Conjecture 0.13]{CKMk} for any $X$, $L$, and $\beta$. Corollary \ref{Intro.Cor.DTPTcurves} shows that the Cao-Kool-Monavari conjecture holds when $L$ has a smooth Calabi-Yau divisor and the tautological complexes are vector bundles.

It was considered to be difficult to check the tautological DT/PT correspondence \eqref{Intro.TautDT/PT} for compact Calabi-Yau 4-folds. The main reason is that the reduction to Behrend-Fantechi method does not work for the DT moduli spaces $I_{n,\beta}(X)$, even in the special geometries with irreducible curve classes due to free-roaming points. In Example \ref{Ex.DT/PTrational}, we will present some simple examples that satisfy the assumptions in Corollary \ref{Intro.Cor.DTPTcurves}. This provides an evidence for the Cao-Kool-Monavari conjecture.

\subsection*{Application II: Pairs/Sheaves correspondence}

In many cases, maps between moduli spaces of sheaves or complexes can be realized as {\em virtual projective bundles}. Since there is a general pushforward formula for virtual projective bundles, a virtual pullback formula for these cases is practically effective for computing invariants. We provide a correspondence between the moduli of stable pairs and the moduli of stable sheaves as an example.

Let us first briefly explain what we call a virtual projective bundle in this paper. Let $\cX$ be any scheme and $\KK$ be a 2-term perfect complex of amplitude $[0,1]$. The virtual projective bundle associated to $\KK$ is a pair of a projective cone
\[p : \PP(\KK):=\mathrm{Proj} \Sym^\bullet (h^0(\KK\dual)) \to \cX\]
and a natural 2-term perfect obstruction theory 
\[\LL\virt_{\PP(\KK)/\cX}:=\cone(\O_{\PP(\KK)} \to p^*\KK(1))\dual \to \LL_{\PP(\KK)/\cX}.\]
%%%%Here the virtual cotangent complex $\LL\virt_{\PP(\KK)/\cX}$ is determined by the virtual Euler sequence, i.e., a distinguished triangle \[\xymatrix{\O_{\PP(\KK)} \ar[r] & p^*\KK \otimes \O_{\PP(\KK)}(1) \ar[r] & (\LL_{\PP(\KK)/\cX}\virt)\dual } \]over $\PP(\KK)$. 
For any cycle class $\alpha \in A_*(\cX)$ and a K-theory class $\xi \in K^0(\cX)$, we have a {\em pushforward formula}
\[p_* (c_m(p^*\xi(1)) \cap p^! \alpha) = \sum_{0 \leq i \leq m} \binom{s-i}{m-i} \cdot c_i(\xi) \cap c_{m-i+1-r}(-\KK) \cap \alpha\]
where $r$ is the rank of $\KK$ and $s$ is the rank of $\xi$.

Let $X$ be a Calabi-Yau 4-fold. The moduli space $P_{n,\beta}(X)$ of PT stable pairs $(F,s)$ with $\ch(F)=(0,0,0,\beta,n)$ and the moduli space $M_{n,\beta}(X)$ of stable sheaves $G$ with $\ch(G)=(0,0,0,\beta,n)$ have Oh-Thomas virtual cycles
\[[P_{n,\beta}(X)]\virt_{OT} \in A_n (P_{n,\beta}(X) ) \and [M_{n,\beta}(X)]\virt_{OT} \in A_1 (M_{n,\beta}(X))\]
by \cite{OT}. When the curve class $\beta$ is irreducible, then all pure 1-dimensional sheaves $G$ with $\ch_3(G)=\beta$ are stable so that $M_{n,\beta}(X)$ is proper, and the forgetful map
\[p: P_{n,\beta}(X) \to M_{n,\beta}(X) : (F,s) \mapsto F\]
is well-defined.

\begin{theorem}[Pairs/Sheaves correspondence]\label{Intro.Thm.Pairs/Sheaves}
Let $X$ be a Calabi-Yau 4-fold, $\beta \in H_2(X,\Q)$ be an irreducible curve class, and $n$ be an integer. Assume that there exists a universal family $\GG$ on $M_{n,\beta}(X)$. Then the forgetful map $p:P_{n,\beta}(X)\to M_{n,\beta}(X)$ is the virtual projective bundle of the tautological complex $\bR\pi_*\GG$. Moreover, there exists a choice of orientations such that the following pullback/pushforward formulas hold:
\begin{enumerate}
\item {\em (Pullback formula)} We have
\[[P_{n,\beta}(X)]\virt_{OT} = p^![M_{n,\beta}(X)]_{OT}\virt\]
where $p^!$ denotes the virtual pullback of the virtual projective bundle.
\item {\em (Pushforward formula)} For any vector bundle $E$ on $X$, we have
\[p_*\left(c_{n-1}(\sE_{n,\beta}) \cap [P_{n,\beta}(X)]\virt_{OT}\right) = \tbinom{N }{n-1} \cdot [M_{n,\beta}(X)]\virt_{OT}\]
where $\sE_{n,\beta}:=\bR\pi_*(\FF\otimes E)$ denotes the tautological complex on $P_{n,\beta}(X)$, $[\O_{P_{n,\beta}(X)\times X} \to \FF]$ denotes the universal pair of $P_{n,\beta}(X)$, and $N:=n\cdot \rank(E)+\int_{\beta}c_1(E)$ denotes the rank of $\sE_{n,\beta}$. 
\end{enumerate}
Here both the projection maps $P_{n,\beta}(X) \times X \to P_{n,\beta}(X)$ and $M_{n,\beta}(X) \times X \to M_{n,\beta}(X)$ are denoted by $\pi$.
\end{theorem}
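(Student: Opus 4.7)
The plan is to realize the forgetful map $p:P_{n,\beta}(X)\to M_{n,\beta}(X)$ as the virtual projective bundle $\PP(\KK)$ of the two-term complex $\KK:=\bR\pi_*\GG$ on $M_{n,\beta}(X)$, and then combine Theorem \ref{Thm.VirtualPullbackFormula} with the general pushforward formula for virtual projective bundles stated earlier. This splits into a scheme-theoretic identification, a comparison of obstruction theories, and a final collapse of the pushforward formula.

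For the scheme identification, I would first check that the irreducibility of $\beta$ makes every pure 1-dimensional sheaf $F$ with $\ch(F)=(0,0,0,\beta,n)$ automatically stable, and every nonzero section $s:\O_X\to F$ automatically a PT stable pair: purity of $F$ together with irreducibility of $\beta$ forces $[\image(s)]=\beta$, so $F/\image(s)$ is 0-dimensional. A family of PT pairs over a test scheme $T$ then amounts to a family of stable sheaves (equivalently a map $T\to M_{n,\beta}(X)$) together with a sub-line bundle of $\pi_{T*}\FF_T$. Since cohomology of a 1-dimensional sheaf on a 4-fold vanishes above degree $1$, the complex $\KK=\bR\pi_*\GG$ has tor-amplitude $[0,1]$, so $\PP(\KK)$ is well-defined as a virtual projective bundle, and the universal property of $\mathrm{Proj}\,\Sym^\bullet h^0(\KK^\vee)$ yields an isomorphism $P_{n,\beta}(X)\cong\PP(\KK)$ over $M_{n,\beta}(X)$, with universal sheaf $\FF\cong p_X^*\GG\otimes\O_{\PP}(1)$ where $p_X=p\times\id_X$.

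The main step is to build the compatibility diagram \eqref{Compatibility.OT}, with $\EE_{\cX}:=\EE_P=\bR\Hom_\pi(\II^\bullet,\II^\bullet)_0[3]$ the Oh-Thomas symmetric obstruction theory on $P_{n,\beta}(X)$, $\EE_{\cY}:=\EE_M=\bR\Hom_\pi(\GG,\GG)_0[3]$ the Oh-Thomas symmetric obstruction theory on $M_{n,\beta}(X)$, and $\EE_f:=\cone(\O_P\to p^*\KK(1))^\vee$ the 2-term relative perfect obstruction theory of the virtual projective bundle. Applying $\bR\Hom_\pi(-,-)$ to the distinguished triangle $\II^\bullet\to\O\to\FF$ in both arguments produces a $3\times 3$ diagram of distinguished triangles; taking trace-free parts, the natural map $\bR\Hom_\pi(\FF,\FF)_0\to\bR\Hom_\pi(\II^\bullet,\II^\bullet)_0$ identifies $p^*\EE_M$ with the middle term of a reduction of $\EE_P$ by the isotropic subcomplex $\EE_f$ as in \eqref{Intro.Reduction}, with the dual piece $\EE_f^\vee[2]$ appearing through Serre duality on the Calabi-Yau 4-fold $X$. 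The hard part is to verify that this realizes the intrinsic normal cone of $P/M$ as an isotropic subcone of $\EE_P$ in the sense of Definition \ref{Def.Isotropic}, and that the canonical orientations of $\EE_P$ and $\EE_M$ can be chosen compatibly under the induced isomorphism $\det\EE_P\cong p^*\det\EE_M$; since $\EE_f$ is 2-term, its determinant contributions to $\EE_P$ cancel in pairs through the self-duality, so orientation compatibility can be absorbed in an overall sign.

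With the compatibility diagram established, Theorem \ref{Thm.VirtualPullbackFormula} immediately gives the pullback formula $[P_{n,\beta}(X)]\virt_{OT}=p^![M_{n,\beta}(X)]\virt_{OT}$. For the pushforward formula, set $\xi:=\bR\pi_*(\GG\otimes E)$ on $M_{n,\beta}(X)$; the projection formula together with $\FF\cong p_X^*\GG\otimes\O_{\PP}(1)$ gives $\sE_{n,\beta}=p^*\xi(1)$. Applying the pushforward formula for virtual projective bundles with $m=n-1$, $s=\rank\xi=N$, and $r=\rank\KK=\chi(F)=n$ (the last equality by Riemann-Roch, using $c_1(X)=0$) yields
\[ p_*\bigl(c_{n-1}(p^*\xi(1))\cap p^![M_{n,\beta}(X)]\virt_{OT}\bigr)=\sum_{0\le i\le n-1}\binom{N-i}{n-1-i}\,c_i(\xi)\cdot c_{-i}(-\KK)\cap[M_{n,\beta}(X)]\virt_{OT}, \]
which collapses to $\binom{N}{n-1}[M_{n,\beta}(X)]\virt_{OT}$ since $c_{-i}(-\KK)=0$ for $i>0$, establishing the claimed pushforward formula.
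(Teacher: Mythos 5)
Your overall architecture coincides with the paper's: identify $P_{n,\beta}(X)$ with $\PP(\bR\pi^M_*\GG)$ using irreducibility of $\beta$, exhibit the three obstruction theories as a compatible triple in the sense of Definition \ref{Def.Compatibility}, invoke the virtual pullback formula, and then collapse the projective-bundle pushforward formula with $m=n-1$, $r=\rank\KK=n$. The scheme identification and the final numerical collapse (including $c_{-i}(-\KK)=0$ for $i>0$) are correct and match the paper.

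The genuine gap is in the middle step. You assert that applying $\bR\hom_\pi(-,-)$ to $\II\to\O\to\FF$ and "taking trace-free parts" yields the required reduction diagram. It does not: the paper's Lemma \ref{Lem.P/S.prototype} produces exactly this naive diagram and explicitly observes that its left square need not commute. To obtain a genuine reduction diagram one must replace $\bR\hom_\pi(\FF,\FF)$, $\bR\hom_\pi(\FF,\II)$, $\bR\hom_\pi(\FF,\O)$ by the modified complexes $\tau^{[1,3]}\bR\hom_\pi(\FF,\FF)$, $\bR\hom_\pi(\FF,\II)_\#$, $\bR\hom_\pi(\FF,\O)_\#$ (cones over the identity and trace maps), and then verify $\alpha\circ\alpha\dual=\beta\circ\beta\dual$ using that the pairing on $\bR\hom_\pi(\II,\II)[3]$ factors through $tr^4$; this is the content of Lemma \ref{Lem.PS.Oct} and the proof of Proposition \ref{Prop.P/S.compatibility}(1), and it is not supplied by your argument. (Relatedly, $\EE_M$ is $\tau^{[1,3]}\bR\hom_{\pi^M}(\GG,\GG)[3]$ rather than the trace-free part, and $p^*\EE_M$ sits as the reduction — the bottom-left corner of diagram \eqref{Eq.Reduction} — not as "the middle term".) A second omission: Definition \ref{Def.Compatibility} also demands compatibility of the maps down to the (truncated) cotangent complexes, i.e.\ the Atiyah-class diagram \eqref{Eq.P/S.Atiyah}. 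This requires showing that $At(\FF)$ and $At((p\times1)^*\GG)$ differ only by a term factoring through $tr^4\circ At(\O_{\sP}(1))$, which dies after truncation, together with the functoriality of Atiyah classes for $P\to M\to\mathbf{Perf}(X)$. Without these two verifications the hypotheses of Theorem \ref{Thm.VirtualPullbackFormula} are not met, so the pullback formula is not yet established. By contrast, the isotropic conditions you flag as "the hard part" are comparatively soft here, since both moduli spaces carry $(-2)$-shifted symplectic enhancements and Lemma \ref{Lem.IsotropicConditioninFunctoriality} handles the composite.
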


%% We prove the pullback formula \eqref{Intro.P/S.Pullback} by comparing the 3-term symmetric obstruction theories on $P_{n,\beta}(X)$ and $M_{n,\beta}(X)$. Then the pushforward formula \eqref{Intro.P/S.Pushforward} follows immediately from the general pushforward formula for virtual projective bundles \eqref{Intro.VPB.Pushforward}.

Theorem \ref{Intro.Thm.Pairs/Sheaves} is a 4-fold analog of Pandharipande-Thomas' formula \cite[Theorem 4]{PT3} on Calabi-Yau 3-folds. The main difference is that they used motivic techniques, while we use a virtual cycle approach. Our approach is a generalization of Cao-Maulik-Toda's approach in \cite[Proposition 2.10]{CMT2}, where they considered the case when Oh-Thomas virtual cycles reduce to Behrend-Fantechi virtual cycles.

The correspondence of virtual cycles in Theorem \ref{Intro.Thm.Pairs/Sheaves} induces correspondences of the primary invariants and the tautological invariants. Recall \cite{CMT,CMT2} that the primary invariants for a cohomology class $\gamma \in H^4(X,\Q)$ are defined as
\[P_{n,\beta}(\gamma) := \int_{[P]\virt}  (\pi _* ( \ch_3(\FF) \cup \gamma))^n, \quad M_{n,\beta}(\gamma) := \int_{[M]\virt} \pi _* ( \ch_3(\GG) \cup \gamma).\]
On the other hand, the tautological invariants for a line bundle $L$ (cf. \cite{CKp,CTt}) can be defined as
\[P_{n,\beta}(L) := \int_{[P]\virt} c_n(\bR\pi_*(\FF\otimes L)), \quad M_{n,\beta}(L) := \int_{[M]\virt} c_1(\bR\pi_*(\GG\otimes L)).\]
Here we abbreviated the virtual cycles $[P_{n,\beta}(X)]\virt_{OT}$ and $[M_{n,\beta}(X)]\virt_{OT}$ by $[P]\virt$ and $[M]\virt$, respectively.

\begin{corollary}[Primary PT/GV correspondence]\label{Intro.Cor.PT/Katz}
Let $X$ be a Calabi-Yau 4-fold and $\beta \in H_2(X,\Q)$ be an irreducible curve class. Then there exists a choice of orientations such that \[P_{1,\beta}(\gamma) = M_{1,\beta}(\gamma)\] for any $\gamma \in H^4(X,\Q)$.
\end{corollary}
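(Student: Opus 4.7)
The plan is to reduce $P_{1,\beta}(\gamma)=M_{1,\beta}(\gamma)$ directly to the pullback and pushforward statements of Theorem \ref{Intro.Thm.Pairs/Sheaves}, after which the identity drops out of a numerical coincidence in the $n=1$ fiber. Since $\beta$ is irreducible, every pure $1$-dimensional sheaf on $X$ with $\ch_3=\beta$ is stable, so the forgetful map $p:P_{1,\beta}(X)\to M_{1,\beta}(X)$ is well defined. I fix the orientations supplied by Theorem \ref{Intro.Thm.Pairs/Sheaves}, under which $p$ realizes $P_{1,\beta}(X)$ as the virtual projective bundle $\PP(\bR\pi_*\GG)$ and $[P_{1,\beta}(X)]\virt=p^![M_{1,\beta}(X)]\virt$.

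First I would compare the universal sheaves on $P_{1,\beta}(X)\times X$ and $M_{1,\beta}(X)\times X$. The virtual projective bundle description identifies the universal PT sheaf $\FF$ with $(p\times\id_X)^*\GG$ up to a line bundle pulled back from $P_{1,\beta}(X)$, the ambiguity being the standard scaling normalization of a universal pair. Because $\FF$ is supported in codimension three, this twist leaves $\ch_3$ invariant, giving $\ch_3(\FF)=(p\times\id_X)^*\ch_3(\GG)$. Combined with flat base change along the Cartesian square formed by $\pi_P,\pi_M,p,p\times\id_X$, this yields
\[\pi_{P*}(\ch_3(\FF)\cup\gamma)=p^*\pi_{M*}(\ch_3(\GG)\cup\gamma)=:p^*\alpha.\]

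Plugging this into the definition of $P_{1,\beta}(\gamma)$ and applying the pullback formula together with the projection formula for the proper map $p$ gives
\[P_{1,\beta}(\gamma)=\deg\!\bigl(p^*\alpha\cap p^![M_{1,\beta}(X)]\virt\bigr)=\deg\!\bigl(\alpha\cap p_*p^![M_{1,\beta}(X)]\virt\bigr).\]
Now I apply the pushforward formula for virtual projective bundles displayed before Theorem \ref{Intro.Thm.Pairs/Sheaves} with $m=0$ and trivial $\xi$, which collapses to $p_*p^![M_{1,\beta}(X)]\virt=c_{1-r}(-\bR\pi_*\GG)\cap [M_{1,\beta}(X)]\virt$ with $r:=\rank(\bR\pi_*\GG)$. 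By fiberwise Riemann-Roch on the Calabi-Yau $4$-fold $X$, $r=\chi(\GG_t)=n=1$, so $c_{1-r}=c_0=1$ and $p_*p^![M_{1,\beta}(X)]\virt=[M_{1,\beta}(X)]\virt$. Substituting, $P_{1,\beta}(\gamma)=\deg(\alpha\cap [M_{1,\beta}(X)]\virt)=M_{1,\beta}(\gamma)$.

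The only delicate points within this reduction are the twist invariance of $\ch_3(\FF)$, which is exactly why the argument is anchored at $\ch_3$ rather than a lower Chern component, and the numerical fact $r=1$, which kills all higher Chern corrections coming from the pushforward formula. The real work is already packaged inside Theorem \ref{Intro.Thm.Pairs/Sheaves}; once that is granted, the corollary reduces to the bookkeeping above. For $n>1$ the same strategy would require a genuinely nontrivial use of the pushforward formula with $m>0$, and the naive identity $P_{n,\beta}(\gamma)=M_{n,\beta}(\gamma)$ would cease to hold.
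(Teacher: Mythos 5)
Your proposal is correct and follows essentially the same route as the paper: both rest on the pullback/pushforward formulas of Theorem \ref{Intro.Thm.Pairs/Sheaves}, the observation that $\ch_3(\FF)=(p\times\id_X)^*\ch_3(\GG)$ since the $\O(1)$-twist does not affect $\ch_3$ of a sheaf supported in codimension three, and the projection formula. The only cosmetic difference is that you re-derive $p_*p^![M_{1,\beta}(X)]\virt=[M_{1,\beta}(X)]\virt$ from the general virtual-projective-bundle pushforward with $m=0$ and $r=\chi(\GG_t)=1$, whereas the paper simply quotes its already-established identity $p_*[P_{1,\beta}(X)]\virt=[M_{1,\beta}(X)]\virt$.
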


In \cite[Conjecture 0.1]{CMT2}, Cao-Maulik-Toda conjectured a primary PT/GV correspondence\footnote{Here we replaced the genus zero Gopakumar-Vafa type invariants $n_{0,\beta}(\gamma)$ in \cite{KlemmPandharipande} by the primary stable sheaf invariants $M_{1,\beta}(\gamma)$ based on Cao-Maulik-Toda's Katz/GV conjecture in \cite[Conjecture 0.2]{CMT}.}
\beq\label{Intro.Eq.PT/GV}
P_{1,\beta}(\gamma) = \sum_{\beta=\beta_1+\beta_2} M_{1,\beta_1}(\gamma) \cdot P_{0,\beta_2}
\eeq
for any  $\beta \in H_2(X,\Q)$ and $\gamma \in H^4(X,\Q)$. Corollary \ref{Intro.Cor.PT/Katz} proves that the Cao-Maulik-Toda conjecture \eqref{Intro.Eq.PT/GV} holds for irreducible curve classes.

\begin{corollary}[Tautological PT/GV correspondence]%\label{Intro.Cor.Taut.PT/GV}
Let $X$ be a Calabi-Yau 4-fold, $\beta$ be an irreducible curve class, and $n$ be an integer. Assume that there is a universal family $\GG$ of $M_{n,\beta}(X)$. Then there exists a choice of orientations such that
\[P_{n,\beta}(L) = 
\begin{cases}
-M_{n,\beta}(\O_X) & \text{if } n=0 \\
M_{n,\beta}(L) - M_{n,\beta}(\O_X) & \text{if }n\geq1 
\end{cases}\]
for any line bundle $L$ with $\int_{\beta}c_1(L)=0$.
\end{corollary}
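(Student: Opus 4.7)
The plan is to reduce the integral over $P_{n,\beta}(X)$ defining $P_{n,\beta}(L)$ to one over $M_{n,\beta}(X)$ by combining the pullback identity $[P]\virt=p^![M]\virt$ of Theorem~\ref{Intro.Thm.Pairs/Sheaves}(1) with the general pushforward formula for virtual projective bundles recalled immediately before Theorem~\ref{Intro.Thm.Pairs/Sheaves}.

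The first step is to put $\sL_{n,\beta}$ into the form $p^*\xi\otimes\sO_P(1)$ to which the pushforward formula applies. Since $p:P_{n,\beta}(X)\to M_{n,\beta}(X)$ is the virtual projective bundle of $\KK:=\bR\pi_*\GG$, the tautological inclusion $\sO_P(-1)\hookrightarrow p^*\KK$ corresponds by adjunction along $\pi$ to a map $\pi^*\sO_P(-1)\to(p\times\id_X)^*\GG$, which after twisting by $\pi^*\sO_P(1)$ agrees with the universal section $\sO_{P\times X}\to\FF$. Hence $\FF\cong(p\times\id_X)^*\GG\otimes\pi^*\sO_P(1)$, and flat base change and the projection formula give
\[\sL_{n,\beta}=\bR\pi_*(\FF\otimes L)\cong p^*\sM_L\otimes\sO_P(1), \qquad \sM_L:=\bR\pi_*(\GG\otimes L).\]
Hirzebruch-Riemann-Roch together with the hypothesis $\int_\beta c_1(L)=0$ yield $\rank(\sM_L)=\chi(\GG\otimes L)=n=\rank(\KK)$.

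The second step applies the general pushforward formula with $\xi=\sM_L$, $m=n$, $s=r=n$. Since $\binom{n-i}{n-i}=1$ for $0\le i\le n$ and $c_{1-i}(-\KK)$ vanishes unless $i\in\{0,1\}$, the sum collapses, using $c_1(-\KK)=-c_1(\KK)$, to
\[p_*\bigl(c_n(\sL_{n,\beta})\cap[P]\virt\bigr)=\begin{cases}-c_1(\KK)\cap[M]\virt & \text{if }n=0,\\ \bigl(c_1(\sM_L)-c_1(\KK)\bigr)\cap[M]\virt & \text{if }n\ge 1.\end{cases}\]
Taking degrees and invoking the definitions $M_{n,\beta}(L)=\int_{[M]\virt}c_1(\sM_L)$ and $M_{n,\beta}(\O_X)=\int_{[M]\virt}c_1(\KK)$ gives both cases of the claimed identity. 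For $n=0$, note that $c_0(\sL_{0,\beta})=1$, so $P_{0,\beta}(L)$ equals the degree of $[P]\virt$, independent of $L$, which is consistent.

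The only delicate point I foresee is verifying the twist in $\FF\cong(p\times\id_X)^*\GG\otimes\pi^*\sO_P(1)$; this is a matter of tracing through the construction of $p$ as a virtual projective bundle in Theorem~\ref{Intro.Thm.Pairs/Sheaves} and matching the two universal sections on $P_{n,\beta}(X)\times X$. The orientation used throughout is the one fixed in Theorem~\ref{Intro.Thm.Pairs/Sheaves} so that the pullback and pushforward formulas hold simultaneously; once this is in place, the rest is a direct Chern class calculation.
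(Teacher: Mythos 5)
Your argument is correct and follows the paper's own route: the paper likewise identifies $\FF\cong(p\times 1)^*\GG\otimes(\pi^P)^*\O(1)$, combines the pullback identity $[P]\virt=p^![M]\virt$ with the general pushforward formula for virtual projective bundles, and the stated corollary is exactly the specialization of the resulting vector-bundle formula (with binomial coefficients $\binom{N-1}{n-1}$, $\binom{N}{n}$) to a line bundle $L$ with $N=n$. Your rank computation $s=r=n$ and the collapse of the sum to the $i=0,1$ terms reproduce the paper's Theorem on the Pairs/Sheaves pushforward formula, so there is no gap.
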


Theorem \ref{Intro.Thm.Pairs/Sheaves} can be generalized to {\em reducible} curve classes as follows: Assume that $M_{n,\beta}(X)$ has no strictly semi-stable sheaves. The moduli space of Joyce-Song type stable pairs in \cite{CTd} (cf. \cite{Pot,JS})
\[P^{JS}_{n,\beta}(X) = \{(F,s) : F \in M_{n,\beta}(X) \and s:\O_X \to F \text{ is non-zero}\}\]  
have an Oh-Thomas virtual cycle $[P^{JS}_{n,\beta}(X)]\virt _{OT}\in A_n (P^{JS}_{n,\beta}(X))$ by \cite{CTd,OT}. Now we have a well-defined forgetful map \[ p : P^{JS}_{n,\beta}(X) \to M_{n,\beta}(X) : (F,s) \mapsto F\] for a reducible curve class $\beta$.

\begin{corollary}[JS/GV correspondence]\label{Intro.Cor.JS/GV}
Let $X$ be a Calabi-Yau 4-fold, $\beta \in H_2(X,\Q)$ be a curve class, and $n$ be an integer. Assume that $\int_{\beta} c_1(\O_X(1))$ and $n$ are coprime. Then the forgetful map $p:P_{n,\beta}^{JS}(X) \to M_{n,\beta}(X)$ is the virtual projective bundle of the tautological complex $\bR\pi _*\GG$. Furthermore, there exists a choice of orientations such that the formula
\[[P_{n,\beta}^{JS}(X)]\virt _{OT} = p^! [M_{n,\beta}(X)]\virt _{OT}\]
holds. Consequently, we have a primary JS/GV correspondence
\beq\label{Intro.Eq.PrimaryJS/GV} P^{JS}_{1,\beta}(\gamma) = M_{1,\beta}(\gamma)\eeq
for any $\gamma \in H^4(X,\Q)$, and a tautological JS/GV correspondence
\[P^{JS}_{n,\beta}(L) = 
\begin{cases}
-M_{n,\beta}(\O_X) & \text{if } n=0 \\
M_{n,\beta}(L) - M_{n,\beta}(\O_X) & \text{if }n\geq1
\end{cases}\]
for any line bundle $L$ with $\int_{\beta}c_1(L)=0$. Here the primary invariants $P^{JS}_{n,\beta}(\gamma)$ and the tautological invariants $P^{JS}_{n,\beta}(L)$ are defined analogously.
\end{corollary}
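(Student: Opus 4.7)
The plan is to follow the strategy of Theorem \ref{Intro.Thm.Pairs/Sheaves}, with the coprime hypothesis in place of the irreducibility of $\beta$. By standard Gieseker-stability arguments, coprimality of $\int_\beta c_1(\O_X(1))$ and $n$ forces every semistable sheaf of Chern character $(0,0,0,\beta,n)$ to be stable, so $M_{n,\beta}(X)$ is a fine proper moduli space with universal family $\GG$. Each JS stable pair $(F,s)$ then has $F$ automatically stable, so the forgetful map $p:P^{JS}_{n,\beta}(X) \to M_{n,\beta}(X)$ is well defined, with fibre $\PP(H^0(X,F))$ over $[F]$. Since $\bR\pi_*\GG$ is a perfect complex of tor-amplitude $[0,1]$, this identifies $P^{JS}_{n,\beta}(X)$ scheme-theoretically with the virtual projective bundle $\PP(\bR\pi_*\GG)$, and the universal pair on $P^{JS}_{n,\beta}(X)\times X$ becomes $\O \to \FF$ with $\FF := (p\times\id)^*\GG \otimes \O_{P^{JS}}(1)$.

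To establish the virtual-cycle identity I will invoke Theorem \ref{Thm.VirtualPullbackFormula} applied to $p$, taking $\phi_f$ to be the natural $2$-term relative perfect obstruction theory $\LL\virt_{\PP(\bR\pi_*\GG)/M_{n,\beta}(X)}$ of the virtual projective bundle. The main obstacle is to realize the Oh-Thomas $3$-term symmetric obstruction theory $\EE_{P^{JS}}$ as the reduction of $p^*\EE_{M_{n,\beta}(X)}$ by the isotropic subcomplex $\EE_f\dual[2]$, in the sense of diagram \eqref{Compatibility.OT}. This compatibility triangle will be extracted from the standard deformation-theoretic distinguished triangle for the pair $(F,s)$, dualized via Serre duality on the Calabi-Yau $4$-fold $X$; isotropy of $\EE_f$ inside the symmetric complex is automatic from its $2$-term amplitude, and compatible orientations are arranged exactly as in Theorem \ref{Intro.Thm.Pairs/Sheaves}.

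Granted $[P^{JS}_{n,\beta}(X)]\virt_{OT} = p^![M_{n,\beta}(X)]\virt_{OT}$, both consequences follow from the pushforward formula for virtual projective bundles. Since $\GG$ is pure $1$-dimensional on the $4$-fold $X$, we have $\ch_i(\GG)=0$ for $i<3$, so twisting by $\O_{P^{JS}}(1)$ preserves $\ch_3$: $\ch_3(\FF)=p^*\ch_3(\GG)$, and the projection formula gives $\pi_*(\ch_3(\FF)\cup\gamma) = p^*\pi_*(\ch_3(\GG)\cup\gamma)$. For $n=1$, $\rank(\bR\pi_*\GG)=1$, and the pushforward formula with $m=0$ yields $p_*[P^{JS}_{1,\beta}(X)]\virt = [M_{1,\beta}(X)]\virt$, whence $P^{JS}_{1,\beta}(\gamma)=M_{1,\beta}(\gamma)$. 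For the tautological correspondence, $\sE_{n,\beta} = p^*\bR\pi_*(\GG\otimes L) \otimes \O_{P^{JS}}(1)$; Riemann-Roch together with $\int_\beta c_1(L)=0$ gives $\chi(F\otimes L)=n$, so $s=r=n$ in the pushforward formula. All binomials then equal $1$, only $i=0,1$ contribute to $c_{1-i}(-\bR\pi_*\GG)$, and the pushforward of $c_n(\sE_{n,\beta})\cap [P^{JS}]\virt$ reduces to $c_1(\bR\pi_*(\GG\otimes L))-c_1(\bR\pi_*\GG)$ for $n\ge 1$ and to $-c_1(\bR\pi_*\GG)$ for $n=0$; integration against $[M_{n,\beta}(X)]\virt$ then delivers the stated formulas.
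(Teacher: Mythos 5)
Your proposal follows the same route as the paper. The paper's own argument for this corollary is exactly: coprimality of $\int_\beta c_1(\O_X(1))$ and $n$ excludes strictly semistable sheaves, so $M_{n,\beta}(X)$ is a fine proper moduli space, $P^{JS}_{n,\beta}(X)$ is identified with the virtual projective bundle $\PP(\bR\pi^M_*\GG)$, and then the entire proof of Theorem \ref{Thm.P/S} (the reduction diagram and Atiyah-class compatibility of Proposition \ref{Prop.P/S.compatibility}, the functoriality of Theorem \ref{Thm.Functoriality}, and the pushforward formula of Proposition \ref{Prop.VPB.Pushforward}) carries over verbatim. Your numerical bookkeeping in the last paragraph ($N=n$ from $\int_\beta c_1(L)=0$, only $i=0,1$ contributing, the $n=1$ case giving $p_*[P^{JS}]\virt=[M]\virt$) matches the paper's computation.

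One justification in your middle step is wrong, though. The isotropy of $\EE_f$ inside the symmetric complex is \emph{not} automatic from its tor-amplitude $[-1,0]$: the induced form $\delta_*(\theta)$ lives in $\Hom(\O,(\EE_f\otimes\EE_f)[-2])=H^0\bigl(h^{-2}(\EE_f\otimes\EE_f)\bigr)$, which is generically nonzero for a two-term complex (for $\EE_f=E[1]$ it is $H^0(E\otimes E)$). In the paper this vanishing is genuine content: it is the commutativity $\alpha\circ\alpha\dual=\beta\circ\beta\dual$ of the left square of the reduction diagram \eqref{Eq.P/S.reduction}, proved via the trace identity \eqref{PS11}, and it is precisely why the naive diagram \eqref{Eq.P/S.prototype} has to be corrected by the modifications $(-)_\#$ and $\tau^{[1,3]}$. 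Since you defer the actual construction of the compatibility triangle to the proof of Theorem \ref{Intro.Thm.Pairs/Sheaves} anyway, this does not sink the argument, but the stated reason should be replaced by that explicit verification.
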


The primary JS/GV correspondence \eqref{Intro.Eq.PrimaryJS/GV} in Corollary \ref{Intro.Cor.JS/GV} was conjectured by Cao-Toda in \cite[Conjecture 0.3]{CTd}.\footnote{In \cite[Conjecture 0.3]{CTd}, Cao-Toda conjectured a primary JS/GV correspondence for any $n$ and $\beta$. Corollary \ref{Intro.Cor.JS/GV} proves that the Cao-Toda conjecture holds when $\int_{\beta} c_1(\O_X(1))$ and $n$ are coprime.}

\subsection*{Generalizations}

We can generalize the virtual pullback formula in Theorem \ref{Thm.VirtualPullbackFormula} to Deligne-Mumford stacks without assuming the quasi-projectivity. The essential ingredient for this generalization is the Kimura sequence for Artin stacks, which will appear in a forthcoming paper \cite{BP} with Younghan Bae.

Chang-Kiem-Li \cite{CKL} discovered that Graber-Pandharipande's torus localization formula \cite{GrPa} for Behrend-Fantechi virtual cycles can be deduced by Manolache's virtual pullback formula. Analogously, Oh-Thomas's torus localization formula \cite[Theorem 7.1]{OT} can be deduced by the virtual pullback formula in Theorem \ref{Thm.VirtualPullbackFormula}. This virtual pullback approach allows us to remove the quasi-projectivity hypothesis in the torus localization formula.

\begin{proposition}[Torus localization]%\label{Intro.Prop.TorusLocalization}
Consider a separated DM stack $\cX$ equipped with a $T:=\GG_m$-action and a $T$-equivariant 3-term symmetric obstruction theory. Assume that the fixed locus $\cX^T$ has the $T$-equivariant resolution property. Then we have
\[i_*\left(\dfrac{[\cX^T]\virt}{\sqrt{e}(N\virt)}\right) = [\cX]\virt \in A^T_*(\cX) \otimes_{\Q[t]} \Q[t^{\pm1}]\]
where $i: \cX^T \hookrightarrow \cX$ denotes the inclusion map (see Appendix \ref{A.quasi-projectivity}).
\end{proposition}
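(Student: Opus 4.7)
The strategy is the square-root analogue of Chang--Kiem--Li's derivation of torus localization from Manolache's virtual pullback formula. I apply an equivariant enhancement of Theorem~\ref{Thm.VirtualPullbackFormula} to the closed embedding $i:\cX^T\hookrightarrow\cX$, taking the target to be $\cX$ itself with its $T$-equivariant 3-term symmetric obstruction theory $\EE_{\cX}$, and endowing $\cX^T$ with the fixed-part obstruction theory $\EE_{\cX^T}:=(\EE_{\cX}|_{\cX^T})^{\mathrm{fix}}$.

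I would first verify that the weight decomposition $\EE_{\cX}|_{\cX^T}=\EE_{\cX^T}\oplus\EE^{\mathrm{mov}}$ promotes $\EE_{\cX^T}$ to a 3-term symmetric, oriented, isotropic obstruction theory on $\cX^T$. Duality, the non-degenerate pairing, and the chosen square root of $\det(\EE_{\cX})$ each split into weight-$0$ and moving pieces, and the isotropy of $\fC_{\cX}|_{\cX^T}$ in $h^1/h^0(\EE_{\cX}\dual|_{\cX^T})$ restricts to isotropy of $\fC_{\cX^T}$ inside $h^1/h^0(\EE_{\cX^T}\dual)$ because $\fC_{\cX^T}$ is the $T$-fixed part of $\fC_{\cX}|_{\cX^T}$. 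The moving complex $\EE^{\mathrm{mov}}$ is a two-term perfect complex carrying a perfect pairing between its weight-$\lambda$ and weight-$(-\lambda)$ eigensubcomplexes, so its $K$-theory class is the virtual normal bundle $N\virt$, and after inverting $t$ the square-root Euler class $\sqe(N\virt)$ becomes invertible in $A^{T}_{*}(\cX^T)\otimes_{\Q[t]}\Q[t^{\pm1}]$.

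Next, I would construct the compatibility data \eqref{Compatibility.OT} for $i$ by setting $\DD:=\EE_{\cX}|_{\cX^T}$, taking $\alpha$ and $\beta$ to be the projection onto and the inclusion of the fixed summand, setting $\phi_{\cX}'=\phi_{\cX^T}\circ\alpha$, and letting the relative obstruction theory be $\EE_{i}:=(\EE^{\mathrm{mov}})\dual[1]$ of amplitude $[-1,0]$. The two distinguished triangles of \eqref{Compatibility.OT} are exchanged by the symmetric form on $\EE_{\cX}|_{\cX^T}$, and the orientation induced on $\EE_{\cX^T}$ matches that of $\EE_{\cX}$ because $\det(\EE^{\mathrm{mov}})$ carries a tautological square root coming from its self-pairing. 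The equivariant form of Theorem~\ref{Thm.VirtualPullbackFormula} then yields
\begin{equation*}
i^{!}[\cX]\virt \;=\; [\cX^T]\virt \;\in\; A^{T}_{*}(\cX^T),
\end{equation*}
where $i^{!}$ denotes the square-root virtual pullback attached to $\EE_{i}$.

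To conclude, I would identify $i^{!}$ after inverting $t$ with the operation $\alpha\mapsto i^{*}\alpha/\sqe(N\virt)$: since $N\virt$ has no $T$-fixed summand on $\cX^T$, the class $\sqe(N\virt)$ is invertible over $\Q[t^{\pm1}]$, and the deformation-to-the-normal-cone description of the square-root virtual pullback specializes to division by $\sqe(N\virt)$ exactly as in the vector-bundle case. Applying $i_{*}$ to the displayed equality then produces the stated torus localization formula. The principal obstacle is the equivariant refinement itself: Theorem~\ref{Thm.VirtualPullbackFormula} must be upgraded to $T$-equivariant Chow groups of separated DM stacks without the quasi-projectivity hypothesis, which requires the Kimura-type sequence for Artin stacks of \cite{BP}, and one must verify that the blowup reduction sketched after Theorem~\ref{Thm.VirtualPullbackFormula}---in particular the passage through the deformation space $M^{\circ}_{\cY/\spec(\C)}$ and the replacement of $\fC_{\cY}$ by a cone cover---can be carried out $T$-equivariantly using the equivariant resolution property assumed on $\cX^T$.
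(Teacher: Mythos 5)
Your overall strategy --- deducing localization from the virtual pullback formula applied to $i:\cX^T\hookrightarrow\cX$ in the style of Chang--Kiem--Li, after extending that formula equivariantly and beyond quasi-projective schemes via the Kimura sequence --- is the same as the paper's, and you correctly identify the non-quasi-projective extension as the main technical overhead. But there is a genuine gap at the center of the argument: your proposed relative obstruction theory $\EE_i=(\EE^{\mathrm{mov}})\dual[1]$ does not have tor-amplitude $[-1,0]$. The moving part of a $3$-term symmetric complex is itself a $3$-term complex of amplitude $[-2,0]$: in a symmetric resolution $[B\to E\to B\dual]$ it is $[B^m\to E^m\to (B^m)\dual]$, and since $h^{-2}$ and $h^{0}$ of $\EE^{\mathrm{mov}}$ are dual to each other it cannot be concentrated in two adjacent degrees unless both vanish. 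Hence $(\EE^{\mathrm{mov}})\dual[1]$ has amplitude $[-1,1]$, hypothesis D2 of Theorem \ref{Thm.VirtualPullbackFormula} fails, and no virtual pullback $i^!$ is attached to it. The same problem appears in your compatibility data: with $\DD=\EE_{\cX}|_{\cX^T}$, the first triangle of \eqref{Compatibility.OT} forces $\EE_i=\cone\bigl(\EE_{\cX}|_{\cX^T}\to\EE^{\mathrm{fix}}\bigr)=\EE^{\mathrm{mov}}[1]$, which lives in degrees $[-3,-1]$. This is exactly where the $3$-term case departs from the $2$-term CKL argument, and it is the point the paper must work around: it replaces the fixed-part obstruction theory $\psi$ on $\cX^T$ by the modified theory $\psi'=(\psi,0)$ with complex $\EE|^f_{\cX^T}\oplus E^m[1]=[B^f\to E\to (B^f)\dual]$, absorbing the moving middle term $E^m$. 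This rescales the virtual class by the invertible factor $\sqrt{e}(E^m)$, and the relative obstruction theory for $i$ collapses to the shifted bundle $(B^m)\dual[1]$, to which the functoriality theorem genuinely applies and for which $i^!i_*=e(B^m)$; the formula then follows from $\sqrt{e}(N\virt)=e(B^m)/\sqrt{e}(E^m)$.

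A second, smaller omission: "applying $i_*$" to $i^![\cX]\virt=[\cX^T]\virt$ does not yield the localization formula by itself, since $i_*i^!$ is not the identity. You also need Kresch's concentration theorem to write $[\cX]\virt=i_*(\alpha)$ for some $\alpha\in A^T_*(\cX^T)\otimes_{\Q[t]}\Q[t^{\pm1}]$, and then solve $e(B^m)\,\alpha=i^!i_*(\alpha)=\sqrt{e}(E^m)\,[\cX^T]\virt$ for $\alpha$. With these two repairs your outline becomes the paper's proof.
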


We can also generalize the virtual pullback formula in Theorem \ref{Thm.VirtualPullbackFormula} to K-theory by replacing Manolache's virtual pullback with the K-theoretic {\em twisted} virtual pullback (cf. \cite{NO,Qu}). Consequently, we also have K-theoretic versions of the Lefschetz principle and Pairs/Sheaves correspondence.

\subsection*{Future works and open problems}

We will apply the virtual pullback formula in Theorem \ref{Thm.VirtualPullbackFormula} to surface counting problems in a forthcoming paper \cite{BKP} with Younghan Bae and Martijn Kool. We will provide a Lefschetz principle for surface counting moduli spaces and introduce various virtual projective bundles that the virtual pullback formula can be applied.

In the Lefschetz principle (Theorem \ref{Intro.Thm.Lefschetz}), it is desirable to compute the {\em signs} $\sigma(e)$ in \eqref{Intro.Eq.Lefschetz}. If the signs $\sigma(e)$ all coincide, then we will have a simpler Lefschetz formula
\[j_*[I_{n,\beta}(D)]\virt_{BF} =  e(\sL_{n,\beta}) \cap [I_{n,\beta}(X)]\virt_{OT}\]
without the signs. This will allow us to remove the assumption A2 in the tautological DT/PT correspondence (Corollary \ref{Intro.Cor.DTPTcurves}).

It would be interesting to know whether there is a derived algebraic geometry interpretation of the virtual pullback formula in Theorem \ref{Thm.VirtualPullbackFormula}. A naive approach is to consider a morphism $\mathbf{f}: \bX \to \bY$ of (-2)-shifted symplectic derived schemes, but this is almost never quasi-smooth. The compatibility condition \eqref{Compatibility.OT} suggests that we should consider a third derived scheme $\bX' \subseteq \bX$ with a quasi-smooth map $\mathbf{f} : \bX' \to \bY$ such that  $(\bX')_{\mathrm{cl}}=(\bX)_{\mathrm{cl}}$ and $\mathbf{f}^*(\omega_{\bY})=\omega_{\bX}|_{\bX'}$. However, it is not obvious to the author what the geometric meaning of these conditions is and why the virtual pullback formula \eqref{VPF.OT} should hold in this setting from the derived algebraic geometry perspective.

In \cite{GJT}, Gross-Joyce-Tanaka conjectured a powerful wall-crossing formula for DT4 invariants, as an analog of the motivic wall-crossing formula for DT3 invariants \cite{JS}. We hope that the virtual pullback formula in Theorem \ref{Thm.VirtualPullbackFormula} can be used to prove the wall-crossing conjecture.

We expect that the virtual pullback formula in Theorem \ref{Thm.VirtualPullbackFormula} can be generalized to the reduced virtual cycles and the cosection-localized virtual cycles introduced in \cite{KP} under some natural compatibility conditions.

\subsection*{Related works}

In \cite{CQ}, Cao and Qu also presented Corollary \ref{Intro.Cor.CaoKoolconj} by an independent method. They developed another version of a virtual pullback formula that applies to a different setting. Basically, they considered a morphism $f:\cX\to\cY$ of schemes such that $\cX$ and $f$ are equipped with 2-term perfect obstruction theories, and $\cY$ is equipped with a 3-term symmetric obstruction theory whose pullback to $\cX$ splits by a 2-term perfect obstruction theory. Roughly speaking, their formula is an intermediate version of Manolache's virtual pullback formula and the virtual pullback formula (Theorem \ref{Thm.VirtualPullbackFormula}) in this paper. However, Cao-Qu's formula and the virtual pullback formula in Theorem \ref{Thm.VirtualPullbackFormula} are not directly related.

In \cite{Bojko}, Bojko proved the Cao-Kool conjecture \cite[Conjecture 1.2]{CKp} for all line bundles based on the results for very ample line bundles (Corollary \ref{Intro.Cor.CaoKoolconj}) and Gross-Joyce-Tanaka's conjectural wall-crossing formula \cite{GJT}.

\subsection*{Outline}

In \S\ref{S.sqrtVP}, we introduce the notion of square root virtual pullbacks as a relative version of Oh-Thomas virtual cycles. We also explore some basic properties of the isotropic condition of 3-term symmetric obstruction theories. In \S\ref{S.Functoriality}, we prove functoriality of square root virtual pullbacks, which is the relative version of the virtual pullback formula in Theorem \ref{Thm.VirtualPullbackFormula}. In \S\ref{S.Lefschetz}, we prove the Lefschetz principle and its corollaries. In \S\ref{S.Pairs/Sheaves}, we introduce virtual projective bundles and prove Pairs/Sheaves correspondence. In Appendix \ref{A.quasi-projectivity}, we generalize square root virtual pullbacks to algebraic stacks and prove the torus localization formula. In Appendix \ref{A.K-theory}, we generalize square root virtual pullbacks to K-theory. In Appendix \ref{A.Reduction}, we prove some elementary properties of the reduction operation of symmetric complexes.

\subsection*{Notations and conventions}
\begin{itemize}
\item All schemes and algebraic stacks are assumed to be of finite type over the field of complex numbers $\C$.
\item For a morphism $f:\cX\to \cY$ of algebraic stacks, we denote $\LL_f$ the full cotangent complex \cite{Illusie} (cf. \cite{Olsson}) and denote $\bL_f = \trunc \LL_f$ the truncated cotangent complex (cf. \cite{HT}).
%\item For a DM morphism $f:\cX \to \cY$ of algebraic stacks, we denote $M^\circ _f$ the deformations space \cite{Ful,Kresch} and denote $(M^\circ _f)'$ the open substack consists of the fibers of $M^\circ _f \to \PP^1$ over $\A^1 = \PP^1 \setminus \{0\}$.
\item For any algebraic stack $\cX$, we denote $A_*(\cX)$ the Chow group of Kresch \cite{Kresch} with $\Q$ coefficients. 
\item For any object $\EE \in D^{\leq0}_{\mathrm{coh}}(\cX)$ on an algebraic stack $\cX$, we denote $\fC(\EE):=h^1/h^0((\trunc\EE)\dual)$ the associated abelian cone stack. For any coherent sheaf $Q$, we denote $C(Q):=\spec S^\bullet Q$ the associated abelian cone. %We often use $C_Q$ and $\fC_{\EE}$ to denote $C(Q)$ and $\fC(\EE)$, respectively.
\item In this paper, all perfect obstruction theories are assumed to be of tor-amplitude $[-1,0]$, and all symmetric obstruction theories are assumed to be of tor-amplitude $[-2,0]$ and oriented.
\end{itemize}

\subsection*{Acknowledgements} The author would like to thank his advisor Young-Hoon Kiem for numerous valuable discussions. We are grateful to Martijn Kool for suggestions on the Lefschetz principle and the tautological DT/PT correspondence. We are thankful to Tasuki Kinjo for various explanations on derived algebraic geometry and shifted symplectic structures. We also thank Younghan Bae, Arkadij Bojko, Yalong Cao, Dominic Joyce, Adeel Khan, Sanghyeon Lee, Cristina Manolache, Jeongseok Oh, Feng Qu, Richard Thomas, and Yukinobu Toda for many useful discussions.

\section{Square root virtual pullback}\label{S.sqrtVP} In this section, we construct a {\em square root virtual pullback} for a three-term symmetric obstruction theory and prove its basic properties.

%A square root virtual pullback is a relative version of an Oh-Thomas virtual cycle \cite{OT}, analogous to Manolache's virtual pullback \cite{Man}. 

\subsection{Square root Euler class}%\label{ss.sqrtEuler}

%% 1.1.1 Introduction of subsection 1.1 Square root Euler class
In this preliminary subsection, we briefly review the square root Euler class $\sqe(E)$ of a special orthogonal bundle $E$ and its localization $\sqe(E,s)$ by an isotropic section $s$ from \cite{EG,OT,KP}. Roughly speaking, the localized square root Euler classes $\sqe(E,s)$ are local models of the square root virtual pullbacks.

%% 1.1.2 Edidin-Graham's square root Euler class
Let $E$ be a special orthogonal bundle of even rank $2n$ over a scheme $\cX$. In \cite{EG}, Edidin-Graham constructed an algebraic characteristic class
\[\sqe(E) \in A^n(\cX),\]
called the {\em square root Euler class} of $E$. An important property of the square root Euler class is the {\em reduction formula}
\beq\label{Eq3} \sqe(E)=e(K)\circ\sqe(K\uperp/K)\eeq
for an isotropic subbundle $K$ of $E$.

% Considering the flag variety of isotropic flags, the construction follows from Fulton's conjecture \cite[Theorem 1(c)]{EG} and Kimura's exact sequence \cite[Theorem 2.3]{Kimura}. 

%% 1.1.3 Oh-Thomas's localized square root Euler class
Suppose the special orthogonal bundle $E$ has given an isotropic section $s \in \Gamma(\cX,E)$. In \cite{OT}, Oh-Thomas proved that the square root Euler class $\sqe(E)$ can be localized to the zero locus $\cX(s)$ of the isotropic section $s$. More precisely, they constructed a bivariant class
\[\sqe(E,s) \in A^n_{\cX(s)}(\cX)\]
% called the {\em localized square root Euler class}, 
satisfying  $\imath_* \circ \sqe(E,s) = \sqe(E)$ for the inclusion map $\imath : \cX(s) \hookrightarrow \cX$. This was achieved by combining Edidin-Graham's flag variety \cite{EG}, Fulton-MacPherson's deformation to the normal cone \cite{Ful}, and Kiem-Li's cosection localization \cite{KL}.

%% 1.1.4 Alternative construction via the blowup method
An alternative construction of the localized square root Euler class $\sqe(E,s)$ was introduced in \cite{KP} via a blowup method. This blowup method provided some additional functorial properties of $\sqe(E,s)$. In particular, this gives us a refined version of the reduction formula in \eqref{Eq3}.
\begin{proposition}[{\cite[Lemma 4.5]{KP}}]\label{Prop.LocFunct}
Let $K$ be an isotropic subbundle of $E$ such that $s \cdot K=0$. Let $s_1 \in \Gamma(\cX,K^{\perp}/K)$ be the induced isotropic section and let $s_2=s|_{\cX(s_1)} \in \Gamma(\cX(s_1),K|_{\cX(s_1)})$ be the restriction. Then we have
\begin{equation}\label{Eq.LocFunct}
\sqrt{e}(E,s) = e(K|_{\cX(s_1)},s_2) \circ \sqrt{e}(K^{\perp}/K,s_1) \in A^n_{\cX(s)}(\cX).
\end{equation}
\end{proposition}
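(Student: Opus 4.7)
The plan is to reduce the localized identity \eqref{Eq.LocFunct} to the non-localized reduction formula \eqref{Eq3} by exploiting the two-step filtration of zero loci $\cX(s) \hookrightarrow \cX(s_1) \hookrightarrow \cX$ induced by the isotropy hypothesis $s \cdot K = 0$.

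First, I would recall the blowup construction of $\sqe(E,s)$ from \cite{KP}: starting from the deformation blowup of $\cX \times \A^1$ along $\cX(s) \times \{0\}$, the localized class is built by combining the non-localized square root Euler class on the total space with a cosection-localized correction term supported on the exceptional fiber. The first step of the proof is then to factor this blowup through the intermediate closed embedding $\cX(s_1) \times \{0\} \hookrightarrow \cX \times \A^1$, producing a two-step tower whose outer step resolves $\cX(s_1)$ and whose inner step resolves $\cX(s) \subseteq \cX(s_1)$. This factorization is allowed because $s \cdot K = 0$ guarantees $\cX(s) \subseteq \cX(s_1)$.

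Second, on the outer step I would apply the non-localized reduction formula \eqref{Eq3} fiberwise. The factor $\sqe(K\uperp/K)$ localizes to $\sqe(K\uperp/K, s_1)$ because the induced isotropic section $s_1$ vanishes exactly along the center $\cX(s_1)$ being blown up, and the factor $e(K)$ restricts compatibly to $\cX(s_1)$. On the inner step, the remaining section $s_2$ of the isotropic subbundle $K|_{\cX(s_1)}$ promotes the ordinary Euler class $e(K|_{\cX(s_1)})$ to its localization $e(K|_{\cX(s_1)}, s_2)$. Composing these two localizations yields the right-hand side of \eqref{Eq.LocFunct}, while the non-localized Euler classes glue on the left to $\sqe(E,s)$ by the original construction.

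The main obstacle is establishing associativity of the cosection-localized Gysin maps across the iterated blowup: one must verify that the cosection-localized correction for the full blowup decomposes as the composition of the corrections for the two intermediate blowups, with matching signs and orientations. This is a local question, and I would attack it by passing to the flag-variety resolution of $E$ used in \cite{EG,OT} that trivializes both $K$ and $K\uperp/K$ simultaneously, whereupon the orthogonal bundle splits as $E = K \oplus (K\uperp/K) \oplus K\dual$ and the isotropic section decomposes as $(s_2, s_1, 0)$. In this split case both sides of \eqref{Eq.LocFunct} admit a direct computation via multiplicativity of Euler classes for orthogonal direct sums, and the general case follows by descent from the flag variety.
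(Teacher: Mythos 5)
You should first note that the paper does not prove this proposition: it is quoted as background from \cite[Lemma 4.5]{KP} (the attribution is in the proposition header), so there is no argument in the text to compare yours against. Judged on its own, your outline has the right shape — factor the localization through the intermediate zero locus $\cX(s_1)$ and match the outcome against the non-localized reduction formula \eqref{Eq3} — but one premise is off and the decisive step is missing. The blowup in \cite{KP} is of $\cX$ itself along $\cX(s)$, so that the ideal of the zero locus becomes invertible and $s$ spans an isotropic line subbundle $\O(D)\subseteq\rho^*E$ on the blowup; the space $\cX\times\A^1$ deformed along $\cX(s)\times\{0\}$ belongs to the Fulton--MacPherson/cosection construction of \cite{OT}, a different (if ultimately equivalent) model, and conflating the two obscures where the correction terms live.

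The genuine gap is in your final paragraph. The Edidin--Graham flag bundle does not split $E$ as $K\oplus(K\uperp/K)\oplus K\dual$; it only supplies isotropic flags, and to split the filtration $K\subseteq K\uperp\subseteq E$ orthogonally one must pass to an affine bundle of splittings. That is harmless for descent, but even there the section decomposes as $(s_0,s_1,0)$ with $s_0\in\Gamma(\cX,K)$ depending on the chosen splitting and agreeing with $s_2$ only after restriction to $\cX(s_1)$. More seriously, the identity you then need — a Whitney-type formula $\sqe\bigl(K\oplus K\dual\oplus(K\uperp/K),(s_0,0,s_1)\bigr)=e(K,s_0)\circ\sqe(K\uperp/K,s_1)$ for \emph{localized} classes — is not an instance of any already-available ``multiplicativity of Euler classes for orthogonal direct sums''; it is essentially the statement being proved, specialized to the split case, and in \cite{KP} the Whitney-type statement is established alongside (not prior to) the reduction formula. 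A complete proof has to open up the actual construction of $\sqe(E,s)$ and check on the blowup (or on the deformation space) that the localized contributions supported over $\cX(s_1)$ and over $\cX(s)$ compose as claimed; that verification is the content of \cite[Lemma 4.5]{KP} and cannot be outsourced to a cleaner formal property.
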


Roughly speaking, the formula \eqref{Eq.LocFunct} is a local model of the {\em functoriality} of square root virtual pullbacks.

\subsection{Symmetric complex}%\label{ss.SymCplx}

In DT4 theory \cite{OT,BJ,CL}, three-term symmetric complexes play the role of two-term perfect complexes in virtual intersection theory \cite{BeFa,Man}. In this subsection, we study basic properties of these three-term symmetric complexes and the abelian cone stacks associated to them.

Let us first fix the notion of {\em symmetric complexes}.

\begin{definition}[Symmetric complex]\label{Def.SymCplx}
A {\em symmetric complex} $\EE$ on a scheme $\cX$ consists of the following data:
\begin{enumerate}
\item A perfect complex $\EE$ of tor-amplitude $[-2,0]$ on $\cX$.
\item A {\em non-degenerate symmetric form} $\theta$ on $\EE$, i.e., a morphism
\[\theta:\O_\cX \to (\EE\otimes \EE) [-2]\]
in the derived category of $\cX$ such that
\begin{enumerate}
\item  $\theta = \sigma \circ \theta$, where $\sigma :\EE\otimes \EE \to \EE\otimes \EE$ is the transition map, and
\item  the induced map $\iota_{\theta}:\EE\dual  \to \EE[-2]$ is an isomorphism.
\end{enumerate}
\item An {\em orientation} $o$ of $\EE$, i.e., an isomorphism $o:\O_\cX \to \mathrm{det}(\EE )$ of line bundles such that $\mathrm{det}(\iota_{\theta})=o\circ o\dual$.
\end{enumerate}
\end{definition}

In this paper, all symmetric complexes are assumed to be of amplitude $[-2,0]$ and oriented, unless stated otherwise.

\begin{proposition}[Symmetric resolution] \label{Prop.SymRes} 
Any symmetric complex $\EE$ on a quasi-projective scheme $\cX$ has a {\em symmetric resolution}, i.e., an isomorphism 
\begin{equation}
\label{Eq.SymRes} \left[B \to E\dual \to B\dual\right] \mapright{\cong} \EE 
\end{equation} 
in the derived category of $\cX$ satisfying the following properties: 
\begin{enumerate} 
\item $E$ is a special orthogonal bundle and $B$ is a vector bundle. 
\item Under the isomorphism \eqref{Eq.SymRes}, the symmetric form is represented by the chain map 
\[\xymatrix{
\EE\dual \ar[d]^{\iota_{\theta}} & B \ar[r]^d \ar@{=}[d] & E \ar[r]^{d\dual\circ q} \ar[d]^q & B\dual \ar@{=}[d] \\ 
\EE[-2] & B \ar[r]^{q \circ d} & E\dual \ar[r]^{d\dual} & B\dual}\]
where $q:E \to E\dual$ is the quadratic form of $E$. 
\item The orientation of $E$ is given by the canonical isomorphism between the determinant line bundles $\det(E) \cong \det(\EE)$ induced by \eqref{Eq.SymRes}.
\end{enumerate}

Moreover, given a map $\delta:\EE \to \KK$ from a symmetric complex $\EE$ to a perfect complex $\KK$ of tor-amplitude $[-1,0]$, if $h^0(\delta)$ is surjective, then there exist a symmetric resolution \eqref{Eq.SymRes} of $\EE$ and a resolution of $\KK$ by a complex of vector bundles such that the map $\delta$ can be represented by a degreewise surjective chain map 
\[\xymatrix{\EE \ar[d]^{\delta} & B \ar[r] \ar[d] & E\dual  \ar[r] \ar[d] & B\dual \ar[d] \\  \KK & 0 \ar[r]& K\dual \ar[r] & D\dual }\] 
of chain complexes.
\end{proposition}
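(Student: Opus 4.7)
The plan is to build the symmetric resolution by combining two dual ``generators'' coming from the symmetric structure, read off the special orthogonal structure from the middle term, and then refine the construction to accommodate $\delta$.

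First, using quasi-projectivity of $\cX$, I would choose a sufficiently negative vector bundle $B$ together with a map $\alpha : B^\vee \to \EE$ such that $h^0(\alpha)$ is surjective. Dualizing $\alpha$ and composing with the non-degenerate symmetric isomorphism $\iota_\theta : \EE^\vee \xrightarrow{\sim} \EE[-2]$, I would obtain a second morphism $\beta : \EE \to B[2]$, whose surjectivity on $h^{-2}$ follows automatically from the self-duality of $\EE$. These two maps together produce a three-term vector-bundle complex $[B \to E^\vee \to B^\vee]$ quasi-isomorphic to $\EE$: the middle term $E^\vee$ is obtained by taking an appropriately truncated mapping cone/fiber built from $\alpha$ and $\beta$, and is concentrated in a single degree thanks to the surjectivity at both ends. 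The symmetric structure on $\EE$ then induces a non-degenerate symmetric form on $E$: dualizing the resolution presents $\EE^\vee$ as $[B \to E \to B^\vee]$, and the identification $\iota_\theta$ translates into a chain-level isomorphism which can be arranged to be the identity on the outer terms, hence determined by its middle component $q : E \to E^\vee$. The symmetry condition $\theta = \sigma \circ \theta$ forces $q = q^\vee$, non-degeneracy forces $q$ to be an isomorphism, and the orientation $o$ together with the canonical identification $\det(\EE) \cong \det(E)$ (in which the $\det(B)$ and $\det(B^\vee)$ factors cancel) yields a trivialization of $\det(E)$, i.e., a special orthogonal structure.

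For the moreover statement, given $\delta : \EE \to \KK$ with $h^0(\delta)$ surjective, I would first pick a two-term resolution $[K^\vee \to D^\vee] \to \KK$ with $D^\vee \twoheadrightarrow h^0(\KK)$, and lift $\delta$ to a chain map. To arrange degreewise surjectivity, I would then, in the construction above, choose $B^\vee$ large enough to surject onto $D^\vee$ (for instance by taking $B^\vee$ to be a negative twist of a direct sum including $D^\vee$), which forces surjectivity in degree $0$. Surjectivity of the middle map $E^\vee \to K^\vee$ would then be achieved by a standard stabilization: one enlarges $E$ by a hyperbolic summand $K \oplus K^\vee$, which preserves the special orthogonal structure while providing the required surjectivity onto $K^\vee$, and correspondingly enlarges $B$ by a trivial summand so that the augmented differentials factor correctly.

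The main technical obstacle lies in the first part: verifying that the middle term $E$ inherits a genuinely symmetric (not skew-symmetric) and non-degenerate form with a matching orientation. Since the derived-category isomorphism $\iota_\theta$ is only defined up to homotopy, the chain-level identification $q : E \to E^\vee$ requires careful choices of representatives and a sign analysis to ensure $q = q^\vee$ rather than $q = -q^\vee$; the determinant condition $\det(\iota_\theta) = o \circ o^\vee$ then pins down the correct trivialization of $\det(E)$ as a special orthogonal bundle.
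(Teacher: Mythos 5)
The paper offers no independent proof of this proposition; it defers entirely to the proof of \cite[Proposition 4.1]{OT}, and your outline is indeed the strategy of that argument: choose $\alpha:B\dual\to\EE$ surjective on $h^0$, use $\iota_{\theta}$ to produce the dual map $\beta:\EE\to B[2]$, and close these up into a three-term self-dual complex whose middle term is an orthogonal bundle, with the ``moreover'' part handled by choosing the resolutions compatibly and stabilizing by a hyperbolic summand. So the route is the right one. The problem is that the proposal stops short at exactly the points where the content of \cite[Proposition 4.1]{OT} lives.

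Concretely: (i) For the complex $[B\to E\dual\to B\dual]$ to exist with the self-dual differential $(d,\,d\dual\circ q)$, the lift $\alpha$ must be \emph{isotropic}: the composite $\beta\circ\alpha\in\Hom(B\dual,B[2])=\Ext^2(B\dual,B)$ is the restriction of $\theta$ to $B\dual$ and must vanish before the cone construction closes up; nothing in your construction arranges or verifies this, and on a merely quasi-projective (non-proper) scheme one cannot simply invoke Serre vanishing to kill this group. (Relatedly, it is $B\dual$, not $B$, that must be taken sufficiently negative, since $B\dual$ is the bundle that surjects onto the coherent sheaf $h^0(\EE)$ and must lift to $\EE$.) (ii) The assertion that ``surjectivity on $h^{-2}$ follows automatically from self-duality'' is not correct as stated: dualizing a surjection yields an injection. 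What one actually needs is that $\cone(B\dual\to\DD\dual[2])$, with $\DD=\cone(\alpha)$, is a vector bundle sitting in a single degree, and this follows from a fiberwise computation using the surjectivity of $h^0(\alpha\otimes k(x))$ for every point $x$, together with the triangle $\DD\dual[2]\to\EE\to B[2]$ — not from dualizing $\alpha$. (iii) Most importantly, the step you explicitly defer — rigidifying $\iota_{\theta}$ into an honest chain map that is the identity on the outer terms and genuinely symmetric and invertible in the middle — is precisely the theorem; $\iota_{\theta}$ is symmetric only up to homotopy, and making the symmetrization $(q+q\dual)/2$ into a quasi-isomorphism compatible with the differentials requires absorbing the homotopies into the data (refining the resolution so the relevant $\Hom$-ambiguities are controlled). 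Announcing this as ``the main technical obstacle'' and leaving it unresolved means the proposition has not been proved; you should either carry out that step or, as the paper does, reduce the statement to \cite[Proposition 4.1]{OT}.
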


\begin{proof}
We omit the proofs since the statements follow directly from the proof of \cite[Proposition 4.1]{OT}.
\end{proof}

Recall that an important operation on a special orthogonal bundle $E$ is the reduction $K\uperp/K$ by an isotropic subbundle $K$. A similar operation exists for symmetric complexes. We first introduce a notion of an {\em isotropic subcomplex} of a symmetric complex.

\begin{definition}[Isotropic subcomplex]\label{Def.IsotropicSubcomplex}
Let $\EE$ be a symmetric complex on a scheme $\cX$. An {\em isotropic subcomplex} of $\EE$ is a perfect complex $\KK$ equipped with a map $\delta :\EE \to \KK$ satisfying the following properties:
\begin{enumerate}
\item $\KK$ is perfect of tor-amplitude $[-1,0]$; 
\item $h^0(\delta ):h^0(\EE ) \to h^0(\KK )$ is surjective;
\item $\KK$ is {\em isotropic}, i.e., the induced symmetric form 
\[\delta_*(\theta):=(\delta\otimes\delta)\circ\theta :\O_\cX \to (\EE\otimes\EE)[-2] \to (\KK\otimes\KK)[-2]\]
on $\KK$ is zero.
\end{enumerate}
\end{definition}

We can form a {\em reduction} of a symmetric complex by an isotropic subcomplex.

\begin{proposition}[Reduction]\label{Prop.Reduction}
Let $\cX$ be a quasi-projective scheme. For any symmetric complex $\EE$ on $\cX$ and an isotropic subcomplex $\KK$ with $\delta : \EE \to \KK$, there exists a {\em reduction}
\beq\label{Eq.R2}
\left[\KK\dual[2] \mapright{\delta\dual} \EE \mapright{\delta} \KK\right]
\eeq
of $\EE$ by $\KK$, i.e., a symmetric complex $\GG$ on $\cX$, denoted by \eqref{Eq.R2}, satisfying the following properties:
\begin{enumerate}
\item There exists a morphism of distinguished triangles
\beq\label{Eq.Reduction}
\xymatrix{
\DD\dual[2] \ar[r]^{\alpha\dual} \ar[d]^{\beta\dual} & \EE \ar[r]^{\delta} \ar[d]^{\alpha} & \KK  \ar@{=}[d] \ar[r] &\\
\GG \ar[r]^{\beta} & \DD \ar[r] & \KK \ar[r] &}
\eeq
for some $\DD$, $\alpha$, $\beta$, where $\alpha\dual$, $\beta\dual$ are the duals of $\alpha$, $\beta$ with respect to the identifications $\EE\dual[2]\cong\EE$ and $\GG\dual[2]\cong\GG$.
\item The orientation of $\GG$ is induced from the orientation of $\EE$ under the canonical isomorphism $\det(\GG)\cong\det(\EE)$ given by \eqref{Eq.Reduction}.
\end{enumerate}
\end{proposition}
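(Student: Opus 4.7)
The plan is to construct $\GG$ from an explicit symmetric resolution of $\EE$ and then produce the compatibility diagram \eqref{Eq.Reduction} by choosing a suitable intermediate complex $\DD$. The central observation is that the isotropy hypothesis on $\KK$ translates, under such a resolution, into the assertion that a certain subbundle $K$ of the central orthogonal bundle $E$ is isotropic, so that the orthogonal-bundle reduction $K\uperp/K$ is available to build the middle term of $\GG$.

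First I would apply Proposition \ref{Prop.SymRes} to $\delta:\EE\to\KK$. Since $\KK$ has tor-amplitude $[-1,0]$ and $h^0(\delta)$ is surjective, the proposition supplies a symmetric resolution of $\EE$ with special orthogonal bundle $E$ (quadratic form $q:E\xrightarrow{\cong}E\dual$) in the middle degree and auxiliary bundle $B$ on the outside, a resolution $\KK\simeq[K\dual\to D\dual]$, and a degreewise surjective chain map representing $\delta$; dualizing its degree $-1$ and $0$ components gives subbundle inclusions $g:K\hookrightarrow E$ and $e:D\hookrightarrow B$. A direct chain-level unwinding of $\delta_*(\theta)=0$ then yields $g\dual\circ q\circ g=0$, i.e.\ $K\subset E$ is isotropic. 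Hence $K\uperp:=\ker(g\dual\circ q:E\to K\dual)$ contains $K$, and $K\uperp/K$ is a special orthogonal bundle of even rank with induced form $q'$.

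Set $B'':=B/e(D)$, so that $(B'')\dual=\ker(e\dual:B\dual\to D\dual)$. Chain-map commutativity forces the differential of $\EE$ out of $B$ to land in $(K\uperp)\dual\subset E\dual$ and to factor through the quotient $B\twoheadrightarrow B''$ after composition with $(K\uperp)\dual\twoheadrightarrow(K\uperp/K)\dual$, yielding a well-defined differential $d'':B''\to(K\uperp/K)\dual$. I would then define
\[
\GG \;:=\; \bigl[\,B''\xrightarrow{d''}(K\uperp/K)\dual\xrightarrow{(d'')\dual\circ q'}(B'')\dual\,\bigr].
\]
This is a symmetric complex of tor-amplitude $[-2,0]$ whose symmetric form comes from $q'$; the orientation is fixed by the chain of canonical isomorphisms $\det(\EE)\cong\det(E)\cong\det(K\uperp/K)\cong\det(\GG)$, where the middle one uses the filtration $0\subset K\subset K\uperp\subset E$ together with the canonical pairing $\det(K)\otimes\det(K\dual)\cong\O_\cX$, and the outer two use the standard determinant identity for three-term perfect complexes together with $\det(B)\cong\det(D)\otimes\det(B'')$.

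For the compatibility diagram, I would take $\DD:=\cone\bigl(\iota_\theta^{-1}[2]\circ\delta\dual[2]\colon\KK\dual[2]\to\EE\bigr)$, which supplies the top triangle after dualizing and identifying $\EE\dual[2]\cong\EE$. The isotropy hypothesis says precisely that $\KK\dual[2]\to\EE\xrightarrow{\delta}\KK$ vanishes in the derived category, so $\delta$ factors as $\bar\delta:\DD\to\KK$, and the $\GG$ constructed above is canonically isomorphic to $\cone(\bar\delta)[-1]$, yielding the bottom triangle; the maps $\alpha,\beta$ are the natural arrows. The main obstacle will be the chain-level verification that the arrows $\alpha\dual,\beta\dual$ in \eqref{Eq.Reduction} truly are the derived duals of $\alpha,\beta$ under the self-dualities $\EE\simeq\EE\dual[2]$ and $\GG\simeq\GG\dual[2]$, together with the parallel check that the orientation on $\GG$ matches the one induced by the diagram. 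Both reduce to explicit linear-algebra bookkeeping involving $K\subset K\uperp\subset E$, $D\subset B$, their quotients, and the canonical trivialization $\det(K)\otimes\det(K\dual)\cong\O_\cX$; such determinantal identities are collected in Appendix \ref{A.Reduction}.
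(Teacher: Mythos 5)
Your construction is essentially the paper's own proof: both choose a symmetric resolution of $\EE$ and a resolution of $\KK$ with $\delta$ represented by a degreewise surjective chain map (Proposition \ref{Prop.SymRes}), observe that the dualized inclusions make $K\subset E$ isotropic and $D\subset B$ a subbundle, set $\GG=[(B/D)\to(K\uperp/K)\dual\to(B/D)\dual]$, and read off the orientation from the determinant isomorphisms induced by the two triangles. The only cosmetic difference is that you spell out $\DD=\cone(\KK\dual[2]\to\EE)$ and the factorization $\bar\delta$ explicitly, which the paper leaves implicit.
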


\begin{proof}
By Proposition \ref{Prop.SymRes}, we can choose a symmetric resolution $\EE\cong[B\to E\dual \to B\dual]$ and a resolution $\KK\cong[K\dual\to D\dual]$ such that $\delta:\EE \to \KK$ can be represented by a surjective chain map. Then $D$ is a subbundle of $B$ and $K$ is an isotropic subbundle of $E$. Thus we can define the reduction as
\[\GG = \left[(B/D) \to (K\uperp/K) \to (B/D)\dual\right],\]
which fits into the diagram \eqref{Eq.Reduction}. The distinguished triangles in the diagram \eqref{Eq.Reduction} give us isomorphisms
\[\det(\EE)\cong\det(\DD)\otimes\det(\KK\dual)\cong\det(\GG)\otimes\det(\KK)\otimes\det(\KK\dual)\cong\det(\GG)\]
between the determinant line bundles. It completes the proof.
\end{proof}

\begin{remark}%\label{Rem.Reduction}
In Appendix \ref{A.Reduction}, we will prove that the reduction in Proposition \ref{Prop.Reduction} is unique as a symmetric complex.
\end{remark}

%%\begin{enumerate} \item The reduction exists over {\em any} algebraic stack, without assuming the resolution property. \item The reduction is unique as a symmetric complex. %\item The reduction of the reduction is also a reduction. \item In the reduction diagram \eqref{Eq.Reduction}, if the first two squares commute, then the remaining square also commutes, after an appropriate change of the other maps. \end{enumerate} %%We postpone the proofs to Appendix \ref{A.Reduction} since the proofs are quite long and technical.

%In Proposition \ref{Prop.IndSymCplx}, the commutativity of the left square of \eqref{Eq.Reduction} means that the two induced symmetric forms on $\GG$ by $\alpha$ and $\beta$ are equal, \[\alpha_*(\theta_{\EE}) = \beta_*( \theta_{\EE^{\KK}}) : \O_X \to (\GG\otimes\GG)[-2].\]

%% 1.2.5 Symmetric obstruction theories

%Let $f:X\to Y$ be a morphism of quasi-projective DM stacks.
%Let $f:X\to Y$ be a morphism of quasi-projective schemes.
%Let $f:X\to Y$ be a morphism from a quasi-projective scheme $X$ to any algebraic stack $Y$.

%Let $f:X \to Y$ be a DM morphism between algebraic stacks. Then the contagent complex of $f:X \to Y$ is concentrated in non-positive degrees, $\LL_f \in D^{\leq0}(X)$. Let $\bL_f=\tau^{\geq-1}\LL_f$ be the truncated cotangent complex. %i.e., the diagonal $X \to X \times_Y X$ is unramified. %where $\LL_f$ is the cotangent complex of $f$ \cite{Illusie} (see also \cite{Olsson}).

%\subsection{Symmetric obstruction theory}

%\subsection{Quadratic function}

Recall that any object $\EE$ in the derived category of a scheme $\cX$, concentrated in non-positive degrees, defines an abelian cone stack by \cite[Proposition 2.4]{BeFa}. In other words, there is a contravariant functor
\[\fC : D_{\mathrm{coh}}^{\leq0}(\cX) \to \left\{\text{abelian cone stacks over }\cX \right\} : \EE  \mapsto h^1/h^0((\tau^{\geq-1}\EE)_{{\mathrm{fl}}}\dual).\]
%\footnote{Moreover, this functor induces an equivalence of categories $D^{[-1,0]}_{\mathrm{coh}}(\cX) \cong \left\{\text{abelian cone stacks over }\cX \right\}$.}
We will also denote $\fC(\EE)$ by $\fC_\EE$.

% For any derived category object $\EE$ and a map $\phi:\FF \to \EE$, we will denote the associated abelian cone stack $\fC(\EE)$ by $\fC_{\EE}$ and the associated map $\fC(\phi)$ by $\fC_\phi$.

% For any morphism $\phi:\FF\to\EE$ in the derived category, let $\fC_{\phi}:\fC_{\EE} \to \fC_{\FF}$ denote the induced map $\fC(\phi)$ between the associated abelian cone stacks. If the cone of a morphism $\phi:\FF\to\EE$ is concentrated in degrees $\leq -2$, then the induced map $\fC_{\phi}:\fC_{\EE} \to \fC_{\FF}$ is a closed embedding by {\cite[Proposition 2.6]{BeFa}}.

For a symmetric complex $\EE$, the associated abelian cone stack $\fC_\EE$ has a {\em quadratic function} $\fq_\EE:\fC_\EE \to \A^1_\cX$ induced from the symmetric form $\theta$ of $\EE$.  This quadratic function $\fq_\EE$ will be used to define the isotropic condition of symmetric obstruction theories in \S\ref{ss.SOT}.

\begin{proposition}[Quadratic function]\label{Prop.QuadraticFunction}
For each symmetric complex $\EE$ on a scheme $\cX$, there exists a function
\[\fq_{\EE} : \fC_\EE \to \A^1_\cX\]
on the associated abelian cone stack $\fC_\EE$ satisfying the following properties:
\begin{enumerate}
\item If $\EE=E[1]$ for a special orthogonal bundle $E$, then 
\[\fq_\EE = \fq_E : \fC_\EE = E \to \A^1_\cX,\]
is the quadratic function on $E=\spec S^\bullet E\dual$ defined by the quadratic form $q_E \in \Gamma(\cX, S^2E\dual)$ of $E$.
\item For any morphism $f: \cY \to \cX$ of schemes, we have
\[f^*\fq_\EE = \fq_{f^*\EE} : f^*\fC_\EE = \fC_{f^*\EE} \to \A^1_\cY.\]
\item If $\GG$ is the reduction of $\EE$ by an isotropic subcomplex $\KK$, then the diagram
\beq\label{Eq.ReductionFormulaforQuadraticFunction}
\xymatrix{
\fC_{\DD} \ar[r]^{\fC_{\alpha}} \ar[d]_{\fC_{\beta}} & \fC_{\EE} \ar[d]^{\fq_{\EE}} \\
\fC_{\GG} \ar[r]_{\fq_{\GG}} & \A^1_\cX
}\eeq
commutes, where $\alpha$, $\beta$, $\DD$ are given as in Proposition \ref{Prop.Reduction}.
\end{enumerate}
Moreover, the function $\fq_\EE$ is uniquely determined by the above properties. We call $\fq_\EE:\fC_\EE \to \A^1_{\cX}$ the {\em quadratic function}.
\end{proposition}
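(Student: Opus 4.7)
The plan is to construct $\fq_\EE$ via a symmetric resolution and verify the three listed properties; uniqueness and independence from the resolution will follow from an abstract description in terms of the symmetric form $\theta$. I would start by invoking Proposition~\ref{Prop.SymRes} to pick locally a symmetric resolution $\EE \cong [B \xrightarrow{d} E \xrightarrow{d\dual q} B\dual]$ with $E$ special orthogonal; the chain relation $d\dual q d = 0$ makes $d(B)$ isotropic in $E$. Enlarging the resolution by a hyperbolic summand if necessary, I may assume $d$ is fiberwise injective, so that $B$ identifies with an isotropic subbundle of $E$. Then $\trunc\EE \cong [E/B \to B\dual]$ has dual $[B \hookrightarrow B\uperp]$ via the iso $(E/B)\dual \cong B\uperp$ coming from $q$, and hence $\fC_\EE \cong B\uperp/B$ as a vector bundle over $\cX$. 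Define $\fq_\EE : B\uperp/B \to \A^1_\cX$ as the descent of $q_E|_{B\uperp}$; this is valid because $q_E$ vanishes on $B$ (as $B$ is isotropic) and is invariant under translation by $B$ on $B\uperp$ (since $B \subseteq B\uperp$).

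Properties (1) and (2) are then straightforward: (1) follows from the trivial resolution with $B=0$, and (2) holds since symmetric resolutions, $B\uperp$, and $q_E$ all commute with base change. For (3), I would apply the second part of Proposition~\ref{Prop.SymRes} to pick compatible resolutions of $\EE$ and $\KK \cong [K\dual \to D\dual]$ such that $\delta$ is a degreewise surjective chain map. The chain-map conditions force $B \subseteq K\uperp$ and $D = B \cap K$ (after suitable refinement). Following Proposition~\ref{Prop.Reduction} one obtains $\GG \cong [(B/D) \to K\uperp/K \to (B/D)\dual]$, while the intermediate complex reads $\DD \cong [(B/D) \to E/K \to B\dual]$. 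A direct calculation yields $\fC_\DD \cong (B\uperp \cap K\uperp)/B$; the map $\fC_\alpha$ is the natural inclusion $(B\uperp \cap K\uperp)/B \hookrightarrow B\uperp/B = \fC_\EE$, and $\fC_\beta$ is the further quotient by $(B+K)/B$, landing in $\fC_\GG \cong (B\uperp \cap K\uperp)/(B+K)$. Both sides of diagram~\eqref{Eq.ReductionFormulaforQuadraticFunction} then send the class of any $v \in B\uperp \cap K\uperp$ to $q_E(v) \in \A^1_\cX$, proving commutativity.

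For well-definedness and uniqueness, I would give an abstract description: composing the symmetric form $\theta: \O_\cX \to (\EE \otimes \EE)[-2]$ with the natural map $\EE \to \trunc \EE$ yields, via the universal property of $\fC_\EE$, a canonical global function $\fC_\EE \to \A^1_\cX$ which under $\fC_\EE \cong B\uperp/B$ coincides with the descended $q_E$. This shows the construction is independent of the symmetric resolution, and pins down $\fq_\EE$ uniquely: any function satisfying (1)--(3) must coincide with the descended $q_E$ on shifted orthogonal bundles, be compatible with base change, and descend through reductions, which together with Proposition~\ref{Prop.SymRes} cover all cases. The main obstacle I foresee lies in the detailed verification of (3): $\DD$ is not symmetric and carries no intrinsic quadratic structure, so identifying $\fC_\DD$, $\fC_\alpha$, and $\fC_\beta$ explicitly in terms of orthogonal-complement quotients requires careful bookkeeping through the distinguished triangles of~\eqref{Eq.Reduction}. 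A secondary issue is ensuring the hyperbolic refinement used to make $d$ fiberwise injective does not alter the descended function, which is handled by the abstract description of $\fq_\EE$.
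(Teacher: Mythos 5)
There is a genuine gap at the foundation of your construction. You claim that after enlarging a symmetric resolution $[B\xrightarrow{d} E\dual \to B\dual]\cong\EE$ by a hyperbolic summand you may assume $d$ is fiberwise injective, so that $\fC_\EE$ becomes the vector bundle $B\uperp/B$. This is impossible in every case of interest: for any resolution by vector bundles, $\ker(d_x)=h^{-2}(\EE\otimes^{\bL}k(x))$ is a quasi-isomorphism invariant, and the self-duality $\EE\dual\cong\EE[-2]$ forces $h^{-2}(\EE\otimes^{\bL}k(x))\cong\bigl(h^{0}(\EE\otimes^{\bL}k(x))\bigr)\dual$. Hence $d$ fails to be fiberwise injective exactly at the points where $h^0(\EE\otimes^{\bL}k(x))\neq 0$, i.e.\ wherever $\fC_\EE$ has a nontrivial fiber, and no hyperbolic enlargement can repair this. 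Consequently $\fC_\EE$ is not a vector bundle: it is the cone stack $[C_Q/B]$ with $Q=\coker(B\to E\dual)$ a coherent sheaf that is not locally free, and all the subsequent identifications in your part (3) ($\fC_\DD\cong(B\uperp\cap K\uperp)/B$, $\fC_\GG\cong(B\uperp\cap K\uperp)/(B+K)$ as bundles) collapse with it.

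The paper instead defines $\fq_\EE$ on the cone $C_Q\subseteq E$ as the restriction of $\fq_E$ and then proves two things your proposal does not supply in usable form: (a) invariance of $\fq_E|_{C_Q}$ under the $B$-action on $C_Q$, which is a genuine computation using the decomposition of $S^2(Q\oplus B\dual)$ and the vanishing of $B\to E\to E\dual\to Q$ and $B\to E\to E\dual \to B\dual$; and (b) independence of the choice of symmetric resolution, proved by comparing two resolutions through a chain map and an explicit homotopy correcting the two quadratic forms. Your closing "abstract description" (composing $\theta$ with $\EE\to\trunc\EE$ and invoking a universal property of $\fC_\EE$) gestures at the derived-geometric description the paper records only as a remark, but as written it does not produce a function on the cone stack, and in particular cannot substitute for (a) and (b). Parts (1) and (2) of your argument are fine once the construction is set up correctly, and your strategy for (3) (compatible surjective resolutions of $\EE$ and $\KK$) matches the paper's, but it must be carried out on the cones $C_Q$, $C_R$, $C_S$ inside $E$, $K\uperp$, $K\uperp/K$ rather than on quotient bundles.
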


\begin{proof}
We first construct the quadratic function $\fq_\EE$ when $\cX$ is a quasi-projective scheme. Choose a symmetric resolution $[B\to E\dual \to B\dual] \cong \EE$ of the symmetric complex $\EE$ as in Proposition \ref{Prop.SymRes}. Let $Q=\coker(B\to E\dual)$ be the cokernel and $C_Q=\spec S^\bullet Q$ be the abelian cone associated to $Q$. Then we have $\fC_\EE=[C_Q/B]$ by definition. Consider the function
\beq\label{4}\fq_E|_{C_Q} : C_Q \hookrightarrow E \to \A^1_\cX,\eeq
where $\fq_E$ is the quadratic function of the special orthogonal bundle $E$.

We claim that the function $\fq_E|_{C_Q}$ in \eqref{4} is invariant under the action of $B$ on $C_Q$ so that it descends to a function
\beq\label{q5}\fq_{\EE} : \fC_\EE=[C_Q/B] \to \A^1_\cX\eeq
on the abelian cone stack $\fC_\EE$. Indeed, it suffices to show that the two maps
\[\xymatrix{
B \times C_Q  \ar@<.5ex>[r]^<<<<<{\sigma} \ar@<-.5ex>[r]_<<<<<{p_2} & C_Q \ar@{^{(}->}[r] & E \ar[r]^{\fq_E} & \A^1_\cX
}\]
%\[B\times C_Q\rightrightarrows C_Q \hookrightarrow E \mapright{\fq_E}  \A^1_\cX\]
coincide, where $\sigma$ is the action and $p_2$ is the projection. Equivalently, it suffices to show that the two maps 
\beq\label{q2}
\xymatrix{
\O_\cX \ar[r]^<<<<<{\fq_E} &  S^2(E\dual) \ar[r] & S^2(Q) \ar@<.5ex>[r]^<<<<{S^2(1,c)} \ar@<-.5ex>[r]_<<<<{S^2(1,0)} & S^2(Q\oplus B\dual)
}\eeq
coincide, where $c:Q \to B\dual$ is the canonical map induced by $d\dual :E\dual \to B\dual$.
%The last two maps in \eqref{q2} are given by the maps
%\[(1,c) : Q \to Q \oplus B\dual \and (1,0) : Q \to Q \oplus B\dual\]
Note that there is a canonical decomposition
\[S^2(Q\oplus B\dual) = (S^2Q) \oplus (Q \otimes B\dual) \oplus (S^2B\dual).\]
Then we can deduce that the two maps in \eqref{q2} coincide since the two maps
\[B \mapright{d} E \mapright{\fq_E} E\dual \to Q \and B \mapright{d} E \mapright{\fq_E} E\dual \mapright{d\dual} B\dual\]
are zero. It proves the claim.

We now claim that the map $\fq_\EE :\fC_\EE \to \A^1_\cX$ in \eqref{q5} is independent of the choice of the symmetric resolution. Indeed, consider two symmetric resolutions
\[[B_1 \to E_1\dual \to B_1\dual] \cong \EE \cong [B_2 \to E_2\dual \to B_2\dual]\]
of $\EE$. Let $Q_1=\coker (B_1\to E_1\dual)$ and $Q_2=\coker(B_2\to E_2\dual)$ be the cokernels. %Let $\fq_1$ and $\fq_2$ be the quadratic functions of $E_1$ and $E_2$, respectively. 
Let
\[\fq_1, \fq_2 : \fC_\EE \rightrightarrows \A^1_\cX\]
be the two functions induced by the functions $\fq_{E_1}|_{C_{Q_1}}:C_{Q_1}\to \A^1$ and $\fq_{E_2}|_{C_{Q_2}}:C_{Q_2}\to\A^1$, respectively. We will show that $\fq_1=\fq_2$. Since the statement is local on $\cX$, we may assume that $\cX$ is an affine scheme. Then the identity map $\id:\EE\dual \to \EE\dual$ can be represented by a chain map
\[\xymatrix{
\EE\dual \ar@{=}[d] & B_2 \ar[r]^{d_2} \ar[d]^u & E_2 \ar[r]^{d_2\dual q_2} \ar[d]^v& B_2\dual \ar[d]^{w\dual}\\
\EE\dual & B_1 \ar[r]^{d_1} & E_1 \ar[r]^{d_1\dual q_1} & B_1\dual }\]
for some $u$, $v$, $w$, where $q_1:E_1 \cong E_1\dual$ and $q_2 :E_2 \cong E_2\dual$ are the symmetric forms. Hence, we obtain a commutative diagram
\[\xymatrix{
C_{Q_2}\ar[r] \ar@{.>}[d]^r & E_2 \ar[d]^v \\
C_{Q_1} \ar[r] & E_1
}\]
for a unique dotted arrow $r:C_{Q_2} \to C_{Q_1}$. Consider a diagram
\[\xymatrix{
\EE\dual \ar[d]^{\iota_\theta} & B_2 \ar[r]^{d_2}  \ar@<-.5ex>[d]_{1} \ar@<.5ex>[d]^{w u} & E_2 \ar[r]^{d_2\dual q_2} \ar@<-.5ex>[d]_{q_2} \ar@<.5ex>[d]^{v\dual q_1 v} 
& B_2\dual \ar@<-.5ex>[d]_{1} \ar@<.5ex>[d]^{u\dual w\dual} 
\\
\EE[-2] &B_2 \ar[r]_{q_2 d_2} & E_2 \dual \ar[r]_{d_2\dual} & B_2\dual
}\]
of two chain maps representing the same map $\iota_\theta :\EE\dual \to \EE[-2]$ in the derived category. Since $\cX$ is affine, there exist maps $h:E_2 \to B_2$ and $k\dual : B_2\dual \to E_2\dual$ such that
\[q_2 - v\dual  q_1 v = q_2 d_2  h + k\dual  d_2\dual  q_2 : E_2 \to E_2\dual.\]
Equivalently, if we let $q_1 \in \Gamma(\cX, E_1\dual\otimes E_1\dual)$, $q_2 \in \Gamma(\cX, E_2\dual\otimes E_2\dual)$ by abuse of notation, then we have
\beq\label{10}
q_2 = (v\dual\otimes v\dual)(q_1) + (h\dual d_2\dual \otimes 1)(q_2)+(1\otimes k\dual d_2\dual)(q_2) \eeq
as global sections in $\Gamma(\cX,E_2\dual\otimes E_2\dual)$. If we compose \eqref{10} with the canonical map $E_2\dual \otimes E_2\dual \to Q_2\otimes Q_2$, then we obtain a commutative diagram
\[\xymatrix{
C_{Q_2} \ar[rd]^{\fq_{E_2}} \ar[d]^r & \\
C_{Q_1} \ar[r]_{\fq_{E_1}} & \A^1_\cX
}\]
since the composition
\[\O_\cX \mapright{q_2} E_2\dual\otimes E_2\dual \mapright{} B_2\dual \otimes Q_2\]
is zero. Since the projection map $C_{Q_2} \to \fC_\EE=[C_{Q_2}/B_2]$ is smooth and surjective, we have $\fq_1=\fq_2$. It proves the claim.

%\[B_2 \mapright{d_2} E_2 \mapright{q_2} (E_2)\dual \to Q_2\]
%\[(e\otimes e)q_2 = (e\otimes e)(v\dual\otimes v\dual)q_1\]
%\[q_2|_{C_{Q_2}} = q_1|_{C_{Q_1}} \circ r : C_{Q_2} \to \A^1_\cX.\]
%\[\xymatrix{ C_{Q_2}\ar@/^0.4cm/[rrd]^{q_{2}} \ar[rd] \ar[dd]\\ & \fC_\EE \ar@<-.5ex>[r]_{\fq_1}\ar@<.5ex>[r]^{\fq_2} & \A^1_\cX\\ C_{Q_1}\ar@/_0.4cm/[rru]_{q_1} \ar[ru] }\]

We have shown that there exists a well-defined quadratic function $\fq_\EE:\fC_\EE\to \A^1_\cX$ for a symmetric complex $\EE$ on a quasi-projective scheme $\cX$. This can be generalized to an arbitrary scheme $\cX$ since we can always construct the quadratic function locally, and then glue them globally. The result in the previous paragraph assures that this is possible. Moreover, (1) and (2) follow directly from the construction.

We will now prove \eqref{Eq.ReductionFormulaforQuadraticFunction} for an isotropic subcomple $\KK$ of $\EE$. By Proposition \ref{Prop.SymRes}, we can choose a symmetric resolution $[B\to E\dual \to B\dual] \cong \EE$ and a resolution $[0 \to K\dual \to D\dual] \cong \KK$ such that $\delta:\EE\to\KK$ can be represented by a surjective chain map. Then we have induced resolutions
\[\DD \cong [(B/D) \to (K\uperp)\dual \to B\dual] \and \GG \cong[(B/D) \to (K\uperp/K) \dual \to (B/D)\dual]\]
where the second one is a symmetric resolution. Let $Q=\coker(B \to E\dual)$ as before, and let
\[R = \coker((B/D)  \to (K\uperp)\dual) \and S= \coker((B/D) \to (K\uperp/K)\dual)\]
be the cokernels. We claim that the two squares
\beq\label{q7}
\xymatrix{ K\uperp \ar[r] \ar[d] & E \ar[d]^{\fq_E} \\ K\uperp/K \ar[r]^-{\fq_{K\uperp/K}} & \A^1_\cX }\qquad
\xymatrix{
C_R \ar[r] \ar[d] & C_Q \ar[d]^{\fq_E} \\
C_S \ar[r]^-{\fq_{K\uperp/K}} \ar[r] & \A^1_\cX
}\eeq
commute. Indeed, the left square in \eqref{q7} commutes since the quadratic form on the reduction $K\uperp/K$ is induced from the quadratic form of $E$. The right square in \eqref{q7} commutes since it is a restriction of the left square in \eqref{q7}. Since the projection map $C_R \to \fC_{\DD} = [C_R/ B]$ is smooth and surjecitve, the right square in \eqref{q7} implies \eqref{Eq.ReductionFormulaforQuadraticFunction}. 

Note that for any symmetric resolution $[B \to E \to B\dual] \cong \EE$, the symmetric complex $\EE$ is the reduction of $E[1]$ by $B[1]$. The uniqueness of the quadratic function $\fq_\EE$ follows from this observation.
\end{proof}

Using the language of derived algebraic geometry, the quadratic function $\fq_\EE:\fC_\EE \to \A^1_{\cX}$ on the abelian cone stack $\fC_\EE$ of a symmetric complex $\EE$ has the following simple description:

%% Derived interpretation of the quadratic function 
\begin{remark}%\label{Rmk.DerivedInterpretationofquadraticfunction}
Consider the total space  $\mathrm{Tot}(\EE\dual[1])$ of the perfect complex $\EE\dual[1]$ of tor-amplitude $[-1,1]$, which is a derived Artin stack. Then we have a weak homotopy equivalence
\[\mathrm{Map}_{\mathrm{dSt}_{\cX}}(\mathrm{Tot}(\EE\dual[1]), \A^1_{\cX}) \cong \mathrm{Map}_{L_{\mathrm{qcoh}}(\cX)}(\O_\cX,S^{\bullet}(\EE[-1]))\] 
between the mapping spaces. Thus the symmetric form $\theta \in \Gamma(\cX, S^2 (\EE[-1]))$ defines a function
\beq\label{dq1}
\mathrm{Tot}(\EE\dual[1]) \to \A^1_{\cX}.
\eeq
The quadratic function $\fq_{\EE}:\fC_{\EE} \to \A^1_{\cX}$ in Proposition \ref{Prop.QuadraticFunction} is the restriction of the function \eqref{dq1} to its classical truncation $\mathrm{Tot}(\EE\dual[1])_{\mathrm{cl}} =\fC_\EE$.
\end{remark}

\subsection{Symmetric obstruction theory}\label{ss.SOT}

To construct a square root virtual pullback for a symmetric obstruction theory, we need an additional assumption called the {\em isotropic condition}. In this subsection. we define the isotropic condition and explore its basic properties.

We first fix the definition of {\em symmetric obstruction theories}.

\begin{definition}[Symmetric obstruction theory]\label{Def.SOT}
A {\em symmetric obstruction theory} for a morphism $f:\cX\to \cY$ of schemes is a morphism $\phi:\EE\to \bL_f$ in the derived category of $\cX$ such that
\begin{enumerate}
\item $\EE$ is a symmetric complex in the sense of Definition \ref{Def.SymCplx},
\item $\phi$ is an obstruction theory in the sense of Behrend-Fantechi \cite{BeFa}, i.e., $h^0(\phi)$ is bijective and $h^{-1}(\phi)$ is surjective,
\end{enumerate}
where $\bL_f=\tau^{\geq-1}\LL_f$ is the truncated cotangent complex.
\end{definition}

%% In this paper, all symmetric obstruction theories are assumed to be of amplitude $[-2,0]$ and oriented, unless stated otherwise.

Let $f:\cX\to \cY$ be a morphism of schemes equipped with a symmetric obstruction theory $\phi:\EE\to\bL_f$. The obstruction theory $\phi$ induces a closed embedding
\[\fC_f \hookrightarrow  \fC_{\EE}\]
of the intrinsic normal cone $\fC_f$ to the {\em virtual normal cone} $\fC_\EE$ by \cite[Proposition 2.6]{BeFa}.
%, since $\fC_f$ is contained in the intrinsic normal sheaf $\fN_f=\fC_{\bL_f}$. 
Since $\EE$ is a symmetric complex, we have a quadratic function
\[\fq_\EE :\fC_\EE \to \A^1_\cX\]
on the virtual normal cone $\fC_\EE$ by Proposition \ref{Prop.QuadraticFunction}.

\begin{definition}[Isotropic condition]\label{Def.Isotropic}
We say that a symmetric obstruction theory $\phi:\EE \to \bL_f$ satisfies the {\em isotropic condition} if the intrinsic normal cone $\fC_f$ is {\em isotropic} in the virtual normal cone $\fC_\EE$, i.e., the restriction
\[\fq_{\EE}|_{\fC_f} : \fC_f \hookrightarrow \fC_{\EE} \to \A^1\]
of the quadratic function $\fq_{\EE}$ on $\fC_\EE$ to $\fC_f$ vanishes.
\end{definition}

%% 1.3.6 Comparison to the isotropic condition in Oh-Thomas

We now compare the definition of the isotropic condition in Definition \ref{Def.Isotropic} and the isotropic condition of Oh-Thomas in \cite[Proposition 4.3]{OT}.

\begin{lemma}[Comparison of isotropic conditions]\label{Lem.ComparisonofIsotropic}
%Let $\phi:\EE\to\bL_f$ be a symmetric obstruction theory for a DM morphism $f:X \to Y$.
Assume that there exists a symmetric resolution $\EE\cong [B\to E\dual \to B\dual]$. Let $\FF=[E\dual \to B\dual]$ be the stupid truncation. Consider a fiber diagram
\beq\label{l1}
\xymatrix{ C \ar[r] \ar[d]^p   & C_Q \ar[r] \ar[d] & E \ar[d]\\ \fC_f \ar[r] & \fC_\EE \ar[r] & [E/B]}\eeq
where $Q=\coker(B \to E\dual)$, and the bottom horizontal arrows are the closed embeddings induced by the maps
\[\FF=[0 \to E\dual \to B\dual] \to \EE=[B\to E\dual \to B\dual] \to \bL_f.\]
Then $\phi$ satisfies the isotropic condition if and only if the subcone $C$ of $E$ is isotropic. 
\end{lemma}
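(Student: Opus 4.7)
The strategy is to invoke the explicit construction of the quadratic function $\fq_\EE$ from Proposition \ref{Prop.QuadraticFunction} together with smooth descent along the atlas $C_Q \to \fC_\EE = [C_Q/B]$. Recall that in the proof of Proposition \ref{Prop.QuadraticFunction}, the quadratic function $\fq_\EE$ was defined precisely as the unique function on $[C_Q/B]$ whose pullback along the smooth surjection $C_Q \twoheadrightarrow [C_Q/B]$ equals the restriction $\fq_E|_{C_Q}$ of the quadratic form of the special orthogonal bundle $E$.

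First I would observe that in the fiber diagram \eqref{l1}, the map $p: C \to \fC_f$ is smooth and surjective, being the base change of the smooth surjection $C_Q \to \fC_\EE$. In particular, $p$ is faithfully flat, so smooth descent of functions yields the equivalence: a morphism $\fC_f \to \A^1_\cX$ is zero if and only if its pullback along $p$ is zero.

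Next I would identify this pullback. By the commutativity of the right square in \eqref{l1} and the defining property of $\fq_\EE$, we have
\[
p^*\bigl(\fq_\EE|_{\fC_f}\bigr) \;=\; \bigl(C \hookrightarrow C_Q \hookrightarrow E \xrightarrow{\fq_E} \A^1_\cX\bigr) \;=\; \fq_E|_C.
\]
Combining this with the descent statement from the previous paragraph gives $\fq_\EE|_{\fC_f}=0$ if and only if $\fq_E|_C=0$, which is exactly the assertion that $C$ is an isotropic subcone of $E$ in the sense of Oh-Thomas.

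There is no substantial obstacle here; the whole argument is essentially a bookkeeping exercise given Proposition \ref{Prop.QuadraticFunction}. The only point requiring any care is verifying that smooth descent applies to functions valued in $\A^1_\cX$, but this is standard since smooth surjections are universal effective epimorphisms on the big Zariski site and $\A^1_\cX$ is a sheaf. The apparent asymmetry between the stacky quotient $\fC_\EE=[C_Q/B]$ and the honest scheme $E$ is resolved precisely because $\fq_E|_{C_Q}$ is $B$-invariant, which was already verified in the proof of Proposition \ref{Prop.QuadraticFunction}.
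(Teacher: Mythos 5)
Your proof is correct and follows essentially the same route as the paper: both arguments reduce to the identity $p^*(\fq_\EE|_{\fC_f}) = \fq_E|_C$ and then conclude by smoothness and surjectivity of $p$. The only cosmetic difference is that the paper obtains the commutative square $\fq_\EE \circ (C_Q \to \fC_\EE) = \fq_E|_{C_Q}$ by viewing $\EE$ as the reduction of $E[1]$ by $B[1]$ and citing Proposition \ref{Prop.QuadraticFunction}(3), whereas you appeal directly to the descent construction of $\fq_\EE$ in the proof of that proposition; these are interchangeable.
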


\begin{proof}
From the commutative diagram
\[\xymatrix
{\FF \ar[r] \ar[d] & E\dual[1] \ar[r] \ar[d] & B\dual[1] \ar@{=}[d]\\
\EE \ar[r] & \FF\dual[2] \ar[r] & B[1]
}\]
we deduce that $\EE$ is the reduction of $E[1]$ by $B[1]$.
%\[\EE = (E[1])^{B[1]}.\]
Hence by Proposition \ref{Prop.QuadraticFunction}(3), we obtain a commutative diagram
\[\xymatrix{
C_Q \ar[r] \ar[d] & E \ar[d]^{\fq_E}\\
\fC_{\EE} \ar[r]^{\fq_{\EE}} & \A^1
}\]
where $\fC_{\FF\dual[2]} = C_Q$. From the diagram \eqref{l1}, we obtain
\[p^*(\fq_{\EE}|_{\fC_f}) = \fq_E|_C.\]
Since the projection map $p:C \to \fC_f$ is smooth and surjective, $\fq_{\EE}|_{\fC_f}$ vanishes if and only if  $\fq_E|_C$ vanishes. It completes the proof.
\end{proof}

%% 1.3.7 Criterion for isotropic condition

A more refined version of the above lemma is the following criterion:

\begin{proposition}[Criterion for isotropic condition]\label{Prop.Isotropic}
Consider a factorization
\[\xymatrix{
& \widetilde{\cY} \ar[d]^{\overline{f}}\\
\cX \ar@{^{(}->}[ru]^{\tf} \ar[r]_{f} & \cY
}\]
of $f$ by a closed immersion $\tf$ and a smooth morphism $\overline{f}$. Assume that there is a symmetric resolution \eqref{Eq.SymRes} of $\EE$ such that $\phi:\EE\to \bL_f$ is represented by a chain map
\[\xymatrix{
\EE \ar[d]^{\phi} & B \ar[r]^{q\circ d} \ar[d] & E\dual \ar[r]^{d \dual} \ar[d]^{\tphi} & B\dual \ar[d]\\
\bL_f & 0  \ar[r] & \I/\I^2 \ar[r] & \Omega_{\overline{f}}|_\cX 
}\]
where $\I=\I_{\tf}$ is the ideal sheaf.  Then $\phi$ satisfies the isotropic condition if and only if the composition
\[C_{\cX/\widetilde{\cY}} \hookrightarrow N_{\cX/\widetilde{\cY}} \mapright{C({\tphi})} E \mapright{\fq_E} \A^1\]
vanishes.
%%%% In particular, if $\tphi$ is surjective, then $\phi$ satisfies the isotropic condition if and only if the induced symmetric obstruction theory \[\tphi[1] : E\dual[1] \to \I/\I^2[1]=\bL_{\tf}\] satisfies the isotropic condition. 
\end{proposition}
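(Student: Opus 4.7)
The plan is to combine Lemma \ref{Lem.ComparisonofIsotropic} with a concrete analysis of the cone $C$ using the factorization $\cX \hookrightarrow \widetilde{\cY} \to \cY$. By Lemma \ref{Lem.ComparisonofIsotropic}, it suffices to show that the subcone $C \subseteq E$ is isotropic if and only if the composition $C_{\cX/\widetilde{\cY}} \to N_{\cX/\widetilde{\cY}} \to E \to \A^1$ vanishes. A direct inspection identifies $C_Q \hookrightarrow E$ with the inclusion of the orthogonal complement $K\uperp$ of the isotropic subbundle $K := d(B) \subseteq E$, so that $\fC_{\EE} = [K\uperp/B]$ with $B$ acting on $K\uperp$ by translation through $d: B \to K \subseteq K\uperp$.

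First, I would exploit the factorization to identify $\bL_f = [\I/\I^2 \to \Omega_{\overline{f}}|_\cX]$ and hence $\fC_f = [C_{\cX/\widetilde{\cY}}/T_{\overline{f}}|_\cX]$, sitting inside $[N_{\cX/\widetilde{\cY}}/T_{\overline{f}}|_\cX]$. The cone functor applied to the given chain map yields $C(\tphi): N_{\cX/\widetilde{\cY}} \to E$. The commutativity of the chain map at degree $-2$ reads $\tphi \circ (q \circ d) = 0$, so $\tphi$ factors through $Q = \coker(B \to E\dual)$; dually, $C(\tphi)$ factors through $C_Q = K\uperp$. Moreover, by construction of the closed embedding $\fC_f \hookrightarrow \fC_\EE$ from the chain map, the two compositions
\[C_{\cX/\widetilde{\cY}} \to K\uperp \to \fC_\EE \quad\text{and}\quad C_{\cX/\widetilde{\cY}} \to \fC_f \hookrightarrow \fC_\EE\]
agree; hence $C_{\cX/\widetilde{\cY}} \to K\uperp$ lifts canonically to $C_{\cX/\widetilde{\cY}} \to C = K\uperp \times_{\fC_\EE} \fC_f$.

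Next, I would form the common smooth cover $\widetilde{C} := C \times_{\fC_f} C_{\cX/\widetilde{\cY}}$, which is a $B$-torsor over $C_{\cX/\widetilde{\cY}}$ and a $T_{\overline{f}}|_\cX$-torsor over $C$, and compare the two resulting maps $\widetilde{C} \to E$. Both land in $K\uperp$ and descend to the same morphism $\widetilde{C} \to \fC_{\EE} = [K\uperp/B]$, so they differ by $d \circ t$ for some function $t: \widetilde{C} \to B$. Since $K$ is isotropic in $E$, for $e \in K\uperp$ and $k \in K$ we have
\[\fq_E(e + k) = \fq_E(e) + 2\langle e, k \rangle_q + \fq_E(k) = \fq_E(e),\]
so $\fq_E$ is invariant under the $B$-action on $K\uperp$. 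Therefore the two pullbacks of $\fq_E$ to $\widetilde{C}$ coincide, and by smooth descent $\fq_E|_C = 0$ if and only if $\fq_E|_{C_{\cX/\widetilde{\cY}}} = 0$. Combined with Lemma \ref{Lem.ComparisonofIsotropic}, this is the desired equivalence.

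The main obstacle will be the bookkeeping required to identify the canonical lift $C_{\cX/\widetilde{\cY}} \to C$ and to compare the two maps $\widetilde{C} \to E$ carefully at the scheme level. Once these cone-stack identifications are pinned down, the geometric heart — invariance of $\fq_E$ under $K$-translation on $K\uperp$, which is essentially the reduction $\fq_E|_{K\uperp}$ descending to $\fq_{K\uperp/K}$ already used in Proposition \ref{Prop.QuadraticFunction}(3) — is immediate.
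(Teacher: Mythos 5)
Your proposal is correct and follows essentially the same route as the paper: both reduce the question to comparing the two smooth presentations $C\to\fC_f$ and $C_{\cX/\widetilde{\cY}}\to\fC_f$ and then use that $\fq_E|_{C_Q}$ descends to $\fq_{\EE}$ on $\fC_{\EE}=[C_Q/B]$ (the paper cites Proposition \ref{Prop.QuadraticFunction}(3), noting $\EE$ is the reduction of $E[1]$ by $B[1]$, where you re-derive the $B$-invariance explicitly on the common cover $\widetilde{C}$). The only cosmetic caveat is that $d(B)$ need not be a subbundle $K$, but your argument never actually needs this — only that $C_Q$ is cut out by the image of $q\circ d$ and that $d\dual\circ q\circ d=0$.
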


\begin{proof}
As in Lemma \ref{Lem.ComparisonofIsotropic}, we have a commutative diagram
\[\xymatrix{
C_{\tf} \ar[r] \ar[rd]_{r} & C \ar[r] \ar[d] & C_Q \ar[r] \ar[d]& E \ar[d] \\
& \fC_f \ar[r] & \fC_{\EE} \ar[r] & [E/B].
}\]
Since $\EE$ is the reduction of $E[1]$ by $B[1]$, we have
\[r^*(\fq_{\EE}|_{\fC_f}) = \fq_{E}|_{C_{\tf}}\]
by Proposition \ref{Prop.QuadraticFunction}(3).
%the diagram \[\xymatrix{ C_{\tf} \ar[d] \ar[rd]^{\fq_{E}|_{C_{\tf}}} & \\\fC_f \ar[r]_{\fq_{\EE}|_{\fC_f}} & \A^1}\]
Since the projection map $r:C_{\tf}\to \fC_f$ is smooth and surjective, the two vanishing conditions are equivalent.
\end{proof}

%%Note that the quadratic function $\fq_E$ is {\em not} invariant under the action of $B$ on $E$. Thus we considered the virtual normal cone $\fC_{\EE}$ instead of the vector bundle stack $[E/B]$ for defining the isotropic condition.

%%%% Proposition \ref{Prop.Isotropic} will be used in the proof of the functoriality of square root virtual pullbacks in \S\ref{ss.Functoriality.Deformation}.

%%%% \begin{remark} The isotropic condition in Definition \ref{Def.Isotropic} does not use a symmetric resolution so that it can be applied to any DM type morphism between algebraic stacks. We use the virtual normal cone $\fC_{\EE}$, instead of the vector bundle stack $\fC_{\FF}=[E/B]$ since the quadratic function $\fq_E$ of the special orthogonal bundle $E$ is not invariant under the action of $B$ so that it does not descend to $[E/B]$.\end{remark}

\subsection{Square root virtual pullback}%\label{ss.sqrtVP}

Let $f:\cX \to \cY$ be a morphism of quasi-projective schemes equipped with a symmetric obstruction theory $\phi:\EE\to\bL_f$ satisfying the isotropic condition. By Proposition \ref{Prop.SymRes}, we can choose a symmetric resolution $[B \to E\dual \to B\dual] \cong \EE$. The stupid truncation $[E\dual \to B\dual] =\FF$ gives us a fiber diagram \[\xymatrix{ C \ar[r] \ar[d]^p & C_Q \ar[r] \ar[d] & E \ar[d]\\ \fC_f \ar[r] & \fC_\EE \ar[r] & [E/B]}\] where $C$ is an isotropic subcone of $E$ by Lemma \ref{Lem.ComparisonofIsotropic}. Let $\tau \in \Gamma(C,E|_C)$ be the tautological section. Then the zero section $0 :X \hookrightarrow C$ is the zero locus of the tautological section $\tau$.
%\[\xymatrix{  & E|_C \ar[d] \\   X \ar@{^{(}->}[r]^{0} & C \ar@/_.4cm/[u]_{\tau} .}\] 

\begin{definition}[Square root virtual pullback]\label{Def.sqrtVP}
The {\em square root virtual pullback} is defined as the composition:
\[\sqrt{f^!} : A_* (\cY) \xrightarrow{\mathrm{sp}_f} A_* (\fC_f) \xrightarrow{p^*} A_{*+\rank(B)} (C) \xrightarrow{\sqrt{e}(E|_C,\tau)} A_{*+\frac12\rank{\EE}} (\cX),\]
where $\mathrm{sp}_f:A_*(\cY)\to A_*(\fC_f)$ denotes the specialization map \cite{Kresch,Man}, and $\sqe(E|_C,\tau)$ denotes the localized square root Euler class \cite{OT}.
\end{definition}

%%\begin{remark} The quasi-projectivity assumption in Definition \ref{Def.sqrtVP} is not necessary. In Appendix A, we will generalized the square root virtual pullback to a DM morphism $f:\cX \to \cY$ between algebraic stacks, under a mild assumption.\end{remark}

\begin{lemma}[cf. \cite{OT}]
The square root virtual pullback is independent of the choice of the symmetric resolution.
\end{lemma}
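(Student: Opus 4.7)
The plan is to reduce the comparison of any two symmetric resolutions to a single elementary move --- hyperbolic stabilization --- and then apply the localized reduction formula of Proposition \ref{Prop.LocFunct}.

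First, I would show that any two symmetric resolutions $[B_i \to E_i\dual \to B_i\dual] \cong \EE$ for $i=1,2$ admit a common refinement $[B \to E\dual \to B\dual] \cong \EE$ together with degreewise surjective chain maps to both that respect the symmetric forms and orientations. This is a mild variant of the existence argument in Proposition \ref{Prop.SymRes}: one takes a vector bundle surjecting onto $h^0(\EE)$ that factors through both given presentations, then dualizes and symmetrizes. A standard argument then reduces us to the case where resolution~2 is obtained from resolution~1 by adding an acyclic hyperbolic summand: $B_2 = B_1 \oplus V$, $E_2 = E_1 \oplus (V \oplus V\dual)$ for some auxiliary vector bundle $V$, with the differentials extending by the inclusion $V \hookrightarrow V \oplus V\dual$ and the projection $V \oplus V\dual \twoheadrightarrow V\dual$.

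In this stabilized case, a direct computation of cokernels yields $Q_2 \cong Q_1 \oplus V\dual$, so $C_{Q_2} \cong C_{Q_1} \times V$ embeds into $E_2 = E_1 \oplus V \oplus V\dual$ as $(c,v) \mapsto (c,v,0)$. Base-changing along $\fC_f \hookrightarrow \fC_\EE$ produces $C_2 \cong C_1 \times V$, and the tautological section decomposes as $\tau_2(c,v) = (\tau_1(c), v, 0)$. In particular the summand $V \subset E_2|_{C_2}$ is an isotropic subbundle satisfying $\tau_2 \cdot V = 0$ with respect to the orthogonal form on $E_2$.

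Now Proposition \ref{Prop.LocFunct} applies with $K = V|_{C_2}$: the induced isotropic section on $K\uperp/K \cong E_1|_{C_2}$ is $\tau_1 \circ \pi$, where $\pi \colon C_2 = C_1 \times V \to C_1$ is the projection; its zero locus is $\cX \times V$; and the restriction of $\tau_2$ to $V|_{\cX \times V}$ is the tautological section of $V$. The proposition yields
\[
\sqe(E_2|_{C_2}, \tau_2) = e\bigl(V|_{\cX \times V}, \tau_2|_V\bigr) \circ \sqe\bigl(E_1|_{C_2}, \tau_1 \circ \pi\bigr).
\]
Combining this with the compatibility of $\sqe$ with flat pullback, which gives $\sqe(E_1|_{C_2}, \tau_1\circ\pi) = \pi|_{\cX\times V}^{\ast} \circ \sqe(E_1|_{C_1},\tau_1)$, and the standard identity $e(V,\text{taut.})\circ \pi_{\cX}^{\ast} = \id_{A_\ast(\cX)}$ for the zero-section Gysin of a vector bundle, we conclude
\[
\sqe(E_2|_{C_2},\tau_2)\circ p_2^{\ast} = \sqe(E_1|_{C_1},\tau_1)\circ p_1^{\ast},
\]
which is the desired equality of the two square root virtual pullbacks. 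The main obstacle is the structural first step: producing a common refinement of symmetric resolutions compatibly with the symmetric forms and orientations, and checking that the resulting passage amounts to a hyperbolic stabilization. Once this is in place, the remaining manipulations reduce to a routine application of Proposition \ref{Prop.LocFunct} together with standard bivariant compatibilities.
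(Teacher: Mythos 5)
Your core computation --- relating $\sqe(E_2|_{C_2},\tau_2)$ to $\sqe(E_1|_{C_1},\tau_1)$ via Proposition \ref{Prop.LocFunct} applied to an isotropic subbundle, then collapsing the extra fibre directions with $e(V,\mathrm{taut})\circ\pi^{*}=\id$ --- is exactly the mechanism used in the proof this paper defers to (\cite[pp.~28--31]{OT}, recast via \cite[Lemma 4.4, Corollary 4.7]{KP}), and that part is correct. The gap is in the reduction to that situation, which is where essentially all of the work lies. First, the notion of ``common refinement with degreewise surjective chain maps respecting the symmetric forms'' is not available: a self-dual chain map between symmetric resolutions has degree-$0$ component dual to its degree-$(-2)$ component, so surjectivity in degree $0$ forces injectivity in degree $-2$, and such a map can only be an isomorphism there. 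The correct relation between a resolution and a refinement of it is not a chain map between the two symmetric complexes but passes through the non-self-dual intermediate $\DD$ of Proposition \ref{Prop.Reduction}; this is why the comparison cannot be phrased as a single quasi-isomorphism of symmetric resolutions.

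Second, even granting a common refinement, a chain map representing the identity of $\EE$ intertwines the quadratic forms only up to homotopy (this is precisely the issue handled by the homotopies $h,k\dual$ in the proof of Proposition \ref{Prop.QuadraticFunction}), and the induced identification of the cones $C_i$ over $\fC_f$ likewise depends on the chain representative of $\phi$ up to homotopy. Disposing of these homotopies at the level of the localized class requires the deformation invariance of $\sqe(E,s)$ (\cite[Corollary 4.7]{KP}), an ingredient absent from your proposal; without it the equality $\tau_2(c,v)=(\tau_1(c),v,0)$ cannot be asserted, only a deformation-equivalent version of it. Third, the splitting $E_2\cong E_1\oplus V\oplus V\dual$ with the standard hyperbolic form need not exist globally over a quasi-projective base (the extension $0\to V\uperp\to E_2\to V\dual\to 0$ need not split), so $C_{Q_2}\cong C_{Q_1}\times V$ is only an affine-bundle statement in general; your invocation of Proposition \ref{Prop.LocFunct} for a general isotropic $K$ survives this, but the explicit product decompositions you rely on do not. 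In short: the elementary move and its computation are right, but the claim that any two symmetric resolutions are connected by such moves is the hard content, and as formulated it is false.
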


\begin{proof}
The proof is analogous to that in \cite[pp. 28-31]{OT}. We just need to replace the fundamental classes of cones in \cite{OT} with the specialization maps. The functorial properties in \cite[Lemma 4.4]{KP} and \cite[Corollary 4.7]{KP} assures that this is possible. We omit the details.
\end{proof}

Consider a fiber diagram
\[\xymatrix{
\cX' \ar[r]^{f'} \ar[d]^{g'} & \cY' \ar[d]^g\\
\cX \ar[r]^f & \cY
}\]
of quasi-projective schemes. Let $a_g:\fC_{f'} \hookrightarrow g'^*\fC_f$ denote the canonical closed embedding \cite[Proposition 2.26]{Man}. The composition
\beq\label{Eq.BaseChange} \phi':g'^*\EE \xrightarrow{g'^*\phi} g'^*\bL_{f} \to\bL_{f'} \eeq
is also a symmetric obstruction theory for $f':\cX'\to \cY'$, and satisfies the isotropic condition by Proposition \ref{Prop.QuadraticFunction}(2). Thus we have an induced square root virtual pullback
\[\sqrt{(f')^!} : A_* (\cY') \to A_*(\cX').\]
By abuse of notation, we also denote $\sqrt{(f')^!}$ by $\sqrt{f^!}$.

\begin{proposition}[Bivariance]\label{Prop.Bivar}
The square root virtual pullback is a bivariant class, i.e.,
\begin{enumerate}
\item If $g:\cY'\to \cY$ is projective, 
$\sqrt{f^!}\circ g_* = g'_* \circ \sqrt{f^!}.$
\item If $g:\cY' \to \cY$ is flat and equi-dimensional, 
$\sqrt{f^!} \circ g^* = (g')^*\circ \sqrt{f^!}.$
\item If $g:\cY' \to \cY$ is a regular immersion, 
$\sqrt{f^!} \circ g^! = g!\circ \sqrt{f^!}.$
\end{enumerate}
%%Thus we may write $\sqrt{f^!} \in A^*(\cX \xrightarrow{f} \cY).$
\end{proposition}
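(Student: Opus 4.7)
The plan is to decompose $\sqrt{f^!}$ into three bivariant operations and verify that each factor commutes separately with $g_*$, $g^*$, $g^!$. From Definition \ref{Def.sqrtVP} we have
\[\sqrt{f^!} = \sqrt{e}(E|_C,\tau) \circ p^* \circ \sp_f.\]
The first factor, the specialization map $\sp_f$, is bivariant by \cite{Kresch,Man}. The second, the smooth pullback $p^*$ along the affine bundle $p:C \to \fC_f$, is bivariant by the standard Fulton-MacPherson theory. The third, the localized square root Euler class $\sqrt{e}(E|_C,\tau) \in A^*_{\cX}(C)$, is a bivariant class by \cite{OT}, or equivalently by the blowup construction in \cite{KP}.

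Next I would verify that under base change along $g:\cY'\to \cY$, the entire construction pulls back compatibly. The base-changed obstruction theory $\phi':g'^*\EE \to \bL_{f'}$ from \eqref{Eq.BaseChange} is again symmetric and satisfies the isotropic condition by Proposition \ref{Prop.QuadraticFunction}(2), and a chosen symmetric resolution $[B \to E\dual \to B\dual] \cong \EE$ pulls back to a symmetric resolution of $g'^*\EE$. Since formation of the cokernel $Q$ commutes with $g'^*$, we have $C_{Q'} \cong g'^*C_Q$, and the canonical closed embedding $a_g:\fC_{f'} \hookrightarrow g'^*\fC_f$ restricts to a closed embedding $C' \hookrightarrow g'^*C$ compatible with the tautological sections. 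Hence each of the three factors of $\sqrt{(f')^!}$ is the base change of the corresponding factor of $\sqrt{f^!}$.

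The three bivariance statements then follow by chasing $g_*$, $g^*$, $g^!$ through the three factors in turn, using the individual bivariance compatibilities noted above together with the base-change identifications of the previous paragraph. The main obstacle is the bivariance of the localized square root Euler class $\sqrt{e}(E|_C,\tau)$ itself, which is the only input not coming directly from classical Fulton-MacPherson theory. However, this is precisely the content of the construction in \cite{OT} via the Edidin-Graham flag variety, Fulton-MacPherson deformation to the normal cone, and Kiem-Li cosection localization — each of which is a bivariant operation — together with the refined functoriality properties of \cite[Lemma 4.4, Corollary 4.7]{KP} already invoked to prove independence of the symmetric resolution in Definition \ref{Def.sqrtVP}. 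Once those ingredients are taken as given, the rest of the argument is a routine diagram chase.
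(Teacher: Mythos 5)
Your proposal is correct and follows essentially the same route as the paper: the paper likewise factors $\sqrt{f^!}=\sqrt{e}(E|_C,\tau)\circ p^*\circ \sp_f$, invokes the proofs of Manolache's Theorems 4.1 and 4.3 for the compatibility of the specialization step with $g_*$, $g^*$, $g^!$, and cites \cite[Lemma 4.4]{KP} for the bivariance of the localized square root Euler class. The only cosmetic difference is that the paper phrases the first step as the bivariance of the composite $(a_g)_*\circ\sp_{f'}$ rather than of $\sp_f$ alone, which is the precise form needed since $\fC_{f'}$ is in general only a closed substack of $g'^*\fC_f$ — a point your second paragraph already accounts for via the embedding $a_g$.
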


\begin{proof}
By the proof of \cite[Theorem 4.1]{Man} and \cite[Theorem 4.3]{Man} (cf. \cite[Lemma 4.10]{KPVIT}), the map
\[A_*(\cY') \xrightarrow{\sp_{f'}} A_*(\fC_{f'})\xrightarrow{(a_g)_*} A_* (\fC_f|_{\cX'})\]
commutes with projective pushforwards, flat pullbacks, and Gysin pullbacks of regular immersions. Since the localized square root Euler class $\sqe(E|_C,\tau)$ is a bivariant class by \cite[Lemma 4.4]{KP}, the square root virtual pullback $\sqrt{f^!}=\sqe(E|_C,\tau)\circ p^* \circ \sp_f$ is also a bivariant class.
\end{proof}

%% 1.4.5 Oh-Thomas type virtual cycle

\begin{definition}[Oh-Thomas virtual cycle]%\label{Def.Oh-Thomas}
Let $\cX$ be a quasi-projective scheme equipped with a symmetric obstruction theory satisfying the isotropic condition. The {\em Oh-Thomas virtual cycle} is defined as
\[[\cX]\virt := \sqrt{p^!}[\spec(\C)] \in A_*(\cX)\]
where $p: \cX\to \spec(\C)$ denotes the projection map.
\end{definition}

\begin{example}[-2-shifted symplectic derived scheme]%\label{Ex.derivedscheme}
Let $\bX$ be a derived scheme equipped with a -2-shifted symplectic form and an orientation. Then the classical truncation $\cX=\bX_{\mathrm{cl}}$ carries a symmetric obstruction theory 
$\LL_{\bX}|_{\cX} \to \LL_{\cX} \to \bL_{\cX}.$
The Darboux theorem \cite{BBJ} assures that this obstruction theory satisfies the isotropic condition. Indeed, this can be shown directly from MacPherson's graph construction \cite[Remark 5.1.1]{Ful} and the criterion in Proposition \ref{Prop.Isotropic}. Thus $\cX$ carries an Oh-Thomas virtual cycle $[\cX]\virt \in A_*(\cX)$ when $\cX$ is quasi-projective.
\end{example}

\subsection{Reduction formula}%\label{ss.Reductionformula}

%% Introduction of subsection 1.5 Reduction formula

When a symmetric obstruction theory can be factored by a perfect obstruction theory, we can decompose the square root virtual pullback into Manolache's virtual pullback \cite{Man} and Edidin-Graham's square root Euler class \cite{EG}.

%In this subsection, we provide a {\em reduction formula} which 

\begin{proposition}[Reduction formula]\label{Prop.ReductionFormula}
Let $f:\cX \to \cY$ be a morphism of quasi-projective schemes equipped with a symmetric obstruction theory $\phi:\EE \to \bL_f$. 
Let $\KK$ be an isotropic subcomplex of $\EE$ with respect to $\delta:\EE\to\KK$ such that $h^0(\delta)$ is bijective and $h^{-1}(\delta)$ is surjective. Assume that there is a map $\psi : \KK \to \bL_f$ such that $\phi = \psi \circ \delta$.
%Consider a commutative diagram \[\xymatrix{ \EE \ar[r]^{\delta}  \ar[rd]_{\phi}& \KK \ar[d]^{\psi}\\ & \bL_f }\] in the derived category of $\cX$. Assume that $\KK$ is an isotropic subcomplex of $\EE$ with respect to $\delta$, $h^0(\delta)$ is bijective, and $h^{-1}(\delta)$ is surjective. 
Then the reduction of $\EE$ by $\KK$ is $G[1]$ for some special orthogonal bundle $G$, $\phi$ satisfies the isotropic condition, $\psi$ is a perfect obstruction theory, and we have a {\em reduction formula}
\[\sqrt{f^!_{\phi}}=\sqe(G) \circ f^!_{\psi}\]
where $\sqrt{f^!_{\phi}}$ denotes the square root virtual pullback for $\phi:\EE\to\bL_f$ and $f^!_{\psi}$ denotes the virtual pullback for $\psi:\KK\to\bL_f$.
\end{proposition}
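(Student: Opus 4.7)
The plan is to verify the four claims of the proposition in order: the first three are formal consequences of the hypotheses, while the reduction formula is the main content. For $\psi$ being a perfect obstruction theory, the long exact cohomology sequences of $\phi = \psi \circ \delta$ force $h^0(\psi)$ bijective and $h^{-1}(\psi)$ surjective. For the reduction: the distinguished triangle $\DD\dual[2] \to \EE \xrightarrow{\delta} \KK$ identifies $\cone(\delta) \cong \DD\dual[3]$, and the hypotheses on $\delta$ give $h^{-1}(\cone\delta) = h^0(\cone\delta) = 0$, whence $h^2(\DD\dual) = h^3(\DD\dual) = 0$, so $\DD$ has tor-amplitude $[-1,0]$. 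Chasing $\GG \to \DD \to \KK$ with $\DD$ and $\KK$ of amplitude $[-1,0]$ yields $h^{-2}(\GG) = 0$, and the self-duality $\GG\dual[2] \cong \GG$ coming from the symmetric form gives $h^0(\GG) \cong h^{-2}(\GG)\dual = 0$. Hence $\GG \cong G[1]$ for some vector bundle $G$, orthogonal via the symmetric form on $\GG$ and oriented via the induced isomorphism $\det \GG \cong \det \EE$.

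For the isotropic condition, the reduction diagram gives a factorization $\fC_f \to \fC_\KK \to \fC_\DD \xrightarrow{\fC(\alpha)} \fC_\EE$ of $\fC(\phi)$: the first map is $\fC(\psi)$ (since $\psi$ is an obstruction theory), the second comes from $\DD \to \KK$, and the third from $\alpha: \EE \to \DD$. By Proposition~\ref{Prop.QuadraticFunction}(3), $\fq_\EE \circ \fC(\alpha) = \fq_\GG \circ \fC(\beta)$. But the composition $\fC_\KK \to \fC_\DD \xrightarrow{\fC(\beta)} \fC_\GG$ is induced by the composition $\GG \to \DD \to \KK$, which is zero in the distinguished triangle, so it factors through the zero section of $\fC_\GG$; the quadratic function $\fq_\GG$ vanishes along this zero section. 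Hence $\fq_\EE|_{\fC_f} = 0$.

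For the reduction formula, choose compatible resolutions $[B \to E\dual \to B\dual] \cong \EE$ and $[K\dual \to B\dual] \cong \KK$ (taking $D = B$, permitted by the above) such that $\delta$ is represented by a surjective chain map with $E\dual \twoheadrightarrow K\dual$ of kernel $K$; then $G \cong K\uperp / K$ as special orthogonal bundles. The cones used to compute $\sqrt{f^!_\phi}$ and $f^!_\psi$ both equal $C = \fC_f \times_{\fC_\KK} K$, and the atlas-level embedding $K \hookrightarrow C_Q \subseteq E$ places $C$ inside $K \subseteq K\uperp \subseteq E$. The tautological section $\tau$ of $E|_C$ therefore takes values in $K|_C$. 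Applying Proposition~\ref{Prop.LocFunct} to the isotropic subbundle $K|_C \subseteq E|_C$ with section $\tau$: the induced section $\tau_1$ of $G|_C$ vanishes (since $\tau \in K$), so $\tau_2 = \tau$ and
\[\sqe(E|_C, \tau) = e(K|_C, \tau) \circ \sqe(G|_C) = e(K|_C, \tau) \circ \pi^* \sqe(G),\]
where $\pi : C \to \cX$. Composing with $p^* \circ \sp_f$ and using bivariance to pull $\pi^* \sqe(G)$ through to $\sqe(G)$ on $\cX$ gives $\sqrt{f^!_\phi} = \sqe(G) \circ f^!_\psi$. The principal obstacle is the atlas-level identification $C = C' \subseteq K$, realizing the tautological section $\tau$ as an element of the isotropic subbundle $K$; once this is in place the formula follows directly from Proposition~\ref{Prop.LocFunct}.
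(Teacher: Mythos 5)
Your argument for the reduction formula itself is the paper's argument: choose a symmetric resolution with $D=B$, observe that the cone $C=\fC_f\times_{\fC_\KK}K$ computing $f^!_\psi$ coincides with the cone computing $\sqrt{f^!_\phi}$ and sits inside the isotropic subbundle $K\subseteq E$, so the tautological section lands in $K|_C$, and then apply Proposition~\ref{Prop.LocFunct} with $s_1=0$ and commute $\sqe(G)$ past the bivariant classes. Where you diverge is in the auxiliary claims. For the isotropic condition the paper simply notes that $C\subseteq K$ with $K$ isotropic and invokes Lemma~\ref{Lem.ComparisonofIsotropic}; your intrinsic argument via Proposition~\ref{Prop.QuadraticFunction}(3) and the vanishing of the composite $\GG\to\DD\to\KK$ is correct and resolution-free, which is a mild gain in robustness at no real cost. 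For the identification $\GG\cong G[1]$ the paper again reads it off the resolution ($\GG=[(B/D)\to K\uperp/K\to(B/D)\dual]$ with $D=B$), whereas you argue by amplitude and duality.

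One step in that last argument is stated incorrectly: $h^0(\GG)$ is \emph{not} isomorphic to $h^{-2}(\GG)\dual$ in general; self-duality gives $h^0(\GG)\cong h^2(\GG\dual)$, which is an $\ext$-sheaf of $\GG$ and does not vanish merely because the sheaf $h^{-2}(\GG)$ does (injectivity of a map of bundles does not dualize to surjectivity). The conclusion is still correct, but the clean way to get it is from the dual triangle $\KK\dual[2]\to\DD\dual[2]\to\GG$ supplied by the reduction diagram: since $\DD\dual$ and $\KK\dual$ have amplitude $[0,1]$, both $h^0(\DD\dual[2])$ and $h^1(\KK\dual[2])$ vanish, forcing $h^0(\GG)=0$; combining the two vanishings with self-duality then shows the local presentations split and $\GG$ is a shifted bundle. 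With that repair your proof is complete.
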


\begin{proof}
By Proposition \ref{Prop.SymRes}, we can choose a symmetric resolution $[B\to E\dual \to B\dual] \cong \EE$ and a resolution $[0 \to K\dual \to D\dual] \cong \KK$ such that the map $\delta : \EE \to \KK$ is represented by a surjective chain map. Since $h^0(\delta)$ is bijecitve and $h^{-1}(\delta)$ is surjective, the map $E\dual \to B\dual \times_{D\dual}K\dual$ is surjective. Replacing $[K\dual \to D\dual]$ by $[B\dual \times_{D\dual}K\dual \to B\dual]$, we may assume that $D=B$. 

Let $G=K\uperp/K$ be the reduction of $E$ by $K$. Then the reduction of $\EE$ by $\KK$ is $G[1]$. Form a fiber diagram
\[\xymatrix{
C \ar[r] \ar[d]^p & K \ar[r] \ar[d] & C_Q \ar[r] \ar[d] & E \ar[d]\\
\fC_f \ar[r] & \fC_\KK \ar[r] & \fC_\EE \ar[r] & \fC_\FF
}\]
where $\FF=[E\dual \to B\dual]$ is the stupid truncation of $[B\to E\dual \to B\dual]$. Since $K$ is an isotropic subbundle of $E$, the subcone $C$ is also isotropic. By Lemma \ref{Lem.ComparisonofIsotropic}, the symmetric obstruction theory $\phi:\EE\to\bL_f$ satisfies the isotropic condition. Recall that
\[f^!_{\psi} = e(K|_C,\tau) \circ p^* \circ \sp_f  \and \sqrt{f^!_\phi} = \sqrt{e}(E|_C,\tau) \circ p^* \circ \sp_f\]
where $\sp_f:A_*(Y)\to A_*(\fC_f)$ denotes the specialization map and $\tau \in \Gamma(C,K|_C)$ denotes the tautological section. The formula \eqref{Eq.LocFunct} in Proposition \ref{Prop.LocFunct} completes the proof.
\end{proof}

\begin{corollary}[Local complete intersection]\label{Cor.sqrtVPforlci}
Assume that $f:\cX\to \cY$ is a local complete intersection morphism equipped with a symmetric obstruction theory $\phi:\EE \to \bL_f$. Then the reduction of $\EE_f$ by $\LL_f$ is $G[1]$ for some special orthogonal bundle $G$ over $\cX$, the symmetric obstruction theory $\phi_f$ is isotropic, and we have $\sqrt{f^!} = \sqe(G) \circ f^!$.
%%In particular, if $\cX$ is a quasi-projective lci scheme equipped with a symmetric obstruction theory, then the Oh-Thomas virtual cycle is the square root Euler class of the obstruction bundle $\Ob=h^1(\EE)$, \[[\cX]\virt = \sqe(\Ob) [\cX] \in A_*(\cX).\]
\end{corollary}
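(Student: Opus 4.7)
The plan is to apply the Reduction formula (Proposition \ref{Prop.ReductionFormula}) with $\KK := \LL_f$, $\delta := \phi$, and $\psi := \mathrm{id}_{\LL_f}$. Since $f$ is local complete intersection, its cotangent complex $\LL_f$ is perfect of tor-amplitude $[-1,0]$, so it coincides with its truncation $\bL_f$ and provides a legitimate candidate for $\KK$. The factorization $\phi = \mathrm{id}_{\LL_f} \circ \phi$ is tautological, and the cohomological hypotheses on $\delta$ required by Proposition \ref{Prop.ReductionFormula}---that $h^0(\phi)$ is bijective and $h^{-1}(\phi)$ is surjective---are precisely the Behrend-Fantechi obstruction theory conditions on $\phi$, which hold by assumption.

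The substantive step, and the main obstacle, is to verify that $\LL_f$ together with $\phi$ forms an \emph{isotropic} subcomplex of $\EE$ in the sense of Definition \ref{Def.IsotropicSubcomplex}. I proceed locally, using a factorization of $f$ as a regular immersion $\tf : \cX \hookrightarrow \tY$ followed by a smooth morphism $\tY \to \cY$. By Proposition \ref{Prop.SymRes}, the obstruction theory $\phi$ may be represented by a surjective chain map from a symmetric resolution $[B \to E\dual \to B\dual] \cong \EE$ to the standard resolution $[I/I^2 \to \Omega_{\tY/\cY}|_{\cX}]$ of $\LL_f$. Proposition \ref{Prop.Isotropic} then reduces the isotropic condition to checking that the composition
\[
N_{\cX/\tY} \hookrightarrow E \xrightarrow{\fq_E} \A^1_{\cX}
\]
vanishes---that is, that the normal bundle $N_{\cX/\tY}$ sits inside the orthogonal bundle $E$ as an isotropic subbundle. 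I expect this vanishing to follow from combining the chain-map compatibility relations $\tphi \circ d_1 = 0$ and $\tphi_0 \circ d_2 = d_{\LL_f} \circ \tphi$ with the structural identity $d_2 = d_1\dual \circ q_E$ characterizing the symmetric resolution.

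Once isotropy is established, Proposition \ref{Prop.ReductionFormula} applies directly. The resolution change used in its proof---replacing $[K\dual \to D\dual]$ by $[B\dual \times_{D\dual} K\dual \to B\dual]$---lets us arrange $D = B$, so that the reduction of $\EE$ by $\LL_f$ collapses to the single-term complex $G[1]$, where $G := K\uperp/K$ is the reduction of the orthogonal bundle $E$ by the isotropic subbundle $K := N_{\cX/\tY}$; this $G$ inherits a canonical special orthogonal structure from $E$. The reduction formula then yields $\sqrt{f^!_{\phi}} = \sqe(G) \circ f^!_{\mathrm{id}}$. Finally, for an lci morphism $f$, Manolache's virtual pullback associated to the tautological perfect obstruction theory $\mathrm{id}_{\LL_f}$ agrees with Fulton's lci Gysin pullback $f^!$, so we conclude $\sqrt{f^!} = \sqe(G) \circ f^!$ as claimed.
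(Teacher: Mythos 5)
Your skeleton --- apply Proposition \ref{Prop.ReductionFormula} with $\KK=\LL_f$, $\delta=\phi$, $\psi=\id_{\LL_f}$, note the tor-amplitude and the obstruction-theory axioms for $\delta$, and identify $f^!_{\id}$ with the lci Gysin pullback via Manolache --- is exactly the intended (unwritten) proof. The gap is in the step you yourself single out as the main obstacle. The isotropy of $N_{\cX/\widetilde{\cY}}$ in $E$ does \emph{not} follow from the chain-map identities you list: those hold for \emph{every} symmetric obstruction theory on an lci morphism represented as in Proposition \ref{Prop.Isotropic}, whereas isotropy can fail. Concretely, take $\cX=\spec(\C)$ sitting at the origin of $\cY=\A^1$, let $\EE=E[1]$ with $E$ the hyperbolic plane, and let $\phi:E[1]\to N\dual[1]=\bL_f$ be dual to an embedding of $N=N_{\cX/\cY}\cong\C$ onto a non-isotropic line of $E$; the dual map $E\dual\to N\dual$ is surjective, so this is a symmetric obstruction theory and all your compatibility relations hold, yet $\fq_E|_N\neq0$. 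If your claim were correct, the isotropic condition would be vacuous for regular immersions, contradicting its role as a genuine hypothesis in Theorem \ref{Thm.Functoriality} (compare the sentence in the proof of Lemma \ref{Lem.specialcaseviablowup}: ``Since $\phi_g$ satisfies the isotropic condition, $N$ is an isotropic subbundle of $G$'' --- deduced from the hypothesis, not for free).

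The correct reading is that the isotropic condition on $\phi$ is a standing hypothesis here: $\sqrt{f^!}$, which appears in the conclusion, is only defined (Definition \ref{Def.sqrtVP}) for isotropic symmetric obstruction theories. For lci $f$ one has $C_{\cX/\widetilde{\cY}}=N_{\cX/\widetilde{\cY}}$, so by Proposition \ref{Prop.Isotropic} the isotropic condition of Definition \ref{Def.Isotropic} is \emph{equivalent} to $N_{\cX/\widetilde{\cY}}$ being an isotropic subbundle of $E$, which is in turn exactly the isotropic-subcomplex condition of Definition \ref{Def.IsotropicSubcomplex} for $\LL_f\subseteq\EE$ (so the two notions you are implicitly juggling do coincide in this case, but neither is automatic). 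With the isotropy taken as given rather than derived, the remainder of your argument --- arranging $D=B$ so the reduction collapses to $G[1]$ with $G=K\uperp/K$, and $\sqrt{f^!_\phi}=\sqe(G)\circ f^!_{\id}=\sqe(G)\circ f^!$ --- is correct and matches the paper.
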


\section{Functoriality}\label{S.Functoriality}

In this section, we prove {\em functoriality} of square root virtual pullbacks.

\subsection{Main result}
Consider a commutative diagram
\beq\label{C.XYZ} \xymatrix{\cX \ar[r]^f \ar@/_0.4cm/[rr]_{g\circ f} & \cY \ar[r]^g & \cZ }\eeq
of quasi-projective schemes. The canonical distinguished triangle of cotangent complexes \[\xymatrix{ f^*\LL_g \ar[r] & \LL_{g\circ f} \ar[r] & \LL_f \ar[r] & }\] induces a distinguished triangle
\beq\label{T.CotanCplx.XYZ} \xymatrix{ \trunc f^*\bL_g \ar[r]^-{a} & \bL_{g\circ f} \ar[r]^-{b} & \bL_f' \ar[r] & }\eeq
where $\bL_g:=\trunc \LL_g$ and $\bL_{g\circ f}:=\trunc \LL_{g \circ f}$ are the truncated cotangent complexes, and $\bL_f'$ is the cone of $a$. Let $r: \bL_f' \to \trunc \bL_f' \cong  \bL_f$ denote the canonical map.

%Then $\trunc \bL_f' \cong \bL_f$.

\begin{definition}[Compatibility condition]\label{Def.Compatibility}
Given a commutative diagram \eqref{C.XYZ}, we say that a triple $(\phi_f,\phi_g,\phi_{g\circ f})$ of symmetric obstruction theories $\phi_{g}:\EE_g \to \bL_g$, $\phi_{g\circ f}:\EE_{g\circ f} \to \bL_{g\circ f}$, and a perfect obstruction theory $\phi_f:\EE_f \to \bL_f$ is {\em compatible} if
\begin{enumerate}
\item there exist two morphisms of distinguished triangles
\beq\label{Eq.Compatibility}
\xymatrix{
\DD\dual[2] \ar[r]^{\alpha\dual} \ar[d]^{\beta\dual} & \EE_{g \circ f} \ar[r]^{\delta} \ar[d]^{\alpha} & \EE_f  \ar@{=}[d] \ar[r] & \\
f^*\EE_g \ar[r]^{\beta} \ar[d]^{f^*\phi_g} & \DD \ar[r]^{\gamma} \ar[d]^{\phi_{g\circ f}'} & \EE_f  \ar[d]^{\phi_f'} \ar[r] & \\
\tau^{\ge-1}f^*\bL_g \ar[r]^a  & \bL_{g\circ f} \ar[r]^b & \bL_f' \ar[r] & 
}\eeq
such that $\phi_{g\circ f}=\phi'_{g\circ f} \circ \alpha$ and $\phi_f = r \circ \phi_f'$;
%$\phi_f$ is the composition $\EE_f \xrightarrow{\phi_f'}\bL_f' \to \trunc \bL_f'\cong\bL_f$;
\item the orientation of $\EE_{g\circ f}$ is given by the orientation of $\EE_g$ via the canonical isomorphism $\det(\EE_{g\circ f}) \cong \det(f^*\EE_g)$ induced by \eqref{Eq.Compatibility}.
\end{enumerate}
\end{definition}

%The compatibility diagram \eqref{Eq.Compatibility} implies that $f^*\EE_g$ is a reduction of $\EE_{g\circ f}$ by $\EE_f$ (see Proposition \ref{Prop.Reduction}).

The compatibility condition in Definition \ref{Def.Compatibility} is slightly general than that in Theorem \ref{Thm.VirtualPullbackFormula} since the truncated cotangent complexes are used instead of the full cotangent complexes. This generality will be needed in the applications in \S\ref{S.Lefschetz} and \S\ref{S.Pairs/Sheaves}.
 
\begin{theorem}[Functoriality]\label{Thm.Functoriality}
Consider a commutative diagram \eqref{C.XYZ} of quasi-projective schemes equipped with a compatible triple $(\phi_f,\phi_g,\phi_{g\circ f})$ of obstruction theories. Assume that $\phi_g$ and $\phi_{g\circ f}$ satisfy the isotropic condition in Definition \ref{Def.Isotropic}. Then we have 
\begin{equation}\label{Eq.Functoriality}
\sqrt{(g\circ f)^!} = f^! \circ \sqrt{g^!}.
\end{equation}

In particular, if $\cZ=\spec(\C)$, then we have a {\em virtual pullback formula}
\[[\cX]\virt = f^! [\cY]\virt\]
between the Oh-Thomas virtual cycles.
\end{theorem}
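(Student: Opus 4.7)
The plan is to adapt the standard double-deformation strategy used for functoriality of Behrend--Fantechi and Manolache virtual pullbacks (cf.\ \cite{KKP,Man,Qu,KPVIT}) to the 3-term symmetric setting. The essential new feature is that every step must respect the symmetric structure, so the \emph{reduction} operation from Proposition \ref{Prop.Reduction} replaces ordinary cones, and the localized square root Euler class replaces the ordinary Euler class. I would first package $\sqrt{f^!}$ into a relative bivariant class $\sqrt{h^!}$ associated with any morphism $h$ equipped with a relative 3-term symmetric isotropic obstruction theory; the construction is a verbatim relativization of Definition \ref{Def.sqrtVP}, with bivariance inherited from Proposition \ref{Prop.Bivar}.

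Next I would set up the deformation to the normal cone. Let $M := M^{\circ}_{\cY/\cZ}$ be the Fulton--MacPherson deformation space of $g$ and consider
\[
h : \cX \times \A^1 \longrightarrow \cY \times \A^1 \longrightarrow M,
\]
whose fibers over $\A^1 \setminus \{0\}$ recover $g \circ f$ and whose special fiber recovers the composition $\cX \to \cY \to \fC_g$. The central construction is a relative symmetric obstruction theory $\phi_h : \EE_h \to \bL_h$ for $h$, built from the intermediate complex $\DD$ in the compatibility diagram \eqref{Eq.Compatibility}, that (i) restricts over the generic fiber to $\phi_{g\circ f}$, (ii) restricts over $0$ to a symmetric obstruction theory for $\cX \to \fC_g$, and (iii) has the relative intrinsic normal cone $\fC_{\cX \times \A^1 / M}$ isotropic in $\fC_{\EE_h}$. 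The isotropy will be verified with the criterion of Proposition \ref{Prop.Isotropic}, using that the isotropy of $\phi_{g\circ f}$ and of $\phi_g$ controls the quadratic function along the family and that the reduction compatibility \eqref{Eq.ReductionFormulaforQuadraticFunction} identifies the quadratic function of the reduction $\DD$ with the one pulled back from $\EE_g$.

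With $\phi_h$ in hand, bivariance and the usual specialization argument reduce the desired identity to the functoriality for the special-fiber composition $\cX \xrightarrow{f} \cY \to \fC_g$. Here I would further replace $\fC_g$ by an honest cone $C_Q$ covering it, coming from the stupid truncation of a symmetric resolution $[B \to E\dual \to B\dual] \cong \EE_g$ provided by Proposition \ref{Prop.SymRes}. After this reduction, $\sqrt{g^!}$ becomes a localized square root Euler class on an isotropic subcone of a special orthogonal bundle $E$, and the compatibility diagram \eqref{Eq.Compatibility} exhibits the perfect obstruction theory $\EE_f$ and an isotropic subbundle $K \subset E|_{\cX}$ so that the reduction $K^{\perp}/K$ computes the remaining factor. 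At this point the localized reduction identity of Proposition \ref{Prop.LocFunct} together with the blowup method of \cite{KP} yields the equality of operational classes, and the orientation hypothesis in Definition \ref{Def.Compatibility}(2) ensures the signs agree globally.

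The main obstacle, as anticipated above, is the construction of $\phi_h$ together with the verification of its isotropy. The deformation space $M$ is singular at the special fiber, and $\DD$ is not itself symmetric, so one cannot simply interpolate the symmetric complexes $\EE_{g\circ f}$ and the reduction of $f^*\EE_g$ by $\EE_f$. The point is to use $\DD$ as a \emph{bridge}: the two distinguished triangles in \eqref{Eq.Compatibility} express the generic and special symmetric structures as two different reductions of the same auxiliary complex, and this is exactly what makes a single relative obstruction theory $\phi_h$ possible and keeps $\fC_{\cX \times \A^1 / M}$ isotropic throughout the deformation. Once this bridging construction is in place, the rest of the argument is a careful bookkeeping of specialization maps, bivariance, and the reduction formula, which I expect to be routine in comparison.
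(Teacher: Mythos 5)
Your plan is correct and follows essentially the same route as the paper: deformation to the normal cone via $M^{\circ}_{g}$, construction of a symmetric obstruction theory $\phi_h$ on $\cX\times\A^1$ by the reduction operation (the paper takes $\EE_h$ to be the reduction of $\EE_{g\circ f}\oplus(\EE_f\oplus\EE_f\dual[2])$ by $\EE_f$ twisted by the coordinate $T$), verification of isotropy through the criterion of Proposition \ref{Prop.Isotropic}, reduction of the special fiber $\cX\to\cY\to\fC_g$ to an honest cone, and conclusion via the blowup method and Proposition \ref{Prop.LocFunct}. The step you defer as the ``main obstacle'' --- the explicit bridging construction of $\EE_h$ and the isotropy check via symmetric resolutions --- is exactly where the bulk of the paper's work lies, but your description of how $\DD$ mediates between the generic and special fibers is consistent with what the paper does.
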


The isotropic condition for $\phi_{g\circ f}$ is redundant in Theorem \ref{Thm.Functoriality}.

\begin{lemma}\label{Lem.IsotropicConditioninFunctoriality}
Given a compatible triple $(\phi_f,\phi_g,\phi_{g\circ f})$ of obstruction theories for \eqref{C.XYZ}, if $\phi_g$ satisfies the isotropic condition, then so does $\phi_{g\circ f}$.
\end{lemma}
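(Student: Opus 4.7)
The plan is to trace the quadratic function $\fq_{\EE_{g\circ f}}$ along the compatibility diagram \eqref{Eq.Compatibility} and reduce the isotropic condition for $\phi_{g\circ f}$ to that for $\phi_g$. The two key inputs are the behavior of quadratic functions under reduction and pullback (Proposition \ref{Prop.QuadraticFunction}(2)--(3)) and the standard functoriality of intrinsic normal cones along a composition.

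First I would factor the canonical embedding $\fC_{\phi_{g\circ f}}\colon\fN_{g\circ f}\to\fC_{\EE_{g\circ f}}$ through $\fC_{\DD}$. Since $\phi_{g\circ f}=\phi'_{g\circ f}\circ\alpha$, applying the contravariant functor $\fC(-)$ gives $\fC_{\phi_{g\circ f}}=\fC_\alpha\circ\fC_{\phi'_{g\circ f}}$, so the restriction of $\fq_{\EE_{g\circ f}}$ to the intrinsic normal cone $\fC_{g\circ f}$ is the composite
\[ \fC_{g\circ f}\hookrightarrow\fN_{g\circ f}\xrightarrow{\fC_{\phi'_{g\circ f}}}\fC_{\DD}\xrightarrow{\fC_\alpha}\fC_{\EE_{g\circ f}}\xrightarrow{\fq_{\EE_{g\circ f}}}\A^1_{\cX}. \]
The compatibility diagram matches the reduction diagram of Proposition \ref{Prop.Reduction} with $\EE=\EE_{g\circ f}$, $\KK=\EE_f$, and $\GG=f^*\EE_g$, so Proposition \ref{Prop.QuadraticFunction}(3) gives $\fq_{\EE_{g\circ f}}\circ\fC_\alpha=\fq_{f^*\EE_g}\circ\fC_\beta$ on $\fC_{\DD}$. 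Applying $\fC(-)$ to the commutative middle square $\phi'_{g\circ f}\circ\beta=a\circ f^*\phi_g$ of \eqref{Eq.Compatibility} further gives $\fC_\beta\circ\fC_{\phi'_{g\circ f}}=\fC_{f^*\phi_g}\circ\fC_a$. Substituting these two identities reduces the above composite to
\[ \fC_{g\circ f}\hookrightarrow\fN_{g\circ f}\xrightarrow{\fC_a}f^*\fN_g\xrightarrow{\fC_{f^*\phi_g}}f^*\fC_{\EE_g}\xrightarrow{\fq_{f^*\EE_g}}\A^1_{\cX}. \]

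The main technical step is that $\fC_a\colon\fN_{g\circ f}\to f^*\fN_g$, restricted to $\fC_{g\circ f}$, factors through $f^*\fC_g\hookrightarrow f^*\fN_g$. This is the standard functoriality of intrinsic normal cones along the composition $\cX\xrightarrow{f}\cY\xrightarrow{g}\cZ$ (see e.g.\ \cite[Proposition 2.26]{Man}): there is an exact sequence of cone stacks $0\to\fC_f\to\fC_{g\circ f}\to f^*\fC_g$ whose last arrow is induced by $\fC_a$. Granting this, the displayed composite further factors as $\fC_{g\circ f}\to f^*\fC_g\hookrightarrow f^*\fC_{\EE_g}\xrightarrow{\fq_{f^*\EE_g}}\A^1_{\cX}$, and by Proposition \ref{Prop.QuadraticFunction}(2) the tail of this is the pullback along $f$ of $\fq_{\EE_g}|_{\fC_g}$, which vanishes by the isotropic condition for $\phi_g$. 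Therefore $\fq_{\EE_{g\circ f}}|_{\fC_{g\circ f}}=0$, so $\phi_{g\circ f}$ is isotropic.

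The single delicate point I anticipate is the intrinsic normal cone functoriality above, which must be stated precisely so that the image of $\fC_{g\circ f}$ under $\fC_a$ (the map on intrinsic normal sheaves induced by the distinguished triangle $\trunc f^*\bL_g\to\bL_{g\circ f}\to\bL_f'$) lies in $f^*\fC_g$ rather than merely in $f^*\fN_g$. Once that compatibility is in place, the remainder is a mechanical diagram chase through \eqref{Eq.Compatibility}.
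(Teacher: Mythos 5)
Your argument is correct and is essentially the paper's own proof: the paper establishes the same commutative diagram $\fC_{g\circ f}\to\fC(\DD)\to\fC(\EE_{g\circ f})$ over $f^*\fC_g\to\fC(f^*\EE_g)\to\A^1_{\cX}$, using Proposition \ref{Prop.QuadraticFunction}(3) for the reduction square and Proposition \ref{Prop.QuadraticFunction}(2) to identify $\fq(f^*\EE_g)$ with $f^*\fq(\EE_g)$. The ``delicate point'' you flag --- that $\fC_{g\circ f}$ lands in $f^*\fC_g$ rather than merely in $f^*\fN_g$ --- is indeed the content of the left square and holds by the standard local description of intrinsic normal cones for a composition (compare the analogous verification in Lemma \ref{Lem.D1}), though the strong ``exact sequence of cone stacks'' phrasing is more than is needed or known in this generality.
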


\begin{proof}
By Proposition \ref{Prop.QuadraticFunction}(3), the diagram
\[\xymatrix{
\fC_{g\circ f} \ar[r] \ar[d] & \fC(\DD) \ar[r] \ar[d] & \fC(\EE_{g\circ f}) \ar[d]^{\fq(\EE_{g\circ f})}\\
f^*\fC_{g} \ar[r] & \fC(f^*\EE_g) \ar[r]^-{\fq(f^*\EE_g)} & \A^1_{\cX}
}\]
commutes. By Proposition \ref{Prop.QuadraticFunction}(2), we have $\fq(f^*\EE_g)=f^*\fq(\EE_g)$. Hence the isotropic condition for $\phi_g$ implies the isotropic condition for $\phi_{g\circ f}$.
\end{proof}

The rest of this section is devoted to the proof of Theorem \ref{Thm.Functoriality}. Basically, the structure of the proof is similar to the standard arguments of functoriality in \cite{Ful,KKP,Man,CKL,Qu,KPVIT}. The additional ingredients for Theorem \ref{Thm.Functoriality} are the followings:
\begin{enumerate}
\item Blowup method introduced in \cite{KP} (cf. \cite{KL});
\item Reduction operation of symmetric complexes in Proposition \ref{Prop.Reduction};
\item Criterion for isotropic condition in Proposition \ref{Prop.Isotropic}.
\end{enumerate}

\subsection{Special case via blowup}

We first prove Theorem \ref{Thm.Functoriality} when $g$ is a closed immersion using the blowup method as in \cite[Lemma 4.5]{KP}.

\begin{lemma}\label{Lem.specialcaseviablowup}
Theorem \ref{Thm.Functoriality} holds if $g: \cY \to \cZ$ is a closed immersion.
\end{lemma}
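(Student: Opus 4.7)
The plan is to choose compatible global resolutions that realize the reduction diagram \eqref{Eq.Compatibility} as an honest isotropic-subbundle configuration, and then reduce the identity $\sqrt{(g\circ f)^!} = f^! \circ \sqrt{g^!}$ to the localized functoriality of the square root Euler class in Proposition \ref{Prop.LocFunct}. Since $g$ is a closed immersion, the intrinsic normal cones $\fC_g$ and $\fC_{g\circ f}$ are geometric enough to move freely between cone-stack language and honest subcones of vector bundles, which is what makes the blowup approach of \cite[Lemma 4.5]{KP} applicable here.

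First, using Proposition \ref{Prop.SymRes}, I would choose a symmetric resolution $[B \to E^\vee \to B^\vee] \cong \EE_{g\circ f}$ together with a resolution $[K^\vee \to D^\vee] \cong \EE_f$ such that $\delta:\EE_{g\circ f} \to \EE_f$ is represented by a degreewise surjective chain map. This forces $D \hookrightarrow B$ and an isotropic subbundle $K \hookrightarrow E$, and by Proposition \ref{Prop.Reduction} the quotient $[(B/D) \to (K^\perp/K)^\vee \to (B/D)^\vee]$ is a symmetric resolution of $f^*\EE_g$ compatible with the middle row of \eqref{Eq.Compatibility}. The orientation hypothesis ensures that the induced orientation on $K^\perp/K$ matches the pullback of the orientation on $\EE_g$, so this resolution is admissible for computing $\sqrt{g^!}$.

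Next I would analyze the cones. Setting $Q = \coker(B \to E^\vee)$, the obstruction theory $\phi_{g\circ f}$ gives the standard covering $C_{g\circ f} \hookrightarrow C_Q \hookrightarrow E$ with $C_{g\circ f}$ isotropic in $E$ by Lemma \ref{Lem.ComparisonofIsotropic}. Using the compatibility \eqref{Eq.Compatibility} together with the standard comparison of intrinsic normal cones under composition (cf.\ \cite[Proposition 2.26]{Man}), one checks that $C_{g\circ f}$ actually lands inside $K^\perp \subseteq E$, that its image in $K^\perp/K$ coincides with the analogous covering $f^*C_g$ of $f^*\fC_g$, and that the resulting residual section of $K|_{f^*C_g}$ is precisely the Manolache-style section cutting out $\fC_f$ as a subcone of $f^*C_g$. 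This puts us in the exact hypothesis of Proposition \ref{Prop.LocFunct}, with $(E,s)$ played by the tautological isotropic section $\tau$ of $E|_{C_{g\circ f}}$, $(K^\perp/K, s_1)$ by the isotropic section cutting out $f^*C_g$, and $(K, s_2)$ by the residual section cutting out $\fC_f$.

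Applying Proposition \ref{Prop.LocFunct} then yields
\[\sqrt{e}(E|_{C_{g\circ f}}, \tau) \;=\; e(K|_{f^*C_g}, s_2) \circ \sqrt{e}((K^\perp/K)|_{f^*C_g}, s_1),\]
and precomposing with the flat pullback from $\fC_{g\circ f}$ and the specialization map $\sp_{g\circ f}$, the right-hand factor is identified with $\sqrt{g^!}$ (by the chosen resolution of $\EE_g$) and the left-hand factor with Manolache's virtual pullback $f^!$ (by the chosen resolution of $\EE_f$), using the standard commutation of specialization maps with flat pullback along closed immersions and the bivariance in Proposition \ref{Prop.Bivar}. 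The main obstacle is the third step: verifying that $C_{g\circ f}$ lies in $K^\perp$ and that its image in $K^\perp/K$ is exactly $f^*C_g$, with the residual section along $K$ matching the one produced by Manolache's theory. This is ultimately where the closed immersion hypothesis on $g$ is essential: it makes the middle exact triangle of \eqref{Eq.Compatibility} realize geometrically as a short exact sequence of cones (rather than only of truncated cotangent complexes), so that the purely algebraic reduction of symmetric complexes lifts to a reduction of the enveloping cones.
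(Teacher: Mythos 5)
There is a genuine gap at the step you yourself flag as the main obstacle, and it is not a technical verification but the entire content of functoriality. You claim that $C_{g\circ f}$ lands in $K^\perp$, that its image in $K^\perp/K$ is $f^*C_g$, and that the residual section of $K$ cuts out $\fC_f$, citing the ``standard comparison of intrinsic normal cones under composition (cf.\ \cite[Proposition 2.26]{Man}).'' That proposition concerns base change of intrinsic normal cones along a fiber square, not composition; for a composition $\cX\to\cY\to\cZ$ there is in general \emph{no} exact relationship of the form $\fC_f \to \fC_{g\circ f}\to f^*\fC_g$, and no containment of $C_{g\circ f}$ in $K^\perp$ with the stated quotient. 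Establishing any such relation is precisely what forces Manolache, Kim--Kresch--Pantev, and this paper to pass through the deformation space $M^\circ_{\cY/\cZ}$ (the double-deformation argument of \S\ref{S.Functoriality}). Relatedly, your final identification requires rewriting $\sqrt{g^!}$, which is defined through $\sp_g$, in terms of operations precomposed with $\sp_{g\circ f}$; that interchange is again a deformation-space statement, not a formal consequence of bivariance.

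The paper's proof takes a different and essentially orthogonal route that avoids any direct cone comparison. It first blows up $\cZ$ along $\cY$ and uses the blowup exact sequence plus bivariance to reduce to the case where $g$ is a \emph{regular} immersion (a step you omit entirely; for a general closed immersion $\phi_g$ does not simplify and your identification of $\sqrt{g^!}$ with a class on $K^\perp/K$ over $f^*C_g$ would be even harder to justify). Once $g$ is regular, $\phi_g: G\dual[1]\to N\dual[1]$ with $N$ isotropic in $G$, and the hard cone-theoretic input is outsourced to Manolache's already-proved functoriality $(g\circ f)^!_\psi=f^!\circ g^!$ for the auxiliary two-term obstruction theory $\psi:\KK\to\bL_{g\circ f}$ with $\KK=\cone(f^*\LL_g\to\EE_f)[-1]$. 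The square roots are then reinstated on both sides purely algebraically: the reduction formula (Proposition \ref{Prop.ReductionFormula}, which is where Proposition \ref{Prop.LocFunct} actually enters) gives $\sqrt{(g\circ f)^!_{\psi\circ\zeta\circ\alpha}}=\sqe(N\uperp/N)\circ(g\circ f)^!_\psi$ and $\sqrt{g^!}=\sqe(N\uperp/N)\circ g^!$, and a homotopy of obstruction theories identifies $\sqrt{(g\circ f)^!}$ with $\sqrt{(g\circ f)^!_{\psi\circ\zeta\circ\alpha}}$. To repair your argument you would either have to supply the missing cone comparison (which amounts to redoing the deformation-to-the-normal-cone construction inside this lemma) or restructure along the paper's lines.
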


\begin{proof}
%We will use the blowup method as in \cite[Lemma 4.5]{KP}. 
Let $\widetilde{\cZ}=\mathrm{Bl}_{\cY} \cZ$ be the blowup of $\cZ$ along $\cY$. Form fiber diagrams
\[\xymatrix{
\widetilde{\cX} \ar[r]^{\tf} \ar[d] & \widetilde{\cY} \ar[r]^{\tg} \ar[d] & \widetilde{\cZ} \ar[d]^{\rho}\\
\cX \ar[r]^f & \cY \ar[r]^g & \cZ
} \qquad \xymatrix{
\cX \ar[r]^{f} \ar[d]^{\id} & \cY \ar[r]^{\id} \ar[d]^{\id} & \cY \ar[d]^{g}\\
\cX \ar[r]^f & \cY \ar[r]^g & \cZ .}\]
By the blowup sequence \cite[Example 1.8.1]{Ful}, we have a surjective map
\[(\rho_*,g_*):A_* (\widetilde{\cZ}) \oplus A_*(\cY) \to A_*(\cZ).\]
Since the both sides of \eqref{Eq.Functoriality} commute with projective pushforwards by Proposition \ref{Prop.Bivar}(1), and the compatibility condition in Definition \ref{Def.Compatibility} is stable under the base change of $\cZ$, it suffice to prove \eqref{Eq.Functoriality} for 
%\[\widetilde{\cX} \xrightarrow{\tf} \widetilde{\cY} \xrightarrow{\tg} \widetilde{\cZ} \and  \cX \xrightarrow{f} \cY \xrightarrow{\id}\cY.\]
\[\xymatrix{\widetilde{\cX} \ar[r]^{\tf} \ar@/_0.4cm/[rr]_-{\tg\circ \tf} & \widetilde{\cY} \ar[r]^-{\tg} & \widetilde{\cZ} } \and
\xymatrix{\cX \ar[r]^f \ar@/_0.4cm/[rr]_-{f} & \cY \ar[r]^-{\id} & \cY } \,.\]
Since the reduction formula in Proposition \ref{Prop.ReductionFormula} proves the latter case, it remains to prove the former case. After replacing $\widetilde{\cX}$, $\widetilde{\cY}$, $\widetilde{\cZ}$ by $\cX$, $\cY$, $\cZ$, we may assume that the closed immersion $g:\cY \to \cZ$ is a local complete intersection morphism.

Since $g:\cY \to \cZ$ is a regular immersion, the symmetric obstruction theory $\phi_g$ can be expressed as
\[\phi_g : \EE_{g} = G \dual[1] \longrightarrow \bL_g=\LL_g =N\dual[1].\] 
for some special orthogonal bundle $G$ over $\cY$, where $N=N_{\cY/\cZ}$ is the normal bundle of $\cY$ in $\cZ$. Since $\phi_g$ satisfies the isotropic condition, $N$ is an isotropic subbundle of $G$.

Form a morphism of distinguished triangles 
\beq\label{Eq25}\xymatrix{
f^*\LL_{g} \ar[r]^{} \ar@{=}[d] & \KK \ar[r]^-{\eta} \ar@{.>}[d]^{\psi}& \EE_{f}  \ar[d]^{\phi_f'} \ar[r] & \\
f^*\LL_{g} \ar[r]^{} & \bL_{g \circ f} \ar[r]^{}  & \bL_f  '\ar[r]& 
}\eeq
for some perfect complex $\KK$ and maps $\psi$ and $\eta$. Then $\psi:\KK \to \bL_{g\circ f}$ is a perfect obstruction theory in the sense of Behrend-Fantechi. By Manolache's virtual pullback formula \cite[Theorem 4.8]{Man},\footnote{Here the compatibility condition \eqref{Eq25} is slightly general that than in Manolache \cite{Man} since we are considering the truncated cotangent complexes. However, exactly the same proof works for this generalized compatibility condition.} the diagram \eqref{Eq25} gives us
\begin{equation}\label{Eq18}
(g\circ f)^!_{\psi} = f^! \circ g^!,    
\end{equation}
where $(g\circ f)^!_{\psi}$ denotes the virtual pullback for $\psi$ and $g^!$ denotes the ordinary Gysin pullback.

Form a morphism of distinguished triangles
\[\xymatrix{
f^*\EE_g \ar[r]^{\beta} \ar[d]^{f^*\phi_g } & \DD \ar[r]^{\gamma} \ar@{.>}[d]^{\zeta} & \EE_{f} \ar@{=}[d] \ar[r]&  \\
f^*\LL_{g} \ar[r]^{} & \KK \ar[r]^-{\eta} & \EE_{f} \ar[r] & }\]
for some map $\zeta$. Then $\KK$ is an isotropic subcomplex of $\EE_{g\circ f}$ with respect to $\zeta\circ \alpha : \EE_{g\circ f} \to \KK$ since $\phi_g$ satisfies the isotropic condition. Note that the reduction of $\EE_{g\circ f}$ by $\EE_f$ is $f^*\EE_g$, and the reduction of $f^*\EE_g=f^*G[1]$ by $f^*\LL_g=f^*N\dual[1]$ is $f^*(N\uperp/N)[1]$. We claim that 
\beq\label{F1}[\KK\dual[2] \to \EE_{g\circ f} \to \KK ] = f^*(N\uperp/N)[1].\eeq
Indeed, choose a symmetric resolution of $\EE_{g\circ f}$ and resolutions of $\KK$, $\EE_f$ such that the two maps $\EE_{g\circ f} \mapright{\zeta\circ \alpha} \KK \mapright{\eta} \EE_f$ are represented by surjective chain maps. As in the proof of Proposition \ref{Prop.ReductionFormula}, we may assume that the degree 0 terms of the resolutions of $\EE_{g\circ f}$, $\KK$, $\EE_f$ are isomorphic since $h^0(\zeta\circ\alpha)$, $h^0(\eta)$ are bijective and $h^{-1}(\zeta\circ\alpha)$, $h^{-1}(\eta)$ are surjective. Then \eqref{F1} follows immediately.

By the reduction formula in Proposition \ref{Prop.ReductionFormula}, we have
\beq\label{Eq29}\sqrt{(g\circ f)^!_{\psi\circ \zeta\circ \alpha}} = \sqe(N\uperp/N)\circ (g\circ f)^!_{\psi}\eeq
where the left-hand side of \eqref{Eq29} is the square root virtual pullback  for the induced symmetric obstruction theory $\psi \circ \zeta \circ \alpha : \EE_{g\circ f} \to \bL_{g\circ f}$. Combining the two equations \eqref{Eq18} and \eqref{Eq29}, we obtain
\[\sqrt{(g\circ f)^!_{\psi\circ \zeta\circ \alpha}} = f^! \circ \sqrt{g^!}.\]
since $\sqrt{g^!} =\sqe(N\uperp/N)\circ g^!$ by Corollary \ref{Cor.sqrtVPforlci}.

Finally, consider the two morphisms
\[\xymatrix{
%\DD\dual[2] \ar[r]^{\alpha\dual} \ar[d]^{\beta\dual} & \EE_{g \circ f} \ar[r]^{\delta} \ar[d]^{\alpha} & \EE_f  \ar@{=}[d] \ar[r] & \\
f^*\EE_g \ar[r]^{\beta} \ar[d]^{f^*\phi_g} & \DD \ar[r]^{\gamma} \ar@<-.5ex>[d]_{\phi_{g\circ f}'} \ar@<.5ex>[d]^{\psi\circ\zeta} & \EE_f  \ar[d]^{\phi_f'} \ar[r] & \\
\tau^{\ge-1}f^*\bL_g \ar[r]^a  & \bL_{g\circ f} \ar[r] & \bL_f' \ar[r] &
}\]
of distinguished triangles. By Lemma \ref{Lem.IsotropicConditioninFunctoriality}, the maps
\[\left((1-t)\cdot (\phi'_{g\circ f}) + (t) \cdot (\psi \circ \zeta) \right) \circ \alpha:\EE_{g\circ f} \to \bL_{g\circ f}\]
for all $t \in \A^1$ give us a family of symmetric obstruction theories satisfying the isotropic condition. Since square root virtual pullbacks are bivariant classes by Proposition \ref{Prop.Bivar}, we can deduce
\[\sqrt{(g\circ f)^!} = \sqrt{(g\circ f)^!_{\psi \circ \zeta \circ \alpha}}\]
by a deformation argument. It completes the proof.
\end{proof}

\subsection{Cone stack case}

We then consider the case when $\cZ=\fC$ is a cone stack over $\cY$. After choosing a presentation $C \to \fC$ of $\fC$ by a cone $C$, we will reduce the situation as
\beq\label{Eq.ConeStacktoCone} 
\xymatrix{\cX \ar[r]^f \ar@/_0.4cm/[rr]_-{k\circ f} & \cY \ar[r]^-{k} & \fC } \leadsto
\xymatrix{\cX \ar[r]^f \ar@/_0.4cm/[rr]_-{\tk\circ f} & \cY \ar[r]^-{\tk} & C } \eeq
by lifting the obstruction theories. Since the zero section $\tk : \cY \to C$ is a closed embedding, we can apply Lemma \ref{Lem.specialcaseviablowup} to the latter case in \eqref{Eq.ConeStacktoCone} and deduce Theorem \ref{Thm.Functoriality} for the former case in \eqref{Eq.ConeStacktoCone}.

\begin{remark}%\label{Rmk.Generalization}
Note that everything in \S\ref{S.sqrtVP} can be generalized to a morphism $f:\cX\to \cY$ from a quasi-projective scheme $\cX$ to an algebraic stack $\cY$ in a straightforward manner. Indeed, we can define a symmetric obstruction theory, the isotropic condition, and the square root virtual pullback as in Definition \ref{Def.SOT}, Definition \ref{Def.Isotropic}, and Definition \ref{Def.sqrtVP}. The criterion in Proposition \ref{Prop.Isotropic} holds if we further assume that $\overline{f}$ is quasi-projective. Proposition \ref{Prop.Bivar} also holds if we further assume that $g$ is quasi-projective. 
\end{remark}

\begin{lemma}\label{Lem.conestackcase}
Consider a commutative diagram 
\[\xymatrix{\cX \ar[r]^f \ar@/_0.4cm/[rr]_{k\circ f} & \cY \ar[r]^k & \fC }\]
of quasi-projective schemes $\cX$, $\cY$ and a cone stack $\fC$ over $\cY$, where $k: \cY \to \fC$ is the zero section. Consider a symmetric obstruction theory $\phi_k : \EE_k \to \bL_k$ satisfying the isotropic condition and a perfect obstruction theory $\phi_f : \EE_f \to \bL_f$. Note that $\bL_{k\circ f} \cong \trunc f^*\bL_k \oplus \bL_f$ by \cite[Lemma 2.10]{Qu}. Then
%\[\phi_{g\circ f} = f^*\phi_g \oplus \phi_g \oplus 0 : f^*\EE_g \oplus (\EE_f \oplus \EE_f\dual[2]) \to  \trunc f^* \bL_g \oplus \bL_f \cong \bL_{g\circ f}\]
\[\phi_{k\circ f}=\begin{pmatrix}f^*\phi_k & \xi & 0\\ 0 & \phi_f & 0\end{pmatrix} : f^*\EE_k \oplus (\EE_f \oplus \EE_f\dual[2]) \to \tau^{\geq-1} f^*\bL_k \oplus \bL_f =\bL_{k \circ f}\]
is a symmetric obstruction theory satisfying the isotropic condition for any map $\xi$ and gives us
\beq\label{Eq.ConeStack}
\sqrt{(k\circ f)^!} = f^! \circ \sqrt{k^!}.\eeq 
\end{lemma}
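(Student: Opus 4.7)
The first task is to verify that $\phi_{k\circ f}$ defines a symmetric obstruction theory. On $\EE_{k\circ f} := f^*\EE_k \oplus (\EE_f \oplus \EE_f^\vee[2])$, I take the orthogonal sum of $f^*\theta_{\EE_k}$ with the tautological hyperbolic pairing $\EE_f \otimes \EE_f^\vee[2] \to \O_\cX[-2]$; together with the canonical trivialisation $\det(\EE_f \oplus \EE_f^\vee[2]) \cong \O_\cX$, this yields a non-degenerate oriented symmetric form of tor-amplitude $[-2,0]$ compatible with $f^*o_{\EE_k}$. The obstruction theory property of $\phi_{k\circ f}$ for any $\xi$ follows from a diagram chase on the long exact sequence associated to the splitting $\bL_{k\circ f} \cong \trunc f^*\bL_k \oplus \bL_f$: the summand $\EE_f^\vee[2]$ is concentrated in degrees $[-2,-1]$ so it cannot affect $h^0$, while the block-triangular structure with bijective diagonal entries $h^0(f^*\phi_k)$ and $h^0(\phi_f)$ forces $h^0(\phi_{k\circ f})$ bijective and $h^{-1}(\phi_{k\circ f})$ surjective.

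For the isotropic condition, Proposition \ref{Prop.QuadraticFunction} shows that the quadratic function on $\fC(\EE_{k\circ f})$ decomposes as $f^*\fq_{\EE_k}$ plus the evaluation pairing between $\fC(\EE_f)$ and $\fC(\EE_f^\vee[2])$. When $\xi = 0$, the cotangent splitting identifies $\fC_{k\circ f} \cong f^*\fC_k \times_\cX \fC_f$ sitting inside $f^*\fC(\EE_k) \times_\cX \fC(\EE_f) \times_\cX \{0\}$; the first quadratic component vanishes by the hypothesis on $\phi_k$ and the second vanishes by orthogonality against the zero section of $\fC(\EE_f^\vee[2])$. For arbitrary $\xi$, the one-parameter deformation $t\xi$, $t \in \A^1$, furnishes a family of isotropic symmetric obstruction theories by the same argument applied fibrewise, and bivariance of square root virtual pullbacks (Proposition \ref{Prop.Bivar}) shows the resulting virtual pullback is independent of $t$.

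To establish the functoriality formula, I choose a symmetric resolution $[B_k \to E_k \to B_k^\vee] \cong \EE_k$ as in Proposition \ref{Prop.SymRes} and a two-term resolution $[B_f \to E_f^\vee] \cong \EE_f$ by vector bundles. Then $\EE_f \oplus \EE_f^\vee[2]$ inherits the symmetric resolution $[E_f \to (B_f \oplus B_f^\vee) \to E_f^\vee]$, where $B_f \oplus B_f^\vee$ carries the standard hyperbolic quadratic form, so $\EE_{k\circ f}$ acquires a symmetric resolution with middle orthogonal bundle $G := f^*E_k \oplus (B_f \oplus B_f^\vee)$ containing the isotropic subbundle $K := f^*E_k \oplus B_f$. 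By Lemma \ref{Lem.ComparisonofIsotropic}, the isotropic subcone of $G$ corresponding to $\fC_{k\circ f}$ is identified with $f^*C_k \times_\cX B_f$, where $C_k \subset E_k$ is the isotropic subcone for $\sqrt{k^!}$. Applying Proposition \ref{Prop.LocFunct} to the chain $B_f \subset K \subset G$ (noting $K^\perp/K \cong f^*E_k$) decomposes the localized square root Euler class multiplicatively as $e(B_f, \tau_f) \circ \sqrt{e}(f^*E_k|_{f^*C_k}, \tau_k)$. Combined with the factorization of the specialization map $\sp_{k\circ f}$ through $f^*\fC_k \times_\cX \fC_f$, this yields $\sqrt{(k\circ f)^!} = f^! \circ \sqrt{k^!}$. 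The main obstacle is this last factorization of $\sp_{k\circ f}$ when the target $\fC$ is only a cone stack rather than a scheme: it is the cone-stack analogue of Manolache's standard factorization in \cite{Man} and should follow from Kresch's intersection theory for Artin stacks together with the direct-sum decomposition of the intrinsic normal cone encoded by the cotangent splitting.
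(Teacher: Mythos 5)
Your first two paragraphs are essentially sound and agree with the paper: the hyperbolic summand $\EE_f\oplus\EE_f\dual[2]$ carries the obvious symmetric form and orientation, the obstruction-theory and isotropic properties follow (the paper gets both at once by noting that $(\phi_f,\phi_k,\phi_{k\circ f})$ is a compatible triple and invoking Lemma \ref{Lem.IsotropicConditioninFunctoriality}), and the reduction to $\xi=0$ by deforming $t\xi$ is exactly the paper's move. The proof breaks down in the last step. Your argument for \eqref{Eq.ConeStack} rests on the identification $\fC_{k\circ f}\cong f^*\fC_k\times_{\cX}\fC_f$ and a resulting factorization of $\sp_{k\circ f}$, which you flag as ``the main obstacle'' and do not prove. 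This identification is false for general cone stacks: take $\cY=\A^1$, $\fC=\spec k[t,u,v]/(tu)$ the abelian cone of $Q=k[t]/(t)\oplus k[t]$, and $\cX=\{t=0\}\hookrightarrow\cY$. Then $\fC_f\times_{\cX}f^*\fC_k\cong\A^1\times\A^2$, while $\fC_{k\circ f}=C_{\cX/\fC}$ is the tangent cone $V(TU)\subset\A^3$, a proper closed subcone. The splitting $\bL_{k\circ f}\cong\trunc f^*\bL_k\oplus\bL_f$ only yields a closed embedding of $\fC_{k\circ f}$ into the product of normal \emph{sheaves} $f^*\fN_k\times_\cX\fN_f$, which is all the paper ever uses. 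Since the product decomposition fails, the specialization map does not factor as you need, and the subsequent application of Proposition \ref{Prop.LocFunct} cannot be reassembled into $f^!\circ\sqrt{k^!}$ (whose two factors act over $\cY$ and $\cX$ separately, with $\sp_f$ in between).

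The paper circumvents exactly this difficulty by a different mechanism: it covers the cone stack by the honest cone $C=\fC\times_{[E/B]}E$ coming from a symmetric resolution of $\EE_k$, notes that the zero section $\tk:\cY\to C$ is a closed immersion of quasi-projective schemes, applies the already-proved closed-immersion case (Lemma \ref{Lem.specialcaseviablowup}, which is where Manolache's functoriality, the reduction formula, and Proposition \ref{Prop.LocFunct} actually do their work) to $\cX\to\cY\to C$, and then descends via the smooth surjection $M^\circ_{\tk\circ f}\to M^\circ_{k\circ f}$, which gives $r^*\circ\sp_{k\circ f}=\sp_{\tk\circ f}\circ p^*$. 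You would need this (or an equivalent) detour; the direct product-of-cones route cannot be repaired. A secondary slip: $K:=f^*E_k\oplus B_f$ is not an isotropic subbundle of $G=f^*E_k\oplus(B_f\oplus B_f\dual)$ since it contains the nondegenerate summand $f^*E_k$; the intended subbundle is $B_f$ (or its dual, to match the obstruction bundle of $\phi_f$), for which indeed $K\uperp/K\cong f^*E_k$.
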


%\begin{lemma}\label{Lem.conestackcase} In the situation of Theorem \ref{Thm.Functoriality}, assume the followings: \begin{enumerate} \item $Z=\fC$ is a cone stack over $Y$ and $g:Y \to \fC$ is the zero section, and \item $\EE_{g \circ f}=f^*\EE_g \oplus (\EE_f \oplus \EE_f\dual[2])$ and $\delta:\EE_{g\circ f} \to \EE_f$ is the projection. \end{enumerate} Choose a symmetric resolution $\EE_{g} \cong [B\to E \to B\dual]$ and let $C=\fC \times_{[E/B]}E$. Consider a commutative diagram \[\xymatrix{ && C \ar[d]^q\\ X \ar[r]_f & Y \ar[r]_g \ar[ru]^{\tg} & \fC  }\] where $\tg:Y \to C$ is the zero section. Then there exist symmetric obstruction theories \[\phi_{\tg \circ f} : \EE_{\tg}\oplus \left(\EE_f \oplus \EE_f\dual[2]\right) \to \bL_{\tg \circ f} \and \phi_{\tg} : \EE_{\tg} \to \bL_{\tg}\] such that \begin{enumerate} \item $\phi_{\tg}$ and $\phi_{\tg \circ f}$ are isotropic,\item $\phi_f,\phi_{\tg},\phi_{\tg \circ f}$ are compatible,\item $\sqrt{g^!} = \sqrt{\tg^!} \circ q^*$,\item $\sqrt{(g\circ f)^!} = \sqrt{(\tg\circ f)^!} \circ q^*$.\end{enumerate} \end{lemma}

\begin{proof}
It is easy to show that $(\phi_f,\phi_k,\phi_{k\circ f})$ form a compatible triple of obstruction theories with obvious distinguished triangles. Hence $\phi_{k \circ f}$ is a symmetric obstruction theory satisfying the isotropic condition by Lemma \ref{Lem.IsotropicConditioninFunctoriality}. It remains to prove the formula \eqref{Eq.ConeStack}.

Note that replacing $\xi$ by $t \cdot \xi$ for any $t \in \C$ gives us a symmetric obstruction theory. Hence, by a deformation argument, we may assume that $\xi=0$.

By Proposition \ref{Prop.SymRes}, we can choose a symmetric resolution $\EE_{k} \cong [B\to E \to B\dual]$. Let $C=\fC \times_{[E/B]}E$ and consider a commutative diagram
\[\xymatrix{
&& C \ar[d]^{p}\\
\cX \ar[r]_f & \cY \ar[r]_k \ar[ru]^{\tk} & \fC 
}\]
where $\tk:\cY \to C$ denotes the zero section. The inclusion $C \subseteq E$ defines a symmetric obstruction theory  $\phi_{\tk} : \EE_{\tk}=E[1] \to \LL_{\tk},$ which is isotropic by Lemma \ref{Lem.ComparisonofIsotropic}. 
%% Let \[\widetilde{\xi} : \EE_f \mapright{\xi} \trunc f^* \bL_k \to \trunc f^* \bL_{\tk}\] be the composition. 
Let
%\[\phi_{\tk \circ f}=\begin{pmatrix}f^*\phi_{\tk} & 0 & 0\\ 0 & \phi_f & 0\end{pmatrix}  : f^*\EE_{\tk} \oplus (\EE_f \oplus \EE_f\dual[2])  \to \trunc f^*\bL_{\tk} \oplus\bL_f  = \bL_{\tk \circ f} .\]
\[\phi_{\tk \circ f}=(f^*\phi_{\tk} \oplus \phi_f \oplus 0)  : f^*\EE_{\tk} \oplus (\EE_f \oplus \EE_f\dual[2])  \to \trunc f^*\bL_{\tk} \oplus\bL_f  = \bL_{\tk \circ f} .\]
Then $(\phi_f, \phi_{\tk}, \phi_{\tk \circ f})$ is also a compatible triple of obstruction theories and thus $\phi_{\tk \circ f}$ is isotropic by Lemma \ref{Lem.IsotropicConditioninFunctoriality}. Since $\tk:Y \to C$ is a closed immersion, Lemma \ref{Lem.specialcaseviablowup} gives us
\beq\label{n2}
\sqrt{(\tk \circ f)^!} = f^! \circ \sqrt{(\tk)^!}.
\eeq

Choose a resolution $\EE_f\cong [K\dual \to D \dual]$ and form a fiber diagram
\[\xymatrix{
C' \ar[rr] \ar[d]^{s} & &E \times K \times K\dual \ar[d]\\
\fC_{\tk \circ f} \ar[r] \ar[d]^{r} &  f^*N_{\tk} \times \fN_f \ar[r] \ar[d] & 
E\times [K/D]\times K\dual\ar[d]\\
\fC_{k \circ f} \ar[r] & f^*\fN_{k} \times \fN_f \ar[r] & 
[E/B] \times [K/D] \times K\dual
}\]
where the horizontal arrows are closed embedding and the vertical arrows are smooth morphisms. Since $M^\circ_{\tk \circ f} \to M^\circ_{k \circ f}$ is smooth, we can easily show
\[r^* \circ \sp_{k \circ f} = \sp_{\tk \circ f} \circ p^*.\]
Therefore, we have
\begin{align}\label{n3}
\sqrt{(k\circ f)^!} &=\sqe(E \oplus (K\oplus K\dual),\tau) \circ s^* \circ r^* \circ\sp_{k \circ f}\\
&=\sqe(E \oplus (K\oplus K\dual),\tau) \circ s^* \circ\sp_{\tk \circ f}\circ p^* = \sqrt{(\tk \circ f)^!}\circ p^* \nonumber
\end{align}
where $\tau \in \Gamma(C',E \oplus (K \oplus K\dual))$ denotes the tautological section. On the other hand, we have
\beq\label{n4}
\sqrt{k^!} =  \sqrt{e}(E|_{C},\tau)\circ p^* =  \sqrt{(\tk)^!}\circ p^*.\eeq
since $\sp_k=\id$ (cf. \cite[Lemma 2.8]{Qu}). The three equations \eqref{n2}, \eqref{n3} and \eqref{n4} completes the proof.
\end{proof}

% Hence it remains to prove Theorem \ref{Thm.Functoriality} when $Z$ is a cone over $Y$.

\subsection{Deformation to the normal cone}%\label{ss.Functoriality.Deformation}

As in \cite{Ful,KKP,Man,Qu,KPVIT}, we will use a deformation argument to replace the commutative diagram \eqref{C.XYZ} as
\[\xymatrix{\cX \ar[r]^f \ar@/_0.4cm/[rr]_-{g\circ f} & \cY \ar[r]^-{g} & \cZ } \leadsto
\xymatrix{\cX \ar[r]^f \ar@/_0.4cm/[rr]_-{k\circ f} & \cY \ar[r]^-{k} & \fC_{g} }\] where $\fC_g$ denotes the intrinsic normal cone of $g$. Then Lemma \ref{Lem.conestackcase} will complete the proof of Theorem \ref{Thm.Functoriality}.

Given a commutative diagram \eqref{C.XYZ} of quasi-projective schemes, let $M^\circ_g$ be the deformation space of $g:\cY \to \cZ$. Consider the canonical cartesian diagram
\[\xymatrix{
\cX \ar[r]^{g\circ f} \ar[d] & \cZ \ar[d] \ar[r] & \{1\} \ar[d]^{i_1} \\
\cX\times \A^1 \ar[r]^h & {(M^{\circ}_g)'} \ar[r] & \A^1\\
\cX \ar[r]^{k\circ f} \ar[u] & \fC_{g} \ar[u] \ar[r] & \{0\} \ar[u]_{i_0} 
}\]
where {$(M^{\circ}_g)'$} denotes the open substack of $M^\circ_g$ consists of the fibers of $M^\circ_{g} \to \PP^1$ over $\A^1=\PP^1\setminus\{0\}$, and $k : \cY \to \fC_g$ denotes the zero section. Note that we have a canonical isomorphism $\bL_k=\bL_g$ between the truncated cotangent complexes.

\begin{proposition}\label{Prop.DeformationtoNormalCone}
There exists a symmetric obstruction theory
\[\phi_h : \EE_h \to \bL_h\]
satisfying the isotropic condition such that the fibers of $\phi_h$ over $\lambda \in \A^1$ (see \eqref{Eq.BaseChange}) are given as follows:
\begin{enumerate}
\item If $\lambda\neq 0$, then
\beq\label{D.Eq27}
(\phi_h)_\lambda=\phi_{g\circ f} : \EE_{g\circ f} \to \bL_{g\circ f}.\eeq
\item If $\lambda=0$, then 
\beq\label{D.Eq28}(\phi_h)_0=\begin{pmatrix}f^*\phi_g & \xi & 0\\ 0 & r \circ \eta & 0\end{pmatrix} : f^*\EE_g \oplus \EE_f \oplus \EE_f\dual[2] \to \tau^{\geq-1} f^*\bL_g \oplus \bL_f =\bL_{k \circ f}
\eeq
for some maps $\xi$ and $\eta:\EE_f \to \bL_f'$ such that $\eta$ fits into a morphism
\beq\label{D1}
\xymatrix{
f^*\EE_g \ar[r]^{\beta} \ar[d]^{\phi_g} & \DD \ar[r]^{\gamma} \ar[d]^{\phi_{g\circ f}'} & \EE_f \ar@{.>}[d]^{\eta} \ar[r] & \\
\tau^{\geq-1} f^*\bL_g \ar[r]^a & \bL_{g \circ f} \ar[r] & \bL_f' \ar[r] & 
}\eeq
of distinguished triangles over $\cX$.
\end{enumerate}
\end{proposition}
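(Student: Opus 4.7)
The strategy is to construct $\phi_h$ as an $\A^1$-family of symmetric obstruction theories that specializes correctly to the prescribed fibers at $\lambda \ne 0$ and $\lambda = 0$. I would first factor $h$ as $\cX \times \A^1 \xrightarrow{f' := f \times \mathrm{id}} \cY \times \A^1 \xrightarrow{j} (M^\circ_g)'$. Since $(M^\circ_g)' \to \A^1$ is flat, $j|_{\lambda \ne 0}$ recovers $g:\cY \to \cZ$, and $j|_{\lambda = 0}$ is the zero section $k:\cY \to \fC_g$, there is a canonical extension of $\phi_g$ to a symmetric obstruction theory $\phi_j: \pi_\cY^* \EE_g \to \bL_j$ interpolating between $\phi_g$ and $\phi_k$, using the canonical identification $\bL_k \cong \bL_g$.

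Next, I would pull back the compatibility diagram of Definition~\ref{Def.Compatibility} from $\cX$ to $\cX \times \A^1$, replacing $\trunc f^* \bL_g$ with $\trunc (f')^* \bL_j$ and $f^* \EE_g$ with $(f')^* \pi_\cY^* \EE_g$. This produces a compatible triple for $(f', j, h)$ in which $\EE_h$ is a symmetric complex of tor-amplitude $[-2,0]$ on $\cX \times \A^1$, obtained from $(f')^* \pi_\cY^* \EE_g$ by a symmetric extension by $\pi^* \EE_f$ (inverse of the reduction in Proposition~\ref{Prop.Reduction}). The orientation of $\EE_h$ is inherited from $\EE_{g\circ f}$ via the canonical isomorphism of determinant line bundles, and the accompanying morphism $\phi_h: \EE_h \to \bL_h$ is built from the compatibility data.

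For the fiber analysis, at $\lambda \ne 0$ the restriction of $\bL_j$ is $\bL_g$, so the compatibility diagram on $\cX \times \{\lambda\}$ coincides with the original one, yielding $(\phi_h)_\lambda = \phi_{g\circ f}$. At $\lambda = 0$, the zero-section property of $k$ combined with the cone-stack analysis of Lemma~\ref{Lem.conestackcase} produces the splitting $\bL_{k\circ f} \cong \trunc f^* \bL_k \oplus \bL_f$ and a corresponding splitting of $\EE_h|_0$ as $f^* \EE_g \oplus \EE_f \oplus \EE_f^\vee[2]$. Reading off the obstruction map in this splitting yields the block form~\eqref{D.Eq28}: $f^* \phi_g$ on the first factor; a lift $\eta: \EE_f \to \bL_f'$ of $\phi_f$ on the second, determined by $\phi_{g\circ f}'$ through~\eqref{D1}; zero on $\EE_f^\vee[2]$ by the symmetric structure; and the residual cross-term $\xi$.

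For the isotropic condition, both $\fq_{\EE_h}$ and the intrinsic normal cone $\fC_h$ form flat families over $\A^1$ (since $(M^\circ_g)' \to \A^1$ is flat), so the vanishing $\fq_{\EE_h}|_{\fC_h} = 0$ is a closed condition on $\A^1$. On the dense open $\A^1 \setminus \{0\}$ the obstruction theory $\phi_h$ coincides with the pullback of $\phi_{g\circ f}$, which is isotropic by hypothesis, hence $\phi_h$ is isotropic everywhere. The main technical hurdle is the fiber identification at $\lambda = 0$: carefully tracking how the iterated extensions $\EE_f^\vee[2] \to \DD^\vee[2] \to f^* \EE_g$ and $\DD^\vee[2] \to \EE_{g\circ f} \to \EE_f$ split under the deformation, and verifying that the resulting block decomposition of $\phi_h|_0$ matches~\eqref{D.Eq28} with the map $\eta$ determined precisely by the diagram~\eqref{D1}.
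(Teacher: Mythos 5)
Your high-level shape (build an $\A^1$-family interpolating between $\phi_{g\circ f}$ and the split obstruction theory at $\lambda=0$, then check the isotropic condition) matches the paper, but two essential steps are missing or wrong. First, the construction of $\EE_h$ itself: you invoke "a symmetric extension by $\pi^*\EE_f$ (inverse of the reduction)", but un-reducing is not a canonical operation -- it requires specifying extension data, and nothing in your proposal produces a symmetric complex on $\cX\times\A^1$ whose fibers are $\EE_{g\circ f}$ for $\lambda\neq 0$ and $f^*\EE_g\oplus\EE_f\oplus\EE_f\dual[2]$ for $\lambda=0$. The paper's key trick is to define $\EE_h$ as a \emph{reduction}, namely of $\EE_{g\circ f}\oplus(\EE_f\oplus\EE_f\dual[2])$ by the isotropic subcomplex $\EE_f$ via the map $(\delta,T,0)$; this single formula forces both fiber identifications. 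Likewise the map $\phi_h:\EE_h\to\bL_h$ is not "read off": it requires the presentation of $\bL_h$ as a cone on $\trunc f^*\bL_g\xrightarrow{(T,a)}\trunc f^*\bL_g\oplus\bL_{g\circ f}$ (Lemma \ref{Lem.KKP}) and an octahedron chase producing the auxiliary complexes $\BB_1,\BB_2$; your intermediate object $\phi_j$ on $\cY\times\A^1\to(M^\circ_g)'$ is plausible but unproved and is not how the compatibility with $\bL_h$ is actually established.

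Second, and more seriously, the isotropic condition cannot be deduced by your density argument. Flatness of $(M^\circ_g)'\to\A^1$ does not make $\fC_h=\fC_{\cX\times\A^1/(M^\circ_g)'}$ flat over $\A^1$: the graded pieces $\I^n/\I^{n+1}$ of the normal cone have $T$-torsion in general, so $\fC_h$ can have components or embedded structure supported over $\lambda=0$. The isotropic condition is the scheme-theoretic vanishing of $\fq_{\EE_h}$ on all of $\fC_h$, and vanishing on the dense open locus $\lambda\neq0$ only shows the relevant element is $T$-power torsion, not zero. (This is the same subtlety that forces Fulton, Kim--Kresch--Pantev and Manolache to argue with rational equivalences rather than flat families at this point.) The paper instead verifies the condition directly: using the criterion of Proposition \ref{Prop.Isotropic} and explicit symmetric resolutions, it shows the composition $C_{\th}\hookrightarrow N_{\th}\to K_1\uperp/K_1\to\A^1$ factors, modulo terms that vanish, through $C_{\tg}|_{\cX\times\A^1}\to K\uperp/K\to\A^1$ (via Lemma \ref{Lem.D1}), which vanishes by the isotropic condition for $\phi_g$. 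You would need to supply an argument of this kind; as written, the proposal's final step does not close.
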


Let us first assume that Proposition \ref{Prop.DeformationtoNormalCone} holds. Then Theorem \ref{Thm.Functoriality} follows immediately from the following simple argument.

\begin{proof}[Proof of Theorem \ref{Thm.Functoriality}]
By Proposition \ref{Prop.DeformationtoNormalCone} and the bivariance of square root virtual pullbacks in Proposition \ref{Prop.Bivar}, we have
\beq\label{D2}
\sqrt{(g\circ f)^!} = \sqrt{(k \circ f)^!} \circ \sp_g
\eeq
where $\sp_g : A_* (\cZ) \to A_* (\fC_g)$ denotes the specialization map. By Lemma \ref{Lem.conestackcase}, we have
\beq\label{D3}
\sqrt{(k\circ f)^!} = f^!_{\eta} \circ \sqrt{k^!}
\eeq
where $f^!_{\eta}$ denotes the virtual pullback associated to the perfect obstruction theory $\EE_f \mapright{\eta} \bL_f' \mapright{r} \bL_f$. Since the maps
\[(1-t)\cdot \phi_f' + t \cdot \eta:\EE_f \to \bL_f'\]
for all $t \in \CC$ fit into the diagram \eqref{D1} as the dotted arrow, a deformation argument proves
\beq\label{D4}
f^!= f^!_\eta. 
%\footnote{The virtual pullback only depends on $\EE_f$, not on the map $\phi:\EE_f\to \bL_f$. However, our proof here can be applied to any homology theories, not just for the Chow theory.}
\eeq
Since $\sqrt{g^!} = \sqrt{k^!} \circ \sp_g$ by Definition \ref{Def.sqrtVP}, the three equations \eqref{D2}, \eqref{D3}, and \eqref{D4} proves the functoriality \eqref{Eq.Functoriality}. 
\end{proof}

The rest of this subsection is devoted to the proof of Proposition \ref{Prop.DeformationtoNormalCone}.

\subsubsection{Review on $M^\circ _g$ and $\LL_h$} 

We first review the basic facts on the deformation space $M^\circ _g$ and the cotangent complex $\LL_h$ from \cite{Ful,KKP}.

Consider a factorization of the diagram \eqref{C.XYZ} as 
\beq\label{Fact.XYZ}
\xymatrix{
\cX \ar@{^{(}->}[r]^{\tf} \ar[rd]_f & \widetilde{\cY} \ar@{^{(}->}[r]^{\tg} \ar[d] & \widetilde{\cZ} \ar[d]^{\overline{f}}\\
 & \cY \ar@{^{(}->}[r] \ar[rd]_g & \cZ' \ar[d]^{\overline{g}}\\
 & & \cZ
}\eeq
where the square is cartesian, the horizontal arrows are closed embeddings, and the vertical arrows are smooth. 
Let $I=I_{\widetilde{\cY}/\widetilde{\cZ}}$ and $J=I_{\cX/\widetilde{\cZ}}$ be the ideal sheaves. 
Then the canonical maps between the truncated cotangent complexes 
%\[\xymatrix{\trunc f^*\bL_g \ar[r]^-{a} & \bL_{g\circ f} \ar[r] & \bL_f }\]
induced by the distinguished triangle \eqref{T.CotanCplx.XYZ} can be represented by a canonical right exact sequence
\beq\label{Res.CotanCplx.XYZ}
\xymatrix{
%& \bL_g \udot \ar[r]  \ar@{}[d]|{\rotatebox[origin=c]{90}{=}} & \bL_{g\circ f}\udot \ar[r] \ar@{}[d]|{\rotatebox[origin=c]{90}{=}}& \bL_f \udot \ar[r]  \ar@{}[d]|{\rotatebox[origin=c]{90}{=}} & 0 \\
& \trunc f^*\bL_g  \ar[r]  \ar@{}[d]|{\rotatebox[origin=c]{90}{=}} & \bL_{g\circ f} \ar[r] \ar@{}[d]|{\rotatebox[origin=c]{90}{=}}& \bL_f \ar@{}[d]|{\rotatebox[origin=c]{90}{=}} &  \\
%& \dfrac{I}{IJ} \ar[r]^{}\ar[d]  & \dfrac{J}{J^2} \ar[r]\ar[d] & \dfrac{J}{J^2+I} \ar[r] \ar[d] & 0 \\
& {I}/{IJ} \ar[r]^{}\ar[d]  & {J}/{J^2} \ar[r]\ar[d] & {J}/{(J^2+I)} \ar[r] \ar[d] & 0 \\
0 \ar[r] &\Omega_{\overline{g}}|_{\cX} \ar[r]
 & \Omega_{\overline{g}\circ \overline{f}}|_{\cX} \ar[r] 
& \Omega_{\overline{f}}|_{\cX} \ar[r]
 & 0
}\eeq
of chain complexes. %$\bL_g \udot$, $\bL_{g\circ f} \udot$, $\bL_f \udot$

The factorization \eqref{Fact.XYZ} induces a factorization of the map $h$ as
\beq\label{Fact.DeformSpace}
\xymatrix{
\cX \times \A^1 \ar@{^{(}->}[r]^{\widetilde{h}} \ar[rd]_h & (M^{\circ} _{\widetilde{g}})' \ar[d]^{\overline{h}} \\
& (M^{\circ} _g )'}
\eeq
where $(M^\circ_{\tg})'$ is the open substack of $M^\circ_{\tg}$ consists of the fibers of $M^\circ _{\tg} \to \PP^1$ over $\A^1=\PP^1\setminus\{0\}$. The horizontal arrow $\th$ in \eqref{Fact.DeformSpace} is a closed embedding and the vertical arrow $\overline{h}$ in \eqref{Fact.DeformSpace} is a smooth morphism. By \cite[\S5]{Ful}, the deformation space $(M^\circ_{\tg})'$ is
%is $\spec(\cA)$ for a $\O_{\widetilde{\cZ}}[T]$-algebra
%\[\cA=\cdots\oplus I^2  T^{-2}\oplus{I}  T^{-1}\oplus \O_{\widetilde{\cZ}}  T^0\oplus \O_{\widetilde{\cZ}}  T^1\oplus \O_{\widetilde{\cZ}}  T^2\oplus\cdots,\]
\[(M^{\circ} _{\tg})' = \spec(\cdots\oplus I^2  T^{-2}\oplus{I}  T^{-1}\oplus \O_{\widetilde{\cZ}}  T^0\oplus \O_{\widetilde{\cZ}}  T^1\oplus \O_{\widetilde{\cZ}}  T^2\oplus\cdots)\]
over $\widetilde{\cZ} \times \A^1$, and the ideal sheaf of the closed subscheme $\cX \times \A^1$ in $(M^\circ_{\tg})'$ is
\[\I= \cdots\oplus{I}^2 T^{-2}\oplus{I} T^{-1}\oplus{J} T^0\oplus{J} T^1\oplus{J} T^2\oplus\cdots .\]

\begin{lemma}[cf. {\cite[Proposition 1]{KKP}}]\label{Lem.KKP}
There is a distinguished triangle
\beq\label{T.KKP}
\xymatrix{ \trunc f^* \bL_g \ar[r]^-{(T,a)} & \trunc f^* \bL_g \oplus \bL_{g\circ f} \ar[r] & \bL_h '  \ar[r] &}
\eeq
for some $\bL_h '$ such that $\trunc \bL_h ' \cong \bL_h$.

Moreover, the induced maps between the truncated cotangent complexes can be represented by a canonical right exact sequence
\beq\label{Res.KKP}
\xymatrix{
%& \bL_g \udot \ar[r]  \ar@{}[d]|{\rotatebox[origin=c]{90}{=}} & \bL_g \udot \oplus \bL_{g\circ f}\udot \ar[r] \ar@{}[d]|{\rotatebox[origin=c]{90}{=}}& \bL_h \udot \ar[r] \ar@{}[d]|{\rotatebox[origin=c]{90}{=}} & 0 \\
& \trunc f^*\bL_g \ar[r]  \ar@{}[d]|{\rotatebox[origin=c]{90}{=}} & \trunc f^*\bL_g \oplus \bL_{g\circ f} \ar[r] \ar@{}[d]|{\rotatebox[origin=c]{90}{=}}& \bL_h  \ar@{}[d]|{\rotatebox[origin=c]{90}{=}} & \\
& \dfrac{I}{IJ}[T] \ar[r]^-{(T,1)} \ar[d] & \left(\dfrac{I}{IJ}\oplus \dfrac{J}{J^2}\right)[T] \ar[r]^-{(T^{-1},-1)} \ar[d] & \dfrac{\I}{\I^2} \ar[r] \ar[d] & 0\\
0 \ar[r] & \left(\Omega_{\overline{g}}|_{\cX}\right)[T] \ar[r] & \left(\Omega_{\overline{g}}|_{\cX} \oplus \Omega_{\overline{g}\circ \overline{f}}|_{\cX}\right)[T] \ar[r] & \Omega_{\overline{h}}|_{\cX\times \A^1} \ar[r] & 0
}\eeq
of chain complexes, where
$\frac{\I}{\I^2} = \frac{I}{IJ}T^{-1} \oplus \frac{J}{J^2} T^0 \oplus \frac{J}{J^2} T^1 \oplus \cdots.$
\end{lemma}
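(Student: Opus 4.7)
The plan is to follow the strategy of Kim-Kresch-Pantev \cite{KKP} and adapt their computation of the full cotangent complex of a deformation to the normal cone to our truncated setting, using the smooth-closed factorization $h = \overline{h} \circ \widetilde{h}$ provided by \eqref{Fact.DeformSpace}. The central input is the explicit presentation of $(M^{\circ}_{\widetilde{g}})'$ as a $T$-graded algebra over $\widetilde{\cZ} \times \A^1$ from \cite[\S5]{Ful}, which lets all the relevant cotangent objects be computed at the chain level.

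First I would compute $\bL_h$ from the composition triangle $\widetilde{h}^{*}\bL_{\overline{h}} \to \bL_h \to \bL_{\widetilde{h}} \to$. Since $\overline{f}$ is smooth, so is $\overline{h}$, giving $\bL_{\overline{h}} = \Omega_{\overline{h}}$; restricted to $\cX \times \A^1$, this sheaf can be identified with the cokernel appearing in the bottom row of \eqref{Res.KKP} by unwinding the graded-algebra structure of $(M^{\circ}_{\widetilde{g}})' \to (M^{\circ}_g)'$. Since $\widetilde{h}$ is a closed embedding, $\bL_{\widetilde{h}} = \I/\I^2[1]$; expanding $\I^2$ graded component by graded component using the quoted formula for $\I$ yields a decomposition $\I/\I^2 \cong \tfrac{I}{IJ}\,T^{-1} \oplus \bigoplus_{k \geq 0} \tfrac{J}{J^2}\,T^{k}$ matching the middle row of \eqref{Res.KKP}. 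Combining these steps presents $\bL_h$ as the claimed cokernel in the top row.

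To produce the distinguished triangle \eqref{T.KKP}, I would define $\bL_h'$ as the mapping cone of $\trunc f^{*}\bL_g \xrightarrow{(T, a)} \trunc f^{*}\bL_g \oplus \bL_{g\circ f}$, where the first coordinate is multiplication by the affine parameter $T$ and the second is the natural map $a$ from \eqref{T.CotanCplx.XYZ}. Using the chain-level resolutions from \eqref{Res.CotanCplx.XYZ}, this cone is represented by a complex whose truncation matches the top row of \eqref{Res.KKP} term-by-term, yielding $\trunc \bL_h' \cong \bL_h$.

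I expect the main obstacle to be the $T$-grading bookkeeping in the second step: verifying the decomposition of $\I/\I^2$ requires a careful expansion of $\I \cdot \I$ that separates cross-terms between different $T^{k}$ components, and the map $(T^{-1}, -1)$ in \eqref{Res.KKP} must be interpreted as the inclusion $I \subseteq J$ composed with a $T$-shift inside the graded algebra. Once this is done, the comparison of the three rows of \eqref{Res.KKP} as chain complexes becomes formal and gives both the distinguished triangle and the truncation identity $\trunc \bL_h' \cong \bL_h$.
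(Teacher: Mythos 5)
Your outline is correct and follows essentially the same route as the paper: the paper's proof simply cites Cases 1 and 3 of the proof of \cite[Proposition 1]{KKP}, which consist of exactly the graded computation of $\I/\I^2$ and the conormal-sequence comparison that you propose to carry out, and then defines $\bL_h'$ as the cone of $(T,a)$ and reads off $\trunc\bL_h'\cong\bL_h$ from the right exact sequence. Your identification of the key point (the degreewise expansion of $\I^2$ giving $(\I/\I^2)_{-1}=I/IJ$ and $(\I/\I^2)_{k}=J/J^2$ for $k\geq 0$, with $(T^{-1},-1)$ using the inclusion $I\subseteq J$) is precisely the content of Case 1 there.
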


\begin{proof}
The statements follow from the proof of \cite[Proposition 1]{KKP}. Indeed, the upper right exact sequence in \eqref{Res.KKP} is given in Case 1 in the proof of \cite[Proposition 1]{KKP}. On the other hand, in Case 3 of the proof of \cite[Proposition 1]{KKP}, it is explained that the lower exact sequence in \eqref{Res.KKP} can be deduced by the conormal sequences and the result in Case 1. Finally, if we form the distinguished triangle \eqref{T.KKP}, then we have $\trunc \bL_h ' \cong \bL_h$ by the right exact sequence \eqref{Res.KKP}. 
\end{proof}

%\[\bL_{\cX\times \A^1/ \fN_h } = \bL_h \]
%\beq\label{F57}\bL_{\cY/\fC_g} = \bL_{\cY/\fN_g} = \bL_g = [I_{\cY/\cZ'}/I_{\cY/\cZ'}^2 \to \Omega_{\cZ'/\cZ}|_\cY]\eeq

We will need the following technical lemma for proving the isotropic condition in Lemma \ref{Lem.Deform.Isotropic}.

\begin{lemma}\label{Lem.D1}
There exists a dotted arrow that fits into the diagram
\beq\label{D11}
\xymatrix{
\fC_h \ar@{.>}[r] \ar@{^{(}->}[d] & \fC_g|_{\cX\times \A^1} \ar@{^{(}->}[d]\\
\fN_h \ar[r] & \fN_g|_{\cX\times\A^1}
}\eeq
where the map $\fN_h \to \fN_g|_{\cX\times \A^1}$ of the intrinsic normal sheaves is induced by the map $\trunc f^*\bL_g \to \bL_h$ of the truncated cotangent complexes in \eqref{Res.KKP}.
\end{lemma}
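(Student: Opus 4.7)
The plan is to lift the desired dotted arrow $\fC_h\to\fC_g|_{\cX\times\A^1}$ to a map between honest normal cones via the smooth factorizations and then descend it to the stack quotients. Using \eqref{Fact.XYZ} and \eqref{Fact.DeformSpace}, we have presentations
\[\fC_h=[C_{\widetilde{h}}/T_{\overline{h}}|_{\cX\times\A^1}],\qquad \fN_h=[N_{\widetilde{h}}/T_{\overline{h}}|_{\cX\times\A^1}]\]
with $N_{\widetilde{h}}=C(\I/\I^2)$, and analogous presentations for $\fC_g|_{\cX\times\A^1}$ and $\fN_g|_{\cX\times\A^1}$ with $N_{\widetilde{g}}=C(I/I^2)$. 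Under these, the given map $\fN_h\to\fN_g|_{\cX\times\A^1}$ is the descent of the honest map $N_{\widetilde{h}}\to N_{\widetilde{g}}|_{\cX\times\A^1}$ obtained by dualizing the $T^{-1}$-inclusion $I/IJ\hookrightarrow\I/\I^2$ visible in \eqref{Res.KKP}.

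To produce the lift on honest normal cones, I would introduce the three-term chain of closed embeddings
\[\cX\times\A^1\;\hookrightarrow\;\widetilde{\cY}\times\A^1\;\hookrightarrow\;(M^\circ_{\widetilde{g}})',\]
where $\widetilde{\cY}\times\A^1$ is the zero-section family in the deformation space. A direct Rees-algebra computation yields the identification
\[C_{\widetilde{\cY}\times\A^1/(M^\circ_{\widetilde{g}})'}\;\cong\;C_{\widetilde{g}}\times\A^1,\]
so the standard map for a three-term chain of closed immersions produces
\[C_{\widetilde{h}}=C_{\cX\times\A^1/(M^\circ_{\widetilde{g}})'}\;\longrightarrow\;C_{\widetilde{\cY}\times\A^1/(M^\circ_{\widetilde{g}})'}\big|_{\cX\times\A^1}=C_{\widetilde{g}}|_{\cX\times\A^1}.\]
Explicitly, on graded algebras this sends $x\in I^n/I^{n+1}\otimes\O_{\cX\times\A^1}$ to $xT^{-n}\in\I^n/\I^{n+1}$, which is well-defined since $(IT^{-1})^n\subseteq \I^n$ and $I^{n+1}T^{-n}\subseteq\I^{n+1}$ from the explicit description of $\I$.

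For the remaining commutativity and descent, the same formula $x\mapsto xT^{-n}$ makes the square
\[\xymatrix{C_{\widetilde{h}}\ar[r]\ar@{^{(}->}[d] & C_{\widetilde{g}}|_{\cX\times\A^1}\ar@{^{(}->}[d]\\ N_{\widetilde{h}}\ar[r] & N_{\widetilde{g}}|_{\cX\times\A^1}}\]
commute: it is compatible with the canonical surjections $\Sym(\I/\I^2)\twoheadrightarrow\bigoplus_n \I^n/\I^{n+1}$ and $\Sym(I/IJ)\otimes \O_{\cX\times\A^1}\twoheadrightarrow\bigoplus_n I^n/I^{n+1}\otimes \O_{\cX\times\A^1}$, and its degree-$1$ component recovers the $T^{-1}$-inclusion $I/IJ\hookrightarrow\I/\I^2$. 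Consequently, the composite $\fC_h\hookrightarrow\fN_h\to\fN_g|_{\cX\times\A^1}$ factors through the closed embedding $\fC_g|_{\cX\times\A^1}\hookrightarrow\fN_g|_{\cX\times\A^1}$, yielding the required dotted arrow. The main obstacle is the identification $C_{\widetilde{\cY}\times\A^1/(M^\circ_{\widetilde{g}})'}\cong C_{\widetilde{g}}\times\A^1$, which I would settle by writing the ideal of $\widetilde{\cY}\times\A^1$ in $(M^\circ_{\widetilde{g}})'$ explicitly as the Rees-type ideal $\bigoplus_n I^{\max(1,-n)}T^n$ and recognizing its associated graded algebra as $\bigoplus_n I^n/I^{n+1}\otimes\O_{\A^1}$.
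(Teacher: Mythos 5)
Your proposal is correct and is essentially the paper's own argument: the paper likewise works with the explicit Rees-algebra description of $(M^\circ_{\tg})'$ and of $\I^n/\I^{n+1}$, defines the map $C_{\th}\to C_{\tg}|_{\cX\times\A^1}$ on graded algebras by $x\mapsto xT^{-n}$ in degree $n$, observes that its degree-one piece is the map $(I/IJ)[T]\to\I/\I^2$ from \eqref{Res.KKP}, and descends the resulting square of honest cones to the intrinsic ones. Your only deviation is cosmetic: you factor this same algebra map through the intermediate embedding $\tY\times\A^1\hookrightarrow(M^\circ_{\tg})'$ and the identification $C_{\tY\times\A^1/(M^\circ_{\tg})'}\cong C_{\tg}\times\A^1$, which amounts to the identical computation.
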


\begin{proof}
Note that the normal cone $C_{\th}$ of $\cX\times \A^1$ in $(M_{\tg} ^\circ)'$ is
\[C_{\th}=\spec\left(\bigoplus_{n\geq0} \dfrac{\I^n}{\I^{n+1}}\right) \quad \text{where}\quad \dfrac{\I^n}{\I^{n+1}}= \bigoplus_{1\leq i \leq n}\dfrac{I^i J^{n-i}}{I^i J^{n-i+1}}\cdot T^{-i}
 \oplus \bigoplus_{j\geq 0}\dfrac{{J}^n}{{J}^{n+1}}\cdot T^j .\]
Hence the map of graded $\O_{\cX}[T]$-algebras
\beq\label{i2}
\bigoplus_{n \geq 0}T^{-n}\cdot : \bigoplus_{n\geq 0}\left(\dfrac{I^n}{I^nJ}[T]\right) \to \bigoplus_{n\geq 0}\left( \bigoplus_{1\leq i \leq n}\dfrac{I^i J^{n-i}}{I^i J^{n-i+1}}\cdot T^{-i}
\oplus \bigoplus_{j\geq 0}\dfrac{{J}^n}{{J}^{n+1}}\cdot T^j \right)
\eeq
gives us a commutative diagram
\beq\label{D42}
\xymatrix{
C_{\th} \ar@{.>}[r] \ar@{^{(}->}[d] & C_{\tg}|_{\cX\times \A^1} \ar@{^{(}->}[d]\\
N_{\th} \ar[r] & N_{\tg}|_{\cX\times\A^1}
}\eeq
where 
%the induced map $N_h \to N_g|_{{\cX}\times \A^1}$ between the associated abelian cones is given by 
the (-1)-term of the map \eqref{i2}
%\beq\label{D18}T^{-1} \cdot  : \dfrac{I}{IJ}[T] \longrightarrow \dfrac{\I}{\I^2} = \dfrac{I}{IJ}T^{-1} \oplus \dfrac{J}{J^2}T^0 \oplus \dfrac{J}{J^2}T^1 \oplus \cdots\eeq
%\beq\label{D18} T^{-1} \cdot  : ({I}/{IJ})[T] \longrightarrow {\I}/{\I^2} = ({I}/{IJ})T^{-1} \oplus ({J}/{J^2})T^0 \oplus ({J}/{J^2})T^1 \oplus \cdots \eeq
is equal to the map $({I}/{IJ})[T] \to {\I}/{\I^2}$ in \eqref{Res.KKP}. The commutative diagram \eqref{D11} follows from \eqref{D42}.
\end{proof}

\subsubsection{Construction of $\phi_h$}

We now construct a symmetric obstruction theory $\phi_h$ for $h$ using the reduction operation in Proposition \ref{Prop.Reduction}.

%Consider the map \beq\label{Eq21}(\delta,T,0):\EE_{g\circ f} \oplus \EE_f \oplus \EE_f\dual[2] \longrightarrow \EE_f\eeq over $\cX \times \A^1$, 

By Proposition \ref{Prop.Reduction}, we can form a reduction
\[\EE_h := \left[ \EE_f\dual[2] \xrightarrow{(\delta\dual,0,T)}\EE_{g\circ f} \oplus \left(\EE_f \oplus \EE_f\dual[2]\right) \xrightarrow{(\delta,T,0)} \EE_f\right]\]
%of the symmetric complex $\EE_{g\circ f} \oplus (\EE_f \oplus \EE_f\dual[2])$ by $\EE_f$, 
where $T \in \Gamma(\A^1,\sO_{\A^1})$ is the coordinate function. Then the fibers of the symmetric complex $\EE_h$ over $\lambda \in \A^1$ can be expressed as follows:
\[(\EE_h)_\lambda =
\begin{cases}
\EE_{g\circ f} & \text{ if }\lambda\neq0 \\
f^*\EE_g \oplus \left( \EE_f \oplus \EE_f\dual[2]\right) & \text{ if }\lambda=0\,.
\end{cases}\]

%We first prove Proposition \ref{Prop.DeformationtoNormalCone} except the isotropic condition part.

\begin{lemma}\label{Lem.D3}
There exists a symmetric obstruction theory $\phi_h : \EE_h \to \bL_h$ such that the fibers are given as in \eqref{D.Eq27} and \eqref{D.Eq28}.
\end{lemma}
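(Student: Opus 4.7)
The plan is to construct $\phi_h : \EE_h \to \bL_h$ by assembling the obstruction-theory data in the compatibility diagram \eqref{Eq.Compatibility} with the description of $\bL_h$ given by Lemma \ref{Lem.KKP}, and then to read off the fibers directly.

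I would first reformulate via the reduction: write $\EE_* := \EE_{g\circ f} \oplus \EE_f \oplus \EE_f\dual[2]$ and $\DD_h := \cone(\EE_f\dual[2] \xrightarrow{(\delta\dual, 0, T)} \EE_*)$, so that $\EE_h \to \DD_h \to \EE_f$ is distinguished. A morphism $\phi_h : \EE_h \to \bL_h$ can then be obtained from a morphism $\Psi : \EE_* \to \bL_h'$ whose precomposition with the boundary $(\delta\dual, 0, T)$ vanishes, by descent to $\DD_h$, composition with $\EE_h \to \DD_h$, and truncation. I would build $\Psi$ componentwise: on the $\EE_{g\circ f}$-summand I take $\phi_{g\circ f}$ composed with the canonical map $\bL_{g\circ f} \to \bL_h'$ from Lemma \ref{Lem.KKP}; on the $\EE_f$-summand I take $\phi_f'$ composed with a lift $\bL_f' \to \bL_h'$ extracted from the same distinguished triangle; on the $\EE_f\dual[2]$-summand I put a $T$-twisted map designed to cancel $\phi_{g\circ f} \circ \delta\dual$ upon precomposition with $(\delta\dual, 0, T)$. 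The cancellation is available because, by the top two rows of \eqref{Eq.Compatibility}, the composite $\phi_{g\circ f} \circ \delta\dual$ factors through $\trunc f^*\bL_g$, and multiplication by $T$ on this factor is precisely the first component of the map $(T, a)$ in Lemma \ref{Lem.KKP}.

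To check the fibers, I restrict to $\lambda \in \A^1$. For $\lambda \neq 0$ the $T$-multiplication in the reduction becomes invertible, so the hyperbolic summand $\EE_f \oplus \EE_f\dual[2]$ is killed, giving $\EE_h|_\lambda \cong \EE_{g\circ f}$, and by naturality $(\phi_h)_\lambda = \phi_{g\circ f}$. For $\lambda = 0$ the $T$-twist vanishes, and the reduction splits as $f^*\EE_g \oplus (\EE_f \oplus \EE_f\dual[2])$ by the compatibility \eqref{Eq.Compatibility}. Unpacking $\Psi|_{\lambda = 0}$ produces the matrix in \eqref{D.Eq28}: the $f^*\EE_g$-column is $f^*\phi_g$ (via the middle row of \eqref{Eq.Compatibility}), the $\EE_f\dual[2]$-column vanishes (because the $T$-twisted component vanishes at $T = 0$), and the $\EE_f$-column contributes an entry $r \circ \eta$ into $\bL_f$ together with an off-diagonal entry $\xi$ into $\trunc f^*\bL_g$, where $\eta$ fits into \eqref{D1} by construction from the third column of \eqref{Eq.Compatibility}.

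The main obstacle will be producing the cancellation $\Psi \circ (\delta\dual, 0, T) = 0$ on the nose in the derived category, rather than merely up to homotopy. Here I would use Proposition \ref{Prop.SymRes} to pass to compatible symmetric resolutions of the relevant complexes, making the component of $\Psi$ on $\EE_f\dual[2]$ completely explicit so that the cancellation reduces to a chain-level computation. Once $\Psi$ is assembled and the fibers are computed, the obstruction-theory conditions on $\phi_h$ (bijectivity of $h^0$ and surjectivity of $h^{-1}$) follow from the long exact sequence applied to the reduction triangle $\EE_h \to \DD_h \to \EE_f$ combined with the corresponding properties of $\phi_{g\circ f}$ and $\phi_f$, so the whole argument amounts to a careful diagram chase synchronizing the $T$-coordinate on the source and target.
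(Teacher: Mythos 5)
Your overall strategy---present $\EE_h$ by its reduction triangle with middle term $\EE_*=\EE_{g\circ f}\oplus\EE_f\oplus\EE_f\dual[2]$, descend a componentwise map $\Psi:\EE_*\to\bL_h'$, and read off the fibers---differs from the paper's, and it breaks at the $\EE_f$-summand. First, the map you want there does not exist: writing $c:\trunc f^*\bL_g\oplus\bL_{g\circ f}\to\bL_h'$ for the cone map of Lemma \ref{Lem.KKP}, one has $c_2\circ a=-c_1\circ T\neq0$, so there is no morphism $\bL_f'=\cone(a)\to\bL_h'$ compatible with the triangle, and ``$\phi_f'$ composed with a lift'' is not available. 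More seriously, \emph{any} choice of $\Psi_2:\EE_f\to\bL_h'$ runs into a contradiction between your two fiber claims. Over $\lambda\neq0$ the canonical identification $(\EE_h)_\lambda\cong\EE_{g\circ f}$ embeds $\EE_{g\circ f}$ into $\EE_*$ as $x\mapsto(x,-\lambda^{-1}\delta x,\ast)$, so your construction yields $(\phi_h)_\lambda=\phi_{g\circ f}-\lambda^{-1}\sigma_\lambda(\Psi_2\circ\delta)$ for the identification $\sigma_\lambda:\bL_h'|_\lambda\cong\bL_{g\circ f}$; demanding that this equal $\phi_{g\circ f}$ forces $\Psi_2\circ\delta$ to vanish, and hence (for any $\Psi_2$ factoring through $c$) forces $\eta\circ\delta=0$ at $\lambda=0$. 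But \eqref{D1} requires $\eta\circ\gamma=b\circ\phi'_{g\circ f}$, hence $\eta\circ\delta=\eta\circ\gamma\circ\alpha=b\circ\phi_{g\circ f}\neq0$ in general. The paper sidesteps this entirely: it never defines a map on an $\EE_f$-summand, but instead presents $\EE_h$ as $\cone\bigl(\BB_1\xrightarrow{(T,u)}\BB_1\oplus\BB_2\bigr)$ with $\BB_1=\cone(T,\gamma\dual)$, $\BB_2=\cone(T,\delta\dual)$ and $\EE_f=\cone(u)$, exactly parallel to $\bL_h'=\cone(T,a)$ with $\bL_f'=\cone(a)$; the maps $\psi_1:\BB_1\to\trunc f^*\bL_g$ and $\psi_2:\BB_2\to\bL_{g\circ f}$ then induce $\phi_h'$ and $\eta$, and at $\lambda\neq0$ the summand $\BB_2$ degenerates to $\EE_{g\circ f}$ so the fiber is $\phi_{g\circ f}$ on the nose.

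Two smaller points. Your cancellation mechanism on the $\EE_f\dual[2]$-summand is unnecessary: from \eqref{Eq.Compatibility} one gets $\phi_{g\circ f}\circ\delta\dual=a\circ f^*\phi_g\circ\beta\dual\circ\gamma\dual=0$ outright, since $\beta\dual\circ\gamma\dual=(\gamma\circ\beta)\dual=0$; so one may simply take $\Psi_3=0$ (and note that your proposed nonzero $\Psi_3$ would otherwise threaten the required zero in the third column of \eqref{D.Eq28}). Finally, your verification that $\phi_h$ is an obstruction theory ``from the long exact sequence applied to $\EE_h\to\DD_h\to\EE_f$'' is not yet an argument: you need a morphism of distinguished triangles with a compatible triangle on the target side, which the reduction triangle alone does not supply; this is precisely what the paper's diagram \eqref{Eq3_Lem.Deformation}, built on Lemma \ref{Lem.D2} and Lemma \ref{Lem.KKP}, provides.
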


\begin{proof}
Form a morphism of distinguished triangles
\beq\label{Eq1_Lem.Deformation}
\xymatrix@C+1pc{
\EE_f\dual[2] \ar[r]^<<<<<<<{(T,\gamma\dual)} \ar@{=}[d] &\EE_f\dual[2]\oplus \DD\dual[2]  \ar[r]^>>>>>>>>{s_1} \ar[d]^{1\oplus\alpha\dual} & \BB_1 \ar[r] \ar@{.>}[d]^u & \EE_f\dual[3] \ar@{=}[d]\\
\EE_f\dual[2] \ar[r]^<<<<<<<{(T,\delta\dual)} & \EE_f\dual[2]\oplus\EE_{g \circ f}  \ar[r]^>>>>>>>>{s_2} & \BB_2 \ar[r] & \EE_f\dual[3]
}\eeq
for some perfect complexes $\BB_1$, $\BB_2$ and maps $s_1$, $s_2$, $u$. Applying the octahedral axiom to \eqref{Eq1_Lem.Deformation}, we obtain a distinguished triangle
\beq\label{T.B1B2Ef}
\xymatrix{\BB_1 \ar[r]^u & \BB_2 \ar[r]^v & \EE_f \ar[r]^w & \BB_1[1]}\eeq
such that $v \circ s_2 = (0,\delta)$.

We claim that there is a commutative diagram
\beq\label{Eq2_Lem.Deformation}
\xymatrix{
\EE_f\dual[2]\oplus \DD\dual[2]\ar[r]^<<<<<{1\oplus\alpha\dual} \ar[d]^{s_1} \ar@/_0.6cm/[dd]_{(0,\phi_g\circ\beta\dual)} & \EE_f\dual[2]\oplus\EE_{g \circ f}  \ar[d]_{s_2} \ar@/^0.6cm/[dd]^{(0,\phi_{g \circ f})} \\
\BB_1 \ar[r]^u \ar@{.>}[d]^{\psi_1} & \BB_2 \ar@{.>}[d]_{\psi_2} \\
\tau^{\geq-1} f^*\bL_g \ar[r]^a & \bL_{g \circ f}
}\eeq
induced by \eqref{Eq1_Lem.Deformation}. Indeed, the distinguished triangles in \eqref{Eq1_Lem.Deformation} show that there exist the dotted arrows  $\psi_1$ and $\psi_2$ which factor the curved arrows $(0,\phi_g\circ  \beta\dual)$ and $(0,\phi_{g \circ f})$. Then the commutativity of the total square implies the commutativity of the bottom square since $\Hom(\EE_f\dual[3], \bL_{g\circ f})=0$.

Form a morphism of distinguished triangles
\beq\label{Eq3_Lem.Deformation}
\xymatrix{
\BB_1 \ar[r]^-{(T,u)} \ar[d]^{\psi_1} & \BB_1 \oplus \BB_2 \ar[r] \ar[d]^{\psi_1\oplus\psi_2} & \EE_h  \ar[r] \ar@{.>}[d]^{\phi_h'}& \\
\tau^{\geq-1} f^*\bL_{g} \ar[r]^-{(T,a)} & \tau^{\geq-1} f^*\bL_{g} \oplus  \bL_{g\circ f} \ar[r] & \bL_h' \ar[r] & }\eeq
for some map $\phi_h '$, where the upper distinguished triangle is given by the distinguished triangle \eqref{T.B1B2Ef} (see Lemma \ref{Lem.D2} below) and the lower distinguished triangle is \eqref{T.KKP}. Let
\[\phi_h : \EE_h \xrightarrow{\phi_h'}\bL_h ' \to \trunc \bL_h'\cong\bL_h\]
be the composition. The diagram \eqref{Eq2_Lem.Deformation} shows that $h^0(\psi_1)$, $h^0(\psi_2)$ are bijective and $h^{-1}(\psi_1)$, $h^{-1}(\psi_2)$ are surjective.
%the cones of $\psi_g $ and $\psi_{g\circ f}$ are concentrated in degree $\leq-2$. 
The long exact sequence associated to \eqref{Eq3_Lem.Deformation} shows that $\phi_h$ is an obstruction theory.

Clearly, the restriction of the diagram \eqref{Eq3_Lem.Deformation} over $\lambda\neq0 \in \A^1$ gives us the obstruction theory for $g\circ f$,
\[(\phi_h)_\lambda = \phi_{g \circ f} : \EE_{g \circ f} \to  \bL_{g \circ f}.\]
On the other hand, the restriction of the diagram \eqref{Eq3_Lem.Deformation} over $\lambda=0 \in \A^1$ gives us a morphism
\[\xymatrix@C-0.3pc{
\ar[r] & (\EE_f\dual[2] \oplus f^*\EE_{g}) \oplus (\EE_f\dual[2] \oplus \DD) \ar[r]^-{1\oplus(0,\gamma)} \ar[d]^{(0,\phi_g)\oplus(0,\phi_{g\circ f}')} & (\EE_f\dual[2] \oplus f^*\EE_g) \oplus \EE_f \ar[d]^{(\phi_h')_0}\ar[r] & \\
\ar[r] & \tau^{\geq-1} f^*\bL_{g} \oplus \bL_{g\circ f} \ar[r]^-{1\oplus b} & \tau^{\geq-1} f^*\bL_g \oplus \bL'_f  \ar[r] & }\]
of distinguished triangles. Therefore we have \eqref{D.Eq28} for some maps $\eta:\EE_f \to \bL'_f$ and $\xi:\EE_f \to \tau^{\geq-1} f^*\bL_g$ such that $\eta$ fits into \eqref{D1}.
\end{proof}

We need the following lemma to complete the proof of Lemma \ref{Lem.D3}.

\begin{lemma}\label{Lem.D2}
There is a distinguished triangle
\beq\label{D21}
\xymatrix{
\BB_1 \ar[r]^-{(T,u)}& \BB_1 \oplus \BB_2 \ar[r] & \EE_h  \ar[r] & \BB_1[1] \\
}\eeq
associated to the short exact sequence \eqref{SES.B1B1B2Eh} below.
\end{lemma}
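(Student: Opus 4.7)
The plan is to construct the short exact sequence \eqref{SES.B1B1B2Eh} at the chain level; the associated long exact cone then yields the distinguished triangle \eqref{D21}.

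First, I fix compatible chain-level resolutions over $\cX \times \A^1$. By Proposition \ref{Prop.SymRes}, choose a symmetric resolution $[B \to E\dual \to B\dual]$ of $\EE_{g\circ f}$ and a resolution $[K\dual \to D\dual]$ of $\EE_f$ so that $\delta: \EE_{g\circ f} \to \EE_f$ is represented by a degreewise surjective chain map, giving an isotropic subbundle $K \subset E$ and a subbundle $D \subset B$. Extend to chain-level representatives of $\DD\dual[2] \cong [B \to K\uperp \to (B/D)\dual]$, $f^*\EE_g$, and the maps $\alpha, \beta, \gamma$ in \eqref{Eq.Compatibility}. Then $\BB_1$ and $\BB_2$ acquire explicit mapping cone representations from the triangles in \eqref{Eq1_Lem.Deformation}, and $u: \BB_1 \to \BB_2$ is realized by a chain map induced from $1 \oplus \alpha\dual$.

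Second, I identify $\EE_h$ via its reduction structure. As the reduction of $\EE' := \EE_{g\circ f} \oplus \EE_f \oplus \EE_f\dual[2]$ by $\EE_f$ via $(\delta, T, 0)$, Proposition \ref{Prop.Reduction} provides a symmetric resolution of $\EE_h$ whose isotropic data involves the $T$-twisted embeddings $D \hookrightarrow B \oplus D$ by $(\iota_D, T)$ and $K \hookrightarrow E \oplus K\dual \oplus K$ by $(\iota_K, 0, T)$. These $T$-twisted identifications are the crucial link between the reduction and the $(T,u)$-map. As a consistency check, at $T$ invertible, $\BB_1 \cong \DD\dual[2]$, $\BB_2 \cong \EE_{g\circ f}$, and $\EE_h \cong \EE_{g\circ f}$; while at $T = 0$, $\BB_1|_0 \cong \EE_f\dual[2] \oplus f^*\EE_g$, $\BB_2|_0 \cong \EE_f\dual[2] \oplus \DD$, and $\EE_h|_0 \cong f^*\EE_g \oplus \EE_f \oplus \EE_f\dual[2]$, matching the expected dimensions in \eqref{D.Eq27} and \eqref{D.Eq28}.

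Third, I write down the SES and verify exactness. The first map is $(T, u): \BB_1 \to \BB_1 \oplus \BB_2$. The second map $\BB_1 \oplus \BB_2 \to \EE_h$ is constructed degreewise: for each chain degree, both $\BB_1$ and $\BB_2$ admit natural maps onto the corresponding term of $\EE_h$ compatible with the $T$-twisted reduction. Exactness then reduces to degreewise linear algebra on the component bundles: quotienting by the image of $(T, u)$ identifies $Tx$ in the $\BB_1$-factor with $-u(x)$ in the $\BB_2$-factor, which is precisely the relation imposed by the $T$-twisted isotropic embeddings in the reduction of $\EE_h$.

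The main obstacle is the chain-level bookkeeping: carefully tracking signs in the mapping cone constructions and matching the $T$-twisted structure of $\EE_h$'s symmetric resolution with the cone structures of $\BB_1, \BB_2$. An alternative, more conceptual route applies the octahedral axiom to the composition $\BB_1 \xrightarrow{(T,u)} \BB_1 \oplus \BB_2 \xrightarrow{p_2} \BB_2$, yielding a triangle $\mathrm{cone}(T,u) \to \EE_f \to \BB_1[1]$ via $\mathrm{cone}(u) = \EE_f$ from \eqref{T.B1B2Ef}, and then identifies $\mathrm{cone}(T,u)$ with $\EE_h$ by matching this triangle with the triangle $\EE_h \to \DD' \to \EE_f$ provided by the reduction structure of $\EE_h$. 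However, this conceptual route still requires verifying compatibility of the connecting maps, so the direct chain-level SES construction is likely the cleaner path.
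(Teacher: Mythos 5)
Your proposal is correct and follows essentially the same route as the paper: after noting that the octahedral axiom gives the triangle abstractly, the paper likewise constructs an explicit short exact sequence of chain complexes using the $T$-twisted subbundles $D_1=(1,T)\cdot D\subseteq B\oplus D$ and $K_1=(1,T)\cdot K\subseteq E\oplus K$ to resolve $\BB_1$, $\BB_2$ and $\EE_h$, precisely the identifications you describe (and, as you anticipate, it prefers the explicit chain-level model because those resolutions are reused to verify the isotropic condition of $\phi_h$). The only detail you leave implicit is the exact formula for the degree $-1$ surjection $\kappa:\tfrac{K\uperp\oplus K}{K_1}\oplus\tfrac{E\oplus K}{K_1}\to\tfrac{K_1\uperp}{K_1}$, which the paper writes out, but your degreewise linear-algebra description of the cokernel is the right content.
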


\begin{proof}
We can simply deduce an isomorphism
\[\cone(\BB_1 \xrightarrow{(T,u)} \BB_1 \oplus \BB_2)\cong \cone(\BB_2 \oplus \EE_f \xrightarrow{(v,T)} \EE_f)[-1] = \EE_h\]
from the distinguished triangle \eqref{T.B1B2Ef} using the octahedral axiom twice. 
%%% Indeed, applying the octahedral axiom to the morphism \[\xymatrix{ \BB_1 \ar[r]^-{(T,u)} \ar@{=}[d] & \BB_1 \oplus \BB_2 \ar[r] \ar[d]^{(0,1)} & \cone(T,u) \ar[r]\ar[d] & \BB_1[1] \ar@{=}[d]\\ \BB_1 \ar[r]^u & \BB_2 \ar[r]^v & \EE_f \ar[r]^w & \BB_1[1] }\] of distinguished triangles, we obtain a distinguished triangle \[\xymatrix{\BB_1 \ar[r] & \cone(T,u) \ar[r] & \EE_f \ar[r]^-{Tw} & \BB_1[1]} \] over $\cX \times \A^1$. Hence we have an isomorphism  \beq\label{FF1} \cone(\BB_1 \xrightarrow{(T,u)} \BB_1 \oplus \BB_2)\cong \cone(\EE_f[-1] \xrightarrow{Tw} \BB_1)\eeq between the cones. On the other hand, we can also obtain an isomorphism \beq\label{FF2}\cone(\EE_f[-1] \xrightarrow{Tw} \BB_1) \cong \cone(\BB_2 \oplus \EE_f \xrightarrow{(v,T)} \EE_f)[-1]\eeq from \eqref{T.B1B2Ef} by a similar argument. Combining the two isomorphisms \eqref{FF1} and \eqref{FF2}, we deduce the desired isomorphism \eqref{191}. 
%However, we avoided this approach since the isomorphism in \eqref{D22} is not {\em canonical}.  The isomorphism \eqref{D22} does not give us an information on the symmetric structure of $\EE_h$.
However, we will construct an explicit short exact sequence using resolutions of complexes to prove the isotropic condition of $\phi_h$ in Lemma \ref{Lem.Deform.Isotropic} below. We will find a short exact sequence representing \eqref{T.B1B2Ef}, then we will have an isomorphism 
\[\coker(\BB_1 \xrightarrow{(T,u)} \BB_1 \oplus \BB_2)\cong \ker(\BB_2 \oplus \EE_f \xrightarrow{(v,T)} \EE_f) = \EE_h\]
of chain complexes.

Choose a symmetric resolution and a resolution
\beq\label{Res.Egf,Ef}
 [B\to E\to B\dual] \cong \EE_{g\circ f} \and [K\dual \to D\dual] \cong \EE_f,
\eeq
respectively, such that $\delta : \EE_{g\circ f} \to \EE_f$ can be represented by a surjective chain map. The resolutions \eqref{Res.Egf,Ef} induce a symmetric resolution and a resolution
%\beq\label{Res.Eg,D} [(B/D) \to (K\uperp/K) \to (B/D)\dual] \cong f^*\EE_g, \quad  [(B/D) \to (E/K) \to B\dual] \cong \DD \eeq
\beq\label{Res.Eg,D} [\tfrac{B}{D} \to \tfrac{K\uperp}{K} \to \left(\tfrac{B}{D}\right)\dual] \cong f^*\EE_g \and [\tfrac{B}{D} \to \tfrac{E}{K} \to B\dual] \cong \DD \eeq
of $f^*\EE_g$ and $\DD$, respectively. Consider the two subbundles
\[D_1=(1,T)\cdot D \subseteq B \oplus D \and K_1 = (1,T)\cdot K \subseteq E \oplus K\]
on $\cX \times \A^1$, given by the images of the embeddings $(1,T) : D \to B \oplus D$ and $(1,T) : K \to E \oplus K$. Let
\[K_1\uperp \subseteq E \oplus (K \oplus K\dual) \] 
be the orthogonal complement of $K_1=K_1\oplus 0$ in the special orthogonal bundle $E \oplus (K\oplus K\dual)$. The two perfect complexes $\BB_1$ and $\BB_2$ in \eqref{Eq1_Lem.Deformation} have resolutions
\beq\label{Res.B1B2}
\left[\tfrac{B\oplus D}{D_1} \to \tfrac{K\uperp \oplus K}{K_1} \to \left(\tfrac BD\right)\dual\right] \cong \BB_1 \and
\left[\tfrac{B\oplus D}{D_1} \to \tfrac{E \oplus K}{K_1} \to B\dual\right] \cong \BB_2,\eeq
respectively, and the symmetric complex $\EE_h$ has a symmetric resolution
\beq\label{Res.Eh}
\left[\tfrac{B\oplus D}{D_1} \to \tfrac{K_1\uperp}{K_1} \to \left(\tfrac{B\oplus D}{D_1}\right)\dual\right] \cong \EE_h. \eeq
The canonical short exact sequence of chain complexes
\beq\label{SES.B1B1B2Eh}
\xymatrix{
& \BB_1 \ar[r]^-{(T,u)}  \ar@{}[d]|-{\rotatebox[origin=c]{90}{=}} & \BB_1\oplus \BB_2\ar[r] \ar@{}[d]|-{\rotatebox[origin=c]{90}{=}}& \EE_h \ar@{}[d]|-{\rotatebox[origin=c]{90}{=}} &  \\
0 \ar[r] & \tfrac{B\oplus D}{D_1} \ar[r]^-{(T,1)}\ar[d] & \tfrac{B\oplus D}{D_1}\oplus \tfrac{B\oplus D}{D_1} \ar[r]^-{(1,-T)} \ar[d] & \tfrac{B\oplus D}{D_1} \ar[d] \ar[r] & 0   \\
0 \ar[r] & \tfrac{K\uperp \oplus K}{K_1} \ar[r]^-{(T,1)} \ar[d] & \tfrac{K\uperp \oplus K}{K_1} \oplus \tfrac{E \oplus K}{K_1} \ar[r]^-{\kappa} \ar[d] & \tfrac{K_1\uperp}{K_1} \ar[d] \ar[r] & 0\\
0 \ar[r] & \left(\tfrac BD\right)\dual \ar[r]^-{(T,1)} & \left(\tfrac BD\right)\dual \oplus B\dual \ar[r]^-{}  & \left(\tfrac{B\oplus D}{D_1}\right)\dual \ar[r] & 0
}\eeq
gives us the desired distinguished triangle \eqref{D21}, where the map $\kappa$ is
\begin{align*}
\left(\tfrac{K\uperp \oplus K}{K_1}\right) \oplus \left(\tfrac{E \oplus K}{K_1}\right) & \mapright{\kappa} \tfrac{K_1\uperp}{K_1} \subseteq \tfrac{E \oplus K \oplus K\dual}{K_1} \\ (\overline{(x,y)},\overline{(z,w)})& \mapsto \overline{(x-Tz,y-Tw,\overline{z})}
\end{align*}
for $x \in K\uperp$, $y \in K$, $z \in E$, and $w \in K$.
\end{proof}

\subsubsection{Isotropic condition for $\phi_h$}

We finally prove the isotropic condition of $\phi_h$ in Lemma \ref{Lem.D3} using the criterion in Proposition \ref{Prop.Isotropic}.

\begin{lemma}\label{Lem.Deform.Isotropic}
$\phi_h$ satisfies the isotropic condition.
\end{lemma}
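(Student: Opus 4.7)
The plan is to apply the criterion of Proposition~\ref{Prop.Isotropic} to the factorization \eqref{Fact.DeformSpace}, in which $\widetilde h : \cX \times \A^1 \hookrightarrow (M^\circ_{\widetilde g})'$ is a closed immersion and $\overline h$ is smooth. The symmetric resolution \eqref{Res.Eh} of $\EE_h$ has orthogonal bundle $K_1\uperp/K_1$, which is the reduction of the ambient special orthogonal bundle $E \oplus (K \oplus K\dual)$ (the orthogonal bundle of $\EE_{g\circ f} \oplus \EE_f \oplus \EE_f\dual[2]$) by the isotropic subbundle $K_1 = (1,T) \cdot K$. The criterion reduces the isotropy of $\phi_h$ to the vanishing of the composition
\[
C_{\widetilde h} \hookrightarrow N_{\widetilde h} \longrightarrow K_1\uperp/K_1 \xrightarrow{\fq} \A^1,
\]
where the middle arrow is induced by the chain map for $\phi_h$ obtained from the short exact sequence \eqref{SES.B1B1B2Eh} together with the chain-level presentation \eqref{Res.KKP} of $\bL_h$.

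The key step is to invoke the reduction formula for quadratic functions (Proposition~\ref{Prop.QuadraticFunction}(3)), applied to the presentation of $\EE_h$ as the reduction of the symmetric complex $\EE':=\EE_{g\circ f} \oplus \EE_f \oplus \EE_f\dual[2]$ by an isotropic subcomplex with chain-level resolution $[D_1 \to K_1]$. This identifies $\fq_{\EE_h}$ with the pullback of $\fq_{\EE'}$ along a canonical map $\fC(\DD) \to \fC(\EE_h)$. The problem then becomes to produce a smooth-local lift $\widetilde{\fC}_{\widetilde h} \to \fC(\DD) \to \fC(\EE_{g\circ f}) \times \fC(\EE_f) \times \fC(\EE_f\dual[2])$ of $C_{\widetilde h} \hookrightarrow \fC_h \hookrightarrow \fC(\EE_h)$, and to show that the sum
\[
\fq_{\EE_{g\circ f}}\circ\pi_1 \;+\; \langle -,-\rangle\circ(\pi_2,\pi_3)
\]
vanishes on the image.

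I would then handle the two summands separately. For the first, the lift's projection $\pi_1$ factors through $\fC_{g\circ f}$, since the chain map for $\phi_h$ in \eqref{Eq3_Lem.Deformation} restricts to that of $\phi_{g\circ f}$ on the $\EE_{g\circ f}$-summand; by Lemma~\ref{Lem.IsotropicConditioninFunctoriality} the intrinsic normal cone $\fC_{g\circ f}$ is isotropic in $\fC(\EE_{g\circ f})$, so $\fq_{\EE_{g\circ f}}\circ\pi_1=0$. For the pairing term, the block-matrix description \eqref{D.Eq28} shows that the chain map for $\phi_h$ kills the $\EE_f\dual[2]$-summand (the zeros in the third column), so the projection $\pi_3$ of the lift is zero, and the pairing with $\pi_2$ vanishes identically; although \eqref{D.Eq28} is literally the description at $T=0$, the same structural vanishing is built into the chain-level construction of $\phi_h$ via the short exact sequence \eqref{SES.B1B1B2Eh} and therefore holds globally on $\cX\times\A^1$.

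The main obstacle will be setting up this lift $\widetilde{\fC}_{\widetilde h} \to \fC(\DD)$ and verifying its compatibility with the three projections: this requires a careful unpacking of the octahedral axiom in \eqref{Eq1_Lem.Deformation} and \eqref{Eq3_Lem.Deformation}, together with the symmetric resolutions in \eqref{Res.Eh} and \eqref{Res.B1B2}, in order to match the projection $\pi_1$ with the chain map for $\phi_{g\circ f}$ and to confirm that $\pi_3\equiv 0$ globally (not merely on the fiber $T=0$). Once this bookkeeping is complete, the two vanishings combine through Proposition~\ref{Prop.QuadraticFunction}(3) to yield $\fq_{\EE_h}|_{\fC_h}=0$, which by the criterion in Proposition~\ref{Prop.Isotropic} is the desired isotropic condition for $\phi_h$.
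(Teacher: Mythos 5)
Your frame (the criterion of Proposition \ref{Prop.Isotropic} applied to the factorization \eqref{Fact.DeformSpace}, plus the reduction formula for quadratic functions, Proposition \ref{Prop.QuadraticFunction}(3)) is the right one, but the termwise vanishing you propose is false away from $T=0$. Over $\lambda\neq 0$ an element of $K_1\uperp\subseteq E\oplus(K\oplus K\dual)$ with $K_1=(1,\lambda)\cdot K$ has the form $(e,a,b)$ with $b=-\lambda^{-1}q_E(e)|_K$ forced by orthogonality, so the third projection $\pi_3$ of any lift is \emph{not} zero there; dually, $\pi_1=e$ differs from the image under the identification $K_1\uperp/K_1\cong E$ by an element $\lambda^{-1}a\in K$, so $\pi_1$ need not land in the isotropic cone of $\phi_{g\circ f}$. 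A direct computation gives $\fq_E(e)=\lambda^{-1}q_E(e,a)$ and $\langle a,b\rangle=-\lambda^{-1}q_E(e,a)$ on the relevant locus: only the \emph{sum} vanishes, and that vanishing is exactly the isotropy of $\phi_{g\circ f}$, which is already known from Lemma \ref{Lem.IsotropicConditioninFunctoriality} and is not the content of this lemma. Your attempt to promote the block form \eqref{D.Eq28} (which is literally the fiber at $T=0$) to a global statement via \eqref{SES.B1B1B2Eh} fails: the map $\kappa$ there has $K\dual$-component $\overline{z}$ with $z\in E$ arbitrary.

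The substantive content of the lemma sits over $T=0$, and more precisely concerns the whole normal cone $C_{\widetilde h}$, including irreducible components supported over $\cX\times\{0\}$; since the intrinsic normal cone does not commute with base change, $\fC_h|_{0}$ is in general strictly larger than $\fC_{k\circ f}$, so neither a fiberwise argument at $T=0$ nor a density argument from $\lambda\neq0$ closes the gap. Your proposal never engages with the graded-algebra structure of $C_{\widetilde h}=\spec\bigl(\bigoplus_n \I^n/\I^{n+1}\bigr)$. The paper's proof does exactly this: Lemma \ref{Lem.D1} produces, via the twist map $\bigoplus_n T^{-n}$, a factorization of $C_{\widetilde h}$ through $C_{\widetilde g}|_{\cX\times\A^1}$, and then the quadratic function is shown (using the auxiliary isotropic subbundle $K_2\subseteq K_1\uperp/K_1$ with $K_2\uperp/K_2\cong K\uperp/K$ and a homotopy correction $x$) to factor through $\fq_{K\uperp/K}$, so that the vanishing follows from the isotropy of $\phi_g$ — not of $\phi_{g\circ f}$. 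To repair your argument you would need to replace both of your termwise claims by this reduction to $\phi_g$ together with the input of Lemma \ref{Lem.D1}.
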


\begin{proof}
We use the notations as above. More precisely, we will use the factorizations \eqref{Fact.XYZ}, \eqref{Fact.DeformSpace}, the resolutions of cotangent complexes in \eqref{Res.CotanCplx.XYZ}, \eqref{Res.KKP}, and the resolutions of perfect complexes and symmetric complexes in \eqref{Res.Egf,Ef}, \eqref{Res.Eg,D}, \eqref{Res.B1B2}, \eqref{Res.Eh}, and the short exact sequence \eqref{SES.B1B1B2Eh}. Since the statement is local, we may assume that all schemes in \eqref{Fact.XYZ} are affine.

Since $\cX$ is affine, there is a chain map
\beq\label{NI1}
\xymatrix{
f^*\EE_g  \ar[d]_{f^*\phi_g} \ar@{}[rd]|{=} & {B}/{D} \ar[r] \ar[d] & {K\uperp}/{K} \ar[r] \ar[d]^{\Phi_g} & \left({B}/{D}\right)\dual \ar[d]\\
\trunc f^* \bL_g    & 0 \ar[r] & {I}/{IJ} \ar[r] & \Omega_{\overline{g}}|_{\cX}
}\eeq
%\beq\label{NI1}\xymatrix{f^*\EE_g  \ar[d]^{f^*\phi_g} & \dfrac{B}{D} \ar[r] \ar[d] & \dfrac{K\uperp}{K} \ar[r] \ar[d]^{\Phi_g} & \left(\dfrac{B}{D}\right)\dual \ar[d]\\\trunc f^* \bL_g  & 0 \ar[r] & \dfrac{I}{IJ} \ar[r] & \Omega_{\overline{g}}|_{\cX}}\eeq
representing $f^*\phi_g$. By Proposition \ref{Prop.Isotropic}, we have a commutative diagram
\[\xymatrix{
C_{\tg}|_{\cX} \ar@{^{(}->}[r] \ar[d] & N_{\tg}|_{\cX} \ar[r] \ar@/^0.5cm/[rr]^-{C(\Phi_g)} \ar[d] & C(Q) \ar[r] \ar[d]& K\uperp/K \ar[d]^{\fq_{K\uperp/K}}\\
\fC_g|_{\cX} \ar@{^{(}->}[r] & \fN_g|_{\cX} \ar@{^{(}->}[r]^-{\fC(\phi_g)} & \fC(f^*\EE_g) \ar[r]^-{f^*\fq(\EE_g)} & \A^1 _{\cX}
}\]
where $Q=\coker(B/D \to K\uperp/K)$. Hence the composition
\beq\label{NI2}C_{\tg}|_{\cX} \hookrightarrow N_{\tg}|_{\cX} \xrightarrow{C(\Phi_g)} K\uperp/K \xrightarrow{\fq_{K\uperp/K}} \A^1\eeq
vanishes by the isotropic condition of $\phi_g$.

The chain map \eqref{NI1} induces a chain map representing $\psi_1$ 
\beq\label{NI3}
\xymatrix{
\BB_1  \ar[d]_{\psi_1} \ar@{}[rd]|{=} & \dfrac{B\oplus D}{D_1} \ar[r] \ar[d] & \dfrac{K\uperp\oplus K}{K_1} \ar[r] \ar[d]^{\Psi_1} & \left(\dfrac{B}{D}\right)\dual \ar[d]\\
\trunc f^* \bL_g  & 0 \ar[r] & {I}/{IJ} \ar[r] & \Omega_{\overline{g}}|_{\cX}
}\eeq
%\beq\label{NI3}\xymatrix{\BB_1  \ar[d]^{\psi_1} & \dfrac{B\oplus D}{D_1} \ar[r] \ar[d] & \dfrac{K\uperp\oplus K}{K_1} \ar[r] \ar[d]^{\Psi_1} & \left(\dfrac{B}{D}\right)\dual \ar[d]\\\trunc f^* \bL_g  & 0 \ar[r] & \dfrac{I}{IJ} \ar[r] & \Omega_{\overline{g}}|_{\cX}}\eeq
since $\Hom(\EE_f\dual[3],\trunc f^*\bL_g)=0$. Then the (-1)-term $\Psi_1$ is the composition
\[\Psi_1 : {(K\uperp\oplus K)}/{K_1} \xrightarrow{(1,0)} {K\uperp}/{K} \xrightarrow{\Phi_g} {I}/{IJ}.\]

Choose any chain map
\beq\label{NI6}
\xymatrix{
\EE_h  \ar[d]^{\phi_h} & \dfrac{B\oplus D}{D_1} \ar[r] \ar[d] & \dfrac{K_1\uperp}{K_1} \ar[r] \ar[d]^-{\Phi_h} & \left(\dfrac{B\oplus D}{D_1}\right)\dual \ar[d]\\
\bL_h  & 0 \ar[r] & {\I}/{\I^2} \ar[r] & \Omega_{\overline{h}}|_{\cX}
}\eeq
representing $\phi_h$. By the criterion of isotropic condition in Proposition \ref{Prop.Isotropic}, it suffices to show that the composition
\[C_{\th} \hookrightarrow  N_{\th} \xrightarrow{C(\Phi_h)} K_1\uperp/K_1 \xrightarrow{\fq_{K_1\uperp/K_1}} \A^1\]
vanishes. 

Consider the commutative diagram 
\beq\label{NI5} 
\xymatrix{
\BB_1 \ar[r] \ar[d]^{\psi_1} & \EE_h \ar[d]^{\phi_h}\\
\trunc f^* \bL_g  \ar[r] & \bL_h 
}\eeq
in the derived category of $\cX\times \A^1$, induced by \eqref{Eq3_Lem.Deformation}. The chain maps in \eqref{SES.B1B1B2Eh}, \eqref{Res.KKP}, \eqref{NI3}, and \eqref{NI6} represent the maps in the diagram \eqref{NI5}. Since $\cX\times \A^1$ is affine, these chain maps representing the diagram \eqref{NI5} commute up to a homotopy. This homotopy should be given by a map $x: (B/D)\dual \to \I/\I^2$. Considering the (-1)-terms, there is a diagram
\beq\label{NI9}
\xymatrix{
\dfrac{K\uperp\oplus K}{K_1} \ar[rd]|-{1\oplus 1\oplus 0} \ar[dd]^{\Psi_1} \ar[rr]& & \left(\dfrac{B}{D}\right)\dual \ar[dd]^x \\
& \dfrac{K_1\uperp}{K_1}\ar[rd]|{\Phi_h} &\\
\left({{I}}/{{I}{J}}\right)[T] \ar[rr]^-{T^{-1}} && {\I}/{\I^2}
}\eeq
where the diagonal composition in \eqref{NI9} is the sum of the other two compositions in \eqref{NI9}. 

Let $K_2$ be the isotropic subbundle of $K_1\uperp/K_1$ given by the embedding
\[(0,1,0) : K \hookrightarrow \dfrac{K_1\uperp}{K_1} \subseteq \dfrac{E \oplus K \oplus K\dual}{K_1}.\]
Then $(K\uperp\oplus K)/K_1 = K_2\uperp $ as subbundles of $K_1\uperp/K_1$ and $K_2\uperp/K_2 \cong K\uperp/K$ as special orthogonal bundles. The composition
\[K_2 \hookrightarrow K_1\uperp/K_1 \mapright{\Phi_h} \I/\I^2\]
vanishes since the two maps 
\[K_2 \to K_2\uperp \mapright{\Psi_1} (I/IJ)[T] \and K_2 \to K_2\uperp \to (B/D)\dual\]
vanish. Let
\[\Phi_h ' : K_2\uperp/K_2 \to (K_1\uperp/K_1)/K_2 \to \I/\I^2.\]
be the induced map. From the commutative diagram
\[\xymatrix{
N_{\th} \ar[r] \ar@/^0.5cm/[rr]^{C(\Phi_h)} \ar[rd]_{C(\Phi_h')} & K_2\uperp \ar[r] \ar[d] & K_1\uperp/K_1 \ar[d]^{\fq_{K_1\uperp/K_1}} &  \\
& K\uperp/K \ar[r]^-{\fq_{K\uperp/K}} & \A^1
}\]
it suffices to show that the composition
\[C_{\th} \hookrightarrow  N_{\th} \xrightarrow{C(\Phi_h ')} K\uperp/K \xrightarrow{\fq_{K\uperp/K}} \A^1\]
vanishes. 

The diagram \eqref{NI9} induces a diagram
\[\xymatrix{
{K\uperp}/{K} \ar[rd]|{\Phi_h '} \ar[d]_{\Phi_g} \ar[r]^{} & \left({B}/{D}\right)\dual \ar[d]^x \\
\left({{I}}/{{I}{J}}\right)[T] \ar[r]^-{T^{-1}} & {\I}/{\I^2}
}\]
where the diagonal arrow is the sum of the two other compositions. Since the two compositions
\[B/D \to K\uperp/K \mapright{\Phi_g} I/IJ \and B/D \to K\uperp/K \to (B/D)\dual\]
vanishes by \eqref{NI1}, it suffices to show that the composition
\[C_{\th} \hookrightarrow N_{\th} \to N_{\tg}|_{\cX\times \A^1} \xrightarrow{C(\phi_g)}  K\uperp/K \xrightarrow{\fq_{K\uperp/K}}  \A^1\]
vanishes. The commutative square \eqref{D42} in Lemma \ref{Lem.D1} and the vanishing of the composition \eqref{NI2} completes the proof.
\end{proof}

\section{Lefschetz principle}\label{S.Lefschetz}

In this section, we prove the {\em Lefschetz principle} in Donaldson-Thomas theory.

\subsection{Main result}\label{ss.Lefschetz.1}

Let $X$ be a smooth projective Calabi-Yau 4-fold. Fix a curve class $\beta \in H_2(X,\Q)$ and an integer $n \in \Z$. Let
\begin{align*}
I_{n,\beta}(X) &= \{\text{closed subschemes }Z \text{ of } X \text{ with } \ch(\O_Z) = (0,0,0,\beta,n)\},\\
P_{n,\beta}(X) &= \{\text{stable pairs }(F,s) \text{ on } X \text{ with } \ch(F) = (0,0,0,\beta,n)\}
\end{align*}
be the Hilbert scheme of curves and the moduli space of stable pairs. In particular, when $\beta=0$, we let $I_{n,0}(X)=X\un$ be the Hilbert scheme of points and $P_{n,0}(X)=\emptyset$.

%% We may also regard the Hilbert scheme $I_{n,\beta}(X)$ as a moduli space parametrizing pairs $\O_X \to \O_Z$ that are surjective.

Let $P(X)$ denote one of the above two moduli spaces. Then there is a universal family 
\[\II = [\O_{P(X)\times X} \mapright{} \FF]\]
of pairs on $X$. When $P(X)=I_{n,\beta}(X)$ is the Hilbert scheme, $\FF$ is the structure sheaf of the universal family. The perfect complex $\II$ defines an open embedding 
\[P(X) \hookrightarrow \Perf(X)^\spl_{\O_X}\]
to the moduli space of simple perfect complexes on $X$ with fixed trivial determinant \cite{Inaba,Lieblich,ToVa,STV}. Hence $P(X)$ carries a (-2)-shifted symplective derived enhancement \cite{PTVV} and an orientation \cite{CGJ}, which induces an Oh-Thomas virtual cycle 
\[[P(X)]\virt_{OT} \in A_n(P(X))\]
by \cite{OT}. Let $L$ be an line bundle on $X$. We will study the {\em tautological invariants}
\[\int_{[P(X)]\virt} c_n(\bR\pi_*(\FF\otimes L))\]
in terms of the divisors of $L$. Here $\pi : P(X) \times X \to P(X)$ denotes the projection map.

Let $D$ be a smooth divisor of $X$ such that $\O_X(D)=L$. Define the moduli space $P(D)$ on $D$ as
\[P(D) = \begin{cases}
\bigsqcup_{\beta'} I_{n,\beta'}(D) & \text{ if } P(X) = I_{n,\beta}(X)\\
\bigsqcup_{\beta'} P_{n,\beta'}(D) & \text{ if } P(X) = P_{n,\beta}(X)
\end{cases}\]
where the disjoint union is taken over all $\beta' \in H_2(D,\Q)$ such that $i_*\beta'=\beta$, and $i:D\hookrightarrow X$ denotes the inclusion map. Then $P(D)$ carries a Behrend-Fantechi virtual cycle
\[[P(D)]\virt_{BF} \in A_{-\beta\cdot D} (P(D))\]
associated to the standard perfect obstruction theory.

\begin{theorem}[Lefschetz principle]\label{Thm.Lefschetz}
Let $X$ be a Calabi-Yau 4-fold and $D$ be a smooth divisor with $\O_X(D)=L$. Fix a curve class $\beta \in H_2(X,\Q)$ and an integer $n$. Let $P(X)$ be one of the two moduli spaces $I_{n,\beta}(X)$ and $P_{n,\beta}(X)$. Assume that the tautological complex $\bR\pi_*(\FF\otimes L)$ is a vector bundle concentrated in degree $0$. Then for any orientation on $P(X)$, there exist canonical signs $\sigma(e)$ for connected components $P(D)^e$ of $P(D)$ such that
\beq\label{Eq.Lefschetz} 
\sum_{e}(-1)^{\sigma(e)}(j_{e})_*[P(D)^e]\virt_{BF} =  e\left(\bR\pi_*(\FF\otimes L)\right) \cap [P(X)]\virt_{OT},
\eeq
where $j_e : P(D)^e \hookrightarrow P(X)$ denotes the inclusion map.
%\note{Do we have a better notation for the index of the connected components $P(D)^e$ of $P(D)$ other than $e$?}
\end{theorem}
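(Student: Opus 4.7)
The plan is to realize $j: P(D) \hookrightarrow P(X)$ as the zero locus of a natural section of $\sL = \bR\pi_*(\FF \otimes L)$, apply Theorem \ref{Thm.VirtualPullbackFormula} to pull the Oh-Thomas cycle back along $j$, and then invoke the reduction formula (Proposition \ref{Prop.ReductionFormula}) to descend to the Behrend-Fantechi cycle on $P(D)$. First, the defining section $s_D \in H^0(X, L)$ of the divisor $D$ produces a tautological section $\tau \in \Gamma(P(X), \sL)$, corresponding by adjunction to the composition $\O_{P(X) \times X} \to \FF \xrightarrow{\mathrm{id} \otimes s_D} \FF \otimes L$ on the universal family. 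A pointwise verification identifies the vanishing scheme of $\tau$ with $P(D)$, and since $\sL$ is a vector bundle concentrated in degree $0$ by hypothesis, $j$ is a regular embedding with normal bundle $\sL|_{P(D)}$ and canonical $2$-term perfect obstruction theory $\phi_j: \EE_j := \sL\dual[1] \to \bL_j$. Manolache's virtual pullback $j^!$ for $\phi_j$ then agrees with the refined Gysin pullback of this section, so in particular $j_* \circ j^! = e(\sL) \cap (\cdot)$.

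The central technical step is the construction of a $3$-term symmetric obstruction theory $\phi_{P(D)}: \EE_{P(D)} \to \bL_{P(D)}$ which (a) fits into a compatibility diagram of the form \eqref{Compatibility.OT} with $\EE_j$ and $\EE_{P(X)}|_{P(D)}$, and (b) factors through the standard Behrend-Fantechi $2$-term perfect obstruction theory $\psi: \KK_{P(D)} \to \bL_{P(D)}$ as in Proposition \ref{Prop.ReductionFormula}, realizing $\KK_{P(D)}$ as an isotropic subcomplex. I would build $\EE_{P(D)}$ starting from the Koszul triangle $L^{-1} \to \II_X \to i_*\II_D \to L^{-1}[1]$ on $P(D) \times X$, which after applying the derived projection formula yields a decomposition
\[ \bR\cH om_{\pi_X}(\II_X, \II_X)_0\big|_{P(D)} \;\cong\; \bR\cH om_{\pi_D}(\II_D, \II_D)_0 \;\oplus\; \bR\cH om_{\pi_D}(\II_D, \II_D \otimes L)_0[-1]. \]
Under the identification $K_D \cong L|_D$, Serre duality on the $3$-fold $D$ pairs the two summands (the Serre dual of the first is a shift of the second), and the restriction of the Calabi-Yau-$4$ self-duality of the left-hand side reproduces exactly this pairing. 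A suitable truncation and dualization then packages this data into $\EE_{P(D)}$ of tor-amplitude $[-2, 0]$, carrying a non-degenerate symmetric form, a map $\delta: \EE_{P(D)} \to \KK_{P(D)}$ with $\KK_{P(D)}$ isotropic under the form, and the distinguished triangles required for \eqref{Compatibility.OT}. The isotropic condition for $\phi_{P(D)}$ will be verified via Proposition \ref{Prop.Isotropic} by embedding $P(D)$ into a smooth ambient space and reducing the vanishing of the quadratic function on the intrinsic normal cone to the corresponding statement for $\phi_{P(X)}$.

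Granting this construction, Theorem \ref{Thm.VirtualPullbackFormula} yields $[P(D)]\virt_{OT} = j^![P(X)]\virt_{OT}$, and pushing forward by $j_*$ combined with the Gysin identity from the first paragraph produces $e(\sL) \cap [P(X)]\virt_{OT} = j_*[P(D)]\virt_{OT}$. Proposition \ref{Prop.ReductionFormula} applied to the factorization $\EE_{P(D)} \to \KK_{P(D)} \to \bL_{P(D)}$ then produces a special orthogonal bundle $G$ on $P(D)$ with $[P(D)]\virt_{OT} = \sqe(G) \cap [P(D)]\virt_{BF}$. A direct rank count using $\rank(G) = 2\rank(\KK_{P(D)}) - \rank(\EE_{P(D)})$, combined with $\rank(\EE_{P(D)}) = 2n - 2\rank(\sL) = -2\beta\cdot D$ (which comes from the reduction relation with $\EE_j$, since $\rank(\sL) = n + \beta \cdot D$) and $\rank(\KK_{P(D)}) = -\beta\cdot D$, gives $\rank(G) = 0$. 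Hence $G$ is a rank-$0$ special orthogonal bundle on each connected component, so $\sqe(G)|_{P(D)^e} = \pm 1$; writing these scalars as $(-1)^{\sigma(e)}$ yields \eqref{Eq.Lefschetz}.

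The main obstacle is the construction of $\EE_{P(D)}$: packaging the Serre-duality pairing between the two Koszul summands as a non-degenerate symmetric form on a single $3$-term complex, producing the distinguished triangles of \eqref{Compatibility.OT} together with the factorization through $\KK_{P(D)}$, and verifying isotropicity all require careful derived-categorical bookkeeping. Once $\EE_{P(D)}$ is in hand, the remaining steps are essentially automatic: the signs $\sigma(e)$ are determined by the orientation on $G$ transported from $\EE_{P(D)}$ through the isotropic reduction, compared against the canonical trivialization of a rank-$0$ bundle.
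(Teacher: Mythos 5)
Your overall architecture coincides with the paper's: realize $P(D)$ as the zero locus of the tautological section $\tau$ of $\sL=\bR\pi_*(\FF\otimes L)$, construct a three-term symmetric obstruction theory on $P(D)$ that is compatible with the one on $P(X)$ and factors through the Behrend--Fantechi theory, apply the virtual pullback formula together with the reduction formula (Proposition \ref{Prop.ReductionFormula}), and conclude from a rank count that the residual orthogonal bundle has rank zero, whence the signs $(-1)^{\sigma(e)}$. The bookends are correct, with one caveat: for stable pairs the identification $P(D)=Z(\tau)$ is not a ``pointwise verification'' --- one must show that the universal sheaf of a $T$-family lying in $Z(\tau)$ is scheme-theoretically supported on $T\times D$, which requires a genuinely family-theoretic argument (the paper's Lemma \ref{Lem.Fiberwisezero}).

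The genuine gap is in the central construction. The claimed isomorphism
\[
\bR\hom_{\pi_X}(\II,\II)_0\big|_{P(D)}\;\cong\;\bR\hom_{\pi_D}(\cJ,\cJ)_0\;\oplus\;\bR\hom_{\pi_D}(\cJ,\cJ\otimes L)_0[-1]
\]
fails on two counts. First, the left-hand side is the wrong complex: restricting the $4$-fold obstruction theory to $P(D)$ and filtering by the triangle $\cL\dual\to\cI\to d_*\cJ$ produces, besides $\bR\hom_{\pi_X}(d_*\cJ,d_*\cJ)$, the cross terms $\bR\hom_{\pi_X}(d_*\cF,L\dual)[\,\cdot\,]$ and their Serre duals, i.e.\ precisely the tautological bundle $\sL|_{P(D)}$ and its dual. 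Already the ranks disagree: the left side has rank $2n$ while the right side has rank $-2\beta\cdot D$, and these differ by $2\rank\sL$. Those cross terms are exactly what must be stripped off by the reduction along $\EE_j=\sL\dual[1]$, which is the role of the two-step diagram \eqref{Eq.JJstoII0}; they cannot be absorbed into a direct sum of the $D$-intrinsic pieces. Second, even for the correct complex $\bR\hom_{\pi_X}(d_*\cJ,d_*\cJ)$, its relation to $\bR\hom_{\pi_D}(\cJ,\cJ)$ and $\bR\hom_{\pi_D}(\cJ,\cJ\otimes L)[-1]$ is the self-dual distinguished triangle \eqref{N.Eq.JJ} coming from the Bondal--Orlov triangle $\cJ\otimes L\dual[1]\to d^*d_*\cJ\to\cJ$, and this extension is in general \emph{non-split}: the resulting symmetric obstruction theory on $P(D)$ is a non-trivial extension of the $2$-term Behrend--Fantechi theory, as the paper emphasizes. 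Assuming a splitting would erase exactly the part of the proof that requires work --- forming the reduction $\bR\hom_{\pi_X}(d_*\cJ,d_*\cJ)_\#$ by the isotropic subcomplex given by the trace (Lemma \ref{Lem.JJs}), establishing the self-dual triangle \eqref{N.Eq.JJs}, and then the octahedral-axiom bookkeeping and Atiyah-class comparison of Proposition \ref{Prop.L.Comp.OT} needed for the compatibility diagram. Your Serre-duality pairing of the two pieces is the right heuristic for why the final reduction has rank zero, but it does not by itself produce the symmetric form, the isotropic condition, or the compatibility with $\phi_{P(X)}$.
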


%% The vector bundle assumption for $\bR\pi_*(\FF\otimes L)$ in Theorem \ref{Thm.Lefschetz} is an analog of the convex bundle condition in \cite{KKP}.

In principle, the signs $\sigma(e)$ are uniquely determined by natural triangles, but the author does not know how to compute them. If we have an affirmative answer to Question \ref{Question.Signs} below, then we can remove the signs $\sigma(e)$ in \eqref{Eq.Lefschetz} and have the following simpler formula
\[j_*[P(D)]\virt_{BF} = e\left( \bR\pi_*(\FF\otimes L) \right) \cap [P(X)]\virt_{OT}\]
where $j: P(D) \hookrightarrow P(X)$ denotes the inclusion map.

\begin{question}\label{Question.Signs}
Given $X,D,P(X)$ as in Theorem \ref{Thm.Lefschetz}, is there a choice of orientations on connected components of $P(X)$ such that the signs $\sigma(e)$ all coincide?
\end{question}

%% The Lefschetz principle in Theorem \ref{Thm.Lefschetz} also works for more general moduli spaces. %% \begin{remark} We say that the scheme $P(X)$ is a {\em moduli space of pairs} if there is an open embedding $P(X) \hookrightarrow \Perf(X)^\spl_{\O_X}$ such that the induced universal complex is given as $\II = [\O_{P(X)\times X} \to \FF]$ for some flat family $\FF$ of coherent sheaves on $X$.  %% Given an arbitrary moduli space of pairs $P(X)$, if the tautological complex $\bR\pi_*(\FF\otimes L)$ is a vector bundle and the zero locus $P(D)$ of the tautological section $\tau : \O_{P(X)} \to \bR\pi_*(\FF\otimes L)$ is also a moduli space of pairs on $D$, then have the Lefschetz formula \eqref{Eq.Lefschetz} for $P(X)$ and $P(D)$. %% In particular, if $P(X)$ is the moduli space of $Z_t$-stable objects for some $t \in \R$ (in the sense of \cite{CTd}), then we have the Lefschetz formula \eqref{Eq.Lefschetz}. \end{remark}

Before we prove our main theorem (Theorem \ref{Thm.Lefschetz}) in this section, we present two immediate corollaries. 
%%Firstly, consider the Hilbert scheme of points $I_{n,0}(X)=X\un$, where the tautological complex $L\un := \bR\pi_*(\FF\otimes L)$ is always a vector bundle. 

\begin{corollary}[Tautological Hilbert scheme invariants]%\label{Cor.CaoKoolconj}
Let $X$ be a Calabi-Yau 4-fold and $L$ be a line bundle. If there is a smooth connected divisor $D$ such that $\O_X(D)=L$, then there exists a choice of orientations such that
\[\sum_{n\geq 0}\int_{[X\un]\virt}e(L\un)\cdot q^n =M(-q)^{\int_X{c_3(X)c_1(L)}}\]
where $M(q)=\prod_{n \geq 1}(1-q^n)^{-n}$ denotes the MacMahon function.
\end{corollary}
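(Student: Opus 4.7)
The plan is to specialize Theorem~\ref{Thm.Lefschetz} to the degree-zero case $\beta=0$, taking $P(X)=X\un$ and $P(D)=D\un$, and then combine the result with the MNOP degree-zero Donaldson--Thomas formula for the smooth projective 3-fold $D$ due to Jun Li and Levine--Pandharipande.

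First I will verify the hypothesis of Theorem~\ref{Thm.Lefschetz}: the universal subscheme $Z\subset X\un\times X$ is finite of degree $n$ over $X\un$, so $L\un=\pi_*(\O_Z\otimes L)$ is a vector bundle of rank $n$ concentrated in degree $0$. The Lefschetz formula \eqref{Eq.Lefschetz} then gives, for each $n$,
\[ e(L\un)\cap[X\un]\virt_{OT}=\sum_e (-1)^{\sigma(e)}(j_e)_*[D\un_e]\virt_{BF}. \]
Since the orientation on $X\un$ can be flipped independently on each connected component, I will arrange orientations to cancel the signs $\sigma(e)$ appearing on the right and obtain
\[ \int_{[X\un]\virt_{OT}}e(L\un) = \int_{[D\un]\virt_{BF}}1. \]

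Next I will invoke the degree-zero DT formula for the smooth projective 3-fold $D$,
\[\sum_{n\geq 0}\int_{[D\un]\virt_{BF}}1\cdot q^n = M(-q)^{\int_D c_3(T_D\otimes K_D)},\]
and match the exponent with $\int_X c_3(X)\cdot c_1(L)$. By adjunction $K_D=(K_X\otimes L)|_D=L|_D$, so the conormal sequence $0\to T_D\to T_X|_D\to L|_D\to 0$ yields $c_3(T_X)|_D=c_3(T_D)+c_2(T_D)\cdot c_1(L|_D)$. On the other hand, expanding the top Chern class of the rank-$3$ tensor product $T_D\otimes K_D$ and substituting $c_1(T_D)=-c_1(L|_D)$, the terms of degree $2$ and $3$ in $c_1(L|_D)$ cancel, giving $c_3(T_D\otimes K_D)=c_3(T_D)+c_2(T_D)c_1(L|_D)=c_3(T_X)|_D$. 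Pushing forward via $D=c_1(L)$ then produces $\int_D c_3(T_D\otimes K_D)=\int_X c_3(X)\cdot c_1(L)$.

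The main obstacle is the sign management in the Lefschetz principle: one must verify that, for each $n$, the signs $\sigma(e)$ attached to those components $D\un_e$ mapping to a single component of $X\un$ agree, so that a single orientation flip on that component of $X\un$ can correct them simultaneously. This should follow from the $\Z/2$-torsor structure of the orientations and the constancy of the sign along connected families; the remaining ingredients (MNOP/Li/Levine--Pandharipande and the Chern-class computation) are standard.
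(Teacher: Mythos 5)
Your overall route is exactly the paper's: apply the Lefschetz principle with $\beta=0$ to $X\un$, feed the result into the Li/Levine--Pandharipande degree-zero MNOP formula for the 3-fold $D$, and match exponents via the normal bundle sequence and $K_D=L|_D$; your Chern-class computation $c_3(\TT_D\otimes K_D)=c_3(\TT_D)+c_2(\TT_D)c_1(L|_D)=c_3(\TT_X)|_D$ is correct.

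The one genuine gap is your treatment of the signs $\sigma(e)$. You correctly identify that flipping the orientation componentwise on $X\un$ can only fix the signs if all components $D\un_e$ lying in a given component of $X\un$ carry the \emph{same} sign, but your proposed justification (``the $\Z/2$-torsor structure of the orientations and the constancy of the sign along connected families'') does not establish this: each $\sigma(e)$ is constant on its own component by definition, and whether distinct components carry matching signs is precisely the content of Question \ref{Question.Signs}, which the paper leaves open in general. The resolution here is much simpler and is the reason the corollary assumes $D$ \emph{connected}: the Hilbert scheme of points $D\un$ of a connected smooth variety is itself connected, so there is exactly one component $e$ and one sign, absorbed by a single orientation choice. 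With that observation substituted for your sign argument, the proof is complete and coincides with the paper's.
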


\begin{proof}
Since $D\un$ is connected, the Lefschetz principle gives us
\[\int_{[X\un]\virt}e(L\un) =\int_{[D\un]\virt}1.\]
By \cite{LePa,Li}, the generating series of the degree zero MNOP invariants \cite{MNOP1,MNOP2} of a smooth projective 3-fold $D$ can be expressed as
\[ \sum_{n\geq 0}\int_{[D\un]\virt}1 \cdot q^n  =M(-q)^{\int_D c_3(\TT_D\otimes K_D)}.\]
By an elementary argument, we can deduce
\[\int_D c_3(\TT_D\otimes K_D) = \int_X c_3(\TT_X)c_1(L)\]
(cf. \cite[(2.5)]{CKp}). It completes the proof.
\end{proof}

%%Secondly, consider the moduli spaces of curves $I_{n,\beta}(X)$ and $P_{n,\beta}(X)$.

\begin{corollary}[Tautological DT/PT correspondence]\label{Cor.DTPTcurves}
Let $X$ be a Calabi-Yau 4-fold and $L$ be a line bundle. Assume that there is a smooth connected divisor $D$ of $X$ which is a Calabi-Yau 3-fold and $\O_X(D)=L$. Let  $\beta \in H_2(X,\Q)$ be a curve class satisfying the following two conditions for both $I_{n,\beta}(X)$ and $P_{n,\beta}(X)$ and for all $n$:
\begin{enumerate}
\item [A1)] The tautological complex $\bR\pi_*(\FF\otimes L)$ is a vector bundle.
\item [A2)] The inclusion map $j :P(D) \hookrightarrow P(X)$ induces an injective function between the sets of connected components of $P(D)$ and $P(X)$.
\end{enumerate}
Then there exists a choice of orientations such that
\[\dfrac{\sum_{n\geq 0} \int_{[I_{n,\beta}(X)]\virt} c_n(\bR\pi_*(\FF\otimes L)) \cdot  q^n}{\sum_{n\geq 0} \int_{[X\un]\virt} c_n(L\un) \cdot  q^n}  = \sum_{n\geq 0} \int_{[P_{n,\beta}(X)]\virt} c_n(\bR\pi_*(\FF\otimes L)) \cdot  q^n\,.\]
\end{corollary}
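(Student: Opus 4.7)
The plan is to apply Theorem \ref{Thm.Lefschetz} to both $I_{n,\beta}(X)$ and $P_{n,\beta}(X)$ with the divisor $D$, thereby reducing the claimed 4-fold identity to the classical Bridgeland--Toda DT/PT correspondence on the Calabi-Yau 3-fold $D$, together with the degree-zero Lefschetz formula for the denominator.

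By assumption A1, Theorem \ref{Thm.Lefschetz} applies to both moduli spaces and yields, for each $n$, signed equalities
\[\sum_e (-1)^{\sigma^I(e,n)} (j_e)_* [I_{n,\beta}(D)^e]\virt_{BF} = c_n(\bR\pi_*(\FF\otimes L)) \cap [I_{n,\beta}(X)]\virt_{OT}\]
and the analogous identity for PT, with signs $\sigma^I(e,n), \sigma^P(e,n)$ depending on the chosen orientations of $I_{n,\beta}(X)$ and $P_{n,\beta}(X)$. Assumption A2 is precisely what makes these signs harmless: each connected component of $I_{n,\beta}(D)$ (resp.\ $P_{n,\beta}(D)$) embeds into a unique component of $I_{n,\beta}(X)$ (resp.\ $P_{n,\beta}(X)$), and distinct divisor components land in distinct ambient components, so one may flip orientations component-by-component on $I_{n,\beta}(X)$ (and independently on $P_{n,\beta}(X)$) to arrange $\sigma^I(e,n) = \sigma^P(e,n) = 0$ for all $e$ and $n$ simultaneously.

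With the signs gone, integrate. Since $D$ is a Calabi-Yau 3-fold in the Calabi-Yau 4-fold $X$, adjunction gives $\O_D(D)\cong K_D\otimes K_X^{-1}|_D \cong \O_D$, hence $\beta \cdot D = 0$ whenever $\beta = i_*\beta'$. The Behrend-Fantechi virtual classes $[I_{n,\beta'}(D)]\virt_{BF}$ and $[P_{n,\beta'}(D)]\virt_{BF}$ therefore have dimension zero, and integration produces the classical 3-fold DT and PT invariants on $D$. The integrated Lefschetz identities then read
\[\sum_n \int_{[I_{n,\beta}(X)]\virt} c_n(\bR\pi_*(\FF\otimes L))\, q^n = \sum_{i_*\beta' = \beta}\sum_n I_{n,\beta'}(D)\, q^n,\]
and similarly with $P_{n,\beta'}(D)$ on the PT side. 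Now invoke the Bridgeland--Toda DT/PT correspondence on the Calabi-Yau 3-fold $D$: for each $\beta'$,
\[\sum_n I_{n,\beta'}(D)\, q^n = \Bigl(\sum_n I_{n,0}(D)\, q^n\Bigr)\cdot \sum_n P_{n,\beta'}(D)\, q^n.\]
Summing over $\beta'$ with $i_*\beta' = \beta$, and identifying the common prefactor via the degree-zero case of the Lefschetz principle, namely
\[\sum_n \int_{[X\un]\virt} c_n(L\un)\, q^n = \sum_n I_{n,0}(D)\, q^n\]
(which is Corollary \ref{Intro.Cor.CaoKoolconj}), produces exactly the quotient formula asserted in the statement. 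The main delicate point is the simultaneous sign normalization permitted by A2; once this is in place, the rest is bookkeeping with generating series plus the direct inputs of Theorem \ref{Thm.Lefschetz} and the 3-fold DT/PT theorem.
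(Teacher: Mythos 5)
Your proposal is correct and follows essentially the same route as the paper's (very terse) proof: apply the Lefschetz principle to $I_{n,\beta}(X)$, $P_{n,\beta}(X)$, and $X\un$, and feed the result into the Bridgeland--Toda DT/PT correspondence on the Calabi--Yau 3-fold $D$. You additionally spell out two points the paper leaves implicit --- that A2 permits a component-by-component orientation flip killing all signs $\sigma(e)$, and that adjunction forces $\beta\cdot D=0$ so the 3-fold classes are genuinely zero-dimensional --- both of which are accurate.
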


\begin{proof}
Applying the Lefschetz principle to the three moduli space $I_{n,\beta}(X)$, $P_{n,\beta}(X)$, and $I_{n,0}(X)=X\un$, the 3-fold DT/PT correspondence \cite{Bridgeland,Toda}
\[\dfrac{\sum_{n\geq 0} \int_{[I_{n,\beta}(D)]\virt}1 \cdot  q^n} {\sum_{n\geq 0}\int_{[D\un]\virt}1\cdot q^n}
= \sum_{n\geq 0}\int_{[P_{n,\beta}(D)]\virt} 1 \cdot q^n\]
completes the proof.
\end{proof}

We present an example that satisfies the assumptions in Corollary \ref{Cor.DTPTcurves}.

\begin{example}\label{Ex.DT/PTrational}
Let $X=Y\times E$ be the product of a Calabi-Yau 3-fold $Y$ and an elliptic curve $E$. Let $L=\O_X(Y\times \{pt\})$. Let $\beta \in H_2(Y)\subseteq H_2(X)$ be a curve class. Assume that for any pure 1-dimensional closed subscheme $C$ of $Y$ with $[C]=\beta$, there is a collection of rigid smooth rational curves $C_1,C_2,\cdots,C_r$ on $Y$ such that
\[C=\bigcup_{1\leq i\leq r} C_i \and \#\left(C_i \cap (\bigcup_{j<i}C_j)\right)\leq 1 \text{ for all }i.\]
Then the assumptions in Corollary \ref{Cor.DTPTcurves} are satisfied and the tautological DT/PT correspondence holds.
\end{example}

The rest of this section is devoted to the proof of Theorem \ref{Thm.Lefschetz}. We briefly sketch the structure of the proof.
\begin{enumerate}
\item In \S\ref{ss.ComparisonofModuli}, we prove that $P(D)$ is the zero locus of the tautological section $\tau$ of the tautological bundle $\bR\pi_*(\FF\otimes L)$ in $P(X)$. 
\item In \S\ref{ss.ConstructionofSOT}, we construct a natural {\em 3-term} symmetric obstruction theory on $P(D)$ which gives us the same virtual cycle $[P(D)]\virt_{OT}=[P(D)]\virt_{BF}$ (up to signs). 
\item In \S\ref{ss.ComparisonofOT}, we show the compatibility of the 3-term symmetric obstruction theories for $P(X)$ and $P(D)$. \item In \S\ref{ss.ComparisonofVirtualCycles}, we apply the virtual pullback formula in \S\ref{S.Functoriality} and finish the proof of Theorem \ref{Thm.Lefschetz}.
\end{enumerate}

\subsection{Comparison of moduli spaces}\label{ss.ComparisonofModuli}

%We will first prove the geometric part of the Lefschetz principle.

Under the notations in Theorem \ref{Thm.Lefschetz}, define the {\em tautological section} as the composition
\[\tau : \O_{P(X)} \mapright{s} \bR\pi_*(\FF)  \mapright{f_D} \bR\pi_*(\FF\otimes L)\]
where $s:\O_{P(X)\times X} \to \FF$ is the universal section and $f_D \in \Gamma(X,L)$ is the defining equation of the divisor $D$. Consider a diagram
\[\xymatrix{
& \bR\pi_* (\FF\otimes L) \ar[d] \\
P(D) \ar@{^{(}->}[r]^{j} & P(X) \ar@/_.4cm/[u]_{\tau} .
}\]

\begin{proposition}\label{Prop.ComparisonofModuli}
P(D) is the zero locus of $\tau$ in $P(X)$.
\end{proposition}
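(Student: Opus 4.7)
My plan is to identify both $P(D)$ and the zero locus $Z(\tau)$ with the same subfunctor of $P(X)$. Let $g\colon S \to P(X)$ be a test morphism with associated family $(\FF_S, s_S)$ on $S \times X$. By flat base change (applied to the proper projection $\pi$) together with the $(\bL\pi_S^*, \bR\pi_{S*})$-adjunction,
\[
\Hom_S(\O_S, g^*\bR\pi_*(\FF \otimes L)) \;=\; \Hom_S(\O_S, \bR\pi_{S*}(\FF_S \otimes L)) \;=\; \Hom_{S\times X}(\O_{S\times X}, \FF_S \otimes L),
\]
and $g^*\tau$ corresponds under this identification to the sheaf map $f_D \cdot s_S \colon \O_{S\times X} \to \FF_S \otimes L$. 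Hence $g$ factors through $Z(\tau)$ iff $f_D \cdot s_S = 0$. On the other hand, $g$ factors through $P(D) \hookrightarrow P(X)$ iff $\FF_S$ is (the pushforward of) an $\O_{S \times D}$-module, i.e.\ iff $f_D$ annihilates $\FF_S$ as a sheaf on $S\times X$ (the section $s_S$ then automatically descends, since any map out of $\O_{S\times X}$ into an $f_D$-annihilated sheaf factors through $\O_{S\times X}/(f_D)=\O_{S\times D}$). Since $f_D\cdot \FF_S = 0$ trivially implies $f_D\cdot s_S=0$, the crux is the converse.

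I would reduce this converse to a pointwise statement using flatness: as $\FF_S$ is $S$-flat by the moduli definition, the multiplication-by-$f_D$ map $\FF_S \to \FF_S \otimes L$ vanishes iff it vanishes on every geometric fiber $\{s\}\times X$; an analogous reduction applies to $f_D\cdot s_S$. So it suffices to prove the following fiberwise statement: for each closed point $(F, s)\in P(X)$, the condition $f_D \cdot s(1) = 0$ in $F$ implies $f_D \cdot F = 0$.

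For the Hilbert scheme case this is transparent: $F = \O_Z$ and $s(1)=1$ is a cyclic generator, so $f_D\cdot 1 = 0$ already yields $f_D \in I_Z$, i.e.\ $f_D \cdot F = 0$ and $Z \subseteq D$ scheme-theoretically. The PT stable pair case is the only nontrivial step and uses purity in an essential way. From $f_D \cdot s(1)=0$ one has $f_D \cdot \image(s) = 0$, so the subsheaf $f_D \cdot F = \image(\mu_{f_D})$ of $F$ is a quotient of $F/\image(s)$, which is zero-dimensional by the PT stability condition. Since $F$ is pure one-dimensional, it admits no nonzero zero-dimensional subsheaf, forcing $f_D\cdot F = 0$. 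Assembling everything, $P(D)$ and $Z(\tau)$ represent the same closed subfunctor of $P(X)$ and hence coincide as closed subschemes. The main (indeed only) subtlety is the purity argument in the stable pair case; the rest is formal adjunction and flatness bookkeeping.
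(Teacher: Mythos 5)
Your fiberwise argument for a single stable pair $(F,s)$ is correct and is essentially the paper's purity argument, but the step that globalizes it is where the real difficulty lives, and as stated it is false. The claim that "the multiplication-by-$f_D$ map $\FF_S \to \FF_S\otimes L$ vanishes iff it vanishes on every geometric fiber" fails even for flat families: take $S=\spec\C[\epsilon]$, a local coordinate $x$ cutting out $D$, and the flat family $\O_{\{x=\epsilon\}}$; multiplication by $x$ vanishes on the unique closed fiber (which lies in $D$) but is nonzero on the family (which does not lie in $S\times D$). So checking both conditions fiberwise only identifies $P(D)$ and $Z(\tau)$ on geometric points, i.e.\ set-theoretically, whereas the proposition requires the scheme-theoretic equality (this is what feeds into the Gysin pullback $j^!$). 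The subtlety is therefore not the purity argument but precisely the "flatness bookkeeping" you dismiss as formal.

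The paper closes this gap differently: it keeps the hypothesis $f_D\cdot s_S=0$ as a global vanishing on $S\times X$, deduces that the image subsheaf $G:=f_D\cdot \FF_S\subseteq \FF_S$ is a quotient of $\coker(s_S)$ (equivalently, is set-theoretically supported on $\mathrm{Supp}(\coker(s_S))$), so that each fiber $G_t$ is $0$-dimensional, whence $\Hom_{X}(G_t,F_t)=0$ by purity of $F_t$. It then invokes a genuine cohomology-and-base-change statement (Lemma \ref{Lem.Fiberwisezero}: if $\Hom(\sG_t,\sF_t)=0$ for all $t$ and $\sF$ is $S$-flat, then $\Hom_{S\times X}(\sG,\sF)=0$, proved via the perfect complex $\bR\hom_\pi$) to conclude that the inclusion $G\hookrightarrow \FF_S$ is zero. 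Note that this lemma requires the vanishing of the entire fiberwise Hom group, not merely of a particular map, which is exactly why it escapes the counterexample above. Your proposal needs to be restructured along these lines; as written, the reduction to fibers does not hold.
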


Proposition \ref{Prop.ComparisonofModuli} is a geometric version of the Lefschetz principle in Theorem \ref{Thm.Lefschetz}. If we prove Proposition \ref{Prop.ComparisonofModuli}, then we will have a Gysin pullback
\beq\label{L.GysinPullback}
j^! : A_* (P(X)) \to A_* (P(D))
\eeq
such that  $j_*\circ j^! = e(\bR\pi_*(\FF\otimes L)).$

%%%Then it suffices to show that the formula \beq\label{L.P(X)OTtoP(D)BF} j^![P(X)]\virt_{OT} = \sum_e (-1)^{\sigma(e)} [P(D)^e]\virt_{BF} \eeq holds to prove the Lefschetz principle in Theorem \ref{Thm.Lefschetz}.

\begin{proof}[Proof of Proposition \ref{Prop.ComparisonofModuli}]
Let $P(X)=I_{n,\beta}(X)$ be the Hilbert scheme. Then it is easy to show that $P(D)$ is the zero locus of $\tau$ (cf. \cite[Proposition 2.4]{CKp}). Indeed, consider a morphism $T\to P(X)$ from a scheme $T$ that corresponds to a closed subscheme $Z \subseteq T\times X$. Then the pullback $\tau|_T$ of the tautological section corresponds to the composition
\beq\label{L43}\O_T \boxtimes L\dual \mapright{f_D} \O_{T\times X} \to \O_Z\eeq
under the adjunction $\pi^* \dashv \pi_*$. Since \eqref{L43} vanishes if and only if $Z$ is contained in $T \times D$, the zero locus of $\tau$ is exactly $P(D)$.

%{\em Case 2.} 
Now assume that $P(X)=P_{n,\beta}(X)$ is the moduli space of stable pairs. Let $V$ be the zero locus of $\tau$ in $P(X)$. Obviously, we have $P(D) \subseteq V$ since the pullback $\tau|_{P(D)}$ is zero. Conversely, we will show that $V \subseteq P(D)$. Choose any morphism $T \to V$ from a scheme $T$. Then the composition $T \to V \to P(X)$ corresponds to a family $s:\O_{T\times X} \to F$ of stable pairs. Since the pullback $\tau|_T$ of the tautological section vanishes, the composition
\[ \O_T \boxtimes L\dual \mapright{f_D} \O_{T\times X} \mapright{s} F\]
also vanishes by the adjunction. To show that the map $T \to V$ factors through $P(D)$, we need to show that $F$ is scheme-theoretically supported in $T\times D$. Equivalently, it suffices to show that the map
\beq\label{L42}
f_D\otimes 1_{F} : L\dual \otimes F\to F\eeq
vanishes. Let $Q=\coker(s : \O_{T\times X} \to F)$ be the cokernel of the section $s$ and $G=L\dual \cdot F \subseteq F$ be the image of the map \eqref{L42}. Since the map \eqref{L42} vanishes away from the support of $Q$, the reduced support of $G$ is contained in the support of $Q$. Let $s_t:\O_X \to F_t$ be the fiber of $s:\O_{T\times X} \to F$ over $t \in T$. Then $Q_t=\coker(s_t : \O_X \to F_t)$ is a zero-dimensional sheaf, and $F_t$ is a pure 1-dimensional sheaf, by the stability of the pair $(F_t,s_t)$. Hence the fiber $G_t$ of $G$ over $t\in T$ is also a 0-dimensional sheaf, and thus $\Hom_{X}(G_t,F_t) = 0$. By Lemma \ref{Lem.Fiberwisezero} below, we have
\[\Hom_{T\times X}(G, F) = 0.\]
Hence $G=0$ and the map \eqref{L42} is also zero.
\end{proof}

We need the following lemma to complete the proof of Proposition \ref{Prop.ComparisonofModuli}.

\begin{lemma}\label{Lem.Fiberwisezero}
Let $\pi : \sX \to T$ be a smooth projective morphism of schemes. Let $\sX_t=\sX\times_T \{t\}$ be the fiber over $t \in T$. Consider coherent sheaves $\sF$ and $\sG$ on $\sX$. Assume that $\sF$ is flat over $T$. Then
\beq\label{L41}\Hom_{\sX_t}(\sG_t,\sF_t) =0  \text{ for all } t\in T \implies \Hom_{\sX}(\sG,\sF)=0, \eeq
where $\sG_t $ and $\sF_t$ are the pullbacks of the sheaves $\sG$ and $\sF$ to $\sX_t$.
\end{lemma}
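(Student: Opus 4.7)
The plan is to represent the Hom functor $T' \mapsto \Hom_{\sX_{T'}}(\sG_{T'}, \sF_{T'})$ on $T$-schemes as $\Hom_{\O_{T'}}(Q|_{T'}, \O_{T'})$ for a single coherent sheaf $Q$ on $T$, and then exploit the fiberwise vanishing hypothesis to force $Q = 0$ by Nakayama's lemma.

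Since the conclusion is local on $T$, I first reduce to the case where $T = \spec A$ is affine and noetherian. As $\pi$ is projective and $\sG$ is coherent, I choose a two-term presentation $P^{-1} \to P^0 \to \sG \to 0$ on $\sX$ with each $P^i = \O_{\sX}(-n_i)^{\oplus r_i}$. The sheaves $(P^i)\dual \otimes \sF$ are $T$-flat (because $\sF$ is), and for $n_i$ chosen sufficiently large the standard cohomology and base change theorem ensures that each $L^i := \pi_*((P^i)\dual \otimes \sF)$ is locally free of finite rank on $T$ and that its formation commutes with arbitrary base change $T' \to T$. Letting $d : L^0 \to L^1$ be the induced map, I set
\[Q := \coker\bigl(d\dual : (L^1)\dual \to (L^0)\dual\bigr).\]

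The core step is then to establish, for every $T$-scheme $T' \to T$, the natural identity
\[\Hom_{\sX_{T'}}(\sG_{T'}, \sF_{T'}) \;=\; \Hom_{\O_{T'}}(Q|_{T'}, \O_{T'}).\]
Indeed, the left-hand side is the global sections of $\ker(L^0|_{T'} \to L^1|_{T'})$ by cohomology and base change applied to $(P^i)\dual \otimes \sF$, while the right-hand side is the same kernel: pullback commutes with cokernel, so $Q|_{T'} = \coker(d\dual|_{T'})$, and applying $\Hom_{\O_{T'}}(-, \O_{T'})$ to its two-term presentation recovers $\ker(L^0|_{T'} \to L^1|_{T'})$ by reflexivity of the locally free $L^i$. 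Specializing to $T' = \spec k(t)$, the hypothesis forces $(Q \otimes_{\O_T} k(t))\dual = 0$, hence $Q \otimes_{\O_T} k(t) = 0$ for every $t \in T$. Nakayama's lemma applied stalkwise to the coherent sheaf $Q$ on the noetherian scheme $T$ then yields $Q = 0$, and specializing the identity above with $T' = T$ produces $\Hom_{\sX}(\sG, \sF) = \Hom_{\O_T}(0, \O_T) = 0$.

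The main conceptual subtlety I expect is the dualization trick: while $\ker(L^0 \to L^1)$ itself does not in general commute with base change (its formation involves $\Tor_1$ of the cokernel of $d$), the dual description $\Hom_{\O_{T'}}(Q|_{T'}, \O_{T'})$ does, which is precisely what is needed to reduce to the fiberwise hypothesis. The remaining technical point is the required uniformity of cohomology and base change for the $L^i$; this follows from choosing the $n_i$ large enough that Serre vanishing holds simultaneously on $\sX$ and on all fibers $\sX_t$, which is standard given the projectivity of $\pi$, the $T$-flatness of $\sF$, and the noetherian hypothesis on $T$.
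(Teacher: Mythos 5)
Your proof is correct, but it runs along a more classical route than the paper's. The paper replaces $\sG$ by a two-term complex $\EE=[E^{-1}\to E^0]$ of vector bundles, observes that $\bR\hom_{\pi}(\EE,\sF)$ is a perfect complex whose derived fibers are $\bR\Hom_{\sX_t}(\EE_t,\sF_t)$, and concludes that fiberwise vanishing of $H^0$ forces the perfect complex to have tor-amplitude $\geq 1$, hence $\Hom_{\sX}(\sG,\sF)=H^0(\bR\Gamma\circ\bR\hom_\pi(\EE,\sF))=0$; a preliminary step compares $\sG_t$ with the derived pullback $\bL i_t^*\sG$. You instead run Grothendieck's representability argument: after twisting the presentation enough that Serre vanishing kills higher cohomology uniformly, you get genuine vector bundles $L^0\to L^1$ on $T$ computing the Hom functor after every base change, and the dual cokernel $Q$ converts the tor-amplitude statement into Nakayama's lemma for a coherent sheaf. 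The two arguments are the same mechanism in different languages — your $Q=0$ is exactly the paper's ``tor-amplitude $\geq 1$'' — but each buys something: the derived formulation needs no large twists and no uniform Serre vanishing over $T$, while your underived formulation works directly with the honest pullbacks $\sG_t$, $\sF_t$ (so the paper's comparison of $\sG_t$ with $\bL i_t^*\sG$ becomes unnecessary) and avoids derived base change entirely. Both are complete; the one genuine technical point you must (and do) flag is the uniformity of the twist $n_i$ over all fibers, which follows from semicontinuity and quasi-compactness of $T$.
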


\begin{proof}
Since the statement is local, we may assume that $T$ is a quasi-projective scheme. Let $i_t : \sX_t \hookrightarrow \sX$ be the inclusion map. We claim that
\[\Hom_{\sX_t} (\bL i_t^* \sG,\sF_t) = \Hom_{\sX_t}(\sG_t,\sF_t).\]
Indeed, we have a distinguished triangle
\[\xymatrix{ R_1 \ar[r] & \bL i_t^*\sG \ar[r] & \sG_t \ar[r] & R_1[1]}\]
on $\sX_t$ such that $R_1$ is concentrated in degree $\leq -1$. Hence
\[\Hom_{\sX_t}(R_1,\sF_t) = \Hom_{\sX_t}(R_1[1],\sF_t)=0,\]
which proves the claim.

Since $\sX$ is quasi-projective, we can find an exact sequence
\[0 \to R_2 \to E^{-1} \to E^0 \to \sG \to 0\]
for some vector bundles $E^0$ and $E^{-1}$ on $\sX$. Letting $\EE = [E^{-1} \to E^0]$, we can form a distinguished triangle
\[\xymatrix{R_2[1] \ar[r] & \EE \ar[r] & \sG \ar[r] & R_2[2] }\]
on $\sX$. As above, we have
\[\Hom_{\sX_t}(\bL i_t^*\sG,\sF_t) = \Hom_{\sX_t}(\EE_t,\sF_t) \and \Hom_{\sX}(\EE,\sF)=\Hom_{\sX}(\sG,\sF)\]
for all $t \in T$, where $\EE_t = \bL i_t ^* \EE$. Hence it suffices to show \eqref{L41} for $\EE$. 

Consider the perfect complex $\bR\hom_{\pi}(\EE,\sF)$ on $T$. By the base change theorem, we have
\[\bR\hom_{\pi}(\EE,\sF)|_{t \in T} = \bR\Hom_{\sX_t}(\EE_t,\sF_t).\]
Hence if $\Hom_{\sX_t}(\EE_t,\sF_t)=0$ for all $t \in T$, then  $\bR\hom_{\pi}(\EE,\sF)$ has tor-amplitude $\geq 1$ so that
\[\Hom_{\sX}(\EE,\sF) = H^0 (\bR \Gamma \circ \bR\hom_{\pi}(\EE,\sF) )= 0.\]
It completes the proof.
\end{proof}

\begin{remark}
There is an alternative proof of Proposition \ref{Prop.ComparisonofModuli} by comparing the {\em pair} obstruction theories\footnote{These obstruction theories are different with the obstruction theories used to define the virtual cycles in \S\ref{ss.Lefschetz.1}.} of $P(D)$ and $P(X)$ using derived algebraic geometry. This approach allows us to generalize Proposition \ref{Prop.ComparisonofModuli} and Theorem \ref{Thm.Lefschetz} to other moduli spaces of pairs (e.g., moduli space $P_{n,\beta}^{t}(X)$ of $Z_t$-stable pairs \cite{CTd}). We sketch the proof here.

Let $\bP(X)$ be the derived enhancement of $P(X)$ as a derived moduli space of pairs. Then the tangent complex at a $\C$-point $(F,s) \in \bP(X)$ can be expressed as
\[\TT_{\bP(X),(F,s)} = \bR\Hom_{X}(I,F)\]
where $I=[\O_X \xrightarrow{s} F]$. The tautological complex $\bR\pi_*(\FF\otimes L)$ and the tautological section $\tau$ on $P(X)$ extend to $\bP(X)$ naturally. Let $\bV$ be the derived zero locus of the extended tautological section.
Clearly, the classical truncation of $\bV$ is $V$. The canonical distinguished triangle of tangent complexes for the map $\bV \to \bP(X)$ at a $\C$-point $(F,s) \in \bV$ is 
\[\xymatrix{\bR\Hom_{X}(J,F) \ar[r] \ar@{=}[d]& \bR\Hom_{X}(I,F) \ar[r] \ar@{=}[d] & \bR\Hom_{X}(L\dual,F) \ar@{=}[d]\\ 
\TT_{\bV,(F,s)} \ar[r] & \TT_{\bP(X),(F,s)} \ar[r] & \TT_{\bV/\bP(X),(F,s)}[1] } \]
where $I=[\O_X \xrightarrow{s} F]$ and $J=[\O_D \xrightarrow{s} F]$. 

Let $\bP(D)$ be the derived enhancement of $P(D)$ as a derived moduli space of pairs. Then there exists a canonical map $\bP(D) \to \bV$. The distinguished triangle of tangent complexes for $\bP(D) \to \bV$ at a $\C$-point $(F,s) \in \bP(D)$ is
\[\xymatrix{
\bR\Hom_D(J,F) \ar[r] \ar@{=}[d] & \bR\Hom_X(J,F) \ar[r] \ar@{=}[d] & \bR\Hom_D(J,F\otimes L)[-1] \ar@{=}[d] \\
\TT_{\bP(D),(F,s)} \ar[r] & \TT_{\bV,(F,s)} \ar[r] & \TT_{\bP(D)/\bV,(F,s)}[1]
}\]
where $J=[\O_D \xrightarrow{s} F]$. Hence the relative cotangent complex $\LL_{\bP(D)/\bV}$ is concentrated in degrees $\leq -2$. Therefore the induced map $P(D) \to V$ between the classical truncations is an \'etale morphism. Using the stability conditions, it is easy to show that $P(D) \to V$ is bijective. Hence $P(D) \to V$ is an isomorphism.
\end{remark}

\begin{remark}
Proposition \ref{Prop.ComparisonofModuli} holds even when the tautological complex $\bR\pi_*(\FF\otimes L)$ is not a vector bundle. In this case, we have a cartesian diagram
\beq\label{L21}
\xymatrix{
P(D) \ar[r] \ar[d] & P(X) \ar[d]^{\tau}\\
P(X) \ar[r]^>>>>>0 & C(h^0(\bR\pi_*(\FF\otimes L)\dual))
}\eeq
where $C(h^0(\bR\pi_*(\FF\otimes L)\dual)$ is the abelian cone associated to the coherent sheaf $h^0(\bR\pi_*(\FF\otimes L)\dual)$.

However, the derived enhancement of the cartesian diagram \eqref{L21} 
\[\xymatrix{ \bP(D) \ar[r] \ar[d] & \bP(X) \ar[d] \\ \bP(X) \ar[r]^-{0} & \bR\pi_*(\bF\otimes L) }\] 
is {\em not} homotopy-cartesian, where $\bF$ denotes the extended universal family on $\bP(X)$.
\end{remark}

\subsection{Symmetric obstruction theory on $P(D)$}\label{ss.ConstructionofSOT}

Given $X,L,D,P(X)$ as in Theorem \ref{Thm.Lefschetz}, consider the commutative diagram
\[\xymatrix{ P(D)\times D \ar@{^{(}->}[r]^d \ar[rd]_{\pi_{D}} & P(D) \times X  \ar[d]^{\pi_{X}}  \\ &P(D)  }\]
of schemes. Recall that the Behrend-Fantechi virtual cycle $[P(D)]\virt_{BF}$ is constructed from the 2-term perfect obstruction theory 
\[\psi_D : \bR\hom_{\pi_D}(\cJ,\cJ\otimes L)_0[2] \xrightarrow{At(\cJ)} \LL_{P(D)} \]
induced by the Atiyah class $At(\cJ) : \cJ \to \cJ \otimes \LL_{P(D) \times D}[1]$ of the universal complex  $\cJ=[\O_{P(D)\times D}\to \cF]$.

%We construct a 3-term symmetric complex which is a {\em non-trivial} extension of the 2-term perfect complex $\bR\hom_{\pi_D}(\cJ,\cJ\otimes L)_0[2]$. 

\begin{proposition}\label{Prop.ConstructionofSOT}
There exists a canonical 3-term symmetric complex 
\[\bR\hom_{\pi_X}(d_*\cJ,d_*\cJ)_\#[3]\]
on $P(D)$ with a distinguished triangle
\beq\label{N.Eq.JJs} 
\xymatrix@-1.2pc{ 
\bR\hom_{\pi_D}(\cJ,\cJ)_0[3] \ar[r]^-{\epsilon\dual} & \bR\hom_{\pi_X}(d_*\cJ, d_*\cJ)_\#[3] \ar[r]^-{\epsilon} & \bR\hom_{\pi_D}(\cJ,\cJ\otimes L)_0[2] }
\eeq 
for some map $\epsilon$. Here we identified
\begin{align*}
(\bR\hom_{\pi_X}(d_*\cJ,d_*\cJ)_\#[3])\dual[2] &= \bR\hom_{\pi_X}(d_*\cJ,d_*\cJ)_\#[3]\\ 
(\bR\hom_{\pi_D}(\cJ,\cJ\otimes L)_0[2])\dual[2] &= \bR\hom_{\pi_D}(\cJ,\cJ)_0[3]
\end{align*}
via the symmetric form of $ \bR\hom_{\pi_X}(d_*\cJ,d_*\cJ)_\#[3]$ and the relative Serre duality.
%%\[(\bR\hom_{\pi_X}(d_*\cJ,d_*\cJ)_\#[3])\dual[2] = \bR\hom_{\pi_X}(d_*\cJ,d_*\cJ)_\#[3],\]  \[(\bR\hom_{\pi_D}(\cJ,\cJ\otimes L)_0[2])\dual[2] = \bR\hom_{\pi_D}(\cJ,\cJ)_0[3].\] 
Moreover, the symmetric form of $\bR\hom_{\pi_X}(d_*\cJ,d_*\cJ)_\#[3]$ is induced from the standard symmetric form of $\bR\hom_{\pi_X}(d_*\cJ,d_*\cJ)[3]$ in the sense of Lemma \ref{Lem.JJs} below.
\end{proposition}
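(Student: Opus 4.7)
The plan is to derive the construction from Grothendieck--Verdier duality applied to the closed embedding $d:P(D)\times D\hookrightarrow P(D)\times X$. Since $d$ is a regular embedding of relative codimension one with normal bundle the pullback of $L$, we have $d^{!}(-)\cong d^{*}(-)\otimes L[-1]$, and the Koszul/Tor description of sheaves pushed forward from a Cartier divisor gives a canonical distinguished triangle
\[
\cJ \longrightarrow d^{!}d_{*}\cJ \longrightarrow \cJ\otimes L[-1] \longrightarrow \cJ[1]
\]
on $P(D)\times D$. Applying $\bR\pi_{D,*}\bR\hom_{D}(\cJ,-)$, using the Grothendieck duality identification $\bR\hom_{\pi_X}(d_{*}\cJ, d_{*}\cJ)\cong \bR\hom_{\pi_D}(\cJ, d^{!}d_{*}\cJ)$, and shifting by $[3]$, I obtain the unreduced distinguished triangle
\[
\bR\hom_{\pi_D}(\cJ,\cJ)[3] \longrightarrow \bR\hom_{\pi_X}(d_{*}\cJ, d_{*}\cJ)[3] \longrightarrow \bR\hom_{\pi_D}(\cJ,\cJ\otimes L)[2] \longrightarrow,
\]
which is the precursor of the triangle \eqref{N.Eq.JJs} claimed in the statement.

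Next, I would define the reduction $(-)_{\#}$ compatibly with the traceless reductions of the outer terms. Both outer terms carry canonical trace maps $\mathrm{tr}:\bR\hom_{\pi_D}(\cJ,\cJ)\to\bR\pi_{D,*}\O_{P(D)\times D}$ and $\mathrm{tr}_{L}:\bR\hom_{\pi_D}(\cJ,\cJ\otimes L)\to\bR\pi_{D,*}L$, split by identity sections. A diagram chase through the Koszul resolution $[L^{-1}\to\O_{X}]\cong d_{*}\O_{D}$ and Grothendieck duality identifies these, under the unreduced triangle, with a combined trace/identity pair on the middle term. I would then define $\bR\hom_{\pi_X}(d_{*}\cJ, d_{*}\cJ)_{\#}$ as the iterated fiber/cofiber obtained by removing both, equivalently as the unique complex fitting into the triangle \eqref{N.Eq.JJs} with traceless outer terms and mapping compatibly to the unreduced triangle above.

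The symmetric form is then induced as follows. Serre duality on the CY$4$-fold $X$ gives a canonical isomorphism $\bR\hom_{\pi_X}(d_{*}\cJ, d_{*}\cJ)\dual\cong \bR\hom_{\pi_X}(d_{*}\cJ, d_{*}\cJ)[4]$, equivalently a non-degenerate symmetric form on the $[3]$-shift in the sense of Definition \ref{Def.SymCplx}. Under this pairing the identity and trace subcomplexes used to define $(-)_{\#}$ are mutually Serre-dual isotropic pairs, so the form descends to a non-degenerate symmetric form on $\bR\hom_{\pi_X}(d_{*}\cJ, d_{*}\cJ)_{\#}[3]$. Serre duality on the smooth $3$-fold $D$ (with $\omega_{D}\cong L|_{D}$ by adjunction) identifies $\bR\hom_{\pi_D}(\cJ,\cJ\otimes L)_{0}\dual\cong \bR\hom_{\pi_D}(\cJ,\cJ)_{0}[3]$, which after shifting matches the pairing between the outer terms of \eqref{N.Eq.JJs}. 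Tor-amplitude $[-2,0]$ of the $[3]$-shift follows from the triangle together with the perfect amplitudes of the two outer terms, and the orientation is induced canonically from the determinant isomorphism provided by the triangle.

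The main technical obstacle I expect is the trace/identity bookkeeping in the second step: because $d_{*}\cJ$ has K-theoretic rank zero on $X$, the composition $\mathrm{tr}_{X}\circ\iota_{X}$ vanishes and there is no canonical splitting of the trace in families, so the $\#$-reduction must be defined as an iterated cone/fiber; one must then check, by an explicit commutative diagram involving the Koszul resolution of $d_{*}\O_{D}$, that the two reduction steps commute canonically and that Grothendieck duality intertwines the $X$-level trace/identity with their $D$-level counterparts on both outer summands. Once this bookkeeping is settled, the assertion that the symmetric form is induced ``in the sense of Lemma \ref{Lem.JJs}'' becomes a formal consequence of Serre duality on $X$ and $D$.
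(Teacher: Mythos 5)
Your proposal follows essentially the same route as the paper: the unreduced self-dual triangle comes from the adjunction/Koszul triangle for $d_*$ along the divisor (the paper's \eqref{N.Eq.BO}), the complex $\bR\hom_{\pi_X}(d_*\cJ,d_*\cJ)_\#$ is the reduction by the trace/identity isotropic pair (the paper's Lemma \ref{Lem.JJs} via Lemma \ref{Lem.Gen.Reduction}), and the statement follows by applying $\bR(\pi_X)_*$ and relative Serre duality with $\omega_D\cong L|_D$. The ``trace/identity bookkeeping'' you flag as the main obstacle is precisely the content of the paper's Lemma \ref{Lem.JJs} (the octahedral-axiom chase establishing $e=\epsilon\dual$ and the compatibility square \eqref{Eq.JJs.compatibility}), so your outline is correct but defers exactly the step where the paper does the real work.
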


Let us first assume that Proposition \ref{Prop.ConstructionofSOT} holds. Then we can define a 3-term symmetric obstruction theory on $P(D)$ as the composition
\beq\label{Eq.P(D)SOT}
\phi_D : \bR\hom_{\pi_X}(d_*\cJ, d_*\cJ)_\#[3] \mapright{\epsilon} \bR\hom_{\pi_D}(\cJ,\cJ\otimes L)_0[2] \mapright{\psi_D} \LL_{P(D)}.
\eeq
By the reduction formula in Proposition \ref{Prop.ReductionFormula}, the associated Oh-Thomas virtual cycle is identical to the Behrend-Fantech virtual cycle
\beq\label{L32}
[P(D)]\virt_{OT} = \sum_e (-1)^{\sigma(e)}[P(D)^e]\virt_{BF}
\eeq
up to canonical signs $(-1)^{\sigma(e)}$ on connected components $P(D)^e$ of $P(D)$.
%%Thus, it remains to show  $j^![P(X)]\virt_{OT}  = [P(D)]\virt_{OT}$  to prove the Lefschetz principle in Theorem \ref{Thm.Lefschetz}.

%% \begin{remark} The symmetric complex in Proposition \ref{Prop.ConstructionofSOT} is not necessarily the trivial extension of the 2-term complex $\bR\hom_{\pi_D}(\cJ,\cJ\otimes L)_0[2]$, \[\bR\hom_{\pi_X}(d_*\cJ,d_*\cJ)_\#[3] \neq \bR\hom_{\pi_D}(\cJ,\cJ\otimes L)_0[2] \oplus \bR\hom_{\pi_D}(\cJ,\cJ)_0[3]).\]\end{remark}

\medskip

The rest of this subsection is devoted to the proof of Proposition \ref{Prop.ConstructionofSOT}. For simplicity, we will use the following abbreviations:
\[\sD= P(D) \times D, \quad \sX=P(D) \times X, \and \sP=P(D) \,.\]
We start with the following well-known lemma.
%% \footnote{Using the language of derived algebraic geometry, we can say that the self-dual triangle \eqref{N.Eq.JJ} is the classical truncation of the Lagrangian structure for $\Perf(D) \to \Perf(X)$ [Katzarkov-Pandit-Spaide]. Is the proof correct?  }

\begin{lemma}%\label{Lem.N1}
There is a canonical distinguished triangle
\beq\label{N.Eq.JJ} \xymatrix@C-1pc{ \bR\hom_{d}(\cJ,\cJ) \ar[r]^-{\xi\dual} & \bR\hom_{\sX}(d_*\cJ,d_*\cJ) \ar[r]^-{\xi} & \bR \hom_{d}(\cJ,\cJ\otimes L)[-1] }\eeq
on $\sX$, where $\bR\hom_d:=\bR d_*\bR\hom_{\sD}$. Here we identified
\[\bR\hom_{\sX}(d_*\cJ,d_*\cJ) = \bR\hom_{\sX}(d_*\cJ,d_*\cJ) \dual\] \[\bR\hom_{d}(\cJ,\cJ) = (\bR\hom_d(\cJ,\cJ\otimes L)[-1])\dual\] 
via the trace map and the Grothendieck-Serre duality.
%More precisely, we have a canonical isomorphism \beq\label{Eq4.16} \xymatrix{ \bR\hom_{d}(\cJ,\cJ) \ar[r]^{}\ar@{=}[d] & \bR\hom_{\sX}(d_*\cJ,d_*\cJ) \ar[r]^{\xi} \ar@{=}[d] & \bR \hom_{d}(\cJ,\cJ\otimes L)[-1] \ar@{=}[d] \\ \bR\hom_{d}(\cJ,\cJ\otimes L)[-1]\dual \ar[r]^{\xi\dual}& \bR\hom_{\sX}(d_*\cJ,d_*\cJ)\dual \ar[r]^{} & \bR \hom_{d}(\cJ,\cJ)\dual  }\eeq of distinguished triangles given by the Serre duality, where the bottom row is given by the dual of the top row.
\end{lemma}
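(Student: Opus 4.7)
The plan is to derive the triangle \eqref{N.Eq.JJ} by combining the standard Koszul-type resolution attached to the smooth divisor $D\subset X$ with Grothendieck duality for the closed embedding $d$. Since $D$ is a smooth divisor with $\O_X(D)=L$, one has the short exact sequence $0\to L\dual \to \O_\sX \to d_*\O_\sD \to 0$ on $\sX$. Deriving $\bL d^*$ on the associated two-term flat resolution of $d_*\cJ$ (note that $d_*\cJ$ is perfect on $\sX$ because $d_*\O_\sD$ has Tor-dimension one) produces the canonical \emph{Koszul triangle}
\[ \cJ\otimes L\dual[1] \longrightarrow \bL d^* d_*\cJ \longrightarrow \cJ \longrightarrow \cJ\otimes L\dual[2] \]
on $\sD$, whose connecting map encodes the extension class of the conormal bundle $L\dual$ of $\sD$ in $\sX$.

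Next, I would apply $\bR\hom_\sD(-,\cJ)$ to this Koszul triangle and push forward by $d_*$. The resulting triangle on $\sX$ has outer terms $d_*\bR\hom_\sD(\cJ,\cJ)=\bR\hom_d(\cJ,\cJ)$ and $d_*\bR\hom_\sD(\cJ,\cJ\otimes L)[-1]=\bR\hom_d(\cJ,\cJ\otimes L)[-1]$, and the Grothendieck duality adjunction
\[ \bR\hom_\sX(d_*\cJ,d_*\cJ) \;=\; d_*\bR\hom_\sD(\bL d^* d_*\cJ,\cJ) \]
identifies the middle term with $\bR\hom_\sX(d_*\cJ, d_*\cJ)$, giving exactly \eqref{N.Eq.JJ}. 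The two required identifications come from the local Grothendieck-Serre duality $d^!\O_\sX = L[-1]$ (because $D$ is a smooth divisor with normal bundle $L$), which yields $(d_*F)\dual = d_*(F\dual\otimes L)[-1]$ for perfect $F$ on $\sD$; specializing to $F=\bR\hom_\sD(\cJ,\cJ\otimes L)$ and using $\bR\hom_\sD(A,B)\dual = \bR\hom_\sD(B,A)$ for perfect complexes gives precisely $(\bR\hom_d(\cJ,\cJ\otimes L)[-1])\dual = \bR\hom_d(\cJ,\cJ)$. The self-duality $\bR\hom_\sX(d_*\cJ,d_*\cJ)\dual=\bR\hom_\sX(d_*\cJ,d_*\cJ)$ then follows from perfectness of $d_*\cJ$ via the trace pairing.

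The delicate point — and the main obstacle I expect — is not producing the triangle abstractly, but verifying that under these identifications the map $\xi\dual$ is genuinely the dual of $\xi$, so that the triangle is compatibly self-dual. Concretely, I would dualize the Koszul triangle (which exchanges $\cJ\otimes L\dual[1]$ with $\cJ$, up to twists by $L$ that cancel against the shift coming from $d^!\O_\sX=L[-1]$) and show that the dualized triangle is canonically isomorphic to the original, with the roles of the two outer maps swapped. Tracking the trace pairing and the Grothendieck-Serre isomorphism through this exchange is what makes the self-duality of \eqref{N.Eq.JJ} canonical, and it is also what prepares the ground for the Serre-trace-free cut $\bR\hom_{\pi_X}(d_*\cJ,d_*\cJ)_\#[3]$ used later in Proposition \ref{Prop.ConstructionofSOT}.
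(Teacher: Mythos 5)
Your proposal is correct and follows essentially the same route as the paper: the paper also starts from the canonical triangle $\cJ\otimes L\dual[1]\to d^*d_*\cJ\to\cJ\to\cJ\otimes L\dual[2]$ (cited from Bondal--Orlov rather than re-derived via the Koszul resolution of $d_*\O_{\sD}$, with the first two arrows given by the adjunctions $d_*\dashv d^*\otimes L[-1]$ and $d^*\dashv d_*$), applies $\bR\hom(-,\cJ)$ and Grothendieck duality to obtain \eqref{N.Eq.JJ}, and deduces self-duality from the naturality of the duality and adjunction maps. Your extra attention to checking that $\xi\dual$ really is the dual of $\xi$ is exactly the point the paper disposes of by that naturality remark.
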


\begin{proof}
Since the self-dual distinguished triangle \eqref{N.Eq.JJ} is well-known, we will only sketch how \eqref{N.Eq.JJ} is constructed. Note that there is a canonical distinguished triangle 
\beq\label{N.Eq.BO} \xymatrix{ \cJ \otimes L\dual[1] \ar[r]& d^* d_* \cJ \ar[r]& \cJ \ar[r] & \cJ \otimes L\dual[2]} \eeq 
on $\sD$ by \cite[Lemma 3.3]{BO} (see also \cite[Corollary 11.4]{Huybrechts}).\footnote{In \cite{BO,Huybrechts}, they only considered smooth schemes but their proofs also work for arbitrary schemes.} Here the first two arrows are given by the adjunctions 
\[d_*\dashv d^* \otimes L[-1] \and d^* \dashv d_*\,.\]
Applying $\bR\hom_{\sX}(-,\cJ)$ to \eqref{N.Eq.BO}, we obtain the desired distinguished triangle \eqref{N.Eq.JJ}. The self-dualness of the distinguished triangle \eqref{N.Eq.JJ} follows from the naturality of the duality maps and the adjunction maps. 
\end{proof}

We will construct a symmetric complex $\bR\hom_{\sX}(d_*\cJ,d_*\cJ)_\#$ as the "reduction" of the "symmetric complex" $\bR\hom_{\sX}(d_*\cJ,d_*\cJ)$ by the "isotropic subcomplex"
\[\bR\hom_{\sX}(d_*\cJ,d_*\cJ) \xrightarrow{tr \circ \xi} d_*(\sO_{\sP}\boxtimes L|_D)[-1].\]
Strictly speaking, $\bR\hom_{\sX}(d_*\cJ,d_*\cJ)$ is not a symmetric complex in the sense of Definition \ref{Def.SymCplx}. However, Lemma \ref{Lem.Gen.Reduction} in Appendix \ref{A.Reduction} shows that we can still form a reduction.

%% The symmetric complex $\bR\hom_{\sX}(d_*\cJ,d_*\cJ)_\#$ is obtained by removing the trace of  \[\bR\hom_d(\cJ,\cJ) = (\bR\hom_{d}(\cJ,\cJ\otimes L)[-1])\dual\]  on the both sides of \eqref{N.Eq.JJ}.

\begin{lemma}\label{Lem.JJs}
There is a perfect complex 
$\bR\hom_{\sX}(d_*\cJ,d_*\cJ)_\#$
on $\sX$ and a self-dual isomorphism
\beq\label{Eq.JJs.sym} \theta:\bR\hom_{\sX}(d_*\cJ,d_*\cJ)_{\#}\cong (\bR\hom_{\sX}(d_*\cJ,d_*\cJ)_{\#})\dual,\eeq i.e., $\theta\dual=\theta$, satisfying the following two properties:
\begin{enumerate}
\item There is a morphism of distinguished triangles
%\beq\label{Eq.JJs.reduction} \xymatrix{ \HH\dual \ar[r]^>>>>>{\mu\dual} \ar[d]^{\nu\dual} & \bR\hom_{\sX}(d_*\cJ,d_*\cJ) \ar[r]^<<<<<{tr \circ \xi} \ar[d]^{\mu} & d_*(\sO_{P} \boxtimes L|_{D})[-1] \ar@{=}[d] \\  \bR\hom_{\sX}(d_*\cJ,d_*\cJ)_{\#} \ar[r]^{\nu} & \HH \ar[r] & (d_* \sO_{\sX})\dual }\eeq
\beq\label{Eq.JJs.reduction}
\xymatrix{
\bR\hom_{\sX}(d_*\cJ,d_*\cJ)_R \ar[r]^-{\mu\dual} \ar[d]^{\nu\dual} & \bR\hom_{\sX}(d_*\cJ,d_*\cJ) \ar[r]^-{tr \circ \xi} \ar[d]^{\mu} & d_*(\O_{\sP} \boxtimes L|_{D})[-1] \ar@{=}[d] \\ 
\bR\hom_{\sX}(d_*\cJ,d_*\cJ)_{\#} \ar[r]^-{\nu} & \bR\hom_{\sX}(d_*\cJ,d_*\cJ)_L  \ar[r] & d_*(\O_{\sP} \boxtimes L|_{D})[-1] 
}\eeq
for some maps $\mu$ and $\nu$, and perfect complexes $\bR\hom_{\sX}(d_*\cJ,d_*\cJ)_L$ and $\bR\hom_{\sX}(d_*\cJ,d_*\cJ)_R := (\bR\hom_{\sX}(d_*\cJ,d_*\cJ)_L)\dual.$
\item There exists a distinguished triangle  
\beq\label{Eq.JJs} 
\xymatrix{ \bR\hom_{d}(\cJ,\cJ)_0 \ar[r]^-{\epsilon\dual} &\bR\hom_{\sX}(d_*\cJ, d_*\cJ)_\# \ar[r]^-{\epsilon} & \bR\hom_{d}(\cJ,\cJ\otimes L)_0[-1] }
\eeq 
for some map $\epsilon$
%, where we identified $\bR\hom_d(\cJ,\cJ\otimes L)_0[-1] = (\bR\hom_d(\cJ,\cJ)_0)\dual$ via the Serre duality.\check{Check that the third arrow is automatically self-dual.}\check{Check that the Serre duality isomorphism can be descends to the trace-less part.} 
such that the diagram
\beq\label{Eq.JJs.compatibility}
\xymatrix{
\bR\hom_{\sX}(d_*\cJ,d_*\cJ)_R \ar[r]^-{\mu\dual} \ar[d]^{\nu\dual} & \bR\hom_{\sX}(d_* \cJ,d_*\cJ) \ar[d]^{\xi}\\
\bR\hom_{\sX}(d_*\cJ,d_*\cJ)_{\#} \ar[r]^-{(\epsilon,0)} & \bR\hom_d(\cJ,\cJ\otimes L)[-1]
}\eeq
commutes. Here we identified 
\[\bR\hom_d(\cJ,\cJ\otimes L)=\bR\hom_d(\cJ,\cJ\otimes L)_0 \oplus d_*(\O_{\sP}\boxtimes L|_{D}).\]
\end{enumerate}
Moreover, the pair of the perfect complex $\bR\hom_{\sX}(d_*\cJ,d_*\cJ)_\#$ and the self-dual isomorphism $\theta$ is uniquely determined by the diagram \eqref{Eq.JJs.reduction}.
\end{lemma}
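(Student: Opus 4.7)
My plan is to apply the generalized reduction operation from Lemma~\ref{Lem.Gen.Reduction} in Appendix~\ref{A.Reduction}, which extends Proposition~\ref{Prop.Reduction} to self-dual perfect complexes that lie outside the restricted tor-amplitude $[-2,0]$ required by Definition~\ref{Def.SymCplx}. The inputs are the trace-induced self-dual complex $\bR\hom_{\sX}(d_*\cJ,d_*\cJ)$ and the quotient map $tr\circ\xi : \bR\hom_{\sX}(d_*\cJ,d_*\cJ)\to d_*(\O_{\sP}\boxtimes L|_D)[-1]$ extracted from the triangle \eqref{N.Eq.JJ}. Uniqueness of the pair $(\bR\hom_{\sX}(d_*\cJ,d_*\cJ)_\#,\theta)$ given \eqref{Eq.JJs.reduction} will follow directly from the uniqueness statement in the appendix.

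The first step is to verify the isotropic condition: the composition of the standard self-dual form on $\bR\hom_{\sX}(d_*\cJ,d_*\cJ)$ with $(tr\circ\xi)\otimes(tr\circ\xi)$ must vanish. Equivalently, under the self-dual isomorphism the Grothendieck-Serre dual $(tr\circ\xi)\dual$ must land inside the kernel of $tr\circ\xi$. Under the identification of \eqref{N.Eq.JJ} with its own dual, $(tr\circ\xi)\dual$ factors through $\bR\hom_d(\cJ,\cJ)\xrightarrow{\xi\dual}\bR\hom_{\sX}(d_*\cJ,d_*\cJ)$, whose image is precisely the kernel of $\xi$, hence of $tr\circ\xi$; this gives the required vanishing.

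With isotropy in hand, Lemma~\ref{Lem.Gen.Reduction} produces the perfect complex $\bR\hom_{\sX}(d_*\cJ,d_*\cJ)_\#$, the self-dual isomorphism $\theta$ satisfying $\theta\dual=\theta$, and the morphism of distinguished triangles \eqref{Eq.JJs.reduction}, with $\bR\hom_{\sX}(d_*\cJ,d_*\cJ)_L$ and $\bR\hom_{\sX}(d_*\cJ,d_*\cJ)_R:=(\bR\hom_{\sX}(d_*\cJ,d_*\cJ)_L)\dual$ arising as the relevant cone and cocone. For property~(2), I would split the self-dual triangle \eqref{N.Eq.JJ} using the canonical trace decompositions $\bR\hom_d(\cJ,\cJ)=\bR\hom_d(\cJ,\cJ)_0\oplus d_*\O_{\sD}$ and $\bR\hom_d(\cJ,\cJ\otimes L)=\bR\hom_d(\cJ,\cJ\otimes L)_0\oplus d_*(\O_{\sP}\boxtimes L|_D)$. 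The trace-full summand on the right is exactly what the reduction kills, so the restriction of the reduced triangle to the trace-free summands reproduces \eqref{Eq.JJs}, and the compatibility \eqref{Eq.JJs.compatibility} reads off from the defining 3$\times$3 diagram of the reduction together with naturality of the trace.

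The main obstacle will be making the isotropic condition rigorous: it requires a careful identification of the self-dual pairing on $\bR\hom_{\sX}(d_*\cJ,d_*\cJ)$ (trace composition on $\sX$) with the Grothendieck-Serre pairing between $\bR\hom_d(\cJ,\cJ)$ and $\bR\hom_d(\cJ,\cJ\otimes L)[-1]$ (trace composition on $\sD$, twisted by the relative dualizing complex $\omega_d=L|_D[-1]$ of the divisorial closed immersion $d:\sD\hookrightarrow\sX$). Once this compatibility of trace pairings is in place, the remaining steps are formal consequences of the general reduction machinery from the appendix.
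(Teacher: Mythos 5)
Your treatment of part~(1) and of the uniqueness statement matches the paper's: one checks the hypotheses of Lemma~\ref{Lem.Gen.Reduction} for the self-dual complex $\bR\hom_{\sX}(d_*\cJ,d_*\cJ)$ with the isotropic map $tr\circ\xi$, and the reduction machinery of Appendix~\ref{A.Reduction} produces $\bR\hom_{\sX}(d_*\cJ,d_*\cJ)_\#$, the self-dual form $\theta$, the diagram \eqref{Eq.JJs.reduction}, and the uniqueness. Your verification of isotropy via $\xi\circ\xi\dual=0$ is correct (phrase it as vanishing of the composite of consecutive maps in \eqref{N.Eq.JJ} rather than via ``images'' and ``kernels''); note that Definition~\ref{Def.Gen.SymIso}(2) also requires the vanishing $\Hom_{\sX}(C\dual,C[-1])=0$ for $C=d_*(\O_{\sP}\boxtimes L|_D)[-1]$, which the paper checks as \eqref{N.Van} and which you should record, since it is what makes the auxiliary map $\gamma$ in the reduction unique.

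The genuine gap is in part~(2). ``Restricting the reduced triangle to the trace-free summands'' is not an operation available in a triangulated category: the triangle \eqref{Eq.JJs} has to be \emph{built}, and the paper does this through four successive applications of the octahedral axiom (\eqref{N.Oct.1}--\eqref{N.Oct.4}), producing a triangle $B\dual\xrightarrow{e}E\xrightarrow{\epsilon}B$ with $B=\bR\hom_d(\cJ,\cJ\otimes L)_0[-1]$. The crux — and the reason the proof is two pages rather than two lines — is that the map $e$ entering $E$ and the map $\epsilon$ leaving $E$ arise from \emph{different} octahedra, so the self-duality $e=\epsilon\dual$ asserted by \eqref{Eq.JJs} is not automatic; it is established by the explicit computations \eqref{N.eq1} and \eqref{N.eq2}, both of which reduce to the identity $\nu\circ\epsilon\dual=\mu\circ b\dual=\nu\circ e$ and a $\Hom$-vanishing that lets one cancel the ambiguity through $C$. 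The compatibility \eqref{Eq.JJs.compatibility} likewise follows from the dual of \eqref{N.eq1} together with $c\circ\mu\dual=0$, not from ``naturality of the trace'' alone. Your plan identifies the right target but omits the construction of $\epsilon$ and the duality check that constitute the actual content of this half of the lemma.
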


\begin{proof}
We first simplify the notations as follows\footnote{The notations $A,B,C,...,b,c,d,...,p,q,r,...$ introduced in the proof of Lemma \ref{Lem.JJs} will not be used in the rest of this paper.}: Let
\begin{align*}
A & := \bR\hom_{\sX}(d_*\cJ,d_*\cJ)\\
B &:=\bR\hom_{d}(\cJ,\cJ\otimes L)_0[-1], \quad C :=d_*(\O_{\sP}\boxtimes L|_D)[-1] 
\end{align*}
be the abbreviations of the perfect complexes on $\sX$. Note that
\[\Hom_{\sX}(C\dual,(B\oplus C)[-1]) = \Hom_{\sX}(d_*\O_{\sD},\bR\hom_{d}(\cJ,\cJ\otimes L)[-2])=0.\]
%% =& \Ext^{-2}_{\sD}(\cJ,\cJ\otimes L)\oplus \Ext^{-3}_{\sD}(\cJ,\cJ\otimes L^{\otimes 2})=0. 
%\[\Hom_{\sX}(C\dual,C[-1]) = \Hom_{\sD}(\bL d^*d_*\O_{\sD},L|_{\sD}[-2])=0\]
%% since $\bL d^* d_*\O_{\sD} = \O_{\sD} \oplus L|_{\sD}\dual[1]$. 
%since $\Hom_{\sD}(\O_{\sD},L|_{\sD}[-2]) = \Hom_{\sD}(L|_{\sD}\dual[1],L|_{\sD}[-2])=0$. 
Hence we have
\beq\label{N.Van}
\Hom_{\sX}(C\dual,B[-1]) = \Hom_{\sX}(C\dual, C[-1])=0.
\eeq
By Lemma \ref{Lem.Gen.Reduction}, we can form a perfect complex $\bR\hom_{\cX}(d_*\cJ,d_*\cJ)_\#$ and a self-dual isomorphism \eqref{Eq.JJs.sym} with the diagram \eqref{Eq.JJs.reduction}. The uniqueness also follows from Lemma \ref{Lem.Gen.Reduction}. Let
\[D  := \bR\hom_{\sX}(d_*\cJ,d_*\cJ)_L \and E := \bR\hom_{\sX}(d_*\cJ,d_*\cJ)_\#\]
be the abbreviations of the perfect complexes on $\sX$.

Now we will form the distinguished triangle \eqref{Eq.JJs}. By the octahedral axiom, we can form a commutative diagram of four distinguished triangles
\beq\label{N.Oct.1}
\xymatrix{
B\dual \oplus C\dual \ar@{=}[r] \ar@{.>}[d]^{(p,q)}& B\dual \oplus C\dual \ar[d]^{(b\dual,c\dual)} & \\
F \ar@{.>}[r]^{f} \ar@{.>}[d]^{c \circ f} & A \ar[r]^{b} \ar[d]^{(b,c)} & B \ar@{=}[d] & \\
C \ar[r]^-{(0,1)} & B \oplus C \ar[r]^-{(1,0)} & B
}\eeq
for some perfect complex $F$ and maps $p$, $q$, $f$, where the middle vertical distinguished triangle is \eqref{N.Eq.JJ} and $(b,c):=\xi$. 
%%The commutativity of the left bottom square implies that the map $F \to C$ is $c \circ f$.
Again by the octahedral axiom, we have a commutative diagram of four distinguished triangles
\beq\label{N.Oct.2}
\xymatrix{
C\dual \ar@{=}[r] \ar[d]^{(0,1)}& C\dual \ar[d]^{q} & \\
B\dual \oplus C\dual \ar[r]^-{(p,q)} \ar[d]^{(1,0)} & F \ar[r]^-{c \circ f} \ar@{.>}[d]^{g} & C \ar@{=}[d] & \\
B\dual \ar@{.>}[r]^-{g \circ p} & G \ar@{.>}[r]^-{r} & C
}\eeq
%%\beq\label{N.Oct.2} \xymatrix{ & C\dual \ar@{=}[r] \ar[d]^{(0,1)}& C\dual \ar[d]^{q} & \\ (T3) & B\dual \oplus C\dual \ar[r]^-{(p,q)} \ar@{.>}[d]^{(1,0)} & F \ar[r]^-{c \circ f} \ar[d]^{g} & C \ar@{=}[d] & \\ (T5) &B\dual \ar@{.>}[r]^-{g \circ p} & G \ar[r]^-{(r} & C\\ & (D1) & (T4) }\eeq
for some perfect complex $G$ and maps $g$, $r$, where the middle horizontal distinguished triangle is the left vertical distinguished triangle in \eqref{N.Oct.1}. Note that we have $f \circ q = c\dual$ by the left upper square in \eqref{N.Oct.1}. By the octahedral axiom, we have a commutative diagram of four distinguished triangles
\beq\label{N.Oct.3}
\xymatrix{
C\dual \ar@{=}[r] \ar[d]^{q}& C\dual \ar[d]^{c\dual} & \\
F \ar[r]^-{f} \ar[d]^{g} & A \ar[r]^-{b} \ar[d]^{\mu} & B \ar@{=}[d] & \\
G \ar@{.>}[r]^-{s} & D \ar@{.>}[r]^-{t} & B
}\eeq
for some map $s$ and $t$, where 
%the left vertical distinguished triangle is the middle vertical distinguished triangle in \eqref{N.Oct.2}, 
the middle vertical distinguished triangle is the dual of the top horizontal distinguished triangle in \eqref{Eq.JJs.reduction}.
%and the middle horizontal distinguished triangle is the middle horizontal distinguished triangle in \eqref{N.Oct.1}.

We claim that we can form a commutative diagram of four distinguished triangles
\beq\label{N.Oct.4}
\xymatrix{
B \dual \ar[r]^{g \circ p} \ar@{.>}[d]^{e} & G \ar[r]^{r} \ar[d]^{s} & C \ar@{=}[d] \\
E \ar[r]^{\nu} \ar@{.>}[d]^{\epsilon} & D \ar[r]^{d} \ar[d]^{t} & C \\
B \ar@{=}[r] & B
}\eeq
for some maps $\epsilon$ and $e$, where the map $d$ and the middle horizontal distinguished triangle in \eqref{N.Oct.4} are given by \eqref{Eq.JJs.reduction}.
%the top horizontal distinguished triangle is the bottom horizontal distinguished triangle in \eqref{N.Oct.2}, 
%and the middel vertical distinguished triangle is the bottom horizontal distinguished triangle in \eqref{N.Oct.3}. 
To deduce the claim from the octahedral axiom, we need to show that the right upper square in \eqref{N.Oct.4} commutes, $r=d \circ s.$ It suffices to show that
\[r \circ g = d\circ s \circ g : F \to G \rightrightarrows C\]
since the middle vertical sequence in \eqref{N.Oct.2} is a distinguished triangle, and $\Hom_{\sX}(C\dual[1],C)=0$ by \eqref{N.Van}. We have $r \circ g = c \circ f$ by \eqref{N.Oct.2} and $d\circ s \circ g = d\circ \mu \circ f =c \circ f$ by \eqref{N.Oct.3} and \eqref{Eq.JJs.reduction}. It proves the claim.

We claim that
\beq\label{N.eq3}e =\epsilon\dual : B\dual \rightrightarrows E.\eeq
By the distinguished triangle in \eqref{Eq.JJs.reduction}, it suffices to show that \[\nu \circ e  =\nu \circ \epsilon\dual : B\dual \rightrightarrows E \to D\]
since $\Hom_{\sX}(B\dual,C[-1])=0$ by \eqref{N.Van}. Consider the commutative diagram
\[\xymatrix{
D\dual \ar[r]^{\mu\dual}\ar[d]^{\nu\dual} & A \ar[rd]^{b} \ar[d]^{\mu} & \\
E \ar[r]^{\nu} \ar@/_0.4cm/[rr]_{\epsilon}& D \ar[r]^{t} & B
}\]
induced by the commutative diagrams \eqref{Eq.JJs.reduction}, \eqref{N.Oct.3}, and \eqref{N.Oct.4}. By taking dual, we obtain
\beq\label{N.eq1}\nu \circ \epsilon\dual = \mu \circ b\dual.\eeq
On the other hand, consider the commutative diagram
\[\xymatrix{
B\dual \oplus C\dual \ar[r]_-{(p,q)} \ar[d]^{(1,0)} \ar@/^0.4cm/[rr]^-{(b\dual,c\dual)} & F \ar[r]_{f} \ar[d]^{g} & A \ar[d]^{\mu}\\
B\dual \ar[r]^-{g \circ p} \ar@/_0.4cm/[rr]_{\nu \circ e}& G \ar[r]^{s} & D }\]
induced by the commutative diagrams \eqref{N.Oct.1}, \eqref{N.Oct.2}, \eqref{N.Oct.3}, and \eqref{N.Oct.4}. Hence we have
\beq\label{N.eq2}
\nu \circ e = \mu \circ b\dual.\eeq
The two equations \eqref{N.eq1} and \eqref{N.eq2} proves the claim \eqref{N.eq3}.

By the equation \eqref{N.eq3}, the distinguished triangle
\[\xymatrix{
B\dual \ar[r]^-{e=\epsilon\dual} & E \ar[r]^{\epsilon} & B
}\]
in \eqref{N.Oct.4} gives us the desired distinguished triangle \eqref{Eq.JJs}. The dual of the equation \eqref{N.eq1}
\[\epsilon \circ \nu\dual = b \circ \mu \dual \]
induces the desired commutative diagram \eqref{Eq.JJs.compatibility} since $c \circ \mu \dual =0$ and $\xi=(b,c)$. It completes the proof.
\end{proof}

Now Proposition \ref{Prop.ConstructionofSOT} follows directly from Lemma \ref{Lem.JJs}.

\begin{proof}[Proof of Proposition \ref{Prop.ConstructionofSOT}]
Let
\[\bR\hom_{\pi_{X}}(d_*\cJ,d_*\cJ)_\# := \bR(\pi_X)_*\bR\hom_{\sX}(d_*\cJ,d_*\cJ)_\#.\]
Then the self-dual isomorphism \eqref{Eq.JJs.sym} induces a self-dual isomorphism
\[(\bR\hom_{\pi_{X}}(d_*\cJ,d_*\cJ)_\#)\dual \cong \bR\hom_{\pi_{X}}(d_*\cJ,d_*\cJ)_\#[4]\]
by the naturality of the relative Serre duality map. Similarly, the distinguished triangle \eqref{Eq.JJs} in Lemma \ref{Lem.JJs}(2) gives us the desired distinguished triangle \eqref{N.Eq.JJs}.
\end{proof}

\subsection{Comparison of obstruction theories}\label{ss.ComparisonofOT}

%By Proposition \ref{Prop.ComparisonofModuli} in \S\ref{ss.ComparisonofModuli}, we have a Gysin pullback \beq\label{L31} j^! : A_* (P(X)) \to A_* (P(D)) \eeq such that $j_* \circ j^! = e(\bR\pi_*(\FF\otimes L)).$ By Proposition \ref{Prop.ConstructionofSOT} in \S\ref{ss.ConstructionofSOT}, we have a 3-term symmetric obstruction theory  \[\phi_D : \bR\hom_{\pi_X}(d_*\cJ,d_*\cJ)_\#[3] \to \LL_{P(D)}\] on $P(D)$ such that  \[[P(D)]\virt_{OT} = \sum_e (-1)^{\sigma(e)}[P(D)^e]\virt_{BF}.\]  Thus it suffices to show \beq\label{L7} j^![P(X)]\virt_{OT} = [P(D)]\virt_{OT} \eeq to prove the Lefschetz principle in Theorem \ref{Thm.Lefschetz}. We would like to deduce \eqref{L7} from the virtual pullback formula in Theorem \ref{Thm.Functoriality}. To do this, we will compare the obstruction theories of $P(X)$ and $P(D)$.

By Proposition \ref{Prop.ComparisonofModuli} and Proposition \ref{ss.ConstructionofSOT}, it suffices to show that \beq\label{L7} j^![P(X)]\virt_{OT} = [P(D)]\virt_{OT} \eeq holds to prove the Lefschetz principle in Theorem \ref{Thm.Lefschetz}. We would like to deduce \eqref{L7} from the virtual pullback formula in Theorem \ref{Thm.Functoriality}. To do this, we will compare the obstruction theories of $P(X)$ and $P(D)$.

\medskip

Given $X,L,D,P(X)$ as in Theorem \ref{Thm.Lefschetz}, let
\[\xymatrix{ P(D)\times D \ar[r]^d \ar[rd]_{\pi_{D}} & P(D) \times X  \ar[d]^{\pi_{X}} \ar[r] & P(X)\times X \ar[d]^{\pi}  \\ &P(D) \ar[r]^j & P(X) }\]
be the canonical commutative diagram. Recall that the Oh-Thomas virtual cycle $[P(X)]\virt_{OT}$ is constructed from the 3-term symmetric obstruction theory
\beq\label{L2} \phi_X:\bR\hom_{\pi}(\II,\II)_0[3] \xrightarrow{At(\II)} \LL_{P(X)} \eeq 
induced by the Atiyah class $At(\II) : \II \to \II \otimes \LL_{P(X) \times X}[1]$ of the universal complex $\II=[\O_{P(X)\times X} \to \FF]$. Let
\[\cI=\II|_{P(D)\times X} = [\O_{P(D)\times X} \to d_*\cF]\]
be the restriction, where $\cJ = [\O_{P(D)\times D} \to \cF]$ is the universal complex of $P(D)$.

\begin{proposition}\label{Prop.L.Comp.OT}
The two symmetric obstruction theory $\phi_X$ in \eqref{L2} and $\phi_D$ in \eqref{Eq.P(D)SOT} are compatible as follows: 
\begin{enumerate}
\item There is a {\em reduction diagram} (see Proposition \ref{Prop.Reduction}), i.e. a morphism of distinguished triangles 
\beq\label{Eq.JJstoII0}\xymatrix@C-1.2pc{
\bR\hom_{\pi_{X}}(d_*\cJ,\cI)_\#[3] \ar[r]^-{\alpha\dual} \ar[d]^{\beta\dual} & \bR\hom_{\pi_{X}}(d_*\cJ, d_*\cJ)_\#[3] \ar[r]^-{\delta} \ar[d]^{\alpha} & \bR\hom_{\pi_{X}}(d_*\cF,L\dual)[5] \ar@{=}[d]\\
\bR\hom_{\pi_{X}}(\cI,\cI)_0[3] \ar[r]^-{\beta} & \bR\hom_{\pi_{X}}(\cI,d_*\cJ)_\#[3] \ar[r]^-{\gamma} & \bR\hom_{\pi_{X}}(d_*\cF,L\dual)[5]
}\eeq
for some maps $\alpha,\beta,\gamma,\delta$ and perfect complexes $\bR\hom_{\pi_{X}}(\cI,d_*\cJ)_\#$ and $\bR\hom_{\pi_{X}}(d_*\cJ,\cI)_\#:=(\bR\hom_{\pi_{X}}(\cI,d_*\cJ)_\#)\dual[-4]$ 
on $P(D)$.
\item The Atiyah classes of $\II$ and $\cJ$ are compatible, i.e., the diagram
\beq\label{Eq.L.Atiyah}\xymatrix{
\bR\hom_{\pi_{X}}(d_*\cJ,\cI)_\#[3] \ar[d]^{\beta\dual}\ar[r]^-{\alpha\dual} & \bR\hom_{\pi_{X}}(d_*\cJ,d_*\cJ)_\#[3] \ar[d]^{\epsilon} \\
\bR\hom_{\pi_{X}}(\cI,\cI)_0[3] \ar[d]^{At(\II)} \ar[rd]|-{At(\cI)} & \bR\hom_{\pi_{D}}(\cJ,\cJ\otimes L)_0[2] \ar[d]^{At(\cJ)}\\
\LL_{P(X)}|_{P(D)} \ar[r] & \LL_{P(D)}
}\eeq
commutes.
\end{enumerate}
\end{proposition}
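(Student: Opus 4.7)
The plan is to derive the reduction diagram \eqref{Eq.JJstoII0} from the adjunction morphism $\cI\to d_*\cJ$, and then to deduce the Atiyah class compatibility \eqref{Eq.L.Atiyah} from the naturality of the Atiyah class. The starting point is the distinguished triangle on $P(D)\times X$
\[L\dual\to\cI\to d_*\cJ\to L\dual[1],\]
which I would obtain by tensoring the fundamental short exact sequence $0\to\sO_X(-D)\to\sO_X\to d_*\sO_D\to 0$ with the identity on the degree-zero components of $\cI=[\sO_{P(D)\times X}\to d_*\cF]$ and $d_*\cJ=[d_*\sO_{P(D)\times D}\to d_*\cF]$. The middle map here is precisely the one induced by the adjunction unit $\sO_X\to d_*\sO_D$, whose cone is $L\dual[1]$.

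Next, I would apply $\bR\hom_{\pi_X}(-,d_*\cJ)$ and $\bR\hom_{\pi_X}(\cI,-)$ to this triangle to produce two distinguished triangles meeting at the hybrid complex $\bR\hom_{\pi_X}(\cI, d_*\cJ)$. After removing the trace-like contributions (arising from the identification $\bR\hom(d_*\sO_D,L\dual)\cong d_*\sO_D[-1]$ via Grothendieck duality $d^!L\dual\cong\sO_D[-1]$, since $N_{D/X}\cong L|_D$), both cofibers will simplify canonically to $\bR\hom_{\pi_X}(d_*\cF,L\dual)$ with the appropriate shift. I would then construct the hybrid reduction $\bR\hom_{\pi_X}(\cI, d_*\cJ)_\#$ in parallel to the construction of $\bR\hom_{\pi_X}(d_*\cJ, d_*\cJ)_\#$ in Lemma \ref{Lem.JJs}, by quotienting out the trace-like summand along $d_*\cJ\to d_*(\sO_{P(D)}\boxtimes L|_D)[-1]$; existence and uniqueness are supplied by the general reduction Lemma \ref{Lem.Gen.Reduction} in Appendix \ref{A.Reduction}.

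With the hybrid complex in hand, the vertical morphism $\alpha$ in \eqref{Eq.JJstoII0} is induced by pullback in the first argument along $\cI\to d_*\cJ$, while $\beta$ comes from post-composition with the same map. The uniqueness clause in Lemma \ref{Lem.Gen.Reduction} forces commutativity of every square and identifies the third column with $\bR\hom_{\pi_X}(d_*\cF,L\dual)[5]$; the top row of \eqref{Eq.JJstoII0} is then obtained as the Serre dual of the bottom row, identifying $\bR\hom_{\pi_X}(d_*\cJ,\cI)_\#$ with the dual of $\bR\hom_{\pi_X}(\cI,d_*\cJ)_\#$. For part (2), the Atiyah class compatibility follows from the naturality of the Atiyah class with respect to the morphism $\cI\to d_*\cJ$ and the canonical map $d^*\LL_{P(D)\times X}\to\LL_{P(D)\times D}$, pushed forward by $\bR(\pi_X)_*$ and passed through the $_0$ and $_\#$ reductions; the compatibility of $At(\II)|_{P(D)\times X}$ with $At(\cI)$ is the standard base-change property.

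The hardest step will be the bookkeeping required to show that the hybrid reduction $\bR\hom_{\pi_X}(\cI, d_*\cJ)_\#$ simultaneously fits both horizontal triangles in \eqref{Eq.JJstoII0}, and that its Serre dual recovers $\bR\hom_{\pi_X}(d_*\cJ,\cI)_\#$ compatibly. This will require repeated use of the octahedral axiom combined with the uniqueness clause in Lemma \ref{Lem.Gen.Reduction}, mirroring the corresponding construction in Lemma \ref{Lem.JJs} but in a non-symmetric setting.
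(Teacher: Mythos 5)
Your overall route is essentially the paper's: you start from the triangle $\cL\dual\to\cI\to d_*\cJ\to\cL\dual[1]$ obtained from $0\to L\dual\to\O_X\to d_*\O_D\to 0$, build a hybrid complex $\bR\hom_{\pi_X}(\cI,d_*\cJ)_\#$ as the middle column of the reduction, identify the common cofiber with $\bR\hom_{\pi_X}(d_*\cF,L\dual)$ using the triangle $\cJ\to\O_{\sD}\to\cF$ and the duality $\bR\hom(d_*\O_{\sD},\cL\dual)\cong d_*\O_{\sD}[-1]$, and prove (2) from naturality of Atiyah classes together with the functoriality of tangent maps for $P(D)\to P(X)\to\mathbf{Perf}(X)$ and $P(D)\to\mathbf{Perf}(D)\to\mathbf{Perf}(X)$. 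Part (2) of your sketch is acceptable in outline.

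There is, however, a genuine gap in part (1): you claim that ``the uniqueness clause in Lemma \ref{Lem.Gen.Reduction} forces commutativity of every square.'' It does not. Uniqueness says the reduction of $\bR\hom_{\pi_X}(d_*\cJ,d_*\cJ)_\#[3]$ by its isotropic subcomplex is unique \emph{as a symmetric complex}; it does not say that the independently given complex $\bR\hom_{\pi_X}(\cI,\cI)_0[3]$, equipped with the specific maps induced by pre- and post-composition with $v:\cI\to d_*\cJ$, fits into a \emph{commutative} reduction diagram. That identification is the entire content of the proposition and it fails at the naive level: before passing to $\#$, the corresponding square does not commute, because projecting to the trace-free part uses $\iota\circ\rho=1-\id_{\II}\circ tr$, so $l_v\circ\iota\circ\rho\circ r_v$ and $r_v\circ l_v$ differ by a correction term factoring through $\O_{\sX}$. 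The paper kills this correction only after reduction, by checking that $\O_{\sX}\xrightarrow{\id_{\II}}\bR\hom(\cI,\cI)\to\bR\hom(\cI,d_*\cJ)_\#$ vanishes (a consequence of the defining triangle of the hybrid complex), and it establishes the right-hand square by the explicit identity $\gamma\circ\alpha\circ\nu\dual=\delta\circ\nu\dual$ combined with the vanishing of $\Hom(d_*\O_{\sD}[1],\bR\hom(d_*\cF,\cL\dual)[2])$. Even invoking Lemma \ref{Lem.R} presupposes that the first two squares commute, which is exactly what must be proved. So the diagram chases you defer as ``bookkeeping'' are not routine consequences of uniqueness; they are the proof, and your proposed mechanism for them would not close the argument.
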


In the rest of this subsection, we will prove Proposition \ref{Prop.L.Comp.OT} through several steps. We first fix the notations. For simplicity, we will use the following abbreviations
\[\sD= P(D) \times D, \quad \sX=P(D) \times X, \and \sP=P(D) \]
of schemes as in \S\ref{ss.ConstructionofSOT}. Let
\beq\label{Eq.JOF} \xymatrix{\cJ \ar[r]^-{i} & \O_{\sD} \ar[r]^-{s} & \cF \ar[r]^-{e} & \cJ[1]} \eeq 
\beq\label{Eq.IOF} \xymatrix{\cI \ar[r] & \O_{\sX} \ar[r]^{} & d_*\cF \ar[r]^-{f} & \cI[1]} \eeq
be the two canonical distinguished triangles on $\sD$ and $\sX$, respectively. By applying the octahedral axiom to the two distinguished triangles \eqref{Eq.JOF} and \eqref{Eq.IOF}, we obtain a third distinguished triangle
\beq\label{Eq.LdIJ} \xymatrix{\cL\dual \ar[r]^-{u}& \cI \ar[r]^-{v} & d_*\cJ \ar[r]^-{w}& \cL\dual [1]} \eeq 
on $\sX$ such that $d_*e=v \circ f$. Here we denoted $\cL=\O_{P(D)}\boxtimes L$.

\subsubsection{Prototype}

We start with a {\em prototype} of the reduction diagram \eqref{Eq.JJstoII0} in Proposition \ref{Prop.L.Comp.OT} where we use
\[\bR\hom_{\sX}(d_*\cJ,d_*\cJ) \and \bR\hom_{\sX}(\cI,d_*\cJ)\]
instead of 
$\bR\hom_{\sX}(d_*\cJ,d_*\cJ)_\#$ and $\bR\hom_{\sX}(\cI,d_*\cJ)_\#.$

\begin{lemma}\label{Lem.L.Prototype}
There is a (not necessarily commutative) diagram of two distinguished triangles
\beq\label{Eq.JJtoII0}
\xymatrix{
\bR\hom_{\sX}(d_*\cJ,\cI) \ar[r]^-{l_v} \ar[d]^{\rho \circ r_v} & \bR\hom_{\sX}(d_*\cJ,d_*\cJ) \ar[r]^-{l_w} \ar[d]^{r_v} & \bR\hom_{\sX}(d_*\cJ,\cL\dual)[1] \ar@{=}[d]\\
\bR\hom_{\sX}(\cI,\cI)_0 \ar[r]^-{l_v\circ \iota} & \bR\hom_{\sX}(\cI,d_*\cJ) \ar[r]^-{\eta} & \bR\hom_{\sX}(d_*\cJ,\cL\dual)[1]
}\eeq
for some $\eta$ such that $r_v \circ \eta = l_w$. Here $l_v$ denotes the left composition with $v$ and $r_v$ denotes the right composition with $v$. Also $\iota$ and $\rho$ are the canonical maps in the direct sum diagram 
\beq\label{Eq.D.II}
\xymatrix{\bR\hom_{\sX}(\cI,\cI)_0 \ar@<.5ex>[r]^-{\iota} & \bR\hom_{\sX}(\cI,\cI) \ar@<.5ex>[r]^-{tr} \ar@<.5ex>[l]^-{\rho} & \O_{\sX} \ar@<.5ex>[l]^-{1}} 
\eeq 
where $tr$ denotes the trace map.
\end{lemma}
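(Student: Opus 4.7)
The top row of \eqref{Eq.JJtoII0} is distinguished because it is the image of the triangle \eqref{Eq.LdIJ} under the covariant functor $\bR\hom_{\sX}(d_*\cJ, -)$, so only the construction of $\eta$ and verification of the bottom row require real work. My plan is to realize $\bR\hom_{\sX}(d_*\cJ,\cL\dual)[1]$ as the cone of $l_v\circ\iota$ via an octahedral comparison, with the decisive input being cyclicity of the trace.

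The first step is to apply the octahedral axiom to the composition $\bR\hom_{\sX}(\cI,\cI)_0 \xrightarrow{\iota} \bR\hom_{\sX}(\cI,\cI) \xrightarrow{l_v} \bR\hom_{\sX}(\cI,d_*\cJ)$. By the splitting \eqref{Eq.D.II}, the cone of $\iota$ is $\O_{\sX}$; by applying $\bR\hom_{\sX}(\cI,-)$ to \eqref{Eq.LdIJ}, the cone of $l_v$ is $\bR\hom_{\sX}(\cI,\cL\dual)[1]$. The octahedral axiom therefore produces a distinguished triangle
\[
\O_{\sX} \to \cone(l_v\circ\iota) \to \bR\hom_{\sX}(\cI,\cL\dual)[1] \to \O_{\sX}[1]
\]
whose connecting morphism is the composite $\bR\hom_{\sX}(\cI,\cL\dual)[1] \xrightarrow{l_u[1]} \bR\hom_{\sX}(\cI,\cI)[1] \xrightarrow{\tr[1]} \O_{\sX}[1]$. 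Since $\cL\dual$ is a line bundle, the identity $\tr(u\circ f) = \tr(f\circ u) = f\circ u$ for $f:\cI\to\cL\dual$ gives $\tr\circ l_u = r_u$, so this connecting morphism is $r_u[1]$.

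The second step is to apply $\bR\hom_{\sX}(-,\cL\dual)$ to \eqref{Eq.LdIJ} and rotate, which yields a distinguished triangle $\O_{\sX} \xrightarrow{r_w} \bR\hom_{\sX}(d_*\cJ,\cL\dual)[1] \xrightarrow{r_v} \bR\hom_{\sX}(\cI,\cL\dual)[1] \xrightarrow{r_u[1]} \O_{\sX}[1]$ with the same connecting morphism $r_u[1]$. Pairing the identity morphisms on the first and third terms gives a partial morphism of distinguished triangles, and TR3 completes it to an isomorphism $\cone(l_v\circ\iota) \xrightarrow{\sim} \bR\hom_{\sX}(d_*\cJ,\cL\dual)[1]$. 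Composing with the canonical projection $\bR\hom_{\sX}(\cI,d_*\cJ) \to \cone(l_v\circ\iota)$ then defines $\eta$ and realises the bottom row of \eqref{Eq.JJtoII0} as a distinguished triangle.

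The compatibility $r_v\circ\eta = l_w$ will follow by naturality: the octahedral axiom identifies the composite $\bR\hom_{\sX}(\cI,d_*\cJ) \to \cone(l_v\circ\iota) \to \bR\hom_{\sX}(\cI,\cL\dual)[1]$ with $l_w$, while under the TR3 isomorphism the second map transports to $r_v$. The one delicate point in the whole argument is the trace-cyclicity identification $\tr\circ l_u = r_u$; this also explains why only the right square is claimed to commute, since the projector $\iota\circ\rho$ differs from the identity precisely by the trace summand which obstructs commutativity of the left square.
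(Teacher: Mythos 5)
Your argument is correct and rests on the same two ingredients as the paper's proof: the octahedral axiom and the trace identity $tr\circ l_u=r_u$, which is precisely the commutative square the paper feeds into the octahedron. The only organizational difference is that the paper applies the octahedron once to the factorization $r_u=tr\circ l_u$, whose three cones are $\bR\hom_{\sX}(\cI,d_*\cJ)$, $\bR\hom_{\sX}(\cI,\cI)_0[1]$ and $\bR\hom_{\sX}(d_*\cJ,\cL\dual)[1]$, and reads off the bottom triangle together with the relation $r_v\circ\eta=l_w$ in a single step; you instead apply it to the composite $l_v\circ\iota$ and then need an extra TR3 comparison of connecting morphisms (both equal to $r_u[1]$) to identify the cone. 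This is slightly longer but equivalent, and the non-uniqueness of the TR3 fill-in is harmless since the lemma only asserts existence of some $\eta$.

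One caution about your closing sentence: the lemma does not claim that the right square of \eqref{Eq.JJtoII0} commutes. Commutativity of that square would read $\eta\circ r_v=l_w$ as maps out of $\bR\hom_{\sX}(d_*\cJ,d_*\cJ)$, whereas the asserted relation $r_v\circ\eta=l_w$ compares maps out of $\bR\hom_{\sX}(\cI,d_*\cJ)$ into $\bR\hom_{\sX}(\cI,\cL\dual)[1]$; the paper later emphasizes that $\eta\circ r_v$ and $l_w$ need \emph{not} agree, only their further compositions with $r_v$ do. Your actual derivation of $r_v\circ\eta=l_w$ is correct; only the interpretive remark is off.
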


\begin{proof} 
Applying $\bR\hom_{\sX}(d_*\cJ,-)$ to \eqref{Eq.LdIJ}, we can form the upper distinguished triangle in \eqref{Eq.JJtoII0}. The lower distinguished triangle can be obtained by applying the octahedral axiom to the commutative diagram \[\xymatrix{ \bR\hom_{\sX}(\cI,\cL\dual) \ar[r]^-{l_u} \ar[d]^{r_u} & \bR\hom_{\sX}(\cI,\cI) \ar[d]^{tr} \\ \bR\hom_{\sX}(\cL\dual,\cL\dual) \ar@{=}[r] & \O_{\sX} }\] 
where the three distinguished triangles are the obvious ones given by \eqref{Eq.LdIJ} and \eqref{Eq.D.II}. The formula $r_v \circ \eta = l_w$ follows from the octahedral axiom.
\end{proof}

\subsubsection{} 

We will replace the complexes in \eqref{Eq.JJtoII0} as \begin{align*} \bR\hom_{\sX}(d_*\cJ,d_*\cJ) & \leadsto \bR\hom_{\sX}(d_*\cJ,d_*\cJ)_\# \\ \bR\hom_{\sX}(\cI,d_*\cJ) & \leadsto \bR\hom_{\sX}(\cI,d_*\cJ)_\# \\ \bR\hom_{\sX}(d_*\cJ,\cL\dual)[1] & \leadsto \bR\hom_{\sX}(d_*\cF,\cL\dual)[2] \end{align*} for some perfect complex $\bR\hom_{\sX}(\cI,d_*\cJ)_\#$. Then the (non-commutative) diagram \eqref{Eq.JJtoII0} will become commutative and induce the desired commutative diagram \eqref{Eq.JJstoII0}. 
%by applying $\bR(\pi_X)_*$.

Define the perfect complex $\bR\hom_{\sX}(\cI,d_*\cJ)_\#$ as the cone of the composition $r_v\circ \xi\dual \circ \id_{\cJ}$ (see \eqref{N.Eq.JJ}). Then it fits into a distinguished triangle
%\[d_*\O_{\sD} \mapright{1} \bR\hom_d(\cJ,\cJ) \mapright{\xi\dual} \bR\hom_{\sX}(d_*\cJ,d_*\cJ) \mapright{r_v} \bR\hom_{\sX}(\cI,d_*\cJ).\]
\beq\label{N.Eq.IJs}
\xymatrix@+1pc{d_*\O_{\sD} \ar[r]^-{r_v\circ \xi\dual \circ \id_{\cJ}} & \bR\hom_{\sX}(\cI,d_*\cJ) \ar[r]^-{\lambda} & \bR\hom_{\sX}(\cI,d_*\cJ)_\# \ar[r] & d_*\O_{\sD}[1]}
\eeq
for some map $\lambda$. Let $\bR\hom_{\sX}(d_*\cJ,\cI)_\# := (\bR\hom_{\sX}(\cI,d_*\cJ)_\#)\dual$ be the dual.

By the octahedral axiom, we obtain a commutative diagram of four distinguished triangles
\beq\label{Eq.IJs}
\xymatrix{
 & \bR\hom_{\sX}(\cL\dual, d_*\cJ)[-1] \ar@{=}[r] \ar[d]^{r_w} & \bR\hom_{\sX}(\cL\dual,d_*\cJ)[-1] \ar@{.>}[d]^{\mu\circ r_w} \\
d_*\O_{\sD} \ar[r]^-{\xi\dual \circ \id_{\cJ}} \ar@{=}[d] & \bR\hom_{\sX}(d_*\cJ,d_*\cJ) \ar[r]^-{\mu} \ar[d]^{r_v} & \bR\hom_{\sX}(d_*\cJ,d_*\cJ)_L \ar@{.>}[d]^{\zeta}\\
d_*\O_{\sD} \ar[r]^-{r_v \circ \xi\dual \circ \id_{\cJ}} & \bR\hom_{\sX}(\cI,d_*\cJ) \ar[r]^-{\lambda} & \bR\hom_{\sX}(\cI,d_*\cJ)_\#
}\eeq
for some map $\zeta$, where the middle vertical distinguished triangle is induced by \eqref{Eq.LdIJ}, the middle horizontal distinguished triangle is the dual of the top horizontal distinguished triangle in \eqref{Eq.JJs.reduction}, and the bottom horizontal distinguished triangle is \eqref{N.Eq.IJs}.

%\begin{lemma}\label{Lem.IJs} There exists a perfect complex $\bR\hom_{\sX}(\cI,d_*\cJ)_\#$ which fits into the commutative diagram of four distinguished triangles \beq\label{Eq.IJs} \xymatrix{  & \bR\hom_{\sX}(\cL\dual, \cJ)[-1] \ar@{=}[r] \ar[d] & \bR\hom_{\sX}(\cL\dual,\cJ)[-1] \ar[d] \\ \O_{\sD} \ar[r]^>>>>>>{\xi\dual \circ 1} \ar@{=}[d] & \bR\hom_{\sX}(\cJ,\cJ) \ar[r]^{\mu} \ar[d]^{r_v} & \bR\hom_{\sX}(d_*\cJ,d_*\cJ)_\star \ar@{.>}[d]^{\zeta}\\ \O_{\sD} \ar[r] & \bR\hom_{\sX}(\cI,d_*\cJ) \ar@{.>}[r]^{\lambda} & \bR\hom_{\sX}(\cI,d_*\cJ)_\# }\eeq for some maps $\zeta$ and $\lambda$, where the middle vertical distinguished triangle in \eqref{Eq.IJs} is induced by the distinguished triangle \eqref{Eq.LdIJ} and the middle horizontal distinguished triangle in \eqref{Eq.IJs} is the dual of the top horizontal distinguished triangle in \eqref{Eq.JJs.reduction}. \end{lemma} \begin{proof} The diagram \eqref{Eq.IJs} follows directly from the octahedral axiom. \end{proof}

% By dualizing the diagram \eqref{Eq.IJs}, we obtain a commutative diagram \beq\label{Eq.JIs} \xymatrix{ \bR\hom_{\sX}(d_*\cJ,\cI)_\# \ar[r]^{\zeta\dual} \ar[d]^{\lambda\dual} & \bR\hom_{\sX}(d_*\cJ,d_*\cJ)^{\star} \ar[r] \ar[d]^{\mu\dual} & \bR\hom_{\sX}(d_*\cJ,\cL\dual)[1] \ar@{=}[d]\\ \bR\hom_{\sX}(d_*\cJ,\cI) \ar[r]^{l_v} & \bR\hom_{\sX}(d_*\cJ,d_*\cJ) \ar[r]^{l_w} & \bR\hom_{\sX}(d_*\cJ,\cL\dual)[1] }\eeq of two distinguished triangles.

\subsubsection{Maps $\alpha$, $\beta$, $\gamma$, $\delta$}

Define the maps $\alpha$ and $\beta$ as the compositions:
\begin{align*}
\alpha &: \bR\hom_{\sX}(d_*\cJ,d_*\cJ)_\# \mapright{\nu} \bR\hom_{\sX}(d_*\cJ,d_*\cJ)_{L} \mapright{\zeta} \bR\hom_{\sX}(\cI,d_*\cJ)_\# \,,%\label{Eq.alpha}
\\
\beta &: \bR\hom_{\sX}(\cI,\cI)_0 \mapright{l_v\circ \iota} \bR\hom_{\sX}(\cI,d_*\cJ) \mapright{\lambda} \bR\hom_{\sX}(\cI,d_*\cJ)_\# \,. %\label{Eq.beta}    
\end{align*}

We define the map $\gamma$ as follows:

\begin{lemma}\label{Lem.gamma}
There exists a commutative diagram of four distinguished triangles
\beq\label{Eq.gamma}
\xymatrix{
& d_*\O_{\sD} \ar@{=}[r] \ar[d]^{r_v\circ \xi\dual \circ \id_{\cJ}} & \bR\hom_{\sX}(d_*\O_{\sD}, \cL\dual)[1] \ar[d]^{r_i} \\
\bR\hom_{\sX}(\cI,\cI)_0 \ar[r]^-{l_v\circ \iota} \ar@{=}[d] & \bR\hom_{\sX}(\cI,d_*\cJ) \ar[r]^-{\eta} \ar[d]^{\lambda} & \bR\hom_{\sX} (d_*\cJ,\cL\dual)[1] \ar[d]^{r_e} \\
\bR\hom_{\sX}(\cI,\cI)_0 \ar@{.>}[r]^-{\beta}  & \bR\hom_{\sX}(\cI,d_*\cJ)_\# \ar@{.>}[r]^-{\gamma} & \bR\hom_{\sX} (d_*\cF,\cL\dual)[2] 
}\eeq
for some map $\gamma$, where the middle vertical distinguished triangle is  \eqref{N.Eq.IJs}, the right vertical distinguished triangle is induced by \eqref{Eq.JOF}, and the middle horizontal distinguished triangle is the bottom horizontal distinguished triangle in \eqref{Eq.JJtoII0}.
\end{lemma}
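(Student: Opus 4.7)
The plan is to apply Verdier's $3\times 3$-lemma (equivalently, the octahedral axiom for a composition) in order to fill in the bottom row and the map $\gamma$ of \eqref{Eq.gamma}. The middle row is the lower distinguished triangle of Lemma~\ref{Lem.L.Prototype}, the middle column is the defining triangle \eqref{N.Eq.IJs} of $\bR\hom_\sX(\cI,d_*\cJ)_\#$, and the right column is obtained from \eqref{Eq.JOF} by pushing forward along $d_*$ and applying $\bR\hom_\sX(-,\cL\dual)[1]$. Once the top-right square of \eqref{Eq.gamma} is shown to commute, the $3\times 3$-lemma produces the bottom row as a distinguished triangle together with the desired map $\gamma$; the top row is degenerate (an isomorphism under the duality identification), so the third column of the $3\times 3$-diagram collapses to an isomorphism $\cone(\eta)[-1]\cong\bR\hom_\sX(\cI,\cI)_0$, which matches the left column of \eqref{Eq.gamma}.

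The crucial step is to identify $\eta\circ(r_v\circ\xi\dual\circ\id_\cJ)$ with $r_i$ under the derived Grothendieck--Serre duality isomorphism $\bR\hom_\sX(d_*\O_\sD,\cL\dual)[1]\cong d_*\O_\sD$ (coming from $d^!\cL\dual\cong\O_\sD[-1]$). Unwinding definitions, $r_v\circ\xi\dual\circ\id_\cJ$ corresponds in $\Hom_\sX(\cI,d_*\cJ)$ to the morphism $v$ of \eqref{Eq.LdIJ}, and applying the connecting morphism $\eta$ of the lower row of \eqref{Eq.JJtoII0} recovers the boundary $w:d_*\cJ\to\cL\dual[1]$ of \eqref{Eq.LdIJ}. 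On the other hand, under the duality identification the tautological generator of $d_*\O_\sD$ becomes the Koszul boundary $\sigma:d_*\O_\sD\to\cL\dual[1]$ of $\cL\dual\to\O_\sX\to d_*\O_\sD$, so $r_i$ becomes $\sigma\circ d_*i$. The identity $w=\sigma\circ d_*i$ will then follow from the morphism of distinguished triangles from \eqref{Eq.LdIJ} to the Koszul triangle --- with components $\id$ on $\cL\dual$, the first map of \eqref{Eq.IOF}, the pushforward $d_*i$ coming from \eqref{Eq.JOF}, and $\id$ on $\cL\dual[1]$ --- which exists because \eqref{Eq.LdIJ} was constructed precisely as the third column of a $3\times 3$-diagram relating \eqref{Eq.IOF} to the $d_*$-pushforward of \eqref{Eq.JOF}.

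Once the top-right square is known to commute, the $3\times 3$-lemma produces $\gamma$ and the bottom-row distinguished triangle. The connecting map produced at the bottom-left position coincides with the pre-defined $\beta=\lambda\circ(l_v\circ\iota)$, because both are characterized by the same commutative square in the $3\times 3$-diagram relating $\bR\hom_\sX(\cI,\cI)_0$ to $\bR\hom_\sX(\cI,d_*\cJ)_\#$ through $\bR\hom_\sX(\cI,d_*\cJ)$. The main technical obstacle will be the duality identification $\eta\circ(r_v\circ\xi\dual\circ\id_\cJ)=r_i$; once this bookkeeping is fixed, the rest of the argument is formal.
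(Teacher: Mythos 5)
Your overall skeleton matches the paper's: everything reduces to the commutativity of the top-right square of \eqref{Eq.gamma}, after which the octahedral axiom produces $\gamma$ and the bottom distinguished triangle. Your identification of $r_i$ under the duality $\bR\hom_{\sX}(d_*\O_{\sD},\cL\dual)[1]\cong d_*\O_{\sD}$, and the identity $w=\sigma\circ d_*i$ extracted from the octahedron defining \eqref{Eq.LdIJ}, are also sound; this is essentially the content of the upper square of the paper's auxiliary diagram \eqref{N11}.

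The gap is the step ``applying the connecting morphism $\eta$ \dots recovers the boundary $w$.'' The map $\eta$ is not explicitly computable: it is produced by the octahedral axiom in Lemma \ref{Lem.L.Prototype}, and the only identity it satisfies is $r_v\circ\eta=l_w$, which is an identity between maps into $\bR\hom_{\sX}(\cI,\cL\dual)[1]$ --- i.e.\ it only holds \emph{after} further precomposing the target with $v$. The paper is explicit that the left square of \eqref{Eq.JJtoII0} need not commute, and later notes that ``the two maps $\eta\circ r_v$ and $l_w$ are not necessarily equal.'' So one cannot evaluate $\eta$ on the map corresponding to $v$ by unwinding definitions. What one can say is that the two maps $d_*\O_{\sD}\rightrightarrows\bR\hom_{\sX}(d_*\cJ,\cL\dual)[1]$ in the top-right square agree after composing with $r_v:\bR\hom_{\sX}(d_*\cJ,\cL\dual)[1]\to\bR\hom_{\sX}(\cI,\cL\dual)[1]$; their difference therefore factors through the fibre of this $r_v$, which is $\bR\hom_{\sX}(\cL\dual[1],\cL\dual[1])=\O_{\sX}$, and the proof must invoke the vanishing $\Hom_{\sX}(d_*\O_{\sD},\O_{\sX})=0$ to kill it. Without this vanishing your square could a priori fail to commute by a multiple of the Koszul boundary. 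A secondary point: the commutativity needed is of morphisms out of $d_*\O_{\sD}$, not merely of their restrictions along the unit $\O_{\sX}\to d_*\O_{\sD}$, so the element-level identifications of $v$ and $w$ must be upgraded to statements about the full maps via the adjunction $d_*\dashv d^*\otimes\cL[-1]$, exactly as in \eqref{N11}.
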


\begin{proof}
We will use the octahedral axiom to obtain the diagram \eqref{Eq.gamma}. Thus it suffices to show that the upper right square in \eqref{Eq.gamma} commutes. 

We claim that the squares in the diagram
\beq\label{N11}\xymatrix{
d_*\O_{\sD} \ar@{=}[r] \ar[d]^{ \xi\dual \circ \id_{\cJ}} & \bR\hom_{\sX}(d_*\O_{\sD} , \cL\dual[1]) \ar[d]^{r_i} \\
\bR\hom_{\sX}(d_*\cJ,d_*\cJ) \ar[r]^{l_w} \ar[d]^{r_v}  & \bR\hom_{\sX}(d_*\cJ,L\dual[1]) \ar[d]^{r_v} \\
\bR\hom_{\sX}(\cI, d_*\cJ) \ar[r]^{l_w} & \bR\hom_{\sX} ( \cI, \cL\dual[1]) 
}\eeq
commute. Indeed, under the adjunction $d_* \dashv d^* \otimes \cL[-1]$, we have a correspondence
\[\left(\cJ \mapright{i} \O_{\sD}\right) \longleftrightarrow \left(d_*\cJ \mapright{w} \cL\dual[1]\right)\]
between the two canonical maps in \eqref{Eq.JOF} and \eqref{Eq.LdIJ}. Hence we have a commutative diagram
\[\xymatrix{
\O_{\sD} \ar@{=}[r] \ar[d] & \bR\hom_{\sD} ( \O_{\sD}, \O_{\sD}) \ar[d]^{r_i} \\
\bR\hom_{\sD} (\cJ, d^*d_*\cJ \otimes \cL [-1]) \ar[r]^-{l_{d^*w}} & \bR\hom_{\sD}(\cJ,\O_{\sD})
}\]
on $\sD$, which proves the commutativity of the upper square in \eqref{N11}. The commutativity of the lower square in \eqref{N11} is just the naturality of the composition functors.

If we compose the two maps 
\beq\label{N12}
d_*\O_{\sD} \rightrightarrows \bR\hom_{\sX}(d_*\cJ,\cL\dual[1])
\eeq
in the upper right square in \eqref{Eq.gamma} with the map 
\[r_v:\bR\hom_{\sX}(d_*\cJ,\cL\dual[1]) \to \bR\hom_{\sX}(\cI,\cL\dual[1]),\]
then we obtain the total square in \eqref{N11} since $r_v\circ \eta=l_w$ by Lemma \ref{Lem.L.Prototype}. Hence the difference of the two maps \eqref{N12} comes from a map 
\beq\label{N14}d_*\O_{\sD} \to \bR\hom_{\sX}(\cL\dual[1],\cL\dual[1]) = \O_{\sX}.\eeq
Since $\Hom_{\sX}(d_*\O_{\sD} , \O_{\sX} ) = 0$, the map \eqref{N14} is zero and the two maps \eqref{N12} coincide.
\end{proof}

We next define the map $\delta$ as follows:

\begin{lemma}%\label{Lem.delta}
There exists a commutative diagram of four distinguished triangles
\beq\label{Eq.delta}
\xymatrix{
& d_*\O_{\sD} \ar@{=}[r] \ar[d] & \bR\hom_{\sX}(d_*\O_{\sD}, \cL\dual)[1] \ar[d]^{r_i} \\
\bR\hom_{\sX}(d_*\cJ,\cI)_{\#} \ar[r]^-{\zeta\dual} \ar@{=}[d] & \bR\hom_{\sX}(d_*\cJ,d_*\cJ)_R \ar[r]^-{l_w\circ \mu\dual} \ar[d]^{\nu\dual} & \bR\hom_{\sX} (d_*\cJ,\cL\dual)[1] \ar[d]^{r_e} \\
\bR\hom_{\sX}(d_*\cJ,\cI)_{\#} \ar@{.>}[r]^-{\alpha\dual}  & \bR\hom_{\sX}(d_*\cJ,d_*\cJ)_{\#} \ar@{.>}[r]^-{\delta} & \bR\hom_{\sX} (d_*\cF,\cL\dual)[2] 
}\eeq
for some map $\delta$, where the middle vertical distinguished triangle is the dual of the bottom horizontal distinguished triangle in \eqref{Eq.JJs.reduction}, the right vertical distinguished triangle is given by \eqref{Eq.JOF}, and the middle horizontal distinguished triangle is the dual of the right vertical distinguished triangle in \eqref{Eq.IJs}.
\end{lemma}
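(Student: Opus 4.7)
The plan is to mirror the octahedral-axiom argument used for $\gamma$ in Lemma \ref{Lem.gamma}. First I would observe that the middle vertical distinguished triangle in \eqref{Eq.delta} has the explicit form
\[ d_*\O_{\sD} \longrightarrow \bR\hom_{\sX}(d_*\cJ,d_*\cJ)_R \xrightarrow{\ \nu\dual\ } \bR\hom_{\sX}(d_*\cJ,d_*\cJ)_{\#}, \]
since the bottom row of \eqref{Eq.JJs.reduction} dualizes to a triangle whose left term is $(d_*(\O_{\sP}\boxtimes L|_D)[-1])\dual$, and this complex is canonically isomorphic to $d_*\O_{\sD}$ via relative Serre duality for the regular embedding $d:\sD\hookrightarrow \sX$ combined with the projection formula $d_*(\O_{\sP}\boxtimes L|_D)\cong d_*\O_{\sD}\otimes \cL$. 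Likewise, the ``equality'' in the top row of \eqref{Eq.delta} is this same Serre duality identification $d_*\O_{\sD}\cong \bR\hom_{\sX}(d_*\O_{\sD},\cL\dual)[1]$.

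Next, I would apply the octahedral axiom to the composition
\[ d_*\O_{\sD}\longrightarrow \bR\hom_{\sX}(d_*\cJ,d_*\cJ)_R \xrightarrow{\ l_w\circ \mu\dual\ } \bR\hom_{\sX}(d_*\cJ,\cL\dual)[1]. \]
This yields a commuting $3\times 3$ diagram containing the desired bottom distinguished triangle
\[ \bR\hom_{\sX}(d_*\cJ,\cI)_{\#}\xrightarrow{\alpha\dual}\bR\hom_{\sX}(d_*\cJ,d_*\cJ)_{\#}\xrightarrow{\delta}\bR\hom_{\sX}(d_*\cF,\cL\dual)[2], \]
provided the upper right square of \eqref{Eq.delta} commutes. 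The definition of $\alpha\dual$ drops out from the octahedral axiom as $\zeta\dual$ composed with $\nu\dual$, which matches the formula for $\alpha$ already fixed just before Lemma \ref{Lem.gamma}, up to dualization.

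The only nontrivial step is therefore the commutativity of the upper right square, i.e.\ that the two maps $d_*\O_{\sD}\rightrightarrows \bR\hom_{\sX}(d_*\cJ,\cL\dual)[1]$ agree. To check this I would follow the pattern of the proof of Lemma \ref{Lem.gamma}: lift the comparison one step up by composing with $\mu\dual$, which converts the claim into the analogous statement on $\bR\hom_{\sX}(d_*\cJ,d_*\cJ)$, namely that
\[ \xi\dual\circ \id_{\cJ}\ \text{followed by}\ l_w\ =\ \text{(Serre duality)}\circ r_i \]
as maps $d_*\O_{\sD}\to \bR\hom_{\sX}(d_*\cJ,\cL\dual)[1]$. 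This in turn comes from the adjunction correspondence $d_*\dashv d^*\otimes \cL[-1]$ that identifies the map $w:d_*\cJ\to \cL\dual[1]$ of \eqref{Eq.LdIJ} with the structure map $i:\cJ\to \O_{\sD}$ of \eqref{Eq.JOF}, exactly as in the analysis of diagram \eqref{N11}.

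The main obstacle is the last step: keeping track of the various adjunctions and Serre duality identifications so that the comparison really reduces to the already established naturality for $\cJ\to \O_{\sD}$, rather than differing by a spurious map that needs to be killed. As in Lemma \ref{Lem.gamma}, any such discrepancy will factor through $\bR\hom_{\sX}(\cL\dual[1],\cL\dual[1])=\O_{\sX}$, so the vanishing $\Hom_{\sX}(d_*\O_{\sD},\O_{\sX})=0$ (a consequence of the fact that $\O_\sX\to d_*\O_\sD$ is surjective with kernel a line bundle) forces the difference to be zero and completes the verification.
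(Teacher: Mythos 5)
Your proposal is correct and follows essentially the same route as the paper: apply the octahedral axiom, so that everything reduces to the commutativity of the upper right square, which (after composing with $\mu\dual$) is exactly the upper square of \eqref{N11} established via the adjunction $d_*\dashv d^*\otimes\cL[-1]$. Your final paragraph invoking $\Hom_{\sX}(d_*\O_{\sD},\O_{\sX})=0$ is harmless but not needed here, since unlike in Lemma \ref{Lem.gamma} the bottom map $l_w\circ\mu\dual$ factors directly through $l_w$, so no discrepancy arises that must be killed.
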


\begin{proof}
As in Lemma \ref{Lem.gamma}, it suffices to show the commutativity of the upper right square in \eqref{Eq.delta}. This follows from the commutativity of the upper square in \eqref{N11}.
\end{proof}

\subsubsection{Reduction diagram}

Combining the four maps $\alpha$, $\beta$, $\gamma$, $\delta$, we can form the reduction diagram \eqref{Eq.JJstoII0} in Proposition \ref{Prop.L.Comp.OT}(1).

%With the two perfect complexes $\bR\hom_{\sX}(d_*\cJ,d_*\cJ)_\#$, $\bR\hom_{\sX}(\cI,d_*\cJ)_\#$ and Lemma \ref{Lem.JJs}, Lemma \ref{Lem.IJs} and the four maps $\alpha$, $\beta$, $\gamma$, $\delta$ in Lemma \ref{Lem.gamma}, Lemma \ref{Lem.delta}, we can form the commutative diagram \eqref{Eq.JJstoII0} in Proposition \ref{Prop.L.Comp.OT}.

\begin{proof}[Proof of Proposition \ref{Prop.L.Comp.OT}(1)]
We claim that the diagram
\beq\label{N15}
\xymatrix{
\bR\hom_{\sX}(d_*\cJ,\cI)_\# \ar[r]^-{\alpha\dual} \ar[d]^{\beta\dual} & \bR\hom_{\sX}(d_*\cJ,d_*\cJ)_\# \ar[r]^-{\delta} \ar[d]^{\alpha} & \bR\hom_{\sX}(d_*\cF,\cL\dual)[2] \ar@{=}[d]\\ 
\bR\hom_{\sX}(\cI,\cI)_0 \ar[r]^-{\beta} & \bR\hom_{\sX}(\cI,d_*\cJ)_\# \ar[r]^-{\gamma} & \bR\hom_{\sX}(d_*\cF,\cL\dual)[2]
}\eeq
commutes. 

We will first show that the left square in \eqref{N15} commutes. Indeed, we have the commutative diagram
\beq\label{Eq.JJstoII0.left}
\xymatrix{
\bR\hom_{\sX}(d_*\cJ,\cI)_\# \ar[r]^{\zeta\dual} \ar[d]^{\lambda\dual} 
%\ar@{.>}@<-6ex>@/_.6cm/[dd]_{\iota\circ\beta\dual} \ar@{.>}@/^.6cm/[rr]^{\alpha\dual} 
& \bR\hom_{\sX}(d_*\cJ,d_*\cJ)_R \ar[r]^{\nu\dual} \ar[d]^{\mu\dual} & \bR\hom_{\sX}(d_*\cJ,d_*\cJ)_\# \ar[d]^{\nu} 
%\ar@{.>}@<7ex>@/^.6cm/[dd]^{\alpha}
\\
\bR\hom_{\sX}(d_*\cJ,\cI) \ar[r]^{l_v} \ar[d]^{r_v} & \bR\hom_{\sX}(d_*\cJ,d_*\cJ) \ar[r]^{\mu} \ar[d]^{r_v} & \bR\hom_{\sX}(d_*\cJ,d_*\cJ)_L \ar[d]^{\zeta}\\
\bR\hom_{\sX}(\cI,\cI) \ar[r]^{l_v} 
%\ar@{.>}@/_.6cm/[rr]_{\beta\circ\rho} 
& \bR\hom_{\sX}(\cI,d_*\cJ) \ar[r]^{\lambda}  & \bR\hom_{\sX}(\cI,d_*\cJ)_\#
}\eeq
by \eqref{Eq.JJs.reduction} in Lemma \ref{Lem.JJs} and \eqref{Eq.IJs}. Note that the composition
\beq\label{L44}\O_{\sX} \mapright{\id_{\II}} \bR\hom_{\sX}(\cI,\cI) \xrightarrow{\lambda \circ l_v} \bR\hom_{\sX}(\cI,d_*\cJ)_\#\eeq
vanishes since we have a commutative diagram
\[\xymatrix{
\O_{\sX} \ar[r] \ar[d]^{\id_{\II}} & d_*\O_{\sD} \ar[d]^{r_v \circ \xi\dual \circ \id_{\cJ}}\\
\bR\hom_{\sX}(\cI,\cI) \ar[r]^-{l_v} & \bR\hom_{\sX}(\cI,d_*\cJ)
}\]
and an equation $\lambda \circ (r_v\circ \xi\dual \circ \id_{\cJ})=0$ by \eqref{N.Eq.IJs}. Since $\beta=\lambda\circ l_v\circ \iota$ by definition, and \eqref{L44} is zero, the diagram
\beq\label{N16}
\xymatrix{
\bR\hom_{\sX}(\cI,\cI) \ar[r]^-{l_v} \ar[d]^{\rho} & \bR\hom_{\sX}(\cI,d_*\cJ) \ar[d]^{\lambda} \\
\bR\hom_{\sX}(\cI,\cI)_0 \ar[r]^-{\beta} & \bR\hom_{\sX}(\cI,d_*\cJ)_\#
}\eeq
commutes. The two commutative diagrams \eqref{Eq.JJstoII0.left} and \eqref{N16} give us the commutativity of the left square in \eqref{N15}.

Now we will prove that the right square in \eqref{N15} commutes. Since 
\begin{align*}
 & \Hom_{\sX}(d_*\O_{\sD}[1], \bR\hom_{\sX}(d_*\cF,\cL\dual[2])) \\
=& \Hom_{\sX}(d_*\cF,d_*\O_{\sD})=\Hom_{\sD}(\cF,\O_{\sD}) \oplus \Ext^{-1}_{\sD}(\cF,\cL|_{\sD}) =0   ,
\end{align*}
it suffices to show that
\[\gamma \circ \alpha \circ \nu\dual = \delta \circ \nu\dual.\]
Consider a diagram
\[\xymatrix{
\bR\hom_{\sX}(d_*\cJ,d_*\cJ)_R \ar[r]^-{\nu\dual} \ar[d]^{\mu\dual} & \bR\hom_{\sX}(d_*\cJ,d_*\cJ)_\# \ar[d]^{\nu} \ar@/^2cm/[dd]^{\alpha} \ar@<6ex>@/^2cm/[ddd]^{\delta} \\
\bR\hom_{\sX}(d_*\cJ,d_*\cJ) \ar[r]^-{\mu} \ar[d]^{r_v} \ar@/_2cm/[dd]_{l_w} & \bR\hom_{\sX}(d_*\cJ,d_*\cJ)_L \ar[d]^{\zeta} \\
\bR\hom_{\sX}(\cI,d_*\cJ) \ar[r]^-{\lambda} \ar[d]^{\eta} & \bR\hom_{\sX}(\cI,d_*\cJ)_\# \ar[d]^{\gamma} \\
\bR\hom_{\sX}(d_*\cJ,\cL\dual)[1] \ar[r]^-{r_e} & \bR\hom_{\sX}(d_*\cF,\cL\dual)[2]
}\]
of commutative squares induced by \eqref{Eq.JJs.reduction}, \eqref{Eq.IJs}, and \eqref{Eq.gamma}. The two maps $\eta\circ r_v$ and $l_w$ are not necessarily equal, but their compositions with $r_v$,
\[\xymatrix{ \bR\hom_{\sX}(d_*\cJ,d_*\cJ) \ar@<.5ex>[r]^-{\eta\circ r_v} \ar@<-.5ex>[r]_-{l_w} & \bR\hom_{\sX}(d_*\cJ,\cL\dual)[1] \ar[r]^-{r_v} & \bR\hom_{\sX}(\cI,\cL\dual)[1]}\]
are equal since $r_v \circ \eta = l_w$ by Lemma \ref{Lem.L.Prototype}. Hence the compositions of the two maps $\eta\circ r_v$ and $l_w$ with $r_e$ are also equal since $d_*e = v \circ f$. By \eqref{Eq.delta}, we have
\begin{align*}
    \gamma \circ \alpha \circ \nu\dual = \gamma \circ \zeta \circ \nu \circ \nu\dual =r_e \circ \eta \circ r_v \circ \mu\dual = r_e \circ l_w \circ \mu\dual = \delta \circ \nu\dual,
\end{align*}
which proves the claim. 

If we apply $\bR(\pi_X)_*$ to \eqref{N15}, then we obtain the desired diagram \eqref{Eq.JJstoII0} (see Lemma \ref{Lem.R}). It completes the proof.
\end{proof}

\subsubsection{Atiyah classes}

Finally, we compare the Atiyah classes of $\cJ$ and $\II$ using derived algebraic geometry. 

\begin{proof}[Proof of Proposition \ref{Prop.L.Comp.OT}(2)]
We first claim that the diagram
\beq\label{Eq4.42}
\xymatrix@C-1pc{
\bR\hom_{\pi_X}(d_*\cJ,\cI)[3] \ar[r]^-{l_v} \ar[d]^{r_v} & \bR\hom_{\pi_X}(d_*\cJ,d_*\cJ)[3] \ar[rdd]|-{At_{P(D) \times X}(d_*\cJ)} \ar[r]^{\xi} & \bR\hom_{\pi_D}(\cJ,\cJ\otimes L)[2] \ar[dd]^{At_{P(D)\times D}(\cJ)} \\ \bR\hom_{\pi_X}(\cI,\cI)[3] \ar[d]^{At_{P(X)\times X}(\II)} \ar[rrd]|-{At_{P(D) \times X}(\cI)} \\
\LL_{P(X)}|_{P(D)} \ar[rr] && \LL_{P(D)}
}\eeq
commutes. Indeed, the commutativity of the middle square in \eqref{Eq4.42} follows from the functoriality of the Atiyah classes
\[\xymatrix{
\cI \ar[r]^v \ar[d]_{At_{P(D) \times X}(\cI)} & d_*\cJ \ar[d]^{At_{P(D) \times X}(d_*\cJ)}\\
\cI \otimes \LL_{P(D) \times X}[1] \ar[r]^v & d_*\cJ \otimes \LL_{P(D) \times X}[1] .
}\]
The lower triangle in \eqref{Eq4.42} is the commutative diagram of tangent maps for
\[\xymatrix{ P(D) \ar[r]^-{j} \ar@/_.6cm/[rr]_-{\cI} & P(X) \ar[r]^-{\II} & \mathbf{Perf}(X) }\]
by \cite[Appendix A]{STV}, where $\mathbf{Perf}(X)$ denotes the derived moduli stack of perfect complexes on $X$. The right triangle in \eqref{Eq4.42} is the commutative diagram of tangents maps for
\[\xymatrix{
P(D) \ar[r]^-{\cJ} \ar@/_.6cm/[rr]_-{d_*\cJ} & \mathbf{Perf}(D) \ar[r] & \mathbf{Perf}(X)
}\]
where the map $\mathbf{Perf}(D) \to \mathbf{Perf}(X)$ between the derived moduli stacks is given by the derived pushforward.\footnote{The commutativity of the right triangle in \eqref{Eq4.42} for the truncated cotangent complex $\trunc \LL_{P(D)}$ can be deduced by \cite[Theorem 2.5]{DSY} without using derived algebraic geometry.} 

By the square \eqref{Eq.JJs.compatibility}, the dual of the right bottom square in \eqref{Eq.IJs}, and the dual of the square \eqref{N16}, the commutative diagram \eqref{Eq4.42} implies the desired commutative diagram \eqref{Eq.L.Atiyah}.
\end{proof}

\subsection{Comparison of virtual cycles}\label{ss.ComparisonofVirtualCycles}
%\subsection{Comparison of virtual cycles}\label{ss.L.ComparisonofVirtualCycles}

Finally, we prove the Lefschetz principle in Theorem \ref{Thm.Lefschetz} using the results in the previous subsections.

\begin{proof}[Proof of Theorem \ref{Thm.Lefschetz}]
By Proposition \ref{Prop.L.Comp.OT}, we can form two vertical morphisms
\beq\label{Eq4.44}
\xymatrix@C-1.2pc{
\bR\hom_{\pi_X}(d_*\cJ,\cI)_\#[3] \ar[r]^-{\alpha\dual} \ar[d]^{\beta\dual} & \bR\hom_{\pi_X}(d_*\cJ,d_*\cJ)_\#[3] \ar[r]^-{\delta} \ar[d]^{\alpha} & \bR\hom_{\pi_X}(d_*\cF,\cL\dual)[5] \ar@{=}[d]\\
\bR\hom_{\pi_X}(\cI,\cI)_0[3] \ar[r]^-{\beta} \ar[d]^{\phi_X} & \bR\hom_{\pi_X}(\cI,d_*\cJ)_\#[3] \ar[r]^-{\gamma} \ar@{.>}[d]^{\phi_D'} & \bR\hom_{\pi_X}(d_*\cF,\cL\dual)[5] \ar@{.>}[d]^{\phi'}\\
\trunc\LL_{P(X)}|_{P(D)} \ar[r]^-{a} & \trunc\LL_{P(D)} \ar[r] & \bL'_{P(D)/P(X)}
}\eeq
of the three horizontal distinguished triangles for some maps $\phi'_D$ and $\phi'$. Indeed, there exists a map
\[\phi'_D : \bR\hom_{\pi_X}(\cI,d_*\cJ)_\#[3] \to \trunc \LL_{P(D)}\]
such that $\phi'_D\circ\alpha=\phi_D$ since $\phi_D \circ \delta\dual =a \circ \phi_X \circ \beta\dual \circ \gamma\dual =0$. Then we have
\[a\circ \phi_X = \phi_D ' \circ \beta\]
since $\Hom_{P(D)}(\bR(\pi_D)_*(\cF\otimes L)[2] , \trunc\LL_{P(D)}) = 0.$ Hence we can also find a map $\phi'$ that fits into the diagram \eqref{Eq4.44}.

The long exact sequence associated to \eqref{Eq4.44} assures that the composition
\[\phi : (\bR(\pi_D)_*(\cF\otimes L)[-1])\dual \mapright{\phi'} \bL'_{P(D)/P(X)} \to \bL_{P(D)/P(X)}\]
is a perfect obstruction theory. Note that the associated virtual pullback
\[j^!_{\phi} : A_* (P(X)) \to A_* (P(D))\]
only depends on the vector bundle $\bR(\pi_D)_*(\cF\otimes L)$ and is independent of the map $\phi$ by \cite[Example 4.1.8]{Ful} (see Lemma \ref{Lem.VPindepofPhi}).  Therefore, the virtual pullback formula (Theorem \ref{Thm.Functoriality}) and the reduction formula \eqref{L32} gives us 
\[j^![P(X)]\virt_{OT}=j^!_{\phi}[P(X)]\virt_{OT}=[P(D)]\virt_{OT}=\sum_e(-1)^{\sigma(e)}[P(D)^e]\virt_{BF},\]
where $j^!$ is the Gysin pullback \eqref{L.GysinPullback} given by the tautological section.
\end{proof}

We end this section with a remark on orientations:

\begin{remark}\label{Rmk.Orientation}
For any given orientation on $\bR\hom_{\pi}(\II,\II)_0[3]$, there exists a unique orientation on $\bR\hom_{\pi_X}(d_*\cJ,d_*\cJ)_\#[3]$ induced from the reduction diagram \eqref{Eq.JJstoII0}. Then the signs $\sigma(e)$ in \eqref{L32} is uniquely determined by this orientation and the self-dual distinguished triangle \eqref{N.Eq.JJs}.
\end{remark}

\section{Pairs/Sheaves correspondence}\label{S.Pairs/Sheaves}

In this section, we compare the Oh-Thomas virtual cycles for moduli spaces of pairs and moduli spaces of sheaves by combining the virtual pullback formula in \S\ref{S.Functoriality} and the pushforward formula for virtual projective bundles in \S\ref{ss.VPBformula}.

\subsection{Virtual projective bundles}\label{ss.VPBformula}

In this preliminary subsection, we provide a pushforward formula for {\em virtual projective bundles}. This pushforward formula is just a reformulation of a classical result in intersection theory \cite{Ful} from the point of view of virtual intersection theory \cite{BeFa,Man}.

% We first fix the notion of the virtual projective bundles.

\begin{definition}[Virtual projective bundle]\label{Def.VirtualProjectiveBundle}
Let $\KK$ be a perfect complex of tor-amplitude $[0,1]$ on a quasi-projective scheme $\cX$. The {\em virtual projective bundle} $\PP(\KK)$ associated to $\KK$ is a pair of 
\begin{enumerate}
\item [D1)] a projective cone
\[p : \PP (\KK) := \mathrm{Proj} \Sym^\bullet h^0(\KK\dual) \to \cX,\]
\item [D2)] and a relative perfect obstruction theory
\beq\label{Eq.VPB.POT}
\LL\virt_{\PP(\KK)/\cX}:=\cone(\O_{\PP(\KK)} \to p^*\KK (1))\dual \to \LL_{\PP(\KK)/\cX}
\eeq
given as follows: Choose a resolution $\KK \cong [K_0 \to K_1]$ by vector bundles. Then we have a commutative diagram
\beq\label{Eq.VPB.Factorization}
\xymatrix{
& K_1(1)|_{\PP(K_0)} \ar[d]\\
\PP (\KK) \ar@{^{(}->}[r]^i \ar[rd]_p & \PP (K_0) \ar[d]^q \ar@/_0.5cm/[u]_{t} \\
& \cX
}\eeq
where $\PP(\KK)$ is the zero locus of the tautological section $t$. Thus
\[\xymatrix{
\LL_{\PP(\KK)/\cX}\virt \ar[d] \ar@{}[r]|{=} &(K_1(1))|_{\PP(\KK)}\dual \ar[r]^{dt} \ar[d]^{t} & \Omega_{\PP(K_0)/\cX}|_{\PP(\KK)} \ar@{=}[d]\\
\LL_{\PP(\KK)/\cX} \ar@{}[r]|{=}  & I/I^2 \ar[r]^d & \Omega_{\PP(K_0)/\cX}|_{\PP(\KK)}
}\]
gives us a perfect obstruction theory, where $I$ denotes the ideal sheaf of $\PP(\KK)$ in $\PP(K_0)$.
\end{enumerate}
\end{definition}
The perfect obstruction theory \eqref{Eq.VPB.POT} is independent of the choice of the global resolution and exists without assuming the global resolution, but we will not need these facts in this paper.\footnote{The virtual projective bundle is the classical truncation of the {\em derived} projective bundle.}

\begin{proposition}[Pushforward formula for virtual projective bundles]\label{Prop.VPB.Pushforward}
Let $p:\PP(\KK) \to \cX$ be a virtual projective bundle over a quasi-projective scheme $\cX$. For any cycle class $\alpha \in A_*(\cX)$ and a K-theory class $\xi \in K^0(\cX)$, we have
\[p_* (c_m(p^*\xi(1)) \cap p^! \alpha) = \sum_{0 \leq i \leq m} \binom{s-i}{m-i} \cdot c_i(\xi) \cap c_{m-i+1-r}(-\KK) \cap \alpha\]
where $r$ is the rank of $\KK$ and $s$ is the rank of $\xi$.
\end{proposition}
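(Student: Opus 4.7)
The plan is to reduce the computation to a classical pushforward on an honest projective bundle by factoring $p : \PP(\KK) \to \cX$ through the regular embedding $i : \PP(\KK) \hookrightarrow \PP(K_0)$ and the smooth map $q : \PP(K_0) \to \cX$ coming from a global resolution $\KK \cong [K_0 \to K_1]$ as in the diagram \eqref{Eq.VPB.Factorization}. Here $i$ is cut out by the tautological section of $K_1(1)|_{\PP(K_0)}$, so $i$ and $q$ carry their tautological perfect obstruction theories $K_1(1)^\vee|_{\PP(\KK)}[1]$ and $\Omega_q$, respectively. The relative perfect obstruction theory $\LL_{\PP(\KK)/\cX}\virt = \cone(\O \to p^*\KK(1))^\vee$ of Definition \ref{Def.VirtualProjectiveBundle} is precisely the datum that fits into the Manolache compatibility diagram for this triple: writing $p^*\KK(1)$ via $[q^*K_0(1) \to q^*K_1(1)]|_{\PP(\KK)}$ and using the Euler sequence of $q$ produces the required morphism of distinguished triangles.

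Granting this compatibility, \cite[Theorem 4.8]{Man} yields $p^! = i^! \circ q^*$, where $q^* = q^!$ is flat pullback and $i^!$ is the refined Gysin map of the regular embedding. The projection formula along $i$ together with the identity $i_* \circ i^! = e(K_1(1)) \cap -$ then gives
\[
p_*\bigl(c_m(p^*\xi(1)) \cap p^!\alpha\bigr) \;=\; q_*\bigl(c_m(q^*\xi(1)) \cdot e(K_1(1)) \cap q^*\alpha\bigr).
\]

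The remainder is a computation in the Chow ring of $\PP(K_0)$. I would expand both twisted Chern classes via the standard identity
\[
c_n(\eta \otimes L) \;=\; \sum_{j} \tbinom{\rank(\eta)-j}{n-j}\, c_j(\eta) \cdot c_1(L)^{n-j},
\]
valid for any K-theory class $\eta$ and line bundle $L$, applied once to $\xi$ and once to $K_1$ inside $e(K_1(1)) = c_{r_1}(K_1(1))$ with $L = \O(1)$. Combined with the classical Segre-class pushforward $q_*(c_1(\O(1))^n \cap q^*\alpha) = s_{n-r_0+1}(K_0) \cap \alpha$ from \cite[Ch.~3]{Ful} and the K-theoretic identity $c(-\KK) = c(K_1) \cdot s(K_0)$ coming from $-\KK = K_1 - K_0$, the middle pushforward evaluates to
\[
q_*\bigl(c_1(\O(1))^{m-i} \cdot e(K_1(1)) \cap q^*\alpha\bigr) \;=\; c_{m-i+1-r}(-\KK) \cap \alpha,
\]
with $r = r_0 - r_1 = \rank(\KK)$. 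Reassembling the sum produces the stated formula.

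The main obstacle is verifying the Manolache compatibility of the three obstruction theories, which amounts to an explicit octahedral-axiom argument identifying $\cone(\O \to p^*\KK(1))^\vee$ with the gluing of $\Omega_{\PP(K_0)/\cX}$ and the conormal complex of $i$ through the tautological section $t$. The remaining steps are formal manipulations of Chern and Segre classes, and the apparent dependence on the chosen resolution $[K_0 \to K_1]$ disappears because the final answer is expressed intrinsically in $\KK$.
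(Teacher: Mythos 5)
Your proposal is correct and follows essentially the same route as the paper: factor $p$ through $i:\PP(\KK)\hookrightarrow\PP(K_0)$ and $q:\PP(K_0)\to\cX$, use $p^!=i^!\circ q^*$ together with $i_*\circ i^!=e(K_1(1))\cap-$, and then expand the twisted Chern classes and push down via Segre classes of $K_0$, recombining with $c(-\KK)=c(K_1)\cdot s(K_0)$. The paper treats the Manolache compatibility as immediate from the construction of the obstruction theory in Definition \ref{Def.VirtualProjectiveBundle}, so your extra attention to that point is harmless but not a divergence.
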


\begin{proof}
Fix a global resolution $\KK \cong [K_0 \to K_1]$ and consider the factorization \eqref{Eq.VPB.Factorization}. Manolache's virtual pullback formula $p^! = i^! \circ q^*$ implies
\[p_* (c_m(\xi(1)) \cap p^! \alpha)  = q_* ( c_m(\xi(1)) \cap c_{r_1}(K_1(1)) \cap q^*\alpha)\]
where $r_0$ and $r_1$ are the ranks of $K_0$ and $K_1$, respectively. Note that 
\[c_m( \xi(1) ) = \sum_{0 \leq i \leq m} \binom{s-i} {m-i} c_i(\xi) c_1(\O(1))^{m-i}\]
by \cite[Example 3.2.2]{Ful}.\footnote{In \cite{Ful}, it is stated for vector bundles, but it is easy to show that the result also holds for any $K$-theory classes.}
Therefore, we have
\begin{align*}
    & p_* (c_m(\xi(1)) \cap p^! \alpha)\\
    & = \sum_{0 \leq i \leq m} \sum_{0 \leq j \leq r_1} \binom{s-i}{m-i}\cdot c_i(\xi) \cap c_j(K_1) \cap q_*(c_1(\O(1))^{m+r_1 -i-j} \cap q^* \alpha)\\
    & = \sum_{0 \leq i \leq m} \sum_{0 \leq j \leq r_1} \binom{s-i}{m-i}\cdot c_i(\xi) \cap c_j(K_1)\cap s_{m+r_1 - i - j -r_0 + 1}(K_0) \cap \alpha \\
    & = \sum_{0 \leq i \leq m} \binom{s-i}{m-i}\cdot c_i(\xi) \cap c_{m - i + 1-r}(-\KK) \cap \alpha
\end{align*}
where $s_\bullet(K_0)$ denotes the Segre class of $K_0$.
\end{proof}

%\begin{remark} The virtual projective bundle in Definition \ref{Def.VirtualProjectiveBundle} is the classical truncation of the derived projective bundle.\end{remark}

%\begin{question} {\em Twisted} virtual projective bundle \end{question}

\subsection{Main result}

Let $X$ be a smooth projective Calabi-Yau 4-fold. Let $\beta \in H_2(X,\Q)$ be a curve class and $n \in \Z$ be an integer. Let
\begin{align*}
P_{n,\beta}(X) &= \{\text{stable pairs }(F,s) \text{ on } X \text{ with } \ch(F) = (0,0,0,\beta,n)\}\\
M_{n,\beta}(X) &= \{\text{stable sheaves }G \text{ on } X \text{ with } \ch(G) = (0,0,0,\beta,n)\}
\end{align*}
be the moduli spaces of stable pairs and stable sheaves. Then they both have Oh-Thomas virtual cycles
\[[P_{n,\beta}(X)]\virt \in A_n (P_{n,\beta}(X)) \and [M_{n,\beta}(X)]\virt \in A_1 (M_{n,\beta}(X)).\]
%% induced by the open embeddings $P_{n,\beta}(X) \hookrightarrow \Perf(X)^{\mathrm{spl}}_{\O_X}$ and $M_{n,\beta}(X) \hookrightarrow \Perf(X)^{\mathrm{spl}}/B\GG_m$.

Assume that the curve class $\beta \in H_2(X,\Q)$ is irreducible.
%, i.e. $\beta$ is not the sum of two effective curve classes. 
Then all pure sheaves $G$ with $\ch_3(G)=\beta$ are stable so that the moduli space $M_{n,\beta}(X)$ is proper and does not depend on the polarization $\O_X(1)$. Moreover, there is a well-defined forgetful map
\[p : P_{n,\beta}(X) \to M_{n,\beta}(X) : (F,s) \mapsto F.\]
When there is a universal sheaf $\GG$ on $M_{n,\beta}(X)\times X$, then the moduli space $P_{n,\beta}(X)$ of stable pairs can be identified to the virtual projective bundle
\[P_{n,\beta}(X) = \PP (\bR\pi^M_*\GG) \to M_{n,\beta}(X) \]
of the perfect complex $\bR\pi^M_*\GG$, where $\pi^M : M_{n,\beta}(X) \times X \to M_{n,\beta}(X)$ denotes the projection map.

%Our main theorem in this section is the following virtual pullback formula for $p:P_{n,\beta}(X) \to M_{n,\beta}(X)$:

%\begin{theorem}[Pullback formula]\label{Thm.Pairs/Sheaves} Let $X$ be a Calabi-Yau 4-fold. Let $\beta \in H_2(X,\Q)$ be an irreducible curve class and $n \in \Z$.  Then we have a virtual pullback formula \beq\label{Eq.Pairs/Sheaves} [P_{n,\beta}(X)]\virt = p^![M_{n,\beta}(X)]\virt \eeq where $p:P_{n,\beta}(X) \to M_{n,\beta}(X)$ is the forgetful map. \end{theorem}

\begin{theorem}[Pairs/Sheaves correspondence]\label{Thm.P/S}
Let $X$ be a Calabi-Yau 4-fold, $\beta \in H_2(X,\Q)$ be an irreducible curve class, and $n \in \Z$ be an integer. Assume that there exists a universal sheaf $\GG$ on $M_{n,\beta}(X)$. For any orientation on $M_{n,\beta}(X)$, there exists an induced orientation on $P_{n,\beta}(X)$ such that the following formulas hold:
\begin{enumerate}
\item {\em (Pullback formula)} We have 
\begin{equation}\label{Eq.P/S.Pullback}
[P_{n,\beta}(X)]\virt = p^![M_{n,\beta}(X)]\virt    
\end{equation}
where $p^!$ is the virtual pullback of the virtual projective bundle.
\item {\em (Pushforward formula)} For any vector bundle $E$ on $X$, we have 
\begin{align}\label{Eq.P/S.Pushforward}
& p_*\left(c_m(\bR\pi^P_*(\FF\otimes E)) \cap [P_{n,\beta}(X)]\virt\right) \\ 
= & \nonumber \begin{cases}
\binom{N }{n-1} \cdot [M_{n,\beta}]\virt    & \text{if } m=n-1 \\
\left(\begin{array}{l}
\binom{N -1}{n-1} \cdot c_1(\bR\pi^M_*(\GG\otimes E)) \\ 
- \binom{N}{n}\cdot c_1(\bR\pi^M_*\GG)
\end{array}\right) \cap  [M_{n,\beta}(X)]\virt    & \text{if } m=n\\
0 & \text{otherwise}
\end{cases}
\end{align}
where $\pi^P : P_{n,\beta}(X) \times X \to P_{n,\beta}(X)$ is the projection map, $r$ is the rank of $E$, and $N=rn + \int_{\beta} c_1(E)$ is the rank of the tautological complex $\bR\pi^P_*(\FF\otimes E)$.
\end{enumerate}
\end{theorem}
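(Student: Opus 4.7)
The strategy is to realize $p : P_{n,\beta}(X) \to M_{n,\beta}(X)$ as a virtual projective bundle, verify the compatibility of symmetric obstruction theories needed for Theorem~\ref{Thm.VirtualPullbackFormula}, and then combine the resulting virtual pullback formula with Proposition~\ref{Prop.VPB.Pushforward}. First I would identify $P_{n,\beta}(X)$ with $\PP(\bR\pi^M_*\GG)$: because $\beta$ is irreducible, every pure sheaf with Chern character $(0,0,0,\beta,n)$ is stable, so the fiber of $p$ over $[F]\in M_{n,\beta}(X)$ is $\PP(H^0(X,F))$, and this globalizes using the universal sheaf $\GG$ to give $\FF \cong p_X^*\GG \otimes (\pi^P)^*\O_P(1)$, where $p_X := p \times \id_X$. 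By Definition~\ref{Def.VirtualProjectiveBundle} this equips $p$ with the 2-term perfect obstruction theory $\phi_p : \EE_p := \cone(\O_P \to \bR\pi^P_*\FF)^\vee \to \LL_{P/M}$.

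The key step is to construct the compatibility diagram \eqref{Compatibility.OT} for the triple $(\phi_p, \phi_M, \phi_P)$, where $\EE_M = \bR\pi^M_*\bR\hom(\GG,\GG)_0[3]$ and $\EE_P = \bR\pi^P_*\bR\hom(\II,\II)_0[3]$ with universal complex $\II = [\O_{P \times X} \to \FF]$. Flat base change along the cartesian square $P \times X \to M \times X$ combined with $\FF = p_X^*\GG \otimes (\pi^P)^*\O_P(1)$ identifies $p^*\EE_M \cong \bR\pi^P_*\bR\hom(\FF,\FF)_0[3]$. The distinguished triangle $\II \to \O_{P \times X} \to \FF$ then supplies the required maps among $\EE_P$, $p^*\EE_M$, and $\EE_p$; applying $\bR\hom(\II,-)$, taking trace-zero parts, and pushing forward produces an intermediate perfect complex $\DD$ (essentially $\bR\pi^P_*\bR\hom(\II,\FF)[3]$, with trace corrections) fitting into distinguished triangles $\DD^\vee[2] \to \EE_P \to \EE_p$ and $p^*\EE_M \to \DD \to \EE_p$. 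The bottom row of Atiyah-class obstruction morphisms is then produced in a fashion analogous to \S\ref{ss.ComparisonofOT}. Choosing the orientation on $\EE_P$ induced from that on $\EE_M$ via the resulting isomorphism $\det(\EE_P) \cong p^*\det(\EE_M)$ verifies the hypotheses of Theorem~\ref{Thm.VirtualPullbackFormula}, yielding the pullback formula \eqref{Eq.P/S.Pullback}.

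For the second assertion, the projection formula and flat base change give
\[\bR\pi^P_*(\FF \otimes E) \cong p^*\bR\pi^M_*(\GG \otimes E) \otimes \O_P(1) = p^*\xi(1),\]
where $\xi := [\bR\pi^M_*(\GG \otimes E)] \in K^0(M_{n,\beta}(X))$. By Riemann--Roch on $X$, $\KK := \bR\pi^M_*\GG$ has rank $\chi(F) = n$ and $\xi$ has rank $\chi(F\otimes E) = N = n\cdot\rank(E) + \int_\beta c_1(E)$. Applying Proposition~\ref{Prop.VPB.Pushforward} with $\alpha = [M_{n,\beta}(X)]\virt$ and using $p^!\alpha = [P_{n,\beta}(X)]\virt$ from the previous step yields
\[p_*\bigl(c_m(p^*\xi(1)) \cap [P_{n,\beta}(X)]\virt\bigr) = \sum_{0 \le i \le m} \binom{N-i}{m-i}\, c_i(\xi) \cap c_{m-i+1-n}(-\KK) \cap [M_{n,\beta}(X)]\virt.\]
Specializing to $m = n-1$ leaves only the $i=0$ term, giving $\binom{N}{n-1}[M_{n,\beta}(X)]\virt$; specializing to $m = n$ leaves $i=0,1$, reproducing the second case of \eqref{Eq.P/S.Pushforward}; for $m < n-1$ every $c_{m-i+1-n}(-\KK)$ vanishes; and for $m > n$ the left-hand side vanishes by dimension since $[P_{n,\beta}(X)]\virt \in A_n(P_{n,\beta}(X))$.

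The hard part will be the construction of $\DD$ and the commutativity of the full compatibility diagram in the second step. The issue is that $\bR\hom(\II,\II)_0$ does not split off $\bR\hom(\FF,\FF)_0$ in any obvious way; instead, one must realize $\EE_P$ as the reduction of $p^*\EE_M$ by the isotropic subcomplex associated to $\EE_p$ in the sense of Proposition~\ref{Prop.Reduction}, which requires handling both the trace-zero subcomplexes and the self-duality coming from Serre duality on the Calabi--Yau $X$. This is precisely the kind of three-term reduction that the virtual pullback formula of this paper is designed to accommodate.
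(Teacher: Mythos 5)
Your proposal follows essentially the same route as the paper: identify $P_{n,\beta}(X)$ with $\PP(\bR\pi^M_*\GG)$, exhibit $p^*\EE_M$ as the reduction of $\EE_P=\bR\hom_{\pi^P}(\II,\II)_0[3]$ by the isotropic subcomplex given by the virtual cotangent complex of the virtual projective bundle, verify the Atiyah-class compatibility, apply the functoriality theorem, and then feed $p^![M_{n,\beta}(X)]\virt=[P_{n,\beta}(X)]\virt$ into Proposition \ref{Prop.VPB.Pushforward} exactly as you compute. Two small cautions: since $\GG$ has rank zero on the fibers the trace splitting ``$\bR\hom(\GG,\GG)_0$'' is not available, so the paper works with $(\tau^{[1,3]}\bR\hom_{\pi^M}(\GG,\GG))[3]$ and with the cones $\bR\hom_{\pi}(\FF,\II)_\#$, $\bR\hom_\pi(\FF,\O)_\#$ rather than trace-free parts; and in your last paragraph the reduction goes the other way (it is $p^*\EE_M$ that is the reduction of $\EE_P$ by $\EE_p$, consistent with the triangles you display earlier).
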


%In particular, we have \[p_*[P_{1,\beta}]\virt = [M_{1,\beta}]\virt \and p_*[P_{0,\beta}]\virt = -c_1(\bR\pi^M_*\GG) \cap [M_{0,\beta}]\virt.\]

We will prove the pullback formula \eqref{Eq.P/S.Pullback} through the virtual pullback formula in Theorem \ref{Thm.Functoriality} by comparing the symmetric obstruction theories of $P_{n,\beta}(X)$ and $M_{n,\beta}(X)$. Then the pushforward formula \eqref{Eq.P/S.Pushforward} will follow directly from the general pushforward formula for virtual projective bundles in Proposition \ref{Prop.VPB.Pushforward}.

Before we prove our main theorem in this section (Theorem \ref{Thm.P/S}), we first provide immediate corollaries. Recall \cite{CMT,CMT2} that the {\em primary invariants} for a cohomology class $\gamma \in H^4(X,\Q)$ are defined as follows:
\begin{align*}
P_{n,\beta}(\gamma) &:= \int_{[P_{n,\beta}(X)]\virt}   (\pi^P _* ( \ch_3(\FF) \cup \gamma))^n\\ M_{n,\beta}(\gamma) &:= \int_{[M_{n,\beta}(X)]\virt} \pi^M _* ( \ch_3(\GG) \cup \gamma)     
\end{align*}
where the primary invariant $M_{n,\beta}(\gamma)$ does not depend on the choice of the universal family $\GG$.

\begin{corollary}[Primary PT/GV correspondence]%\label{Cor.PT/Katz}
Let $X$ be a Calabi-Yau 4-fold and $\beta \in H_2(X,\Q)$ be an irreducible curve class. Assume that $M_{n,\beta}(X)$ has a universal family. Then there exists a choice of orientations such that
%\beq\label{Eq.PT/Katz} \int_{[P_{1,\beta}(X)]\virt} \tau^P_0(\gamma) = \int_{[M_{1,\beta}(X)]\virt} \tau^M_0(\gamma)  \eeq
\beq\label{Eq.PT/Katz}
P_{n,\beta}(\gamma) = 
\begin{cases}
M_{1,\beta}(\gamma) & \text{if } n=1\\
0 & \text{if }n\geq 2
\end{cases}
\eeq
for any $\gamma \in H^4(X,\Q)$.
\end{corollary}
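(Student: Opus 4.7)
The plan is to combine the pullback formula of Theorem \ref{Thm.P/S} with a K\"unneth-degree analysis to reduce the insertion to a pullback class, then apply the pushforward formula for virtual projective bundles in Proposition \ref{Prop.VPB.Pushforward}. Since $\beta$ is irreducible and a universal sheaf $\GG$ exists on $M := M_{n,\beta}(X)$, the forgetful map $p\colon P := P_{n,\beta}(X) \to M$ is the virtual projective bundle $\PP(\bR\pi^M_*\GG)$. The universal pair on $P\times X$ then identifies as $\FF \cong \bp^*\GG \otimes \pi^{P*}\O_P(1)$ via adjunction from the tautological inclusion $\O_P(-1) \hookrightarrow p^*\bR\pi^M_*\GG$, where $\bp = p\times\id_X$. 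Throughout I work with the orientation on $P$ induced from that on $M$ as in Theorem \ref{Thm.P/S}.

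Setting $h := \pi^{P*}c_1(\O_P(1))$ and expanding $\ch(\FF) = \bp^*\ch(\GG)\cdot\exp(h)$, the projection formula along $h^k$ together with flat base change $\pi^P_*\bp^* = p^*\pi^M_*$ yields
\[\pi^P_*(\ch_3(\FF)\cup\gamma) = \sum_{k=0}^{3}\frac{c_1(\O_P(1))^k}{k!}\cdot p^*\pi^M_*(\ch_{3-k}(\GG)\cup\gamma).\]
Each $k \geq 1$ term vanishes: for $k = 3$ because $\ch_0(\GG) = 0$; for $k = 2$ because $\pi^M_*$ lowers cohomological degree by $8$, so that $\pi^M_*(\ch_1(\GG)\cup\gamma) \in H^{-2}(M) = 0$; and for $k = 1$ because the only K\"unneth component of $\ch_2(\GG)\cup\gamma$ surviving $\pi^M_*$ lies in $H^*(M)\otimes H^8(X)$, which singles out the $H^0(M)\otimes H^4(X)$-piece of $\ch_2(\GG)$, namely the fiberwise constant class $\ch_2(G_m) = 0$. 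Consequently $\pi^P_*(\ch_3(\FF)\cup\gamma) = p^*\tau$ with $\tau := \pi^M_*(\ch_3(\GG)\cup\gamma) \in H^2(M,\Q)$.

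The conclusion now follows by invoking the pullback formula $[P]\virt = p^![M]\virt$ from \eqref{Eq.P/S.Pullback} and the ordinary projection formula:
\[P_{n,\beta}(\gamma) = \int_{[P]\virt} p^*(\tau^n) = \int_M \tau^n \cap p_*p^![M]\virt,\]
and applying Proposition \ref{Prop.VPB.Pushforward} with $\xi = 0$ and $m = 0$ to obtain
\[p_*p^![M]\virt = c_{1-n}(-\bR\pi^M_*\GG)\cap[M]\virt,\]
where $\rank(\bR\pi^M_*\GG) = \chi(X,G) = n$ by Hirzebruch-Riemann-Roch on the Calabi-Yau 4-fold. For $n = 1$ this factor is $c_0 = 1$, yielding $P_{1,\beta}(\gamma) = \int_{[M]\virt}\tau = M_{1,\beta}(\gamma)$; for $n \geq 2$ the negative-index Chern class vanishes and $P_{n,\beta}(\gamma) = 0$. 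The main technical hurdle is the K\"unneth-degree analysis of the middle paragraph: verifying that the $\pi^{P*}\O_P(1)$-twist built into $\FF$ contributes nothing to the insertion, so that the latter collapses cleanly to $p^*\tau$ without correction terms. Once this vanishing is in hand, the virtual projective bundle machinery of Proposition \ref{Prop.VPB.Pushforward} closes the argument.
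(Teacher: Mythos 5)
Your proposal is correct and follows essentially the same route as the paper: identify $\FF \cong (p\times 1)^*\GG \otimes \pi^{P*}\O(1)$, show the insertion $\pi^P_*(\ch_3(\FF)\cup\gamma)$ is the pullback $p^*\pi^M_*(\ch_3(\GG)\cup\gamma)$, and conclude via the projection formula together with $p_*[P_{n,\beta}(X)]\virt = [M_{n,\beta}(X)]\virt$ for $n=1$ and $=0$ for $n\geq 2$ (your direct application of Proposition \ref{Prop.VPB.Pushforward} with $m=0$ is exactly the $m=0$ case of the pushforward formula in Theorem \ref{Thm.P/S}(2)). The only cosmetic difference is that the paper deduces $\ch_3(\FF)=(p\times1)^*\ch_3(\GG)$ in one line from the vanishing of the lower Chern characters of $\GG$, whereas you verify the same vanishing of the twist corrections term by term via a K\"unneth-degree analysis.
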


\begin{proof}
By the pushforward formula \eqref{Eq.P/S.Pushforward}, we have
%\[p_*[P_{1,\beta}(X)]\virt = [M_{1,\beta}(X)]\virt.\]
\[p_*[P_{n,\beta}(X)]\virt = \begin{cases} [M_{n,\beta}(X)]\virt & \text{ if } n=1\\ 0 & \text{ if } n \geq 2 \,. \end{cases}\]
Since we have
\[\ch_3(\FF) = \ch_3((p\times 1)^*\GG \otimes \O(1)) = \ch_3((p\times 1)^*\GG) = (p\times 1)^*\ch_3(\GG),\]
the projection formula proves \eqref{Eq.PT/Katz}.
\end{proof}

We can define the {\em tautological invariants} (cf. \cite{CKp,CTt}) for a vector bundle $E$ on $X$ as follows:
\begin{align*}
P_{n,\beta}(E) & := \int_{[P_{n,\beta}(X)]\virt} c_n(\bR\pi_*(\FF\otimes E))\\  
M_{n,\beta}(E) & := \int_{[M_{n,\beta}(X)]\virt} c_1(\bR\pi_*(\GG\otimes E))
\end{align*}
where the tautological invariant $M_{n,\beta}(E)$ {\em depends} on the choice of the universal family $\GG$.

\begin{corollary}[Tautological PT/GV correspondence]%\label{Cor.Taut.PT/GV}
Let $X$ be a Calabi-Yau 4-fold and $\beta$ be an irreducible curve class. Assume that there is a universal family $\GG$ of $M_{n,\beta}(X)$. For any vector bundle $E$, there exists a choice of orientations such that
\[P_{n,\beta}(E) = 
\begin{cases}
-\binom{N}{n} \cdot M_{n,\beta}(\O_X) & \text{if } n=0 \\
\binom{N-1}{n-1}\cdot M_{n,\beta}(E) - \binom{N}{n}\cdot M_{n,\beta}(\O_X) & \text{if }n\geq1
\end{cases}\]
where $N=n \cdot \rank(E) + \int_{\beta} c_1(E)$ is the rank of $\bR\pi^P_*(\FF\otimes E)$. 
\end{corollary}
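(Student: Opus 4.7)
The plan is to derive this corollary as a direct specialization of the pushforward formula \eqref{Eq.P/S.Pushforward} in Theorem \ref{Thm.P/S}. First, I would fix any orientation on $M_{n,\beta}(X)$ and take the induced orientation on $P_{n,\beta}(X)$ provided by Theorem \ref{Thm.P/S}. Applying \eqref{Eq.P/S.Pushforward} to the given vector bundle $E$ at the index $m=n$ yields the Chow-level identity
\[
p_*\!\left(c_n(\bR\pi^P_*(\FF\otimes E)) \cap [P_{n,\beta}(X)]\virt\right) = \Bigl(\tbinom{N-1}{n-1}\, c_1(\bR\pi^M_*(\GG\otimes E)) - \tbinom{N}{n}\, c_1(\bR\pi^M_*\GG)\Bigr) \cap [M_{n,\beta}(X)]\virt
\]
on $M_{n,\beta}(X)$. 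Here $N = n\cdot\rank(E) + \int_\beta c_1(E)$ is the rank of $\bR\pi^P_*(\FF\otimes E)$; this equals $\chi(F\otimes E)$ for any sheaf $F$ with $\ch(F)=(0,0,0,\beta,n)$ by Hirzebruch-Riemann-Roch on the Calabi-Yau $X$, which is the content of the rank computation needed.

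Next, I would take degrees over $\spec(\C)$. Since $p$ is proper, the degree of the left-hand side equals $P_{n,\beta}(E) = \int_{[P_{n,\beta}(X)]\virt} c_n(\bR\pi^P_*(\FF\otimes E))$, while the right-hand side unfolds into $\tbinom{N-1}{n-1}\, M_{n,\beta}(E) - \tbinom{N}{n}\, M_{n,\beta}(\O_X)$ directly from the definitions of the tautological invariants. This produces
\[
P_{n,\beta}(E) = \tbinom{N-1}{n-1}\, M_{n,\beta}(E) - \tbinom{N}{n}\, M_{n,\beta}(\O_X).
\]
For $n \geq 1$ this is exactly the claimed formula. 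For $n=0$, the coefficient $\tbinom{N-1}{-1}$ vanishes and $\tbinom{N}{0}=1$, so the right-hand side collapses to $-M_{0,\beta}(\O_X)$, matching the first branch of the statement (where the stated coefficient $-\tbinom{N}{n}$ evaluates to $-1$).

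Since all the substantive input -- the identification of $P_{n,\beta}(X)$ with a virtual projective bundle over $M_{n,\beta}(X)$, the virtual pullback formula \eqref{Eq.P/S.Pullback}, and the universal pushforward formula of Proposition \ref{Prop.VPB.Pushforward} -- has been absorbed into Theorem \ref{Thm.P/S}, this corollary reduces to elementary bookkeeping. There is no serious obstacle here; the only mild subtleties are the degenerate case $n=0$ and consistency with the previous Tautological PT/GV corollary, which one verifies in the special case $E = L$ with $\int_\beta c_1(L) = 0$, where $N = n$ and both binomials $\tbinom{N-1}{n-1}, \tbinom{N}{n}$ evaluate to $1$, recovering the simpler formula stated there.
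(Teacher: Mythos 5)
Your proposal is correct and is exactly the route the paper intends: the corollary is the $m=n$ case of the pushforward formula \eqref{Eq.P/S.Pushforward} in Theorem \ref{Thm.P/S}, followed by taking degrees and unwinding the definitions of the tautological invariants, with the $n=0$ branch coming from $\binom{N-1}{-1}=0$. The paper states it as an immediate consequence without further argument, and your bookkeeping (including the rank computation $N=\chi(F\otimes E)$ and the degenerate case) fills in precisely what is needed.
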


%% Here the tautological invariants $-M_{0,\beta}(\O_X)$ and $ M_{n,\beta}(L) - M_{n,\beta}(\O_X)$ are independent of the choice of the universal family $\GG$.

%%%% Cao-Toda studied the tautological stable pair invariants in \cite{CTt} for special cases when the Oh-Thomas virtual cycle on $P_{n,\beta}(X)$ reduces to the Behrend-Fantechi virtual cycle. The tautological PT/GV correspondence \eqref{Eq.Taut.PT/GV} in Corollary \ref{Cor.Taut.PT/GV} can be regarded as a generalization of the Cao-Toda conjecture in \cite[Conjecture 0.1]{CTt} to general Calabi-Yau 4-folds (for irreducible curve classes).

We can generalize Theorem \ref{Thm.P/S} to {\em reducible} curve classes as follows: When $M_{n,\beta}(X)$ has no strictly semi-stable sheaves, the moduli space of Joyce-Song type stable pairs in \cite{CTd} (cf. \cite{Pot,JS}) can be expressed as
\[P^{JS}_{n,\beta}(X) = \{(F,s) : F \in M_{n,\beta}(X) \and s:\O_X \to F \text{ is non-zero}\}.\]
By \cite[Theorem 0.1]{CTd}, $P^{JS}_{n,\beta}(X)$ is an open subscheme of the moduli space $\Perf(X)_{\O_X}^{\mathrm{spl}}$ of simple perfect complexes with fixed trivial determinant. Hence $P^{JS}_{n,\beta}(X)$ carries an Oh-Thomas virtual cycle
$[P^{JS}_{n,\beta}(X)]\virt \in A_n (P^{JS}_{n,\beta}(X))$
by \cite{OT}. For any curve class $\beta \in H_2(X,\Q)$, there is a forgetful map
\[ p : P^{JS}_{n,\beta}(X) \to M_{n,\beta}(X) : (F,s) \mapsto F\]
which identifies $P^{JS}_{n,\beta}(X)$ with the virtual projective bundle of $\bR\pi^M _*\GG$. The proof of Theorem \ref{Thm.P/S} also works for $P^{JS}_{n,\beta}(X)$ and gives us analogous pullback formula \eqref{Eq.P/S.Pullback} and the pushforward formula \eqref{Eq.P/S.Pushforward}. In particular, we have the following corollary:

\begin{corollary}[JS/GV correspondence]%\label{Cor.JS/GV}
Let $X$ be a Calabi-Yau 4-fold, $\beta \in H_2(X,\Q)$ be a curve class, and $n$ be an integer. Assume that $\int_{\beta}c_1(\O_X(1))$ and $n$ are coprime. Then there exists a choice of orientations such that
\begin{align*}
P^{JS}_{n,\beta}(\gamma) &=
\begin{cases}
M_{1,\beta}(\gamma) & \text{if } n=1\\
0 & \text{if }n\geq 2
\end{cases}\\
P^{JS}_{n,\beta}(E) &= 
\begin{cases}
- \binom{N}{n}\cdot M_{0,\beta}(\O_X) & \text{if } n=0 \\
\binom{N-1}{n-1} \cdot M_{n,\beta}(E) - \binom{N}{n}\cdot M_{n,\beta}(\O_X) & \text{if }n\geq1
\end{cases}    
\end{align*}
for any cohomology class $\gamma$ and a vector bundle $E$, where the primary invariant $P^{JS}_{n,\beta}(\gamma)$ and the tautological invariant $P^{JS}_{n,\beta}(E)$ are defined analogously.
\end{corollary}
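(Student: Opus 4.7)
The plan is to deduce this corollary from the generalization of Theorem~\ref{Thm.P/S} to the Joyce-Song setting announced in the paragraph preceding the statement. First I would verify the hypotheses. The coprimality of $n$ and $\int_\beta c_1(\O_X(1))$ forces Gieseker stability and semi-stability to coincide for pure one-dimensional sheaves with the prescribed Chern character, so $M_{n,\beta}(X)$ contains no strictly semi-stable points, admits a universal sheaf $\GG$ by standard descent, and $P^{JS}_{n,\beta}(X)$ sits as an open subscheme of $\Perf(X)^{\spl}_{\O_X}$ by \cite{CTd}. In particular, $P^{JS}_{n,\beta}(X)$ carries an Oh-Thomas virtual cycle, and the forgetful map $p$ identifies $P^{JS}_{n,\beta}(X)$ with the virtual projective bundle $\PP(\bR\pi^M_*\GG)$ over $M_{n,\beta}(X)$. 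As indicated in the excerpt, the proof of Theorem~\ref{Thm.P/S} transfers verbatim and produces analogues of the pullback formula \eqref{Eq.P/S.Pullback} and the pushforward formula \eqref{Eq.P/S.Pushforward} for $p:P^{JS}_{n,\beta}(X)\to M_{n,\beta}(X)$.

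For the primary invariants I would exploit the explicit relation between the universal pair and the universal sheaf. Since $P^{JS}_{n,\beta}(X)\cong\PP(\bR\pi^M_*\GG)$, the universal complex satisfies $\FF\cong(p\times 1)^*\GG\otimes \pi^{P*}\O_{P^{JS}}(1)$; purity of $\GG$ gives $\ch_i(\GG)=0$ for $i<3$, so this line-bundle twist leaves $\ch_3$ untouched and $\ch_3(\FF)=(p\times 1)^*\ch_3(\GG)$. Flat base change then yields $\pi^P_*(\ch_3(\FF)\cup\gamma)=p^*\pi^M_*(\ch_3(\GG)\cup\gamma)$, and the projection formula reduces the primary invariant to
\[P^{JS}_{n,\beta}(\gamma)=\int_{M_{n,\beta}(X)}p_*[P^{JS}_{n,\beta}(X)]\virt\cap\bigl(\pi^M_*(\ch_3(\GG)\cup\gamma)\bigr)^n.\]
The generalization of \eqref{Eq.P/S.Pushforward} with $E=\O_X$ and $m=0$ only contributes when $m=n-1$, i.e.\ $n=1$, giving $p_*[P^{JS}_{1,\beta}(X)]\virt=[M_{1,\beta}(X)]\virt$ and $p_*[P^{JS}_{n,\beta}(X)]\virt=0$ for $n\geq 2$, yielding the stated values of $P^{JS}_{n,\beta}(\gamma)$.

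For the tautological invariants I would apply the generalized pushforward formula directly to $c_n(\bR\pi^P_*(\FF\otimes L))\cap[P^{JS}_{n,\beta}(X)]\virt$ with $m=n$. The hypothesis $\int_\beta c_1(L)=0$ gives rank $N=n$, so for $n\geq 1$ the binomial coefficients collapse as $\binom{N-1}{n-1}=\binom{N}{n}=1$, delivering $P^{JS}_{n,\beta}(L)=M_{n,\beta}(L)-M_{n,\beta}(\O_X)$ after integration. The case $n=0$ follows by appealing to Proposition~\ref{Prop.VPB.Pushforward} directly with $m=0$ and ranks $s=r=0$: the only surviving term is $-c_1(\bR\pi^M_*\GG)\cap[M_{0,\beta}(X)]\virt$, producing $P^{JS}_{0,\beta}(L)=-M_{0,\beta}(\O_X)$.

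The hard part of this program is not the deductions above but the preliminary transfer of Theorem~\ref{Thm.P/S} from the PT moduli $P_{n,\beta}(X)$ to the JS moduli $P^{JS}_{n,\beta}(X)$---in particular, re-running the intricate reduction-diagram comparison of symmetric obstruction theories from Section~\ref{ss.ComparisonofOT} and the attendant Atiyah-class computation in the JS setting. The key input that makes this transfer routine rather than genuinely new is that $P^{JS}_{n,\beta}(X)$ embeds openly into $\Perf(X)^{\spl}_{\O_X}$ by \cite{CTd}, so its symmetric obstruction theory is again governed by the Atiyah class of the universal two-term complex $[\O\to\FF]$; the construction of the reduction $\bR\hom_{\pi^P}(\FF,\FF)_\#$ and its compatibility with $\bR\hom_{\pi^M}(\GG,\GG)_0[3]$ then proceed along identical lines to those of Sections~\ref{ss.ConstructionofSOT} and \ref{ss.ComparisonofOT}.
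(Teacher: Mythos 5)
Your proposal follows the paper's own route exactly: the paper derives this corollary by noting that the open embedding $P^{JS}_{n,\beta}(X)\hookrightarrow\Perf(X)^{\spl}_{\O_X}$ from \cite{CTd} (together with the absence of strictly semistables under the coprimality hypothesis) lets the whole proof of Theorem \ref{Thm.P/S} --- reduction diagram, Atiyah-class comparison, and virtual pullback --- run unchanged for the forgetful map $p:P^{JS}_{n,\beta}(X)\to M_{n,\beta}(X)=\PP(\bR\pi^M_*\GG)$, after which the primary and tautological formulas drop out of the pushforward formula \eqref{Eq.P/S.Pushforward} with $m=0$ and $m=n$ respectively, precisely as you compute. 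The only mismatch is cosmetic: the tautological half of the statement here is for an arbitrary vector bundle $E$ with the coefficients $\binom{N-1}{n-1}$, $\binom{N}{n}$ retained, whereas you specialize to a line bundle $L$ with $\int_\beta c_1(L)=0$ (the Introduction's version); the general case is the same $m=n$ application of \eqref{Eq.P/S.Pushforward} without simplifying the binomials, and in your $n=0$ step the rank $s$ of $\bR\pi^M_*(\GG\otimes E)$ equals $\int_\beta c_1(E)$ rather than $0$, which is harmless since only $\binom{s}{0}=1$ enters.
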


%%%% Cao-Toda conjectured primary JS/GV correspondence \beq\label{Eq.CT.Conj} P^{JS}_{n,\beta}(\gamma) = \sum_{\substack{ \beta_1+\cdots+\beta_n\\ \beta_i \cdot h = \frac{\beta\cdot h}{n}} }  \prod_{1 \leq i \leq n} n_{0,\beta_i}(\gamma) \eeq in \cite[Conjecture 0.3]{CTd} for general $n$ and $\beta$. Corollary \ref{Cor.JS/GV} proves that Cao-Toda's JS/GV conjecture \eqref{Eq.CT.Conj} when $\beta\cdot h$ and $n$ are coprime.

%%%% We end this subsection with the following question: \begin{question}\label{Question.TwistedVPB} Can we generalize the results in Theorem \ref{Thm.P/S} to the case when there is no universal family on $M_{n,\beta}(X)$?  \end{question} To answer Question \ref{Question.TwistedVPB} above, we need a pushforward formula for {\em twisted} virtual projective bundles. The author does not know how to do this.

The rest of this section is devoted to the proof of Theorem \ref{Thm.P/S}(1).

\subsection{Comparison of obstruction theories}

Given $X$, $\beta$, $n$ as in Theorem \ref{Thm.P/S}, consider the cartesian diagram
\[\xymatrix{
P_{n,\beta}(X) \times X \ar[r]^{p\times 1} \ar[d]^{\pi^P} & M_{n,\beta}(X) \times X \ar[d]^{\pi^M} \\
P_{n,\beta}(X) \ar[r]^p & M_{n,\beta}(X)
}\]
of schemes. Recall that the Oh-Thomas virtual cycles on $P_{n,\beta}(X)$ and $M_{n,\beta}(X)$ are constructed from the symmetric obstruction theories
\begin{align}
\phi_P &: \bR\hom_{\pi^P}(\II,\II)_0[3] \xrightarrow{At(\II)} \LL_{P_{n,\beta}(X)} \label{Eq.phiP}\\
\phi_M &: (\tau^{[1,3]}\bR\hom_{\pi^M}(\GG,\GG))[3] \xrightarrow{At(\GG)} \trunc \LL_{M_{n,\beta}(X)}\label{Eq.phiM}
\end{align}
induced by the Atiyah classes of the universal complex $\II=[\O_{P_{n,\beta}(X)\times X} \to \FF]$ on $P_{n,\beta}(X)\times X$ and the universal sheaf $\GG$ on $M_{n,\beta}(X) \times X$.\footnote{Here the Atiyah class $At(\GG)$ depends on the choice of the universal family $\GG$, but the symmetric obstruction theory $\phi_M$ does not depend of the choice of $\GG$. See Remark \ref{Rem.Atiyah}.} Note that the universal sheaf $\FF$ on $P_{n,\beta}(X)\times X$ can be written as
\[\FF = (p\times 1)^*\GG \otimes (\pi^P)^*\O(1)\]
where $\O(1)$ is the dual of the tautological line bundle of the virtual projective bundle $P_{n,\beta}(X)=\PP(\bR\pi^M _* \GG)$.

\begin{proposition}\label{Prop.P/S.compatibility}
The two symmetric obstruction theories $\phi_P$ and $\phi_M$ in \eqref{Eq.phiP} and \eqref{Eq.phiM} are compatible as follows:
\begin{enumerate}
\item We have a reduction diagram (see Proposition \ref{Prop.Reduction}), i.e., a morphism of distinguished triangles
\beq\label{Eq.P/S.reduction}
\xymatrix{
\bR\hom_{\pi^P}(\II,\FF)_\#[2] \ar[r]^-{\alpha\dual} \ar[d]^{\beta\dual} & \bR\hom_{\pi^P}(\II,\II)_0[3] \ar[r]^-{\delta} \ar[d]^{\alpha} & \bR\hom_{\pi^P}(\FF,\O_{})_\#[4] \ar@{=}[d]\\
(\tau^{[1,3]}\bR\hom_{\pi^P}(\FF,\FF))[3] \ar[r]^-{\beta} & \bR\hom_{\pi^P}(\FF,\II)_\#[4] \ar[r]^-{\gamma} & \bR\hom_{\pi^P}(\FF,\O_{})_\#[4]
}\eeq
for some maps $\alpha$, $\beta$, $\gamma$, $\delta$ and perfect complexes $\bR\hom_{\pi^P}(\FF,\sO)_\#$, $\bR\hom_{\pi^P}(\FF,\II)_\#$, and $\bR\hom_{\pi^P}(\II,\FF)_\#:=(\bR\hom_{\pi^P}(\FF,\II)_\#)\dual[-4]$.
\item The Atiyah classes of $\II$ and $\GG$ are compatible, i.e., the diagram
\beq\label{Eq.P/S.Atiyah}
\xymatrix{
\bR\hom_{\pi^P}(\II,\FF)_\#[2] \ar[r]^{\alpha\dual} \ar[d]^{\beta\dual} & \bR\hom_{\pi^P}(\II,\II)_0[3] \ar[dd]^{At(\II)}\\
(\tau^{[1,3]}\bR\hom_{\pi^P}(\FF,\FF))[3] \ar[d]^{At(\GG)} \ar[rd]|{At(\FF)}& \\
\trunc\LL_{M_{n,\beta}(X)}|_{P_{n,\beta}(X)} \ar[r] & \trunc\LL_{P_{n,\beta}(X)}
}\eeq
\end{enumerate}
commutes. 
%Here $At(\FF)_\#$ and $At(\GG)_\#$ are the unique maps that factors \begin{align*} (\tau^{\leq3}\bR\hom_{\pi^P}(\FF,\FF))[3] \mapright{} \bR\hom_{\pi^P}(\FF,\FF)[3] \mapright{At(\FF)} \LL_{P_{n,\beta}(X)} \mapright \trunc \LL_{P_{n,\beta}(X)\    (\tau^{\leq3}\bR\hom_{\pi^P}(\FF,\FF))[3] \mapright{} \bR\hom_{\pi^P}(\FF,\FF)[3] \mapright{At(\FF)} \LL_{P_{n,\beta}(X)} \end{align*}
\end{proposition}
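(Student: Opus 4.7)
The proof follows the template of Proposition~\ref{Prop.L.Comp.OT} in the Lefschetz principle section. The essential input is the tautological distinguished triangle
\[\II \longrightarrow \O_{P_{n,\beta}(X) \times X} \longrightarrow \FF \longrightarrow \II[1]\]
on $P_{n,\beta}(X) \times X$ coming from the universal section, which plays the same role as the triangle $\cI \to \O_{P(D) \times X} \to d_*\cF$ of Section~\ref{S.Lefschetz}. Serre duality on $\bR\hom_{\pi^P}(\FF,\FF)[3]$ (using that $X$ is Calabi--Yau of dimension $4$) supplies the self-dual structure, replacing the ``$\cL\dual$'' tail in Lemma~\ref{Lem.JJs}, and the truncation $\tau^{[1,3]}$ cuts off the identity and trace directions so that the resulting complex is a 3-term symmetric complex concentrated in degrees $[-2,0]$.

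For part~(1), the first task is to construct the perfect complex $\bR\hom_{\pi^P}(\FF,\II)_\#$, together with its canonical self-dual isomorphism to $\bR\hom_{\pi^P}(\II,\FF)_\#[-2]$, via the general reduction procedure of Lemma~\ref{Lem.Gen.Reduction} in Appendix~\ref{A.Reduction}. This reduction is applied to $\bR\hom_{\pi^P}(\FF,\II)$ by cutting off the identity/trace component produced by the triangle $\II\to \O\to \FF$; the auxiliary $\Hom$-vanishings required to invoke Lemma~\ref{Lem.Gen.Reduction} hold because $\FF$ is a pure one-dimensional sheaf on the Calabi--Yau 4-fold $X$, so $\bR\hom_{\pi^P}(\FF,\O_{P_{n,\beta}(X)\times X})$ is concentrated in the expected range of degrees. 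Once $\bR\hom_{\pi^P}(\FF,\II)_\#$ is in place, the four maps $\alpha,\beta,\gamma,\delta$ fitting into diagram~\eqref{Eq.P/S.reduction} are assembled by iterated applications of the octahedral axiom to the various $\bR\hom$'s of the triangle $\II\to \O\to \FF$, mirroring the construction of $\alpha,\beta,\gamma,\delta$ in Section~\ref{ss.ComparisonofOT}.

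For part~(2), the derived-algebraic-geometry argument from the proof of Proposition~\ref{Prop.L.Comp.OT}(2) applies directly. The Atiyah classes of $\II$ and $\FF$ are compared via the commutative triangle of tangent maps induced by the factorization $P_{n,\beta}(X) \to M_{n,\beta}(X) \to \mathbf{Perf}(X)$, together with the natural equivalence $\FF \cong (p\times 1)^*\GG\otimes (\pi^P)^*\O(1)$ and the functoriality of the Atiyah class with respect to the triangle $\II\to \O\to \FF$. Combining these ingredients with the reduction data of part~(1) and the naturality of trace/cotrace pairings yields the commutative diagram~\eqref{Eq.P/S.Atiyah}.

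The main obstacle is the octahedral bookkeeping required to build $\bR\hom_{\pi^P}(\FF,\II)_\#$ with the correct self-dual structure and to verify every square in~\eqref{Eq.P/S.reduction}. This repeats, in an essentially mechanical way, the combinatorial work of Lemma~\ref{Lem.JJs} and the subsequent construction in Section~\ref{ss.ComparisonofOT}, but careful matching is needed to check that $\tau^{[1,3]}\bR\hom_{\pi^P}(\FF,\FF)[3]$ really arises as the reduction of $\bR\hom_{\pi^P}(\II,\II)_0[3]$ by the isotropic subcomplex $\bR\hom_{\pi^P}(\FF,\O_{P_{n,\beta}(X)\times X})_\#[4]$, and to track how the orientation on $M_{n,\beta}(X)$ induces the orientation on $P_{n,\beta}(X)$ via~\eqref{Eq.P/S.reduction}, in the spirit of Remark~\ref{Rmk.Orientation}.
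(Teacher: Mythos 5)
Your outline follows the same route as the paper: the tautological triangle $\II\to\O_{\sX}\to\FF$, the truncation $\tau^{[1,3]}\bR\hom_{\pi^P}(\FF,\FF)[3]$ arising as the reduction of $\bR\hom_{\pi^P}(\II,\II)_0[3]$, octahedral constructions of $\alpha,\beta,\gamma,\delta$, and the derived-moduli-stack comparison of Atiyah classes. There are, however, two steps where the plan as written would stall without an idea you have not named. For part (1), iterated octahedra only produce the \emph{prototype} diagram \eqref{Eq.P/S.prototype}, whose left square need not commute (the paper says so explicitly). The commutativity $\alpha\circ\alpha\dual=\beta\circ\beta\dual$ in \eqref{Eq.P/S.reduction} is not formal: it is deduced from the fact that the symmetric pairing on $\bR\hom_{\pi^P}(\II,\II)[3]$ factors through the top trace $tr^4$, combined with the vanishing of $tr^4$ composed with $l_e\circ\lambda\dual$ coming from the dual of the octahedron defining $\bR\hom_{\pi^P}(\FF,\II)_\#$ (see \eqref{PS11} and \eqref{PS4}). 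Without this duality/trace argument the left square is exactly the square that fails in the prototype. A related small imprecision: the reduction of Lemma \ref{Lem.Gen.Reduction} is applied to the symmetric complex $\bR\hom_{\pi^P}(\FF,\FF)$ (yielding $\tau^{[1,3]}$, diagram \eqref{R.FFs}), not to $\bR\hom_{\pi^P}(\FF,\II)$, which is not symmetric; its $\#$-version is merely a cone on the co-identity $\O_{\sP}[-1]\to\bR\hom_{\pi^P}(\FF,\II)$, and $\bR\hom_{\pi^P}(\II,\FF)_\#$ is \emph{defined} as the dual of that cone, so no self-duality needs to be established there.

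For part (2), the factorization $P_{n,\beta}(X)\to M_{n,\beta}(X)\to\mathbf{Perf}(X)$ compares $At(\GG)$ with $At((p\times1)^*\GG)$, but the universal sheaf on $P_{n,\beta}(X)\times X$ is $\FF=(p\times1)^*\GG\otimes(\pi^P)^*\O(1)$, so one still has to reconcile $At(\FF)$ with $At((p\times1)^*\GG)$. This requires the product formula $At(\FF)=At((p\times1)^*\GG)\otimes\id+\id\otimes At((\pi^P)^*\O(1))$ together with the observation that the correction term factors through $tr^4:\bR\hom_{\pi^P}(\FF,\FF)[3]\to\O_{\sP}[-1]$ and hence dies after precomposition with $\tau^{\leq3}\bR\hom_{\pi^P}(\FF,\FF)[3]$ and truncation of the cotangent complex. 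You mention the twist but not the mechanism that kills this correction term; stating it is what makes \eqref{Eq.P/S.Atiyah} commute (and is also why $\phi_M$ is independent of the choice of universal family).
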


We will prove Proposition \ref{Prop.P/S.compatibility} through several steps. The proof is similar to that of Proposition \ref{Prop.L.Comp.OT} in \S\ref{ss.ComparisonofOT}.  To simplify the notations, let
\[\sP= P_{n,\beta}(X), \quad \sX = P_{n,\beta}(X) \times X, \quad \pi=\pi^P, \and \sM=M_{n,\beta}(X)\]
be the abbreviations of the schemes. Let
\beq\label{T.IOF}
\xymatrix{
\II \ar[r]^i & \O_{\sX} \ar[r]^s & \FF \ar[r]^e & \II[1]
}\eeq
be the canonical distinguished triangle on $\sX$.

\subsubsection{Prototype} 

We first consider a {\em prototype} of the reduction diagram \eqref{Eq.P/S.reduction} where we use
\[\bR\hom_{\pi}(\FF,\FF), \quad \bR\hom_{\pi}(\FF,\O_{\sX}) \and \bR\hom_{\pi}(\FF,\II)\]
instead of $\tau^{[1,3]}\bR\hom_{\pi}(\FF,\FF)$, $\bR\hom_{\pi}(\FF,\O_{\sX})_\#$, and $\bR\hom_{\pi}(\FF,\II)_\#$.

\begin{lemma}\label{Lem.P/S.prototype}
We have a (not necessarily commutative) diagram of two distinguished triangles
\beq\label{Eq.P/S.prototype}
\xymatrix@C+.6pc{
\bR\hom_{\sX}(\II,\FF)[2] \ar[r]^-{\rho \circ l_e} \ar[d]^{r_e} & \bR\hom_{\sX}(\II,\II)_0[3] \ar[r]^{r_e \circ l_i \circ \iota} \ar[d]^-{r_e \circ \iota} & \bR\hom_{\sX}(\FF,\O_{\sX})[4] \ar@{=}[d]  \ar[r]^-{\eta} &\\
\bR\hom_{\sX}(\FF,\FF)[3] \ar[r]^-{l_e} & \bR\hom_{\sX}(\FF,\II)[4] \ar[r]^-{l_i} & \bR\hom_{\sX}(\FF,\O_{\sX})[4] \ar[r]^-{l_s} &
}\eeq
for some map $\eta$ such that $l_e \circ \eta = \id_{\II} \circ r_s$. Here $r_i$, $r_s$, $r_e$ (resp. $l_i$, $l_s$, $l_e$) are the right (resp. left) composition maps with the maps $i$, $s$, $e$ in \eqref{T.IOF}, and $\iota$, $\rho$ are the canonical maps in the direct sum diagram 
\beq\label{D.II}
\xymatrix{\bR\hom_{\sX}(\II,\II)_0 \ar@<.5ex>[r]^-{\iota} & \bR\hom_{\sX}(\II,\II) \ar@<.5ex>[r]^-{tr} \ar@<.5ex>[l]^-{\rho} & \O_{\sX} \ar@<.5ex>[l]^-{\id_{\II}} } 
\eeq 
where $tr$ denotes the trace map.
\end{lemma}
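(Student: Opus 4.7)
The strategy mirrors that of Lemma \ref{Lem.L.Prototype}: construct the bottom row by a direct functorial application, then construct the top row (together with $\eta$) via the octahedral axiom applied to a well-chosen factorization, and finally extract the stated identity from the octahedral compatibilities. Concretely, for the bottom row I apply $\bR\hom_{\sX}(\FF,-)$ to the canonical distinguished triangle \eqref{T.IOF} to obtain the triangle $\bR\hom_{\sX}(\FF,\II) \xrightarrow{l_i} \bR\hom_{\sX}(\FF,\O_{\sX}) \xrightarrow{l_s} \bR\hom_{\sX}(\FF,\FF) \xrightarrow{l_e} \bR\hom_{\sX}(\FF,\II)[1]$; rotating appropriately and shifting by $[3]$ yields the bottom row of \eqref{Eq.P/S.prototype}.

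For the top row, I apply the octahedral axiom to the factorization $\O_{\sX} \xrightarrow{\id_{\II}} \bR\hom_{\sX}(\II,\II) \xrightarrow{l_i} \bR\hom_{\sX}(\II,\O_{\sX})$, whose composition equals $r_i$. The three feeder triangles are: the split triangle $\O_{\sX} \xrightarrow{\id_{\II}} \bR\hom_{\sX}(\II,\II) \xrightarrow{\rho} \bR\hom_{\sX}(\II,\II)_0$ from \eqref{D.II}; the triangle obtained from applying $\bR\hom_{\sX}(\II,-)$ to \eqref{T.IOF}, whose cone on $l_i$ is $\bR\hom_{\sX}(\II,\FF)$ with boundary $l_e$; and the triangle $\O_{\sX} \xrightarrow{r_i} \bR\hom_{\sX}(\II,\O_{\sX}) \xrightarrow{r_e} \bR\hom_{\sX}(\FF,\O_{\sX})[1] \xrightarrow{-r_s[1]} \O_{\sX}[1]$, obtained from applying $\bR\hom_{\sX}(-,\O_{\sX})$ to \eqref{T.IOF} and rotating. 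The octahedral axiom then produces a fourth distinguished triangle $\bR\hom_{\sX}(\II,\II)_0 \to \bR\hom_{\sX}(\FF,\O_{\sX})[1] \to \bR\hom_{\sX}(\II,\FF) \to \bR\hom_{\sX}(\II,\II)_0[1]$; rotating once and shifting by $[3]$ yields the top row, with $\eta$ the shift of the third connecting map.

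The identifications of the maps follow from the explicit octahedral construction. The leftmost map $\rho \circ l_e$ arises as the boundary $l_e$ of the middle feeder triangle composed with the splitting projection $\rho$; the middle map $r_e \circ l_i \circ \iota$ arises as the natural octahedral map $\cone(\id_{\II}) \to \cone(r_i)$, which factors through the inclusion $\iota$, then $l_i$, then the projection $r_e$ to $\cone(r_i)$. The vertical maps $r_e$ and $r_e \circ \iota$ in \eqref{Eq.P/S.prototype} are right-composition with $e$, and the required square-commutativities then follow from the functoriality of composition and the compatibility of the trace decomposition with these operations.

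Finally, the identity $l_e \circ \eta = \id_{\II} \circ r_s$ follows from two octahedral compatibilities. Exactness in the octahedral triangle $U \xrightarrow{a} W \xrightarrow{b} V \xrightarrow{c} U[1]$ (with $a = r_e \circ l_i \circ \iota$, $c = \rho \circ l_e$, and $b$ the unshift of $\eta$) forces $\rho \circ l_e \circ b = 0$, so $l_e \circ b$ factors through the trace summand as $\id_{\II} \circ \mu$ for some $\mu \colon \bR\hom_{\sX}(\FF,\O_{\sX})[1] \to \O_{\sX}[1]$. The identification $\mu = r_s[1]$ (up to sign) then comes from the octahedral compatibility relating $b$ with the boundaries $\partial_g = l_e$ and $\partial_{gf} = -r_s[1]$, combined with $\partial_f = 0$ arising from the splitting of $\id_{\II}$. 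The main technical obstacle I anticipate is careful sign-tracking through the rotations and octahedron; conceptually, no difficulty beyond the standard octahedral axiom is expected.
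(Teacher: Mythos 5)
Your proposal is correct and follows essentially the same route as the paper: the lower triangle is obtained by applying $\bR\hom_{\sX}(\FF,-)$ to \eqref{T.IOF}, and the upper triangle by the octahedral axiom for the factorization $\O_{\sX}\xrightarrow{\id_{\II}}\bR\hom_{\sX}(\II,\II)\xrightarrow{l_i}\bR\hom_{\sX}(\II,\O_{\sX})$ of $r_i$, with the identity $l_e\circ\eta=\id_{\II}\circ r_s$ extracted from the octahedron's compatibility $f[1]\circ\gamma=\beta\circ\psi$. One caution: the left square of \eqref{Eq.P/S.prototype} genuinely need not commute (since $\iota\circ\rho\neq\id$), so your remark that the ``required square-commutativities follow'' should be restricted to the middle square — but the lemma does not assert more, so this does not affect the proof.
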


\begin{proof}
Applying $\bR\hom_{\sX}(\FF,-)$ to \eqref{T.IOF}, we obtain the lower distinguished triangle in \eqref{Eq.P/S.prototype}. Applying the octahedral axiom to the commutative diagram
\[\xymatrix{
\O_{\sX} \ar@{=}[r] \ar[d]^{\id_{\II}} & \bR\hom_{\sX}(\O_{\sX},\O_{\sX}) \ar[d]^{r_i} \\
\bR\hom_{\sX}(\II,\II) \ar[r]^{l_i} & \bR\hom_{\sX}(\II,\O_{\sX}) 
}\]
with the obvious three distinguished triangles given by \eqref{T.IOF} and \eqref{D.II}, we also obtain the upper distinguished triangle in \eqref{Eq.P/S.prototype}.
\end{proof}

Clearly, the middle square in \eqref{Eq.P/S.prototype} commutes. However, the left square in \eqref{Eq.P/S.prototype} may not commute.

\subsubsection{}

We will replace the complexes in \eqref{Eq.P/S.prototype} as
\begin{align*}
\bR\hom_{\pi}(\FF,\FF) \leadsto& \tau^{[1,3]}\bR\hom_{\pi}(\FF,\FF) \\
& = \cone(\cone(\O_{\sP} \mapright{} \bR\hom_{\pi}(\FF,\FF)) \mapright{} \O_{\sP}[-4])[-1] \\
\bR\hom_{\pi}(\FF,\O_{\sX}) \leadsto & \bR\hom_{\pi}(\FF,\O_{\sX})_\# \\
& = \cone (  \bR\hom_{\pi}(\FF,\O_{\sX}) \mapright{} \O_{\sP}[-4])[-1] \\
\bR\hom_{\pi}(\FF,\II)  \leadsto & \bR\hom_{\pi}(\FF,\II)_\#\\
& = \cone (\O_{\sP}[-1] \mapright{} \bR\hom_{\pi}(\FF,\II)[4])
\end{align*}
to obtain the commutative diagram \eqref{Eq.P/S.reduction}. Here we fix some notations among them.
\begin{enumerate}
\item We can form a commutative diagram
\beq\label{R.FFs}
\xymatrix{
\O_{\sP} \ar@{=}[r] \ar[d]^{t\dual} & \O_{\sP} \ar[d]^{\id_{\FF}} & \\
\tau^{\leq3}\bR\hom_{\pi}(\FF,\FF) \ar[r]^{\mu\dual} \ar[d]^{\nu\dual} & \bR\hom_{\pi}(\FF,\FF) \ar[r]^>>>>>{tr^4} \ar[d]^{\mu} & \O_{\sP}[-4] \ar@{=}[d]\\
\tau^{[1,3]}\bR\hom_{\pi}(\FF,\FF) \ar[r]^{\nu} & \tau^{\geq1}\bR\hom_{\pi}(\FF,\FF) \ar[r]^>>>>>{t}  & \O_{\sP}[-4]
}\eeq
of four distinguished triangles for unique maps $\mu$, $\nu$, and $t$ (cf. Lemma \ref{Lem.Gen.Reduction}). Here $tr^4$ denotes the top trace map.
\item We can also form the following two distinguished triangles
\beq\label{T.FOs}
\xymatrix@+1pc{
\bR\hom_{\pi}(\FF,\O_{\sX})_\# \ar[r]^-{\kappa} & \bR\hom_{\pi}(\FF,\sO_{\sX}) \ar[r]^-{tr^4 \circ l_s} & \O_{\sP}[-4]
}\eeq
\beq\label{T.FIs}
\xymatrix@+1pc{
\O_{\sP}[-1] \ar[r]^-{l_e\circ \id_{\FF}} & \bR\hom_{\pi}(\FF,\II) \ar[r]^-{\lambda} & \bR\hom_{\pi}(\FF,\II)_\#
} \eeq
over $\sP$. Let $\bR\hom_{\pi}(\II,\FF)_\# := (\bR\hom_{\pi}(\FF,\II))_\#\dual[-4].$
\end{enumerate}

\subsubsection{Maps $\alpha$, $\beta$, $\gamma$, $\delta$} 

We now construct the maps $\alpha$, $\beta$, $\gamma$, $\delta$ and the distinguished triangles in \eqref{Eq.P/S.reduction} as follows:

\begin{lemma}\label{Lem.PS.Oct}
We have the following diagrams:
\begin{enumerate}
\item There is a commutative diagram of four distinguished triangles
\beq\label{Oct.zeta}
\xymatrix{
\O_{\sP}[3] \ar@{=}[r] \ar[d]^{\id_{\FF}} & \O_{\sP}[3] \ar[d]^{l_e \circ \id_{\FF}} & \\
\bR\hom_{\pi}(\FF,\FF)[3] \ar[r]^{l_e} \ar[d]^{\mu} & \bR\hom_{\pi}(\FF,\II)[4] \ar[r]^{l_i} \ar[d]^{\lambda} & \bR\hom_{\pi}(\FF,\O_{\sX}) [4] \ar@{=}[d]\\
(\tau^{\geq1}\bR\hom_{\pi}(\FF,\FF))[3] \ar@{.>}[r]^{\zeta} & \bR\hom_{\pi}(\FF,\II)_\#[4] \ar@{.>}[r]^{\omega} & \bR\hom_{\pi}(\FF,\O_{\sX})[4]
}\eeq
for some maps $\zeta$ and $\omega$. Here the three given distinguished triangles are those in \eqref{R.FFs}, \eqref{T.FIs}, and \eqref{T.IOF}.
%Here the left vertical distinguished triangle is the middle vertical distinguished triangle in \eqref{R.FFs}, the middle vertical distinguished triangle is \eqref{T.FIs}, and the middle horizontal distinguished triangle is given by \eqref{T.IOF}.
\item There is a commutative diagram of four distinguished triangles
\beq\label{Oct.betagamma}
\xymatrix{
 & \bR\hom_{\pi}(\FF,\O_{\sX})[3] \ar@{=}[r] \ar[d]^{\mu\circ l_s} & \bR\hom_{\pi}(\FF,\O_{\sX})[3] \ar[d]^{tr^4\circ l_s} \\
(\tau^{[1,3]}\bR\hom_{\pi}(\FF,\FF))[3] \ar@{=}[d] \ar[r]^{\nu} & (\tau^{\geq1}\bR\hom_{\pi}(\FF,\FF))[3]  \ar[r]^{t} \ar[d]^{\zeta} & \O_{\sP}[-1]  \ar[d]\\
(\tau^{[1,3]}\bR\hom_{\pi}(\FF,\FF))[3] \ar@{.>}[r]^{\beta} & \bR\hom_{\pi}(\FF,\II)_\#[4] \ar@{.>}[r]^{\gamma} &  \bR\hom_{\pi}(\FF,\O_{\sX})_\#[4]
}\eeq
%\beq\label{Oct.betagamma} \xymatrix{ & (\tau^{[1,3]}\bR\hom_{\pi}(\FF,\FF))[3] \ar@{=}[r] \ar[d]^{\nu} & (\tau^{[1,3]}\bR\hom_{\pi}(\FF,\FF))[3] \ar@{.>}[d]^{\beta}  \\ \bR\hom_{\pi}(\FF,\O_{\sX})[3] \ar@{=}[d] \ar[r]^{\mu\circ l_s} & (\tau^{\geq1}\bR\hom_{\pi}(\FF,\FF))[3]  \ar[d]^{t} \ar[r]^{\zeta} & \bR\hom_{\pi}(\FF,\II)_\#[4] \ar@{.>}[d]^{\gamma}\\ \bR\hom_{\pi}(\FF,\O_{\sX})[3] \ar[r]^{tr^4\circ l_s} & \O_{\sP}[-1]  \ar[r] &\bR\hom_{\pi}(\FF,\O_{\sX})_\#[4] }\eeq 
for some maps $\beta$ and $\gamma$ such that $\kappa \circ \gamma = \omega$. Here the three given distinguished triangles are those in \eqref{Oct.zeta}, \eqref{T.FOs}, and \eqref{R.FFs}.
%Here the middle vertical distinguished triangle is the bottom horizontal distinguished triangle in \eqref{Oct.zeta}, the right vertical distinguished triangle is \eqref{T.FOs}, and the middle horizontal distinguished triangle is the bottom distinguished triangle in \eqref{R.FFs}.
\item There is a commutative diagram of four distinguished triangles
\beq\label{Oct.alphadelta}
\xymatrix{
& \bR\hom_{\pi}(\FF,\O_{\sX})[3] \ar[d]^{\eta} \ar@{=}[r] & \bR\hom_{\pi}(\FF,\O_{\sX})[3] \ar[d]^{tr^4\circ l_s}\\
\bR\hom_{\pi}(\II,\FF)_\#[2] \ar[r]^{\lambda\dual} \ar@{=}[d] & \bR\hom_{\pi}(\II,\FF)[2] \ar[r]^{tr^4\circ r_e} \ar[d]^{\rho\circ l_e} & \O_{\sP}[-1] \ar[d]\\
\bR\hom_{\pi}(\II,\FF)_\#[2] \ar@{.>}[r]^{\alpha\dual} & \bR\hom_{\pi}(\II,\II)_0[3] \ar@{.>}[r]^{\delta} & \bR\hom_{\pi}(\FF,\O_{\sX})_\#[4]
}\eeq
for some maps $\alpha$ and $\delta$ such that $\kappa \circ \delta =r_e \circ l_i \circ \iota$. Here the three given distinguished triangles are those in \eqref{Eq.P/S.prototype}, \eqref{T.FOs}, and \eqref{T.FIs}.
%Here the middle vertical distinguished triangle is the top horizontal distinguished triangle in \eqref{Eq.P/S.prototype}, the right vertical distinguished triangle is \eqref{T.FOs}, and the middle horizontal distinguished triangle is dual of \eqref{T.FIs}.
\end{enumerate}
\end{lemma}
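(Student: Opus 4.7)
The plan is to prove each of the three parts by applying the octahedral axiom to a suitable composable pair, using the three already-given distinguished triangles to identify the accompanying cones; the additional compatibility relations $\kappa \circ \gamma = \omega$ in (2) and $\kappa \circ \delta = r_e \circ l_i \circ \iota$ in (3) are then verified by chasing through the universal squares produced by the axiom. In each case I first check commutativity of the upper-right $2 \times 2$ square so that the axiom applies, and then read off the bottom row as the new distinguished triangle together with the required maps.

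For (1), I apply the octahedral axiom to the composition
\[
\O_{\sP}[3] \xrightarrow{\id_{\FF}} \bR\hom_{\pi}(\FF,\FF)[3] \xrightarrow{l_e} \bR\hom_{\pi}(\FF,\II)[4].
\]
The three accompanying cones are identified, via suitable shifts of \eqref{R.FFs}, \eqref{T.FIs}, and \eqref{T.IOF}, with $(\tau^{\geq 1}\bR\hom_{\pi}(\FF,\FF))[3]$, $\bR\hom_{\pi}(\FF,\II)_\#[4]$, and $\bR\hom_{\pi}(\FF,\O_{\sX})[4]$ respectively. The upper-right square is tautological, so the axiom directly supplies $\zeta$, $\omega$, and the claimed distinguished bottom row.

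For (2), I apply the octahedral axiom to the composition
\[
\bR\hom_{\pi}(\FF,\O_{\sX})[3] \xrightarrow{\mu \circ l_s} (\tau^{\geq 1}\bR\hom_{\pi}(\FF,\FF))[3] \xrightarrow{t} \O_{\sP}[-1].
\]
Commutativity of the upper-right square follows from the identity $t \circ \mu = tr^4$ in \eqref{R.FFs}, and the three cones are identified via the middle column (\eqref{Oct.zeta} rotated), the right column (\eqref{T.FOs} shifted), and the middle row (\eqref{R.FFs} rotated), yielding $\beta$, $\gamma$, and the new bottom row. To verify $\kappa \circ \gamma = \omega$, I show that both sides vanish after precomposition with $\zeta$: for $\omega \circ \zeta$ this is the composability in the bottom row of \eqref{Oct.zeta}, while for $\kappa \circ \gamma \circ \zeta$ it follows from the commutativity of \eqref{Oct.betagamma} combined with $\kappa \circ \partial = 0$ in the triangle \eqref{T.FOs}, where $\partial$ is its connecting map. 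Both maps thus factor uniquely through the quotient $\bR\hom_{\pi}(\FF,\O_{\sX})[4]$ of $\zeta$, and unwinding the octahedral data shows that both induced factorizations are the identity.

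For (3), the composable pair is
\[
\bR\hom_{\pi}(\FF,\O_{\sX})[3] \xrightarrow{\eta} \bR\hom_{\pi}(\II,\FF)[2] \xrightarrow{tr^4 \circ r_e} \O_{\sP}[-1].
\]
The commutativity of the upper-right square is the identity $tr^4 \circ r_e \circ \eta = tr^4 \circ l_s$, which follows from Lemma \ref{Lem.P/S.prototype}'s relation $l_e \circ \eta = \id_{\II} \circ r_s$ together with the cyclic invariance of the top trace on the Calabi-Yau fourfold. The three cones are identified through \eqref{Eq.P/S.prototype}, the dual of \eqref{T.FIs}, and \eqref{T.FOs}, supplying $\alpha$, $\delta$, and the bottom row. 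The compatibility $\kappa \circ \delta = r_e \circ l_i \circ \iota$ is established by precisely the same cone-chasing strategy as in (2): both sides are shown to agree after precomposing with $\rho \circ l_e$ by the octahedral commutativity, and the unique induced factorizations are matched. The main technical obstacle throughout is verifying these two compatibility relations, since they compare outputs of two distinct octahedral applications and demand careful tracking of the universal commutative squares produced by the axiom.
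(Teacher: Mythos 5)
Your proposal is correct and takes essentially the same route as the paper: each diagram is produced by the octahedral axiom applied to the composition you name, with the three cones identified via \eqref{R.FFs}, \eqref{T.FOs}, \eqref{T.FIs}, \eqref{T.IOF}, \eqref{Oct.zeta} and \eqref{Eq.P/S.prototype}, and the only non-formal input is the commutativity of the upper-right square in \eqref{Oct.alphadelta}, which you derive exactly as the paper does, from $l_e\circ\eta=\id_{\II}\circ r_s$ (Lemma \ref{Lem.P/S.prototype}) together with the symmetry of the trace, giving $tr^4\circ r_e\circ \eta = tr^4\circ l_s$. One remark: the relations $\kappa\circ\gamma=\omega$ and $\kappa\circ\delta=r_e\circ l_i\circ\iota$ are among the compatibilities that the octahedral axiom already guarantees for the maps it produces (they identify the connecting maps to $\bR\hom_{\pi}(\FF,\O_{\sX})[4]$ in the two rows), so your separate cone-chasing verification is redundant, and as written its ``factors uniquely'' step would in any case require a $\Hom$-vanishing that you do not check.
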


\begin{proof}
(1) and (2) follow directly from the octahedral axiom. We will only prove (3). It suffices to show that the top right square in \eqref{Oct.alphadelta} commutes. Consider a diagram
\beq\label{PS2}
\xymatrix{
\bR\hom_{\sX}(\FF,\O_{\sX}) \ar[r]^-{\eta} \ar@/^0.8cm/[rr]^{l_s} \ar[d]^{r_s} & \bR\hom_{\sX}(\II,\FF) \ar[r]^-{r_e} \ar[d]^{l_e} & \bR\hom_{\sX}(\FF,\FF) \ar[d]^{tr} \\
\O_{\sX} \ar[r]^-{\id_{\II}} & \bR\hom_{\sX}(\II,\II) \ar[r]^-{tr} & \O_{\sX}
}\eeq
of commutative squares where the left square in \eqref{PS2} commutes by Lemma \ref{Lem.P/S.prototype}. Then we have
\[tr \circ r_e \circ \eta = tr \circ l_e \circ \eta = \tr \circ \id_{\II} \circ r_s = r_s = tr \circ l_s.\]
Applying $\bR\pi_*$ to \eqref{PS2}, we obtain the desired commutative diagram.
\end{proof}

\subsubsection{Reduction diagram}

Now we can form the reduction diagram \eqref{Eq.P/S.reduction}.

\begin{proof}[Proof of Proposition \ref{Prop.P/S.compatibility}(1)]
We first prove the commutativity of the left square in \eqref{Eq.P/S.reduction}. Indeed, consider the commutative diagram
\[\xymatrix{
\bR\hom_{\pi}(\II,\FF)_\#[2] \ar[r]^{\lambda\dual} \ar[d]^{\zeta\dual} & \bR\hom_{\pi}(\II,\FF)[2] \ar[r]^{l_e} \ar[d]^{r_e} & \bR\hom_{\pi}(\II,\II)[3] \ar[d]^{r_e} \\
(\tau^{\leq3}\bR\hom_{\pi}(\FF,\FF))[3] \ar[r]^{\mu\dual} \ar[d]^{\nu\dual} & \bR\hom_{\pi}(\FF,\FF)[3] \ar[r]^{l_e} \ar[d]^{\mu} & \bR\hom_{\pi}(\FF,\II)[4] \ar[d]^{\lambda}\\
(\tau^{[1,3]}\bR\hom_{\pi}(\FF,\FF))[3] \ar[r]^{\nu} & \tau^{\geq1}\bR\hom_{\pi}(\FF,\FF))[3] \ar[r]^{\zeta} & \bR\hom_{\pi}(\FF,\II)_\#[4]
}\]
given by \eqref{R.FFs} and \eqref{Oct.zeta}. Note that $\alpha$ and $\beta$ are defined as the compositions
\begin{align*}
\alpha &= \lambda  \circ r_e \circ \iota : \bR\hom_{\pi}(\II,\II)_0[3] \to \bR\hom_{\pi}(\FF,\II)_\#[4]\\
\beta &= \zeta \circ \nu : (\tau^{[1,3]} \bR\hom_{\pi}(\FF,\FF))[3] \to \bR\hom_{\pi}(\FF,\II)_\#[4]
\end{align*}
in \eqref{Oct.alphadelta} and \eqref{Oct.betagamma}. Thus it suffices to show
\beq\label{PS4}
\lambda \circ r_e \circ \iota \circ \rho \circ l_e \circ \lambda\dual =\lambda \circ r_e \circ l_e \circ \lambda\dual
\eeq
to prove $\alpha \circ \alpha\dual = \beta\circ \beta\dual$. By the dual diagram of \eqref{Oct.zeta}, we have
\beq\label{PS11}
tr^4\circ l_e\circ\lambda\dual = tr^4 \circ r_e \circ \lambda\dual =0.
\eeq
Since the pairing on $\bR\hom_{\pi}(\II,\II)[3]$ is given by the composition 
\[\bR\hom_{\pi}(\II,\II)[3] \otimes \bR\hom_{\pi}(\II,\II)[3] \mapright{\cup} \bR\hom_{\pi}(\II,\II)[6] \mapright{tr^4} \O_{\sP}[2],\]
the equation \eqref{PS11} proves \eqref{PS4}.

%% Equivalently, it suffices to show that  \beq\label{PS7} tr^4 \circ \cup\circ (l_e \circ \lambda\dual)^{\otimes2} = tr^4 \circ \cup \circ (\iota \circ \rho \circ l_e \circ \lambda\dual)^{\otimes2} :(\bR\hom_{\pi}(\II,\FF)_\#[2])^{\otimes2}  \to \O_{\sP}[2] \eeq where $\cup :(\bR\hom_{\pi}(\II,\II)[3])^{\otimes2} \to \bR\hom_{\pi}(\II,\II)[6]$ denotes the composition map.  Hence the following simple formula \[ tr(A-tr(A)\cdot1)(B-tr(B)\cdot1) = tr(AB) -tr(A)tr(B)\] shows that \eqref{PS6} implies \eqref{PS7}.

We then prove the commutativity of the right square in \eqref{Eq.P/S.reduction}. Since 
\[\Hom_{\sP}(\bR\hom_{\pi}(\II,\II)_0[3], \O_{\sP}[-1]) = 0,\]
it suffices to show that the square in the diagram
\[\xymatrix{
\bR\hom_{\pi}(\II,\II)_0[3] \ar[r]^{\delta} \ar[d]^{\alpha=\lambda\circ r_e \circ \iota} \ar@/^0.8cm/[rr]^{r_e \circ l_i \circ \iota} & \bR\hom_{\pi}(\FF,\O_{\sX})_\#[4] \ar[r]^{\kappa} & \bR\hom_{\pi}(\FF,\O_{\sX})[4] \ar@{=}[d] \\
\bR\hom_{\pi}(\FF,\II)_\#[4] \ar[r]^{\gamma}  \ar@/_0.8cm/[rr]_{\omega}& \bR\hom_{\pi}(\FF,\O_{\sX})_\#[4] \ar[r]^{\kappa} & \bR\hom_{\pi}(\FF,\O_{\sX})[4]
}\]
commutes. By Lemma \ref{Lem.PS.Oct}(2), Lemma \ref{Lem.PS.Oct}(3), and equation $\omega \circ \lambda =l_i$ in \eqref{Oct.zeta}, we have
\[\kappa \circ \gamma \circ \alpha = \omega \circ \lambda \circ r_e \circ \iota = l_i \circ r_e \circ \iota = r_e \circ l_i \circ \iota =\kappa \circ \delta,\]
which proves the claim.

By Lemma \ref{Lem.R}, the commutativity of the two squares in \eqref{Eq.P/S.reduction} suffices to form the desired reduction diagram. It completes the proof.
\end{proof}

\subsubsection{Atiyah classes}

Finally, we compare the Atiyah classes.

\begin{proof}[Proof of Proposition \ref{Prop.P/S.compatibility}(2)]

We first compare the Atiyah classes of $\II$ and $\FF$. The functoriality of Atiyah classes
\[\xymatrix{
\FF \ar[r]^e \ar[d]^{At(\FF)} & \II[1] \ar[d]^{At(\II)}\\
\FF\otimes \LL_{\sX} [1] \ar[r]^{e\otimes 1} & \II \otimes \LL_{\sX}[2]
}\]
gives us a commutative diagram
\beq\label{PS.A.1}
\xymatrix{
\bR\hom_{\pi}(\II,\FF)[2] \ar[r]^{l_e} \ar[d]^{r_e} & \bR\hom_{\pi}(\II,\II)[3] \ar[d]^{At(\II)} \\
\bR\hom_{\pi}(\FF,\FF)[3] \ar[r]^-{At(\FF)} & \LL_{\sP}\,.
}\eeq
Since $\lambda\circ l_e = \zeta\circ \mu$ by \eqref{Oct.zeta}, $\alpha\dual = \rho \circ l_e \circ \lambda\dual$ by \eqref{Oct.alphadelta}, and the composition
\[\bR\pi_*\O_{\sX}[3] \xrightarrow{\id_{\II}} \bR\hom_{\pi}(\II,\II)[3] \xrightarrow{At(\II)} \LL_{\sP}\] 
is zero by \cite[Proposition 3.2]{STV}, the commutative diagram \eqref{PS.A.1} induces a commutative diagram
\beq\label{PS.A.2}
\xymatrix{
\bR\hom_{\pi}(\II,\FF)_\#[2] \ar[r]^{\alpha\dual} \ar[d]^{\zeta\dual} & \bR\hom_{\pi}(\II,\II)_0[3] \ar[d]^{At(\II)} \\
(\tau^{\leq3}\bR\hom_{\pi}(\FF,\FF))[3] \ar[r]^>>>>>>>>>>{At(\FF)} & \LL_{\sP}\,.
}\eeq
Since $\Hom_{\sP}(\O_{\sP}[4], \trunc \LL_{\sP})=0$, the commutative diagram \eqref{PS.A.2} gives us the commutativity of the upper square in \eqref{Eq.P/S.Atiyah}.

We then compare the Atiyah classes of 
\[(p\times 1)^*\GG \and \FF = (p\times 1)^*\GG \otimes \pi^*\O_{\sP}(1).\] 
By \cite[p. 260]{HL}\footnote{In \cite{HL}, the formula \eqref{PS.A.3} is proved for Atiyah classes on smooth schemes, but \cite[Chapitre IV, 2.3.7]{Illusie} shows that it also holds for Atiyah classes on arbitrary schemes.}, we have
\beq\label{PS.A.3}
At(\FF) = At((p\times 1)^*\GG)\otimes \id_{\pi^*\O_{\sP}(1)} + \id_{(p\times1)^*\GG}\otimes At(\pi^*\O_{\sP}(1)) : \FF \to \FF\otimes \LL_{\sX} [1].
\eeq
Also, we can deduce
\[At(\pi^*\O_{\sP}(1)) = (\pi^*At(\O_{\sP}(1)),0) : \O_{\sX} \to \LL_{\sX}[1] = \pi^*\LL_{\sP}[1] \oplus (\pi^X)^*\LL_{X}[1]\]
from the commutative diagram
\[\xymatrix{ \sX \ar[r]^{\pi} \ar@/_.4cm/[rr]_-{\pi^*\O_{\sP}(1)} & \sP \ar[r]^-{\O_{\sP}(1)} & \mathbf{Perf} }\]
by \cite[Appendix A]{STV}, where $\mathbf{Perf}$ denotes the derived moduli stack of perfect complexes and $\pi^X : \sP \times X \to X$ denotes the projection map. 
%Here we used that the diagram \[\xymatrix@+1pc{\O_{\sX} \ar[r]^{p^*At(\O_{\sP}(1))} \ar[rd]_{At(p^*\O_{\sP}(1))} & p^*\LL_{\sP} \ar[d] \\ & \LL_{\sX} }\] commutes, which can be shown by considering the tangent maps for
Hence the difference of the two induced maps
\beq\label{PS.A.4}
\xymatrix{
\bR\hom_{\pi}(\FF,\FF)[3] \ar@<.4ex>[rr]^-{At(\FF)}  \ar@<-.4ex>[rr]_-{At((p\times1)^*\GG)} && \LL_{\sP}
}\eeq
is the composition
\[\bR\hom_{\pi}(\FF,\FF)[3] \mapright{tr^4} \O_{\sP}[-1] \xrightarrow{At(\O_{\sP}(1))} \LL_{\sP}.\]
By \eqref{R.FFs}, the compositions of the two maps in \eqref{PS.A.4} with the map
\[\mu\dual : (\tau^{\leq3}\bR\hom_{\pi}(\FF,\FF))[3] \to \bR\hom_{\pi}(\FF,\FF)[3]\]
coincide. Since $\Hom_{\sP}(\O_{\sP}[4], \trunc \LL_{\sP})=0$, the two maps
\beq\label{PS.A.5}\xymatrix{
(\tau^{[1,3]}\bR\hom_{\pi}(\FF,\FF))[3] \ar@<.4ex>[rr]^-{At(\FF)}  \ar@<-.4ex>[rr]_-{At((p\times1)^*\GG)} && \trunc \LL_{\sP}
}\eeq
given by the Atiyah classes of $\FF$ and $(p\times 1)^*\GG$ are equal.

Finally, we compare the Atiyah classes of $\GG$ and $(p \times 1)^*\GG$. From the commutative diagram
\[\xymatrix{ \sP \ar[r]^-p \ar@/_.5cm/[rr]_-{(p\times1)^*\GG} & \sM \ar[r]^-{\GG} & \mathbf{Perf}(X) }\]
we can deduce that the triangle
\beq\label{PS.A.6}
\xymatrix{
p^*\bR\hom_{\pi}(\GG,\GG)[3] \ar[rd]^-{At((p\times1)^*\GG)} \ar[d]_{At(\GG)}\\
p^*\LL_{\sM} \ar[r] & \LL_{\sP}
}\eeq
commutes by \cite[Appendix A]{STV}.

Combining the commutative triangle \eqref{PS.A.6} with the equality \eqref{PS.A.5}, we deduce the commutativity of the lower triangle in \eqref{Eq.P/S.Atiyah}.
\end{proof}

\begin{remark}\label{Rem.Atiyah}
By the arguments in the second paragraph of the proof of Proposition \ref{Prop.P/S.compatibility}(2), we can deduce that the symmetric obstruction theory $\phi_M$ in \eqref{Eq.phiM} is independent of the choice of the universal family $\GG$.
\end{remark}

\subsection{Comparison of virtual cycles}

Finally, we can prove the pullback formula \eqref{Eq.P/S.Pullback} in Theorem \ref{Thm.P/S} from the compatibility of the obstruction theories in Proposition \ref{Prop.P/S.compatibility} and the virtual pullback formula in Theorem \ref{Thm.Functoriality}.

\begin{proof}[Proof of Theorem \ref{Thm.P/S}]
By Proposition \ref{Prop.P/S.compatibility}, we can form morphisms of distinguished triangles
\beq\label{PS.11}
\xymatrix{
\bR\hom_{\pi}(\II,\FF)_\#[2] \ar[r]^-{\alpha\dual} \ar[d]^{\beta\dual} & \bR\hom_{\pi}(\II,\II)_0[3] \ar[r]^-{\delta} \ar[d]^{\alpha} & \bR\hom_{\pi}(\FF,\O_{\sX})_\#[4] \ar@{=}[d]\\
(\tau^{[1,3]}\bR\hom_{\pi}(\FF,\FF))[3] \ar[r]^-{\beta} \ar[d]^{\phi_M}& \bR\hom_{\pi}(\FF,\II)_\#[4] \ar[r]^-{\gamma} \ar@{.>}[d]^{\phi_P'} & \bR\hom_{\pi}(\FF,\O_{\sX})_\#[4] \ar@{.>}[d]^{\phi'}\\
\trunc p^* \LL_{\sM} \ar[r] & \trunc \LL_{\sP} \ar[r] & \bL'_{\sP/\sM}
}\eeq
for some maps $\phi_P'$ and $\phi$ such that $\phi_P ' \circ \alpha =\phi_P$, as in the proof of Theorem \ref{Thm.Lefschetz} in \S\ref{ss.ComparisonofVirtualCycles}. By the long exact sequence associated to \eqref{PS.11}, we deduce that
\beq\label{Eq.12}
\phi : \bR\hom_{\pi}(\FF,\O_{\sX})_\#[4] \mapright{\phi'} \bL'_{\sP/\sM} \to \trunc \bL'_{\sP/\sM} \cong \trunc \LL_{\sP/\sM}
\eeq
is a perfect obstruction theory. Since the virtual pullback 
\[p^! : A_* (\sM) \to A_*(\sP)\]
depends only on the virtual cotangent complex $\bR\hom_{\pi}(\FF,\O_{\sX})_\#[4]$, but not on the map $\phi$ by Lemma \ref{Lem.VPindepofPhi} below, the virtual pullback given by the perfect obstruction theory \eqref{Eq.12} is equal to the virtual pullback given by the perfect obstruction theory of the virtual projective bundle $\sP = \PP(\bR\pi^M _* \GG)$ in Definition \ref{Def.VirtualProjectiveBundle}.

Since we have a compatible triple of obstruction theories \eqref{PS.11}, we have a virtual pullback formula
\[p^![\sM]\virt =[\sP]\virt\]
by Theorem \ref{Thm.Functoriality}. It completes the proof.
\end{proof}

We need Lemma \ref{Lem.VPindepofPhi} below to complete the proof of Theorem \ref{Thm.P/S} above.

\begin{lemma}\label{Lem.VPindepofPhi}
Let $\psi_1,\psi_2 : \KK \rightrightarrows \bL_{\cX/\cY}$ be two perfect obstruction theories for a morphism $f:\cX \to \cY$ of quasi-projective schemes. Then the two associated virtual pullbacks coincide.
\end{lemma}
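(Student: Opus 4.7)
The plan is to adapt the deformation argument of \cite[Example 4.1.8]{Ful}, which shows that all sections of a vector bundle induce the same Gysin pullback. The setup is as follows: I would form the map
\[
\widetilde{\psi} := (1-t)\,\psi_1 + t\,\psi_2 \;:\; \KK\boxtimes\O_{\A^1} \longrightarrow \bL_f\boxtimes\O_{\A^1} \;\cong\; \bL_{(f\times \id_{\A^1})/\A^1}
\]
on $\cX\times\A^1$, where $t$ is the coordinate on $\A^1$. Here the tor-amplitude $[-1,0]$ condition is automatic, and $h^0(\widetilde{\psi})=(1-t)h^0(\psi_1)+t\,h^0(\psi_2)=h^0(\psi_1)$ is the canonical isomorphism to $h^0(\bL_f)$ because $h^0(\psi_1)=h^0(\psi_2)$. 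The only non-formal condition to verify is that $h^{-1}(\widetilde{\psi})$ is surjective on every fiber of $\cX\times\A^1\to\A^1$; this is an open condition that holds at $t=0$ and $t=1$ by hypothesis.

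Assuming this surjectivity (or, if it fails on a proper closed subset, passing to the dense open complement where it holds and using that $0,1$ lie in it), $\widetilde{\psi}$ is a relative perfect obstruction theory for $f\times\id_{\A^1}$. It therefore yields a virtual pullback
\[
(f\times\id_{\A^1})^!_{\widetilde\psi}\;:\;A_*(\cY\times\A^1)\longrightarrow A_*(\cX\times\A^1).
\]
By bivariance of virtual pullbacks (Proposition \ref{Prop.Bivar}(3), or Manolache \cite{Man}) applied to the Cartesian squares cutting out the fibers over $t=0,1$, we have
\[
i_j^{!}\circ (f\times\id_{\A^1})^!_{\widetilde\psi} \;=\; f^!_{\psi_j}\circ i_j^{!}, \qquad j=0,1,
\]
where $i_j\colon \cY\hookrightarrow \cY\times\A^1$ is the inclusion of the fiber over $j$. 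For any $\alpha\in A_*(\cY)$, both $i_0^{!}\bigl(\alpha\boxtimes[\A^1]\bigr)$ and $i_1^{!}\bigl(\alpha\boxtimes[\A^1]\bigr)$ equal $\alpha$, so
\[
f^!_{\psi_0}(\alpha)\;=\;i_0^{!}\bigl((f\times\id_{\A^1})^!_{\widetilde\psi}(\alpha\boxtimes[\A^1])\bigr),\qquad f^!_{\psi_1}(\alpha)\;=\;i_1^{!}\bigl((f\times\id_{\A^1})^!_{\widetilde\psi}(\alpha\boxtimes[\A^1])\bigr).
\]
Since $i_0,i_1$ are two sections of the trivial line bundle $\cX\times\A^1\to \cX$, Fulton's \cite[Example 4.1.8]{Ful} gives $i_0^{!}=i_1^{!}\colon A_*(\cX\times\A^1)\to A_*(\cX)$, yielding $f^!_{\psi_0}(\alpha)=f^!_{\psi_1}(\alpha)$.

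The main obstacle is the potential failure of $h^{-1}(\widetilde\psi)$ to be surjective throughout $\cX\times\A^1$. If this happens, I would bypass it by choosing a global resolution $\KK\dual\cong[E_0\to E_{-1}]$ with $E_\bullet$ locally free and working directly with the two closed embeddings $\widetilde\fC_f^{(j)}\hookrightarrow E_{-1}$ (the $E_0$-torsors over $\fC_f$ induced by $\psi_j$). Their linear interpolation in the fibers of $E_{-1}\to\cX$ defines a subscheme of $E_{-1}\times\A^1$ flat over $\A^1$ with fibers $\widetilde\fC_f^{(j)}$ at $t=j$; the classical rational equivalence argument (exactly as in \cite[Example 4.1.8]{Ful}) then shows $0^!_{E_{-1}}[\widetilde\fC_f^{(0)}]=0^!_{E_{-1}}[\widetilde\fC_f^{(1)}]$ after intersecting with $\sp_f(\alpha)$, from which the identity of virtual pullbacks follows by Manolache's description of $f^!_{\psi_j}$.
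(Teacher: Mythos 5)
Your main route has a genuine gap at the very first step: the linear interpolation $\widetilde\psi_t=(1-t)\psi_1+t\psi_2$ need not be an obstruction theory for intermediate $t$. You assert $h^0(\widetilde\psi_t)=h^0(\psi_1)$ ``because $h^0(\psi_1)=h^0(\psi_2)$'', but two obstruction theories with the same source need not induce the same map on $h^0$ (e.g.\ $\psi_2=\psi_1\circ\sigma$ for an automorphism $\sigma$ of $\KK$), so $h^0(\widetilde\psi_t)=h^0(\psi_1)\circ\bigl((1-t)+t\,h^0(\psi_1)^{-1}h^0(\psi_2)\bigr)$ can degenerate at intermediate $t$. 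Likewise the surjectivity of $h^{-1}(\widetilde\psi_t)$ can fail for $t\neq 0,1$; since $\cX$ is only quasi-projective, the image in $\A^1$ of the closed failure locus in $\cX\times\A^1$ is merely constructible and can equal $\A^1\setminus\{0,1\}$, so there need not exist any open $U\subseteq\A^1$ containing both $0$ and $1$ over which $\widetilde\psi$ is a family of obstruction theories --- your ``pass to the dense open complement'' fix does not apply. (The deformation arguments used elsewhere in this paper work only because the interpolated maps are constrained to fit into a fixed morphism of distinguished triangles, which forces the obstruction-theory property by the five lemma; no such constraint is available for two arbitrary $\psi_1,\psi_2$.) Your fallback is also not a valid construction: the fibrewise ``linear interpolation'' of two subcones of a vector bundle is neither flat over $\A^1$ nor does it specialize scheme-theoretically to the two given cones (already for the two coordinate axes in $\A^2$ the interpolation jumps in dimension), and \cite[Example 4.1.8]{Ful} is not a rational-equivalence statement.

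The correct argument, and the one the paper gives, avoids deformations entirely. By bivariance one reduces to showing $f^!_{\psi_1}[\cY]=f^!_{\psi_2}[\cY]$ for $\cY$ integral. Choosing a global resolution $[K_1\dual\to K_0\dual]\cong\KK$, the two obstruction theories give two closed embeddings $\fC_{\cX/\cY}\rightrightarrows[K_1/K_0]$, hence two cones $C_1,C_2\subseteq K_1$, each a $K_0$-torsor over $\fC_{\cX/\cY}$. Their fibre product $C_3=C_1\times_{\fC_{\cX/\cY}}C_2$ sits in exact sequences of cones $0\to K_0\to C_3\to C_i\to 0$ for $i=1,2$, so the exact-sequence-of-cones formula of \cite[Example 4.1.8]{Ful} gives $s(C_1)=c(K_0)\cap s(C_3)=s(C_2)$, and therefore $0^!_{K_1}[C_1]=(s(C_1)\cdot c(K_1))_{vd}=(s(C_2)\cdot c(K_1))_{vd}=0^!_{K_1}[C_2]$. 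This is the point you are missing: the two embeddings of the \emph{same} intrinsic normal cone into the \emph{same} bundle stack admit a common $K_0$-torsor over them, which pins down the Segre class, and hence the virtual pullback, independently of $\psi$.
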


\begin{proof}
Since the virtual pullbacks commute with projective pushforwards, it suffices to show $f^!_{\psi_1}([\cY])=f^!_{\psi_2}([\cY])$ for an integral scheme $\cY$. Choose a global resolution $[K_1\dual \to K_0\dual] \cong \KK$ of vector bundles. Consider the fiber diagrams
\[\xymatrix{
C_1 \ar[r] \ar[d]^{p_1} & K_1 \ar[d]\\
\fC_{\cX/\cY} \ar[r]^-{\fC(\psi_1)} & [K_1/K_0]
} \qquad 
\xymatrix{
C_2 \ar[r] \ar[d]^{p_2} & K_1 \ar[d]\\
\fC_{\cX/\cY} \ar[r]^-{\fC(\psi_2)} & [K_1/K_0]
}\qquad
\xymatrix{
C_3 \ar[r] \ar[d] & C_1 \ar[d]^-{p_1}\\
C_2 \ar[r]^-{p_2} & \fC_{\cX/\cY}
}\]
where the two closed embeddings $\fC_{\cX/\cY} \rightrightarrows [K_1/K_0]$ are given by the two obstruction theories $\psi_1$ and $\psi_2$. Hence we have two short exact sequence
\[\xymatrix{K_0 \ar[r] & C_3 \ar[r] & C_1} \and \xymatrix{K_0\ar[r] & C_3 \ar[r] & C_2}\]
of cones over $\cX$. Therefore, we have
\[f_{\psi_1}^!([\cY]) = 0^!_{K_1}[C_1] = (s(C_1)\cdot c(K_1))_{vd} = (s(C_3)\cdot c(K_0)\cdot c(K_1))_{vd} = f^!_{\psi_2}([\cY])\]
by \cite[Example 4.1.8]{Ful}, where $vd=\dim(\cY) +\rank(\KK)$.
\end{proof}

\appendix

\section{Torus localization without quasi-projectivity}\label{A.quasi-projectivity}

Here we generalize square root virtual pullbacks in \S\ref{S.sqrtVP} and its functorliality in \S\ref{S.Functoriality} to DM morphisms between algebraic stacks. The Kimura sequence in Lemma \ref{Lem.Kimura} is the essential ingredient. As a corollary, we prove the torus localization formula without assuming the quasi-projectivity hypothesis.

\subsection{Kimura sequence}

\begin{lemma}[{\cite{BP,Kimura}}]\label{Lem.Kimura}
Let $p : \cY \to \cX$ be a projective surjective morphism between algebraic stacks with affine stabilizers. Then we have a right exact sequence
\[\xymatrix@C+1.5pc{
A_* (\cY \times_{\cX} \cY) \ar[r]^-{(p_1)_*-(p_2)_*} & A_*(\cY) \ar[r]^-{p_*} & A_*(\cX) \ar[r] & 0,
}\]
where $p_1,p_2:\cY\times_{\cX} \cY \rightrightarrows \cY$ denote the projection maps.
\end{lemma}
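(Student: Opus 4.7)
The plan is to generalize Kimura's original argument \cite{Kimura} from schemes to algebraic stacks with affine stabilizers, using Kresch's definition of $A_*(\cX)$ (cycles on integral closed substacks modulo rational equivalence, defined via DM-type presentations).

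First I would prove surjectivity of $p_*$. Given an integral closed substack $\cZ \hookrightarrow \cX$, the base change $\cY \times_{\cX} \cZ \to \cZ$ is projective and surjective, so there is an irreducible component $\cZ' \subseteq \cY \times_{\cX} \cZ$ that maps generically finitely onto $\cZ$ with some positive generic degree $d$. Then $p_*[\cZ'] = d \cdot [\cZ]$, and since we work with $\Q$-coefficients, $[\cZ]$ lies in the image of $p_*$. Noetherian induction on $\dim \cZ$, combined with excision on $\cX \setminus \cZ$, promotes this to surjectivity on all of $A_*(\cX)$.

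For the middle exactness, the strategy is to mimic Kimura's devissage: let $\alpha \in A_*(\cY)$ satisfy $p_*\alpha = 0$. By Chow's lemma for algebraic stacks with affine stabilizers and a blowup reduction, we may decompose $\alpha$ into pieces supported on strata where $p$ restricts to a generically finite (in fact generically \'etale) morphism onto its image. Over such a stratum $\cU \subseteq \cX$ with $\cV = p^{-1}(\cU)$, the hypothesis $p_*\alpha|_\cV = 0$ forces $\alpha|_\cV$ to be a $\Q$-linear combination of differences $[\cW_i] - [\cW_j]$ of irreducible components of the fibers. Each such difference is the image under $(p_1)_* - (p_2)_*$ of a cycle on $\cW_i \times_{\cU} \cW_j \subseteq \cY \times_{\cX} \cY$ (normalized by the generic fiber degree). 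A Noetherian induction on the support of the residual cycle $\alpha - ((p_1)_* - (p_2)_*)(\gamma)$ then closes the argument.

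The main obstacle will be controlling the stack-theoretic geometry of $\cY \times_{\cX} \cY$: nontrivial automorphism groups can produce gerby irreducible components that do not correspond to honest pairs of components of fibers of $p$. The use of $\Q$-coefficients is essential here, since it allows us to divide by degrees of residual gerbes and to average over stabilizer actions when building the required preimages. A second technical point is that Kresch's Chow theory requires care about which cycles admit representatives on DM substacks; the Noetherian induction must be compatible with this presentation, and this is precisely where the forthcoming joint work \cite{BP} with Bae develops the necessary machinery.
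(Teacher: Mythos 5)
First, a point of comparison: the paper does not actually prove this lemma. It cites \cite{Kimura} for the scheme case and explicitly defers the Artin-stack case to the forthcoming paper \cite{BP}, so there is no in-paper argument to measure your proposal against; it has to stand on its own. And on its own terms there is a genuine gap at the foundation. You describe Kresch's $A_*(\cX)$ as ``cycles on integral closed substacks modulo rational equivalence, defined via DM-type presentations,'' but that description is only correct for Deligne--Mumford stacks (and algebraic spaces). For Artin stacks with positive-dimensional affine stabilizers --- which are exactly the stacks the lemma is applied to in Appendix A, e.g.\ quotient stacks $[P/G]$ and classifying stacks $BT$ --- Kresch's groups are built from stratifications by quotient stacks together with Edidin--Graham/Totaro vector-bundle approximations, and they contain classes that are not represented by any cycle on the stack: for instance $A_{-2}(B\GG_m)\cong \Q\cdot t\neq 0$, whereas $B\GG_m$ has a single integral closed substack, sitting in dimension $-1$. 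Consequently the entire devissage over integral closed substacks --- both your surjectivity argument and your middle-exactness argument --- does not engage with the groups the lemma is actually about. The argument has to be routed through Kresch's approximation machinery (reduce to quotient-stack strata, approximate by algebraic spaces, apply Kimura's theorem there, and pass to the limit compatibly with the two pushforwards from $\cY\times_{\cX}\cY$), which is presumably the content of \cite{BP}.

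There is also a more local gap in the middle-exactness step, even in the scheme case. You pass from ``$p_*\alpha=0$ in $A_*(\cU)$'' directly to ``$\alpha|_{\cV}$ is a $\Q$-linear combination of differences of fiber components.'' Kimura's argument requires an intermediate correction: one first lifts the rational functions witnessing $p_*\alpha\sim 0$ along components of $\cY$ that dominate their images generically finitely (dividing by the generic degrees --- this is one place $\Q$-coefficients enter), so as to replace $\alpha$ by a rationally equivalent cycle whose pushforward vanishes \emph{identically as a cycle}; only then does the component-by-component analysis on $\cY\times_{\cX}\cY$ apply, and even there the relevant differences are of the form $[\cW_i]-\tfrac{d_i}{d_j}[\cW_j]$ weighted by generic degrees, not bare differences of components. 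As written, your sketch conflates vanishing of the cycle class with vanishing of the cycle.
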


For schemes, Lemma \ref{Lem.Kimura} was proved in \cite{Kimura}. The proof of Lemma \ref{Lem.Kimura} for Artin stacks will appear in a forthcoming paper \cite{BP}.

\subsection{Square root virtual pullback and functoriality}

Note that the definitions of symmetric complexes (Definition \ref{Def.SymCplx}), quadratic functions (Definition \ref{Prop.QuadraticFunction}), symmetric obstruction theories (Definition \ref{Def.SOT}), and the isotropic condition (Definition \ref{Def.Isotropic}) can be generalized to algebraic stacks and DM morphisms between algebraic stacks in a straightforward manner.

\begin{definition}
Let $\EE$ be a symmetric complex on an algebraic stack $\cX$. Let $\fQ(\EE)$ be the zero locus of the quadratic function $\fq_{\EE} : \fC(\EE)\to \A^1_{\cX}$ on the associated abelian cone stack $\fC(\EE)$. We define the {\em square root Gysin pullback}
\[\sqrt{0_{\fQ(\EE)}^!} : A_*(\fQ(\EE)) \to A_*(\cX)\]
in the following cases:
\begin{enumerate}
\item Assume that $\cX$ is a separated DM stack. By the Chow lemma, there exists a projective surjective map $p:\tcX \to \cX$ from a quasi-projective scheme $\tcX$. Define the square root Gysin pullback as the unique map that fits into the commutative diagram
\[\xymatrix{
A_*(\fQ(\EE|_{\tcX\times_{\cX} \tcX})) \ar[r] \ar[d]^{\sqrt{0^!_{\fQ(\EE|_{\tcX\times_{\cX} \tcX})}}}  & A_*(\fQ(\EE|_{\tcX})) \ar[r] \ar[d]^{\sqrt{0^!_{\fQ(\EE|_{\tcX})}}} & A_*(\fQ(\EE)) \ar[r] \ar@{.>}[d]^{\sqrt{0_{\fQ(\EE)}^!}} & 0\\
A_*(\tcX\times_{\cX} \tcX) \ar[r] & A_*(\tcX) \ar[r]^{p_*} & A_*(\cX) \ar[r] & 0
}\]
where the other two square root Gysin pullbacks are well-defined as square root virtual pullbacks since $\tcX$ and $\tcX\times_{\cX}\tcX$ are quasi-projective schemes, and the two horizontal sequences are exact by the Kimura sequence (Lemma \ref{Lem.Kimura}). It is easy to show that $\sqrt{0^!_{\fQ(\EE)}}$ is independent of the choice of the projective cover $p:\tcX \to \cX$ using the Kimura sequence.

\item More generally, assume that $\cX=[P/G]$ is the quotient stack of a separated DM stack $P$ by an action of a linear algebraic group $G$. By \cite{Totaro}, there exist $G$-representations $V_i$ and a $G$-invariant open subschemes $U_i\subseteq V_i$ such that $U_i/G$ are quasi-projective schemes and $\mathrm{codim}_{V_i\setminus U_i}V_i \geq i$. We may regard
$r_i:\cX_i:= [(P\times U_i)/G] \to \cX$
as {\em approximations} of $\cX$ by separated DM stacks $\cX_i$. By the homotopy property of Chow groups \cite[Corollary 2.5.7]{Kresch}, we can define the square root Gysin pullback as
\[\xymatrix@C+2pc{
A_d(\fQ(\EE)) \ar@{.>}[r]^{\sqrt{0^!_{\fQ(\EE)}}} \ar[d]_{\cong} & A_d(\cX) \ar[d]^{r_i^*}_{\cong}\\
A_{d+d_i}(\fQ(\EE|_{\cX_i})) \ar[r]^{\sqrt{0^!_{\fQ(\EE|_{\cX_i})}}} & A_{d+d_i}(\cX_i)
}\]
for big enough $i$ for each $d$, where $d_i$ denotes the relative dimension of $r_i$. It is easy to show that $\sqrt{0^!_{\fQ(\EE)}}$ is independent of the choice of the approximation $r_i:\cX_i \to \cX$ using the homotopy property of Chow groups.
\end{enumerate}
\end{definition}

\begin{definition}%\label{Def.SqrtVPforDM}
Let $f:\cX\to\cY$ be a DM morphism between algebraic stacks equipped with a symmetric obstruction theory $\phi:\EE \to \bL_f$ satisfying the isotropic condition. Then we have a closed embedding $a:\fC_f \hookrightarrow \fQ(\EE)$. Assume that $\cX$ is the quotient stack of a separated DM stack by an action of a linear algebraic group. We define the {\em square root virtual pullback} as the composition
\[\sqrt{f^!} :  A_*(\cY) \mapright{\sp_f} A_*(\fC_f) \mapright{a_*} A_*(\fQ(\EE)) \mapright{\sqrt{0^!_{\fQ(\EE)}}} A_*(\cX)\]
where $\sp_f$ denotes the specialization map.
\end{definition}

It is easy to show that $\sqrt{f^!}$ commutes with projective pushforwards, smooth pullbacks, and Gysin pullbacks for regular immersions.

%%Note that we have an induced symmetric obstruction theory for the zero section $0_{\fC_f} : \cX \to \fC_f$ since there is a canonical isomorphism $\bL_{\cX/\fC_f} \cong \bL_f$ between the truncated cotangent complexes.

\begin{theorem}\label{Thm.FunctorialityforStacks}
Consider a commutative diagram \eqref{C.XYZ} of DM morphisms between algebraic stacks equipped a compatible triple $(\phi_f,\phi_g,\phi_{g\circ f})$ of obstruction theories in the sense of Definition \ref{Def.Compatibility}. Assume that $\phi_g$ and $\phi_{g\circ f}$ satisfy the isotropic condition. We further assume the followings:
\begin{enumerate}
\item $\cY$ is the quotient stack of a separated DM stack by an action of a linear algebraic group,
\item $f:\cX\to\cY$ is quasi-projective, and
\item $\cX$ has the resolution property.
\end{enumerate}
Then we have $\sqrt{(g\circ f)^!} = f^! \circ \sqrt{g^!}.$
\end{theorem}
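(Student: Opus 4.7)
\textbf{The plan} is to reduce the statement to Theorem~\ref{Thm.Functoriality} via a devissage that brings $\cZ$, then $\cY$, then $\cX$ into the quasi-projective scheme setting, using at each step the bivariance of both $\sqrt{(g\circ f)^!}$ and $f^! \circ \sqrt{g^!}$.

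First I would reduce to the case where $\cZ$ is a quasi-projective scheme. Since both maps $A_*(\cZ) \to A_*(\cX)$ commute with projective pushforwards, and $A_*(\cZ)$ in Kresch's sense is generated by projective pushforwards from quasi-projective schemes $T \to \cZ$, it suffices to check the equality after base change along each such $T$. The compatible triple of obstruction theories, the isotropic conditions (via Proposition~\ref{Prop.QuadraticFunction}(2)), and the quotient-stack structure $\cY = [P/G]$ all transport under such a base change.

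Next I would reduce $\cY$ to a quasi-projective scheme in two substeps. Writing $\cY = [P/G]$, apply Totaro's approximations $r_i : \cY_i := [(P \times U_i)/G] \to \cY$, which are smooth with $\cY_i$ a separated DM stack for $i$ large. By the compatibility of $\sqrt{g^!}$ and $f^!$ with smooth pullbacks together with the homotopy property of Kresch Chow groups, it suffices to prove the theorem after pulling back along $r_i$; this reduces $\cY$ to a separated DM stack. Chow's lemma then yields a projective surjection $p: \tilde{\cY} \to \cY$ from a quasi-projective scheme; pulling back, $\tilde{\cX} := \cX \times_{\cY} \tilde{\cY}$ is quasi-projective since $f$ is. The Kimura sequence (Lemma~\ref{Lem.Kimura}) applied to $p$ yields
\[
A_*(\tilde{\cY} \times_{\cY} \tilde{\cY}) \to A_*(\tilde{\cY}) \xrightarrow{p_*} A_*(\cY) \to 0,
\]
so the compatibility of both sides of the desired equality with the projective pushforward $p_*$ reduces the identity on $\cY$ to its analogues after base change to $\tilde{\cY}$ and to $\tilde{\cY} \times_{\cY} \tilde{\cY}$. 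At that point $\cX$, $\cY$, $\cZ$ are all quasi-projective schemes, the compatible triple pulls back faithfully (including orientations), and Theorem~\ref{Thm.Functoriality} delivers the desired equality.

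The hard part will be to verify that the square root virtual pullback on stacks, built via the Kimura sequence and Totaro's approximations in the definition given above, commutes with all the operations used in the devissage in a manner compatible with the specialization map and the quadratic function entering the isotropic condition. Concretely, one must show that the compatibility diagram \eqref{Compatibility.OT} survives base change along smooth morphisms, projective surjections from quasi-projective schemes, and the projective pushforwards used to reduce $\cZ$; this is a somewhat lengthy but essentially formal check following from Proposition~\ref{Prop.QuadraticFunction}(2), the uniqueness portion in the definition of the square root Gysin pullback for quotient stacks, and the base-change compatibilities for Manolache's virtual pullback.
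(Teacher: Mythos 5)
There is a genuine gap, and it occurs at the very first step. You claim that Kresch's $A_*(\cZ)$ is generated by projective pushforwards from quasi-projective schemes $T\to\cZ$. This is false for a general algebraic stack $\cZ$, and the theorem makes no hypothesis on $\cZ$ beyond $g$ being a DM morphism: in the intended application (torus localization) one takes $\cZ=BT$ with $T=\GG_m$, and $A_*(BT)=\Q[t]$ contains classes in negative degree (e.g.\ $t\in A_{-1}(BT)$) that cannot be pushed forward from any quasi-projective scheme. So the devissage cannot begin with $\cZ$. The later steps inherit the problem: the two maps being compared have source $A_*(\cZ)$, whereas the Kimura sequence for $\tcY\to\cY$ only gives surjectivity onto $A_*(\cY)$ (or onto $A_*$ of stacks living \emph{over} $\cY$), so "reducing the identity on $\cY$" does not reduce an identity of maps out of $A_*(\cZ)$.

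The missing idea is that deformation to the normal cone must come \emph{first}. The paper's proof begins by showing that Proposition \ref{Prop.DeformationtoNormalCone} still holds in this setting, i.e.\ the symmetric obstruction theory $\phi_h$ on $\cX\times\A^1\to (M^\circ_g)'$ exists and is isotropic; this is exactly where hypothesis (3), the resolution property of $\cX$, is used (it guarantees the explicit resolutions of Lemma \ref{Lem.D2}, while the isotropic condition is local). This step replaces $\cZ$ by the cone stack $\fC_g$, which lives over $\cY$. Only then does the devissage of $\cY$ work: Totaro approximation reduces to a separated DM stack $\cY_i$, and Chow's lemma plus the Kimura sequence (Lemma \ref{Lem.Kimura}) applied to the projective surjection $\fC_g\times_\cY\tcY_i\to\fC_g\times_\cY\cY_i$ gives surjectivity onto the \emph{source} of the two maps, after which quasi-projectivity of $f$ makes $\cX\times_\cY\tcY_i$ a quasi-projective scheme and Lemma \ref{Lem.conestackcase} concludes. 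A telltale sign that your route cannot work is that it never uses hypothesis (3): after a reduction to quasi-projective schemes the resolution property is automatic, so your argument would prove the theorem without that assumption.
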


\begin{proof}
Since $\cX$ has the resolution property, Proposition \ref{Prop.DeformationtoNormalCone} also holds in this setting. Indeed, the resolution property for $\cX$ guarantees Lemma \ref{Lem.D2}. The first paragraph of Lemma \ref{Lem.KKP} also holds by \cite{KKP}. Hence we can construct a symmetric obstruction theory $\phi_h$ as in Lemma \ref{Lem.D3}. The isotropic condition follows from Lemma \ref{Lem.Deform.Isotropic} since the isotropic condition can be shown locally.

Now it suffices to show the functoriality for 
\[\cX\to \cY \to \fC_g.\]
Let $\cY=[P/G]$ be the quotient stack of a separated DM stack $P$ by an action of a linear algebraic group $G$. Let $U_i/G \to BG$ be approximations of the classifying stack $BG$ by quasi-projective schemes $U_i/G$ . Let $\cY_i=\cY\times_{BG}(U_i/G)$. By the homotopy property of Chow groups, it suffices to show the functoriality for 
\[\cX\times_{\cY} \cY_i \to \cY_i \to \fC_g\times_{\cY}\cY_i.\]
Since $\cY_i$ is a separated DM stack, we can choose a projective surjective map $\tcY_i \to \cY_i$ from a quasi-projective scheme $\tcY_i$ by the Chow lemma. By the Kimura sequence (Lemma \ref{Lem.Kimura}), it suffices to show the functoriality for 
\[\cX\times_{\cY} \tcY_i \to \tcY_i \to \fC_g\times_{\cY} \tcY_i.\] Since $f:\cX \to \cY$ is quasi-projective, $\cX\times_{\cY} \tcY_i$ is a quasi-projective scheme. Lemma \ref{Lem.conestackcase} completes the proof.
\end{proof}

We expect that Theorem \ref{Thm.FunctorialityforStacks} holds in a much greater generality. However, here we used assumptions that suffices to prove the torus localization formula below for the simplicity of the arguments.

\subsection{Torus localization}%\label{ss.torus}

\begin{proposition}
Let $\cX$ be a separated DM stack with a $T=\GG_m$-action. Let $\cX^T$ be the fixed locus \cite{Kresch}. Let $\phi:\EE\to \bL_{\cX}$ be a $T$-equivariant symmetric obstruction theory.
%i.e., a relative symmetric obstruction theory for $[\cX/T] \to BT$. 
Then $\psi:\EE|_{\cX^T}^{f} \to \bL_{\cX}|_{\cX^T}^{f} \to \bL_{\cX^T}$ is a symmetric obstruction theory for $\cX^T$ by \cite{GrPa}. Assume that $[\cX^T/T]$ has the resolution property. Then we have
\[i_*\left(\dfrac{[\cX^T]\virt}{\sqrt{e}(N\virt)}\right) = [\cX]\virt  \in A^T_*(\cX) \otimes_{\Q[t]} \Q[t^{\pm1}]\]
where $i:\cX^T \hookrightarrow \cX$ denotes the inclusion map, $[B\to E \to B\dual]\cong\EE|_{\cX^T}$ is a $T$-equivariant symmetric resolution, %$B^m$, $E^m$ denote the moving parts of $B$, $E$, 
$\sqrt{e}(N\virt):=e(B^m)/\sqrt{e}(E^m)$, and $t$ is the first Chern class of the one-dimensional weight one representation of $T$. 
%%Here $\sqrt{e}(N\virt)$ is independent of the choice of the symmetric resolution.
\end{proposition}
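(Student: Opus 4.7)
The plan is to adapt the Chang-Kiem-Li strategy \cite{CKL} to the Oh-Thomas setting: deduce the torus localization formula from the functoriality of square root virtual pullbacks applied to the inclusion $i:\cX^T\hookrightarrow\cX$, with the equivariant localization theorem taking care of the purely Chow-theoretic step. The advantage over the direct approach of \cite{OT} is that Theorem \ref{Thm.FunctorialityforStacks} holds for DM morphisms between algebraic stacks satisfying the resolution property, so no quasi-projectivity hypothesis on $\cX$ is needed.

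The first step is to construct a $T$-equivariant perfect obstruction theory $\phi_i:\EE_i\to\bL_i$ for the closed immersion $i$ and verify that $(\phi_i,\phi_{\cX^T},\phi_\cX)$ forms a compatible triple in the sense of Definition \ref{Def.Compatibility}. The resolution property of $[\cX^T/T]$ furnishes a $T$-equivariant symmetric resolution $[B\to E\to B\dual]\cong\EE|_{\cX^T}$; since $\cX^T$ is the $T$-fixed locus, the moving part of the middle special orthogonal bundle, $E^m$, has no zero weight and therefore decomposes as $E^m=E^{m,+}\oplus E^{m,-}$ into positive and negative weight summands with $E^{m,-}\cong(E^{m,+})\dual$. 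In particular $E^{m,+}$ is a maximal isotropic subbundle of $E^m$, so $[B^m\to E^{m,+}]$ defines an isotropic quotient of the moving part of $\EE|_{\cX^T}$, whose reduction (in the sense of Proposition \ref{Prop.Reduction}) is exactly $\EE|_{\cX^T}^f$. Setting $\EE_i:=[B^m\to E^{m,+}]$ and using the induced map on moving parts $\EE|_{\cX^T}\to i^*\bL_\cX\to\bL_i[1]$ produces $\phi_i$, and the reduction diagram supplies the compatibility \eqref{Eq.Compatibility}.

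The second step is to invoke Theorem \ref{Thm.FunctorialityforStacks} in the $T$-equivariant setting: approximating $BT$ by the quasi-projective quotients $U_i/T$ and using the equivariant resolution property together with the Chow lemma, the fibers $\cX^T\times_{BT}(U_i/T)$ are quasi-projective schemes, so the scheme-level functoriality and the homotopy invariance of Chow groups give
\[i^!_{\phi_i}[\cX]\virt=[\cX^T]\virt\in A^T_*(\cX^T)\otimes_{\Q[t]}\Q[t^{\pm1}].\]
Finally, after inverting $t$, the pushforward $i_*$ becomes an isomorphism by the standard equivariant localization theorem, and one writes $[\cX]\virt=i_*\alpha$ uniquely. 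A direct local computation, using the self-intersection formula for $\EE_i=[B^m\to E^{m,+}]$ together with the identity $\sqrt{e}(E^m)=\pm\,e(E^{m,+})$ coming from the hyperbolic splitting $E^m=E^{m,+}\oplus(E^{m,+})\dual$, shows that $i^!_{\phi_i}\circ i_*=\sqrt{e}(N\virt)\cap(-)$ after localization. Combining gives $[\cX^T]\virt=\sqrt{e}(N\virt)\cap\alpha$, which rearranges to the desired formula.

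The main obstacle is the first step: the polarization $E^m=E^{m,+}\oplus E^{m,-}$ requires a $T$-equivariant symmetric resolution (this is where the equivariant resolution property of $[\cX^T/T]$ enters), and recasting the moving part of a three-term symmetric complex as a two-term perfect complex $\EE_i$ with a genuine obstruction-theoretic interpretation for the closed immersion $i$ is a delicate algebraic manipulation involving the reduction formula (Proposition \ref{Prop.ReductionFormula}). Once this is in place, the remaining steps are formal applications of the functoriality theorem and of standard equivariant intersection theory.
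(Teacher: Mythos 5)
Your overall strategy (write $[\cX]\virt=i_*\alpha$ via Kresch's localization theorem, build a compatible triple for $[\cX^T/T]\to[\cX/T]\to BT$, apply Theorem \ref{Thm.FunctorialityforStacks}, and finish with a self-intersection computation) is indeed the paper's, but the construction of the compatible triple in your first step fails, for two concrete reasons. First, your reduction goes the wrong way. Definition \ref{Def.Compatibility} (see \eqref{Compatibility.OT} and \eqref{Intro.Reduction}) demands that the \emph{pullback of the ambient} obstruction theory, $i^*\EE=[B\to E\to B\dual]$, be the reduction of $\EE_{g\circ f}$ (the symmetric obstruction theory placed on $\cX^T$) by $\EE_i$; you instead exhibit $\EE|_{\cX^T}^f$ as a reduction of $i^*\EE$. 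These are not interchangeable: with $\EE_{g\circ f}=\psi=[B^f\to E^f\to(B^f)\dual]$ the middle bundle contains no moving summands, so no isotropic subcomplex can reduce it to $[B\to E\to B\dual]$ — already the ranks disagree, since one would need $\rank E^m=2\rank B^m$. Second, even the reduction you do write down need not exist: for $[B^m\to E^{m,+}]$ to be an isotropic subcomplex of $[B\to E\to B\dual]$ one needs $q(d(B^m),E^{m,+})=0$, i.e.\ $d(B^m)\subseteq (E^{m,+})^{\perp}=E^f\oplus E^{m,+}$; but $d$ is $T$-equivariant, so $d(B^{m,-})\subseteq E^{m,-}$, and the condition fails whenever $B$ has negative-weight summands. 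The weight-sign polarization of $E^m$ is used (here and in \cite{OT}) only to \emph{evaluate} $\sqrt{e}(E^m)$ as $\pm e(E^{m,+})$ a posteriori, not to build the obstruction theory of $i$.

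The paper's proof avoids both problems by not taking $\phi_{g\circ f}=\psi$. It introduces the auxiliary symmetric obstruction theory $\psi'=(\psi,0):[B^f\to E\to(B^f)\dual]\to\bL_{\cX^T}$, which retains the full middle bundle $E$; then $i^*\EE=[B\to E\to B\dual]$ genuinely is the reduction of $[B^f\to E\to(B^f)\dual]$ by $\EE_i=(B^m)\dual[1]$ (reduce the orthogonal bundle $E$ by the isotropic subbundle $d(B^m)$ and remove nothing from the outer terms), and $\EE_i$ is a legitimate perfect obstruction theory for the closed immersion $i$ with $i^!i_*=e(B^m)$. Functoriality then yields $[\cX^T]\virt_{\psi'}=i^![\cX]\virt=e(B^m)(\alpha)$, and a separate application of the Whitney sum formula identifies $[\cX^T]\virt_{\psi'}$ with $\sqrt{e}(E^m)\,[\cX^T]\virt$ — this is exactly where the denominator $\sqrt{e}(E^m)$ of $\sqrt{e}(N\virt)=e(B^m)/\sqrt{e}(E^m)$ enters. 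This is the same device as in the Lefschetz principle, where a nonstandard three-term symmetric obstruction theory must first be built on the subspace so that its reduction recovers the restricted ambient one, and its virtual cycle is then compared to the standard one by the reduction formula. To salvage your outline you must insert such an intermediate obstruction theory; as written, the hypotheses of Theorem \ref{Thm.FunctorialityforStacks} are not met.
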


\begin{proof}
By \cite[Theorem 5.3.5]{Kresch},
%% $i_* :A^T_*(\cX^T) \otimes_{\Q[t]} \Q[t^{\pm1}] \to A^T(\cX)  \otimes_{\Q[t]} \Q[t^{\pm1}]$
we may write  $[\cX]\virt = i_*(\alpha)$ for some $\alpha \in A^T_*(\cX^T) \otimes_{\Q[t]} \Q[t^{\pm1}]$. Consider a modified symmetric obstruction theory
\[\psi' = (\psi,0) : \EE|_{\cX^T}^f \oplus E^m[1] = [B^f \to E \to (B^f)\dual] \to \bL_{\cX^T}\]
for $\cX^T$. By the Whitney sum formula \cite[Lemma 4.8]{KP}, the homotopy property of Chow groups \cite[Corollary 2.5.7]{Kresch}, and the Kimura sequence (Lemma \ref{Lem.Kimura}), we can easily show that
\[ [\cX^T]\virt_{\psi'} = \sqrt{e}(E^m)[\cX^T]\virt\]
holds. Consider a reduction diagram given by the morphism
\[\xymatrix{
[0 \to 0 \to (B^m)\dual] \ar[r] \ar@{=}[d] & [B^f \to E \to B\dual] \ar[r] \ar[d] & [B^f \to E \to (B^f)\dual] \ar[r] \ar[d] & \\
[0 \to 0 \to (B^m)\dual] \ar[r] & [B \to E \to B\dual] \ar[r] & [B \to E \to (B^f) \dual] \ar[r] &
}\]
of short exact sequences.  Applying the functoriality in Theorem \ref{Thm.FunctorialityforStacks} to $[\cX^T/T] \to [\cX/T] \to BT,$ we deduce
\[ \sqrt{e}(E^m)[\cX^T]\virt= [\cX^T]\virt_{\psi'} = i^![\cX]\virt  = i^! i_* (\alpha )= e(B^m)(\alpha).\]
Therefore, we have $i_*\left(\frac{[\cX^T]\virt}{e(B^m)/\sqrt{e}(E^m)}\right) = [\cX]\virt$.
\end{proof}

\section{Virtual pullback in K-theory}\label{A.K-theory}

%% Here we generalize the main results in this paper to (twisted) virtual structure sheaves in K-theory.

%% We first introduce a {\em twisted} virtual pullback (cf. \cite{NO}). 

\subsection{Twisted virtual pullback}

For any algebraic stack $\cX$, let $K_0(\cX)$ (resp. $K^0(X)$) denote the Grothendieck group of coherent sheaves (resp. vector bundles) with $\Q$-coeffcients. For any line bundle $L$ on a scheme $\cX$, there exists a unique square root $\sqrt{L} \in K^0(\cX)$ such that $(\sqrt{L})^2=L$ and $\sqrt{L}-1$ is nilpotent (cf. \cite[Lemma 5.1]{OT}).

\begin{definition}
Let $f:\cX\to\cY$ be a morphism of quasi-projective schemes equipped with a perfect obstruction theory $\psi: \KK \to \bL_f$. Choose a global resolution $[K_1\dual \to K_0\dual] \cong \KK$ by vector bundles, and let $C= \fC_f\times_{[K_1/K_0]}K_1$. The {\em twisted virtual pullback} is defined as the composition
\[\widehat{f^!} : K_0(\cY) \mapright{\sp_f} K_0(\fC_f) \mapright{p^*} K_0(C) \xrightarrow{\fe(K_1|_C,\tau)} K_0(\cX) \xrightarrow{\sqrt{\det(\KK)}\cdot} K_0(\cX)\]
where $p: C\to \fC_f$ denotes the projection map, $\tau \in \Gamma(C,K_1|_C)$ denotes the tautological section, and $\fe(K_1|_C,\tau)$ denotes the localized K-theoretic Euler class.
\end{definition}

When $\cY=\spec(\C)$ is a point, then $\hO_{\cX}\virt= \widehat{f^!}\left(\O_{\spec(\C)}\right)$ is Nekrasov-Okounkov's twisted virtual structure sheaf \cite{NO}.

\subsection{Square root virtual pullback}
\begin{definition}
Let $f:\cX\to\cY$ be a morphism of quasi-projective schemes equipped with a symetric obstruction theory $\phi:\EE\to\bL_f$ satisfying the isotropic condition. Let $[B\to E\to B\dual] \cong \EE$ be a symmetric resolution (Proposition \ref{Prop.SymRes}) and let $C=\fC_f \times_{[E/B]}E$. We define the {\em twisted square root virtual pullback} as the composition
\[\sqrt{\widehat{f^!}} : K_0(\cY) \mapright{\sp_f} K_0(\fC_f) \mapright{p^*} K_0(C) \xrightarrow{\sqrt{\fe}(E|_C,\tau)} K_0(\cX) \xrightarrow{\sqrt{\det(B)\dual}\cdot} K_0(\cX)\]
where $p:C \to \fC_f$ denotes the projection map, $\tau \in \Gamma(C,E|_C)$ denotes the tautological section, which is isotropic by Lemma \ref{Lem.ComparisonofIsotropic}, and $\sqrt{\fe}(E|_C,\tau)$ denotes the localized square root Euler class \cite{OT}.

In particular, if $\cY=\spec (\C)$ is a point, then the {\em twisted virtual structure sheaf} is defined as
$\hO_{\cX}\virt :=\sqrt{\widehat{f^!}}\left(\O_{\spec(\C)}\right) \in K_0(\cX).$
\end{definition}

The square root virtual pullback $\sqrt{\widehat{f^!}}$ and the twisted virtual structure sheaf $\hO_{\cX}\virt$ are independent of the choice of the symmetric resolution (cf. \cite[Proposition 5.10]{OT}). Also, the square root virtual pullback $\sqrt{\widehat{f^!}}$ commutes with projective pushforwards and lci pullback. Moreover, in the situation of Proposition \ref{Prop.ReductionFormula}, we have a reduction formula
$\sqrt{\widehat{f^!_{\phi}}} = \sqrt{\fe}(G) \cdot \widehat{f^!_{\psi}}.$ %%We omit the proofs of these facts.

\begin{theorem}[Functoriality]
Given a commutative diagram \eqref{C.XYZ} of quasi-projective schemes equipped with a compatible triple $(\phi_f,\phi_g,\phi_{g\circ f})$ of obstruction theories, if $\phi_g$ and $\phi_{g\circ f}$ satisfy the isotropic condition, then we have
\[\sqrt{\widehat{(g\circ f)^!}} = \widehat{f^!} \circ \sqrt{\widehat{g^!}}.\]
\end{theorem}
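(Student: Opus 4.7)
The plan is to mirror the proof of Theorem \ref{Thm.Functoriality} line by line, replacing each Chow-theoretic ingredient with its K-theoretic counterpart, and then separately track the scalar twist $\sqrt{\det(B)\dual}$ through the three steps. All the structural inputs we used in Section \ref{S.Functoriality} have K-theoretic analogs: the blowup exact sequence of \cite[Ex.~1.8.1]{Ful} holds verbatim for $K_0$; Manolache's virtual pullback formula \cite[Thm.~4.8]{Man} is replaced by Qu's K-theoretic (twisted) virtual pullback formula \cite{NO,Qu}; Kiem--Li's cosection localization \cite{KL} and the blowup method of \cite[Lemma 4.5]{KP} for localized square root Euler classes are established in K-theory in \cite{OT}; the K-theoretic specialization map commutes with projective pushforwards, smooth pullbacks, and lci Gysin pullbacks (cf.\ \cite[Lemma 2.8]{Qu}); and the derived-categorical construction of $\phi_h$ in Proposition \ref{Prop.DeformationtoNormalCone} together with the isotropy criterion of Proposition \ref{Prop.Isotropic} is purely cohomological and transfers unchanged.

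First I would verify the bivariance of $\sqrt{\widehat{f^!}}$ in K-theory -- commutativity with projective pushforwards, flat equi-dimensional pullbacks, and regular immersion Gysin pullbacks -- exactly as in Proposition \ref{Prop.Bivar}; this reduces to bivariance of the K-theoretic specialization map and of Oh--Thomas' K-theoretic localized square root Euler class, together with the elementary fact that $\sqrt{\det B\dual}$ is compatible with these operations. Next I would carry out the closed immersion case: for $g:\cY\hookrightarrow\cZ$, the blowup exact sequence reduces us to a regular embedding, for which $\EE_g=G\dual[1]$ with $N\subseteq G$ isotropic; the K-theoretic reduction formula of \cite[\S5]{OT} then gives $\sqrt{\widehat{g^!}}=\sqrt{\fe}(N\uperp/N)\cdot\widehat{g^!}$, and combining this with Qu's twisted virtual pullback formula applied to \eqref{Eq25} yields $\sqrt{\widehat{(g\circ f)^!_{\psi\circ\zeta\circ\alpha}}}=\widehat{f^!}\circ\sqrt{\widehat{g^!}}$. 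A deformation argument in the family of symmetric obstruction theories $(1-t)\phi'_{g\circ f}+t\,\psi\circ\zeta$ identifies the left-hand side with $\sqrt{\widehat{(g\circ f)^!}}$, giving the K-theoretic analog of Lemma \ref{Lem.specialcaseviablowup}.

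The cone stack case (Lemma \ref{Lem.conestackcase}) then transfers formally: choosing a symmetric resolution and pulling back through a smooth cover $C\to\fC$, the K-theoretic specialization map, smooth pullback, and localized square root Euler class combine exactly as in equations \eqref{n2}--\eqref{n4}. Finally, deformation to the normal cone via Proposition \ref{Prop.DeformationtoNormalCone} works verbatim (the construction of $\EE_h$ and $\phi_h$ is derived-categorical, and the isotropy Lemma \ref{Lem.Deform.Isotropic} is geometric), and a deformation argument in the parameter $t$ of \eqref{D1} replaces $\phi_f'$ by $\eta$ as in \eqref{D4}.

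The main obstacle will be checking that the scalar twists multiply correctly under composition. With symmetric resolutions $[B\to E\to B\dual]\cong\EE_{g\circ f}$, $[K\dual\to D\dual]\cong\EE_f$, and the induced $[B/D\to K\uperp/K\to(B/D)\dual]\cong f^*\EE_g$, the composition $\widehat{f^!}\circ\sqrt{\widehat{g^!}}$ carries the twist $\sqrt{f^*\det(B/D)\dual}\cdot\sqrt{\det(\EE_f)}=\sqrt{\det(B)\dual\otimes\det(K)}$, whereas $\sqrt{\widehat{(g\circ f)^!}}$ carries $\sqrt{\det(B)\dual}$. The discrepancy factor $\sqrt{\det(K)}$ must then be shown to be exactly the correction built into the K-theoretic reduction identity relating $\sqrt{\fe}(E)$, $\fe(K)$ and $\sqrt{\fe}(K\uperp/K)$ (this is the K-theoretic refinement of Proposition \ref{Prop.LocFunct} proved in \cite{OT}). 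The orientation compatibility of Definition \ref{Def.Compatibility}(2), which forces $\det(\EE_{g\circ f})\cong f^*\det(\EE_g)$, ensures that all square roots of determinants are globally well-defined and that these identities hold on the nose rather than only up to a sign.
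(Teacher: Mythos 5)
Your proposal is correct and follows exactly the route the paper takes: the paper's own proof of this statement is the single sentence that the argument is identical to the Chow-theoretic proof of Theorem \ref{Thm.Functoriality}, and your step-by-step transfer (blowup sequence, Qu's K-theoretic virtual pullback, the K-theoretic localized square root Euler class, the cone-stack and deformation-to-the-normal-cone steps) is precisely that argument made explicit. Your bookkeeping of the twist $\sqrt{\det(B)\dual}$ versus $\sqrt{f^*\det(B/D)\dual}\cdot\sqrt{\det(\EE_f)}$, resolved by the twisted reduction formula $\sqrt{\widehat{f^!_{\phi}}}=\sqrt{\fe}(G)\cdot\widehat{f^!_{\psi}}$, is the one genuinely K-theoretic point, and you handle it correctly.
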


\begin{proof}
The proof is identical to that for Chow theory in \S\ref{S.Functoriality}.
\end{proof}

As corollaries, we have a K-theoretic Lefschetz principle and K-theoretic Pairs/Sheaves correspondence.

\begin{corollary}[Lefschetz principle]\label{Cor.K.L}
In the situation of Theorem \ref{Thm.Lefschetz}, we have
\[\sum_e (-1)^{\sigma(e)}(j_e)_*\left(\hO_{P(D)^e}\virt\right) =  \widehat{{\fe}}(\bR\pi_*(\FF\otimes L))\cdot  \hO_{P(X)}\virt\]
where $\widehat{\fe}(E)=\sqrt{\det(E)}\cdot\fe(E)$.
\end{corollary}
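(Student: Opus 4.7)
The plan is to repeat the proof of Theorem \ref{Thm.Lefschetz} line for line, replacing the Chow-theoretic operations of \S\ref{S.sqrtVP}--\S\ref{S.Functoriality} with their K-theoretic counterparts from Appendix \ref{A.K-theory}. All three scheme-theoretic ingredients --- Proposition \ref{Prop.ComparisonofModuli} identifying $P(D)$ with the zero locus of the tautological section $\tau$ of $\bR\pi_*(\FF\otimes L)$, Proposition \ref{Prop.ConstructionofSOT} constructing the 3-term symmetric obstruction theory $\phi_D$ on $P(D)$, and Proposition \ref{Prop.L.Comp.OT} producing the compatibility diagram between $\phi_X$ and $\phi_D$ --- do not depend on the choice of generalized intersection theory and can be invoked verbatim.

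First, Proposition \ref{Prop.ComparisonofModuli} supplies a Koszul-type K-theoretic Gysin pullback $j^! : K_0(P(X)) \to K_0(P(D))$ attached to the section $\tau$, satisfying $j_* \circ j^! = \fe(\bR\pi_*(\FF\otimes L))\cdot(-)$. Its twist is $\widehat{j^!} = \sqrt{\det \bR\pi_*(\FF\otimes L)} \cdot j^!$, so that $j_* \circ \widehat{j^!} = \widehat{\fe}(\bR\pi_*(\FF\otimes L))\cdot(-)$. Second, feeding the compatible triple of obstruction theories from Proposition \ref{Prop.L.Comp.OT} into the twisted virtual pullback formula from Appendix \ref{A.K-theory} (the K-theoretic analogue of Theorem \ref{Thm.Functoriality}) gives
\[
\widehat{j^!}\, \hO_{P(X)}\virt \;=\; \hO_{P(D)}\virt_{\widehat{OT}},
\]
and applying $j_*$ converts this to
\[
j_* \hO_{P(D)}\virt_{\widehat{OT}} \;=\; \widehat{\fe}(\bR\pi_*(\FF\otimes L)) \cdot \hO_{P(X)}\virt.
\]

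Third, the K-theoretic reduction formula from Appendix \ref{A.K-theory}, applied to the factorization of $\phi_D$ through the standard 2-term perfect obstruction theory $\psi_D$ defining $\hO_{P(D)^e}\virt$, identifies $\hO_{P(D)}\virt_{\widehat{OT}}$ on each connected component $P(D)^e$ with $(-1)^{\sigma(e)}\hO_{P(D)^e}\virt$. Combining the three equalities will yield the claimed identity. The main obstacle is the third step: one must verify that the reduction of the 3-term symmetric complex by the image of $\psi_D$ is $G[1]$ for a rank-zero special orthogonal bundle $G$ on each component, and that $\sqrt{\fe}(G) = \pm 1$ contributes the exact same canonical sign $\sigma(e)$ as in Remark \ref{Rmk.Orientation}. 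Since the localized K-theoretic square root Euler class is bivariant and satisfies the Whitney-sum behavior used in the proof of Proposition \ref{Prop.ReductionFormula}, this sign extraction proceeds as in Chow theory, completing the argument.
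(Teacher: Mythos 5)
Your overall strategy is the paper's own: rerun the proof of Theorem \ref{Thm.Lefschetz} with the K-theoretic operations of Appendix \ref{A.K-theory} substituted for the Chow-theoretic ones, using that Propositions \ref{Prop.ComparisonofModuli}, \ref{Prop.ConstructionofSOT} and \ref{Prop.L.Comp.OT} are intersection-theory-free. Steps 1 and 3 are fine as far as they go (in particular your observation that the reduction of $\phi_D$ by $\psi_D$ is a rank-zero orthogonal bundle contributing only the sign $(-1)^{\sigma(e)}$ is exactly what happens in Chow theory and transfers to K-theory via the K-theoretic reduction formula).

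There is, however, one genuine gap, and it is precisely the point the paper isolates as the only non-automatic step. The operator $\widehat{j^!}$ you produce in Step 1 is the Koszul-type Gysin pullback attached to the tautological section $\tau$ of the bundle $\bR\pi_*(\FF\otimes L)$, i.e.\ the twisted virtual pullback of the \emph{standard} perfect obstruction theory of a section's zero locus. The operator that the K-theoretic functoriality theorem hands you in Step 2 is the twisted virtual pullback $\widehat{j^!_{\phi}}$ attached to the obstruction theory $\phi:(\bR(\pi_D)_*(\cF\otimes L)[-1])\dual\to\bL_{P(D)/P(X)}$ extracted from the compatibility diagram \eqref{Eq4.44}; this map $\phi$ has the same virtual cotangent complex but is a priori a different morphism. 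To splice your two displayed equations together you must know that the twisted virtual pullback depends only on the class of $\KK$ in K-theory and not on the map $\phi$, i.e.\ the K-theoretic analogue of Lemma \ref{Lem.VPindepofPhi}. The Chow-theoretic proof of that lemma is a Segre-class computation (Fulton, Example 4.1.8) with no direct K-theoretic counterpart; the paper instead deduces the K-theoretic statement from the virtual Grothendieck--Riemann--Roch comparison (Lemma \ref{Lem.RR}), which identifies $\widehat{f^!}$ with $\td(\KK\dual)\cdot f^!$ under the Riemann--Roch isomorphisms and thereby reduces the independence claim to the already-known Chow-theoretic one. Your proposal never addresses this identification, so as written the chain of equalities does not close; adding the GRR argument (or some substitute for it) is required to complete the proof.
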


\begin{corollary}[Pairs/Sheaves correspondence]\label{Cor.K.P/S}
In the situation of Theorem \ref{Thm.P/S}, we have
\[\hO_{P_{n,\beta}(X)}\virt = \widehat{p^!}\left( \hO_{M_{n,\beta}(X)}\virt\right).\]
\end{corollary}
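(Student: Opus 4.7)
The plan is to transfer the proof of Theorem \ref{Thm.P/S}(1) verbatim from Chow theory to K-theory. The key observation is that the compatibility of obstruction theories established in Proposition \ref{Prop.P/S.compatibility} is a statement purely about maps of distinguished triangles and Atiyah classes; it does not reference Chow groups or K-theory anywhere. Likewise, the isotropic condition (Definition \ref{Def.Isotropic}) is checked at the level of cone stacks and quadratic functions, so it is inherited unchanged.

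First, I would assemble the compatible triple of obstruction theories $(\phi_p, \phi_M, \phi_P)$ for
\[P_{n,\beta}(X) \xrightarrow{p} M_{n,\beta}(X) \longrightarrow \spec(\C),\]
exactly as in the proof of Theorem \ref{Thm.P/S}(1): the perfect obstruction theory $\phi_p$ comes from the virtual projective bundle structure $p : P_{n,\beta}(X) = \PP(\bR\pi^M_*\GG) \to M_{n,\beta}(X)$ (Definition \ref{Def.VirtualProjectiveBundle}), while $\phi_M$ and $\phi_P$ are the Oh--Thomas symmetric obstruction theories. Proposition \ref{Prop.P/S.compatibility}, together with the argument producing the diagram \eqref{PS.11}, gives the reduction diagram and the Atiyah class compatibility, and Lemma \ref{Lem.VPindepofPhi} (whose proof is a Segre class identity that transports to K-theory via the K-theoretic Grothendieck formula for projective bundles) shows the induced perfect obstruction theory yields the same virtual pullback as $\phi_p$.

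Next, I would invoke the K-theoretic functoriality established in Appendix \ref{A.K-theory}:
\[\sqrt{\widehat{(g\circ p)^!}} \;=\; \widehat{p^!} \,\circ\, \sqrt{\widehat{g^!}},\]
where $g : M_{n,\beta}(X) \to \spec(\C)$. Applying both sides to $\O_{\spec(\C)} \in K_0(\spec(\C))$ and using the defining relations $\sqrt{\widehat{g^!}}(\O_{\spec(\C)}) = \hO_{M_{n,\beta}(X)}\virt$ and $\sqrt{\widehat{(g\circ p)^!}}(\O_{\spec(\C)}) = \hO_{P_{n,\beta}(X)}\virt$ yields the desired identity.

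The main technical point to verify, beyond importing the Chow argument, is the compatibility of the two square root twists $\sqrt{\det B^\vee}$ (appearing in $\sqrt{\widehat{(g\circ p)^!}}$ and $\sqrt{\widehat{g^!}}$) with the twist $\sqrt{\det \KK}$ (appearing in $\widehat{p^!}$). This reduces to a determinantal identity for the symmetric resolutions underlying the reduction diagram: if $[B_{g\circ p} \to E_{g\circ p} \to B_{g\circ p}^\vee] \cong \EE_{g\circ p}$ and $[B_g \to E_g \to B_g^\vee] \cong \EE_g$ are compatible symmetric resolutions as in the proof of Proposition \ref{Prop.Reduction}, then $B_{g\circ p} / p^*B_g$ fits into an exact sequence with the bottom terms of a resolution of $\KK$, and taking determinants produces exactly the required identification $\sqrt{\det B_{g\circ p}^\vee} \cong p^*\sqrt{\det B_g^\vee} \cdot \sqrt{\det \KK}$. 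The compatibility of orientations built into the reduction formula Proposition \ref{Prop.ReductionFormula} is the K-theoretic analogue and the verification parallels the Chow case; this is where I expect the most care to be required, but no genuine new idea beyond the arguments already assembled in \S\ref{S.Pairs/Sheaves} and Appendix \ref{A.K-theory}.
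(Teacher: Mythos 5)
Your proposal is correct and follows the same overall route as the paper: the compatible triple from Proposition \ref{Prop.P/S.compatibility} and the diagram \eqref{PS.11} carry over unchanged, and the conclusion is the K-theoretic functoriality theorem of Appendix \ref{A.K-theory} applied to $\O_{\spec(\C)}$. The one place where you diverge is exactly the one point the paper singles out as non-formal, namely the K-theoretic analogue of Lemma \ref{Lem.VPindepofPhi}. You propose to redo Fulton's Segre-class computation directly in $K$-theory (``the K-theoretic Grothendieck formula for projective bundles''); the paper instead proves a virtual Grothendieck--Riemann--Roch statement (Lemma \ref{Lem.RR}), which identifies the K-theoretic $f^!$ with $\td(\KK\dual)\cdot f^!$ under the isomorphisms $\tau_{\cX}$, $\tau_{\cY}$ and thereby imports the independence of the obstruction-theory map from the already-proved Chow statement. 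The GRR route is the more economical one: a direct K-theoretic argument would require you to set up K-theoretic Segre classes of cones and their behavior under the exact sequences of cones used in the Chow proof, machinery the paper never develops, so if you keep your version you should expect to supply that (or simply switch to the GRR comparison). Finally, the determinantal compatibility of the twists $\sqrt{\det(B)\dual}$ and $\sqrt{\det(\KK)}$ that you flag is a genuine point, but it belongs to the proof of the K-theoretic functoriality theorem itself --- it is precisely where the orientation compatibility in Definition \ref{Def.Compatibility} enters --- rather than being an extra verification specific to this corollary; your determinant identity for compatible symmetric resolutions is the correct statement there.
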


All the arguments in \S\ref{S.Lefschetz} and \S\ref{S.Pairs/Sheaves} immediately work for K-theory except Lemma \ref{Lem.VPindepofPhi}. The Lemma \ref{Lem.RR} below shows that Lemma \ref{Lem.VPindepofPhi} also holds in K-theory, which proves Corollary \ref{Cor.K.L} and Corollary \ref{Cor.K.P/S}.

\begin{lemma}[Virtual Grothendieck-Riemann-Roch]\label{Lem.RR}
Let $f:\cX\to\cY$ be a morphism of quasi-projective schemes equipped with a perfect obstruction theory $\psi : \KK \to \bL_f$. Then we have a commutative diagram
\[\xymatrix{
K_0(\cY) \ar[d]^{f^!} \ar[r]^{\tau_\cY}_{\cong} & A_*(\cY) \ar[d]^{\td(\KK\dual)\cdot f^!}\\
K_0(\cX) \ar[r]^{\tau_{\cX}}_{\cong} & A_*(\cX)
}\]
where $\tau_{\cX}$ and $\tau_{\cY}$ denote the Grothendieck-Riemann-Roch maps in \cite{Ful}.
\end{lemma}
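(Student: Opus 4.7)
The plan is to factor Manolache's virtual pullback into three classical operations and apply ordinary Grothendieck-Riemann-Roch to each. This mirrors the proof of Verdier-Riemann-Roch for lci morphisms, where the Todd factor $\td(T_f)$ arises as the product of the Todd contributions of a smooth surjection and a regular immersion; in our virtual setting the role of $T_f$ is played by $\KK\dual$.

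Fix a global resolution $[K_1\dual\to K_0\dual]\cong\KK$. In $K^0(\cX)$ we have $\KK\dual=[K_0]-[K_1]$, hence $\td(\KK\dual)=\td(K_0)\cdot\td(K_1)^{-1}$. Writing $C=\fC_f\times_{[K_1/K_0]}K_1$, Manolache's virtual pullback factors in both Chow and K-theory as
\[f^!=e(K_1|_C,\tau)\circ p^*\circ\sp_f,\qquad f^!_K=\fe(K_1|_C,\tau)\circ p^*\circ\sp_f,\]
where $p:C\to\fC_f$ is a smooth $K_0$-torsor and the final factor is the refined Gysin pullback along the zero section of $K_1$, with Koszul replacement in K-theory.

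The proof then proceeds in three steps. First, the specialization $\sp_f$ is assembled from a flat pullback along $(M^\circ_f)'\to\cY$ and a Gysin pullback along $\{0\}\hookrightarrow\A^1$, both of which have trivial relative Todd class, so $\sp_f$ commutes with $\tau$ on the nose. Second, Verdier-Riemann-Roch applied to the smooth map $p$ with relative tangent $p^*K_0$ produces the factor $\td(K_0)$. Third, GRR for the regular immersion $0:\cX\hookrightarrow K_1$ with normal bundle $K_1|_\cX$ (virtual tangent bundle $-K_1|_\cX$) produces the factor $\td(K_1)^{-1}$. Composing and collecting the Todd factors yields $\td(K_0)\td(K_1)^{-1}\cdot f^!\circ\tau_\cY=\td(\KK\dual)\cdot f^!\circ\tau_\cY$, as required.

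The main obstacle will be in the third step, since the tautological section $\tau$ of $K_1|_C$ need not cut out $\cX$ regularly inside $C$, so one cannot directly invoke GRR for a regular immersion $\cX\hookrightarrow C$. The standard remedy is to push forward to the ambient vector bundle via $\iota_{C*}:A_*(C)\to A_*(K_1)$ and apply GRR for the honest regular immersion $0:\cX\hookrightarrow K_1$, combined with the functoriality of $\tau$ under the proper pushforward $\iota_{C*}$; checking that $\fe(K_1|_C,\tau)\in K_0(\cX)$ indeed corresponds to $0^*_K\iota_{C*}[\sO_C]$ is a routine application of the Koszul resolution. A secondary subtlety is that $\fC_f$ is an Artin stack rather than a scheme; one descends through the smooth cover $C\to\fC_f$ to reduce to the scheme-theoretic case where Baum-Fulton-MacPherson GRR is classical.
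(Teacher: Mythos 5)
Your proposal is correct and follows essentially the same route as the paper: both factor Manolache's virtual pullback into a specialization map (Todd factor $1$), smooth/torsor pullbacks (Todd factor $\td(K_0)$, with the auxiliary factors from a factorization of $f$ through a smooth ambient space cancelling), and the zero-section Gysin map of $K_1$ (Todd factor $\td(K_1)^{-1}$), then apply \cite[Theorem 18.2]{Ful} to each piece. The Artin-stack subtlety you flag at the end is resolved in the paper exactly as you anticipate, by replacing $\fC_f$ with the scheme-theoretic normal cone $C_{\cX/\widetilde{\cY}}$ and mediating between the two presentations via the $K_0$- and $\TT_{\widetilde{\cY}/\cY}$-torsors, whose smooth pullbacks are isomorphisms on Chow and K-groups.
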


\begin{proof}
Choose a global resolution $[K_0 \to K_1]\cong \KK\dual$ and a factorization of $f$ 
\[\xymatrix{
& \tcY \ar[d]^{\overline{f}} \\
\cX \ar@{^{(}->}[ru]^{\tf} \ar[r]^f & \cY
}\]
by a closed embedding $\tf$ and a smooth morphism $\overline{f}$. Form a fiber digram
\[\xymatrix{
C' \ar[r]^{\tq} \ar[d]^{\tp} & C \ar[r]  \ar[d]^p& K_1 \ar[d] \\
C_{\cX/\tcY} \ar[r]^q & \fC_{\cX/\cY} \ar[r] & [K_1/K_0]
}\]
where $p$ is a $K_0$-torsor, and $q$ is a $\TT_{\tcY/\cY}$-torsor. Then we have
\begin{align}
f^! = 0_{[K_1/K_0]}^! \circ \sp_f &=  0^!_{K_1} \circ p^* \circ (q^*)^{-1} \circ \sp_{\cX/\tcY} \circ \overline{f}^* \nonumber\\
&=0^!_{K_1} \circ (\tq^*)^{-1}\circ \tp^* \circ \sp_{\cX/\tcY} \circ \overline{f}^* \label{B1}
\end{align}
both in K-theory and Chow theory.
%\[f^! = 0^!_{K_1} \circ p^* \circ (q^*)^{-1} \circ \sp_{\cX/\tcY} \circ \overline{f}^*\] since $\sp_{\cX/\tcY}\circ \overline{f}^* = q^* \circ \sp_{\cX/\cY}$. 
Since all the operations in \eqref{B1} are for schemes, we have
\begin{align*}
\tau_{\cX} \circ f^! &= \td(-K_1)\cdot \td(\TT_{\tcY/\cY})^{-1}\cdot \td(K_0)\cdot  1 \cdot \td(\TT_{\tcY/\cY}) \cdot f^! \circ \tau_{\cY}\\
&=\td(\KK\dual)\cdot f^! \circ \tau_{\cY}
\end{align*}
by \cite[Theorem 18.2]{Ful}. It completes the proof.
\end{proof}

%\begin{remark}[Pushforward] \cite{HIMN}\end{remark}

\section{Reduction of symmetric complexes}\label{A.Reduction}

Here we will slightly generalize the notions of {\em symmetric complexes} and {\em isotropic subcomplexes} in \S\ref{S.sqrtVP}.

\begin{definition}\label{Def.Gen.SymIso}
Let $\cX$ be an algebraic stack. 
\begin{enumerate}
\item A {\em $d$-shifted symmetric complex} is a pair $(\EE,\theta)$ of a perfect complex $\EE$ on $\cX$ and an isomorphism $\theta:\EE\dual \cong\EE[d]$ such that $\theta\dual[d]=\theta$. 
\item An {\em isotropic subcomplex} of $\EE$ is a pair $(\KK,\delta)$ of a perfect complex $\KK$ on $\cX$ and a map $\delta:\EE\to\KK$ such that $\delta_*(\theta) = \delta[d] \circ \theta \circ \delta\dual =0$ and $\Hom(\KK\dual[-d],\KK[-1])=0$.  
\end{enumerate}
\end{definition}

\begin{lemma}\label{Lem.Gen.Reduction}%\label{Lem.Reductionwithoutresolution}
Let $\EE$ be a $d$-shifted symmetric complex on an algebraic stack $\cX$ and $\KK$ be an isotropic subcomplex. Then there exists a unique $d$-shifted symmetric complex $\GG$ that fits into a morphism of distinguished triangles
\beq\label{C.Eq1}
\xymatrix{
\DD\dual[-d] \ar[r]^-{\alpha\dual} \ar[d]^{\beta\dual} & \EE \ar[r]^{\delta} \ar[d]^{\alpha} & \KK \ar@{=}[d] \ar[r] & \DD\dual[-d+1] \ar[d]\\
\GG \ar[r]^{\beta} & \DD \ar[r]  & \KK \ar[r] & \GG[1]
}\eeq
for some maps $\alpha$, $\beta$ and a perfect complex $\DD$. We call $\GG$ the {\em reduction}.
\end{lemma}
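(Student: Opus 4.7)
The plan is to realize $\GG$ as a ``double cone'' governed by a canonical lifting of the symmetric form. First, set $F := \mathrm{fib}(\delta)$, producing the distinguished triangle $F \xrightarrow{j} \EE \xrightarrow{\delta} \KK \to F[1]$, and define $\DD := F\dual[-d]$. Dualizing this triangle and identifying $\EE\dual[-d] \cong \EE$ via $\theta$ yields a second distinguished triangle $\KK\dual[-d] \xrightarrow{\mu} \EE \xrightarrow{\alpha} \DD \to \KK\dual[-d+1]$ in which $\mu := \theta \circ \delta\dual[-d]$. The first of these becomes the top row of \eqref{C.Eq1} under the identifications $\DD\dual[-d] = F$ and $\alpha\dual = j$; the second supplies the middle-column map $\alpha$.

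The key input is a canonical lift $\widetilde\beta : \KK\dual[-d] \to F$ of $\mu$ through $j$. Such a lift exists because $\delta \circ \mu = 0$ is exactly the isotropic condition $\delta[d] \circ \theta \circ \delta\dual = 0$. The long exact sequence attached to $F \to \EE \to \KK$, together with the hypothesis $\Hom(\KK\dual[-d], \KK[-1]) = 0$, forces the lift to be unique. Applying the octahedral axiom to the factorization $\mu = j \circ \widetilde\beta$ then yields a commutative diagram of four distinguished triangles: the two triangles already produced, a new triangle $\KK\dual[-d] \xrightarrow{\widetilde\beta} F \xrightarrow{\beta\dual} \GG \to \KK\dual[-d+1]$ which defines $\GG := \cone(\widetilde\beta)$, and a fourth triangle $\GG \xrightarrow{\beta} \DD \xrightarrow{\rho} \KK \to \GG[1]$ which serves as the bottom row of \eqref{C.Eq1}. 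The commutativities in \eqref{C.Eq1}---in particular $\alpha \circ \alpha\dual = \beta \circ \beta\dual$ and $\rho \circ \alpha = \delta$---are part of the octahedral data.

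To equip $\GG$ with its $d$-shifted symmetric structure, apply the contravariant involution $M := (-)\dual[-d]$ to the triangle $\KK\dual[-d] \to F \to \GG$. Using $M(F) = \DD$ and $M(\KK\dual[-d]) = \KK$, this produces a distinguished triangle $\GG\dual[-d] \to \DD \xrightarrow{M(\widetilde\beta)} \KK$. The identity $M(\delta) = \mu$, a direct consequence of $\theta\dual[d] = \theta$, combines with $\rho \circ \alpha = \delta$ to yield $\mu = M(\rho) \circ j$; by the uniqueness of the lift, $M(\rho) = \widetilde\beta$, hence $\rho = M(\widetilde\beta)$. The two triangles $\GG \to \DD \xrightarrow{\rho} \KK$ and $\GG\dual[-d] \to \DD \xrightarrow{\rho} \KK$ therefore coincide, producing a canonical isomorphism $\theta_\GG : \GG\dual \cong \GG[d]$. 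The self-dual condition $\theta_\GG\dual[d] = \theta_\GG$ then follows from $\theta\dual[d] = \theta$ and the naturality of $M$. Uniqueness of $\GG$ is immediate: any competing datum determines the same lift $\widetilde\beta$, hence the same cone.

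The hard part will be this final step---verifying that $M(\widetilde\beta) = \rho$ and then promoting the resulting isomorphism $\GG \cong \GG\dual[-d]$ to a genuinely self-dual one. Everything reduces to tracing how the identity $\theta\dual[d] = \theta$ interacts with the octahedral axiom, which is involved bookkeeping but should follow cleanly from the uniqueness established in the second paragraph.
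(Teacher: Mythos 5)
Your construction of $\DD$, the lift $\widetilde{\beta}$, and the cone $\GG$ is, up to relabeling, exactly the paper's (your $F$, $\widetilde{\beta}$, $\rho$ are its $\DD\dual[-d]$, $\gamma\dual$, $\gamma$), and the uniqueness of the lift via $\Hom(\KK\dual[-d],\KK[-1])=0$ is correct. The gap sits precisely at the step you defer: producing the symmetric form on $\GG$. Having identified $\rho=M(\widetilde{\beta})$, you obtain two distinguished triangles $\GG\to\DD\xrightarrow{\rho}\KK$ and $\GG\dual[-d]\to\DD\xrightarrow{\rho}\KK$, but two cones of the same map are isomorphic only by a \emph{non-unique} fill-in; two fill-ins differ by a map factoring through $\KK[-1]\to\GG$, and the vanishing $\Hom(\GG\dual[-d],\KK[-1])=0$ that would force uniqueness is not among your hypotheses (only $\Hom(\KK\dual[-d],\KK[-1])=0$ is). So the isomorphism is not canonical, and an arbitrary fill-in $f$ need not satisfy $f\dual[-d]=f$; appealing to ``naturality of $M$'' gives nothing here because the fill-in produced by the triangulated axioms is not natural. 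The uniqueness of $\widetilde{\beta}$ established in your second paragraph does not transfer to this new extension problem.

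The paper closes exactly this gap with a two-step argument that your sketch would need to reproduce. First it shows that a single map $\eta:\GG\dual[-d]\to\GG$ can be chosen to be a morphism of triangles for \emph{both} relevant pairs of triangles simultaneously (compatibility with $\beta,\beta\dual$ \emph{and} with the connecting maps to $\KK\dual[-d+1]$ and from $\KK[-1]$); this requires an explicit correction $\eta\mapsto\eta-b\dual\circ(y-\epsilon\dual\circ u)$ obtained from a chain of factorizations, again using $\Hom(\KK\dual[-d],\KK[-1])=0$. Only with both compatibilities in hand is $\eta\dual$ guaranteed to fit the \emph{same} diagrams as $\eta$, so that $(\eta+\eta\dual)/2$ is still a morphism of triangles---hence an isomorphism by the five lemma---and is self-dual by construction. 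The same issue recurs in your uniqueness claim: ``the same cone'' only yields some isomorphism $\GG\cong\GG'$, not one intertwining the symmetric forms, and the paper again must symmetrize, replacing a fill-in $f$ by $(f+(f\dual)^{-1})/2$.
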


\begin{remark} 
The above definitions are compatible with the definitions in \S\ref{S.sqrtVP} as follows:
\begin{enumerate}
\item The symmetric complexes in Definition \ref{Def.SymCplx} are the (-2)-shifted symmetric complexes (in the sense of Definition \ref{Def.Gen.SymIso}(1)) of tor-amplitude $[-2,0]$ with orientations. 
\item The isotropic subcomplexes in Definition \ref{Def.IsotropicSubcomplex} are the isotropic subcomplexes (in the sense of Definition \ref{Def.Gen.SymIso}(2)) of tor-amplitude $[-1,0]$ whose reductions are of tor-amplitude $[-2,0]$.
%\item The reductions in Proposition \ref{Prop.Reduction} are the reductions in Lemma \ref{Lem.Gen.Reduction}.
\end{enumerate}
\end{remark}

\begin{proof}[Proof of Lemma \ref{Lem.Gen.Reduction}]
Form a distinguished triangle
\[\xymatrix{
\KK\dual[-d] \ar[r]^-{\delta\dual} & \EE \ar[r]^-{\alpha} & \DD \ar[r]^-{\epsilon} & \KK\dual[-d+1]
}\]
for some perfect complex $\DD$ and maps $\alpha$ and $\epsilon$. Since $\delta \circ \delta\dual=0$, there exists a map $\gamma:\DD \to \KK$ that fits into the commutative square
\beq\label{r2}
\xymatrix{\EE \ar[r]^{\delta} \ar[d]^{\alpha} & \KK \ar@{=}[d]\\ \DD \ar@{.>}[r]^{\gamma} & \KK}
\eeq
as the dotted arrow. Moreover, the map $\gamma:\EE\to\KK$ is uniquely determined by the commutative square \eqref{r2} since $\Hom(\KK\dual[-d+1],\KK)=0$. Now form a distinguished triangle
\[\xymatrix{
\GG \ar[r]^{\beta} & \DD \ar[r]^{\gamma} & \KK \ar[r] & \GG[1]
}\]
for some perfect complex $\GG$ and a map $\beta$. Applying the octahedral axiom to the diagram \eqref{r2}, we obtain a commutative diagram
\beq\label{r3}
\xymatrix{
 & \KK\dual[-d] \ar@{=}[r]\ar@{.>}[d]^{\gamma\dual} & \KK\dual[-d] \ar[d]^{\delta\dual} & \\
\KK[-1] \ar[r]^{\epsilon\dual} \ar@{=}[d] & \DD\dual[-d] \ar[r]^{\alpha\dual} \ar@{.>}[d]^{b\dual} & \EE \ar[r]^{\delta} \ar[d]^{\alpha} & \KK \ar@{=}[d]\\
\KK[-1] \ar[r]^{} & \GG \ar[r]^{\beta} \ar@{.>}[d]^{} & \DD \ar[r]^{\gamma} \ar[d]^{\epsilon} & \KK\\
& \KK\dual[-d+1] \ar@{=}[r] & \KK\dual[-d+1]
}\eeq
consists of four distinguished triangles for some map $b\dual : \DD\dual[-d] \to \GG$, where the map $\KK\dual[-d] \to \DD\dual[-d]$ is $\gamma\dual$ since it factors $\delta\dual$. Note that if we dualize the diagram \eqref{r3}, then we obtain the same diagram where $\GG$ is replaced by $\GG\dual[-d]$, and $\beta$ and $b$ are replaced by each other. Form two morphisms of distinguished triangles
\beq\label{r4}
\xymatrix{
\DD\dual[-d] \ar[r]^{\beta\dual} \ar@{=}[d] & \GG\dual[-d] \ar[r]^{\epsilon b} \ar@{.>}[d]^{\eta_1} & \KK\dual[-d+1] \ar@{=}[d]\\
\DD\dual[-d] \ar[r]^{b\dual} & \GG \ar[r]^{\epsilon\beta} & \KK\dual[-d+1]
}\qquad
\xymatrix{
\KK\dual[-1] \ar[r]^{\beta\dual\epsilon\dual} \ar@{=}[d] & \GG\dual[-d] \ar[r]^{b} \ar@{.>}[d]^{\eta_2} & \DD \ar@{=}[d]\\
\KK\dual[-1] \ar[r]^{b\dual\epsilon\dual} & \GG \ar[r]^{\beta} & \DD
}\eeq 
for some maps $\eta_1$ and $\eta_2$. 

We claim that we can choose a map $\eta=\eta_1=\eta_2$ that fits into the two commutative diagrams in \eqref{r4} simultaneously. Indeed, first let $\eta=\eta_1$ be the map that fits into the left commutative diagram in \eqref{r4}. Then we have
\[\eta \circ  \beta\dual = b\dual  \and \epsilon \circ  \beta \circ  \eta = \epsilon \circ  b.\]
Since $\epsilon \circ (\beta \circ \eta -b) =0$, there exists a map $x:\GG\dual[-d] \to \EE$ such that $\beta \circ \eta - b =\alpha \circ x.$ Since $\gamma \circ (\beta\circ \eta - b) = 0$, we have $\gamma\circ \alpha \circ x = \delta \circ x=0$. Hence there exists a map $y: \GG\dual[-d] \to \DD\dual[-d]$ such that $x = \alpha \dual \circ y.$ Also, $(\beta \circ \eta - b) \circ \beta\dual = \beta \circ b\dual - b \circ \beta\dual = \alpha \circ \alpha\dual - \alpha\circ \alpha\dual =0$ implies $\alpha\circ\alpha\dual \circ y\circ  \beta\dual =0$. Hence there exists $z:\DD\dual[-d] \to \KK\dual[-d]$ such that $\alpha\dual \circ y \circ \beta\dual = \delta\dual \circ z.$ Then we have $\alpha\dual \circ (y\circ \beta\dual - \gamma\dual \circ z) = 0$ so that there exists $w : \DD\dual[-d] \to \KK[-1]$ such that $y \circ \beta\dual - \gamma\dual \circ z = \epsilon \dual \circ w.$ Note that $w \circ \gamma\dual = 0 $ since $\Hom(\KK\dual[-d], \KK[-1])=0$. Hence there exists a map $u : \GG\dual[-d] \to \KK[-1]$ such that $w = u \circ \beta\dual.$ Now if we replace $\eta$ as
\[\eta \mapsto \eta - b\dual \circ (y -\epsilon\dual \circ u),\]
then we have
\[\eta \circ \beta \dual = b\dual \and \beta \circ \eta = b.\]
Hence $\eta$ fits into the two commutative diagrams in \eqref{r4} simultaneously.

Note that $\eta\dual$ also fits into the two commutative diagrams in \eqref{r4}. Replace $\eta$ by $\dfrac{\eta+ \eta\dual}{2}$. Then $\eta$ still fits into the two commutative diagrams in \eqref{r4} and $\eta=\eta\dual$. This proves the existence.

Now we will prove the uniqueness. Let $\GG'$ be another $d$-shifted symmetric complex that fits into the diagram \eqref{C.Eq1} with $\beta$ replaced by $\beta'$. Here we can assume that $\DD$, $\gamma$, and $\epsilon$ are fixed. By repeating the arguments in the second paragraph, we can form morphisms
\beq\label{r5}
\xymatrix{
\DD\dual[-d] \ar[r]^-{\beta\dual} \ar@{=}[d] & \GG \ar[r]^-{\epsilon \beta} \ar@{.>}[d]^{f} & \KK\dual[-d+1] \ar@{=}[d]\\
\DD\dual[-d] \ar[r]^-{(\beta')\dual} & \GG' \ar[r]^-{\epsilon\beta'} & \KK\dual[-d+1]
}\qquad
\xymatrix{
\KK\dual[-1] \ar[r]^-{\beta\dual\epsilon\dual} \ar@{=}[d] & \GG \ar[r]^-{\beta} \ar@{.>}[d]^{f} & \DD \ar@{=}[d]\\
\KK\dual[-1] \ar[r]^-{(\beta')\dual\epsilon\dual} & \GG' \ar[r]^-{\beta'} & \DD
}\eeq
of distinguished triangles for some map $f:\GG \to \GG'$ which fits into the both diagram simultaneously. Here we identified $\GG\dual[-d]=\GG$ and $(\GG')\dual[-d] = \GG'$ by their symmetric forms. Then $(f\dual)^{-1} :\GG \to \GG'$ also fits into the both commutative diagrams in \eqref{r5}. Hence after replacing $f$ by $\frac{f+ (f\dual)^{-1}}{2}$, we may assume that $f = (f\dual)^{-1}$. Equivalently, we have a commutative diagram
\[\xymatrix{
\GG \ar[r]^f \ar@{=}[d] & \GG' \ar@{=}[d] \\
\GG\dual[-d] & (\GG')\dual[-d] \ar[l]^{f\dual}
}\]
which means that $\GG$ and $\GG'$ are isomorphic as symmetric complexes.
\end{proof}

\begin{lemma}\label{Lem.R}
Let $\EE$ and $\GG$ be symmetric complexes (in the sense of Definition \ref{Def.SymCplx}) and $\KK$ be an isotropic subcomplex of $\EE$ with respect to $\delta:\EE\to\KK$ (in the sense of Definition \ref{Def.IsotropicSubcomplex}). Assume that we have a (not necessarily commutative) diagram \eqref{C.Eq1} where the two horizontal sequences are distinguished triangles and only the first two squares commute. Then we can form the full reduction diagram \eqref{C.Eq1} with the same maps $\alpha$, $\beta$, $\delta$.
\end{lemma}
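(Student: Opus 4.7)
The approach is to extend the given partial diagram to a full morphism of distinguished triangles via the axiom TR3, then invoke the uniqueness part of Lemma~\ref{Lem.Gen.Reduction} to recognize the given $\GG$ as a genuine reduction of $\EE$ by $\KK$. Concretely, since the first two squares of (C.1) commute by hypothesis and the third square (with vertical map $\id_{\KK}$) commutes trivially, TR3 produces a map $\phi \colon \DD\dual[-d+1] \to \GG[1]$ completing the rightmost square, so that the augmented diagram is a morphism of distinguished triangles.

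Next, I would apply Lemma~\ref{Lem.Gen.Reduction} to the isotropic subcomplex $\KK$ of $\EE$ to obtain the canonical reduction $(\GG^{\mathrm{red}}, \alpha^{\mathrm{red}}, \beta^{\mathrm{red}})$. Since the top row of (C.1) is a distinguished triangle built out of the same $\delta$, the object $\DD$ and the map $\alpha$ can be identified with $\DD^{\mathrm{red}}$ and $\alpha^{\mathrm{red}}$ up to unique isomorphism; moreover the second connecting map $\gamma \colon \DD \to \KK$ of the bottom row is uniquely pinned down by the relation $\delta = \gamma \circ \alpha$, thanks to the vanishing $\Hom(\KK\dual[-d+1], \KK) = 0$ built into the isotropic subcomplex condition (Definition~\ref{Def.Gen.SymIso}(2)), exactly as recorded in the proof of Lemma~\ref{Lem.Gen.Reduction}. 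Consequently both $\GG$ and $\GG^{\mathrm{red}}$ are shifted fibres of the same morphism $\gamma$, and TR3 combined with the five-lemma yields an isomorphism $f \colon \GG \to \GG^{\mathrm{red}}$ of underlying perfect complexes with $\beta^{\mathrm{red}} \circ f = \beta$.

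The main obstacle will be promoting $f$ to an isomorphism of symmetric complexes. The first-square commutativity $\alpha \circ \alpha\dual = \beta \circ \beta\dual$, together with the analogous relation $\alpha \circ \alpha\dual = \beta^{\mathrm{red}} \circ (\beta^{\mathrm{red}})\dual$ coming from the canonical reduction diagram, forces $\beta^{\mathrm{red}} \circ (f \circ \beta\dual - (\beta^{\mathrm{red}})\dual) = 0$; hence the discrepancy $f \circ \beta\dual - (\beta^{\mathrm{red}})\dual$ factors through $\KK[-1]$ via the connecting map of the bottom triangle. After adjusting $f$ by a suitable map $\GG \to \KK[-1] \to \GG^{\mathrm{red}}$ (which does not disturb $\beta^{\mathrm{red}} \circ f = \beta$) we arrange $f \circ \beta\dual = (\beta^{\mathrm{red}})\dual$ as well, so that $(f\dual)^{-1}$ automatically satisfies both intertwining relations by duality. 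The averaging trick $f \leadsto \tfrac{1}{2}(f + (f\dual)^{-1})$ from the uniqueness part of Lemma~\ref{Lem.Gen.Reduction} then produces a symmetric isomorphism, and transporting the canonical reduction diagram for $\GG^{\mathrm{red}}$ along this symmetric isomorphism exhibits the full reduction diagram (C.1) for $\GG$ with the prescribed maps $\alpha$, $\beta$, and $\delta$.
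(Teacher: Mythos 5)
Your opening step is incorrect, and if it were valid the lemma would be vacuous. Prescribing all three vertical maps $(\beta\dual,\alpha,\id_{\KK})$ and knowing that the first two squares commute does \emph{not} make the triple a morphism of distinguished triangles: TR3 completes a \emph{single} commuting square by \emph{producing} the remaining vertical map, and that completion need not agree with the map already fixed in that slot. Applying TR3 to the second square (after rotation) does yield some $\DD\dual[-d+1]\to\GG[1]$ compatible with the connecting maps, but nothing forces it to be $\beta\dual[1]$, so the ``augmented diagram'' you obtain does not contain the first square with $\beta\dual$; dually, completing the first square yields some $\phi\colon\KK\to\KK$ that need not be the identity. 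Whether the prescribed triple can be realized as a morphism of triangles after adjusting connecting maps is precisely the content of the lemma, so it cannot be dispatched by one appeal to TR3. Also, there is no ``third square that commutes trivially'': the third square involves the unknown fourth vertical map.

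Fortunately your first step is not used afterwards, and the remainder is a workable route that differs from the paper's. You pass through the canonical reduction $\GG^{\mathrm{red}}$ of Lemma \ref{Lem.Gen.Reduction}: $\gamma$ is pinned down by $\gamma\circ\alpha=\delta$ and $\Hom(\KK\dual[-d+1],\KK)=0$, both $\GG$ and $\GG^{\mathrm{red}}$ are fibres of $\gamma$, and the resulting isomorphism $f$ is upgraded to intertwine $\beta,\beta\dual$ with $\beta^{\mathrm{red}},(\beta^{\mathrm{red}})\dual$ using $\alpha\alpha\dual=\beta\beta\dual$ together with $\Hom(\KK\dual[-d],\KK[-1])=0$, then symmetrized and used to transport the canonical diagram. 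These lifting and averaging steps are exactly those of the uniqueness half of Lemma \ref{Lem.Gen.Reduction}, so they go through. The paper instead works directly on the given diagram: TR3 applied to the \emph{first} square produces a morphism of triangles $(\beta\dual,\alpha,\phi)$ with $\phi\colon\KK\to\KK$; since $\phi\circ\delta=\gamma\circ\alpha=\delta$, the map $\phi-1$ factors through the connecting map of the top row, and a computation on cohomology sheaves shows $\phi$ is an isomorphism; one then absorbs $\phi$ into the connecting map of the bottom triangle to put $\id_{\KK}$ in the third column. The paper's argument is shorter and avoids re-running the uniqueness proof; yours makes explicit that the given $\GG$ is isomorphic as a symmetric complex to the canonical reduction. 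To be acceptable, your write-up must delete the first paragraph and let the second and third paragraphs carry the proof.
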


\begin{proof}
Form a morphism of distinguished triangles 
\beq\label{R2}
\xymatrix{
\DD\dual[2] \ar[r]^{\alpha\dual} \ar[d]^{\beta\dual} & \EE \ar[r]^{\delta} \ar[d]^{\alpha} & \KK \ar@{.>}[d]^{\phi} \ar[r]^x & \DD\dual[3] \ar[d]^{\beta\dual} \\
\GG \ar[r]^{\beta} & \DD \ar[r]^{\gamma} & \KK \ar[r]^y & \GG[1]
}\eeq
for some map $\phi:\KK \to \KK$. Since $\KK\dual[2]$ is of tor-amplitude $[-2,-1]$, $h^0(\beta\dual)$, $h^0(\alpha)$ are bijective and $h^{-1}(\beta\dual)$, $h^{-1}(\alpha)$ are surjective. From the morphism of the long exact sequences associated to \eqref{R2}, we deduce that $h^0(\phi)$ is bijective and $h^{-1}(\phi)$ is surjective. We now claim that $h^{-1}(\phi)$ is injective. Since $\phi\circ \delta = \gamma \circ \alpha =\delta$, there exists $\psi : \DD\dual[3] \to \KK$ such that $\phi -1 = \psi \circ x$. If $h^{-1}(\phi)(a)=0$, then $h^{-1}(x)(a)=0$ since $h^0(\beta\dual)$ is bijective. Hence 
\[h^{-1}(\phi)(a) = a - h^{-1}(\psi) \circ h^{-1}(x) (a) = a =0.\]
Hence $\phi:\KK \to \KK$ is an isomorphism. Replace $y$ by $y \circ \phi = \beta\dual \circ x$, then we obtain the desired reduction diagram.
\end{proof}

\bibliographystyle{amsplain}

\end{document}